\newcommand{\overbar}[1]{\mkern 1.5mu\overline{\mkern-1.5mu#1\mkern-1.5mu}\mkern 1.5mu}
\numberwithin{equation}{section}
\newcommand*{\dd}{\mathrm{d}}
\newcommand*{\ve}{\textup{vec}}
\newcommand*{\vem}{\textup{vecm}}
\newcommand*{\stack}{\textup{stk}}
\newtheorem{theorem}{Theorem}[section]
\newtheorem{proposition}[theorem]{Proposition}
\newtheorem{corollary}[theorem]{Corollary}
\newtheorem{lemma}[theorem]{Lemma}
\theoremstyle{definition}
\newtheorem{definition}[theorem]{Definition}
\newtheorem{remark}[theorem]{Remark}
\newtheorem{example}[theorem]{Example}
\newcommand{\eps}{\varepsilon}
\newcommand\norm[1]{\left\lVert#1\right\rVert}
\newcommand\normk[1]{\lVert#1\lVert}
\newcommand\abs[1]{\left\lvert#1\right\rvert}
\newcommand{\R}{\mathbb{R}}
\newcommand\blfootnote[1]{%
	\begingroup
	\renewcommand\thefootnote{}\footnote{#1}%
	\addtocounter{footnote}{-1}%
	\endgroup
}
\title{\textbf{Analysis of the Geometric Structure of Neural Networks and Neural ODEs via Morse Functions \\[3mm]}}
\author{Christian Kuehn \orcidlink{0000-0002-7063-6173}$^{1,2,3}$ \& Sara-Viola Kuntz \orcidlink{0009-0000-4611-9742}$^{1,2,3}$ \\[3mm]
}
\date{
	\small{$^1$\textit{Technical University of Munich, School of Computation, Information and Technology, \\ Department of Mathematics, Boltzmannstraße 3, 85748 Garching, Germany} \\
		$^2$\textit{Munich Data Science Institute (MDSI), Garching, Germany } \\
		$^3$\textit{Munich Center for Machine Learning (MCML), München, Germany }}\\[10mm]
	\large{February 10, 2026}
}
\begin{document}
	
	\maketitle

	\begin{abstract} 
		Besides classical feed-forward neural networks such as multilayer perceptrons, also neural ordinary differential equations (neural ODEs) have gained particular interest in recent years. Neural ODEs can be interpreted as an infinite depth limit of feed-forward or residual neural networks. We study the input-output dynamics of finite and infinite depth neural networks with scalar output. In the finite depth case, the input is a state associated with a finite number of nodes, which maps under multiple non-linear transformations to the state of one output node. In analogy, a neural ODE maps an affine linear transformation of the input to an affine linear transformation of its time-$T$ map. We show that, depending on the specific structure of the network, the input-output map has different properties regarding the existence and regularity of critical points. These properties can be characterized via Morse functions, which are scalar functions where every critical point is non-degenerate. We prove that critical points cannot exist if the dimension of the hidden layer is monotonically decreasing or the dimension of the phase space is smaller than or equal to the input dimension. In the case that critical points exist, we classify their regularity depending on the specific architecture of the network. We show that except for a Lebesgue measure zero set in the weight space, each critical point is non-degenerate if for finite depth neural networks the underlying graph has no bottleneck, and if for neural ODEs, the affine linear transformations used have full rank. For each type of architecture, the proven properties are comparable in the finite and infinite depth cases. The established theorems allow us to formulate results on universal embedding and universal approximation, i.e., on the exact and approximate representation of maps by neural networks and neural ODEs. Our dynamical systems viewpoint on the geometric structure of the input-output map provides a fundamental understanding of why certain architectures perform better than others.  
	\end{abstract}

	\vspace{3mm}
	
	\noindent {\small \textbf{Keywords: neural networks, neural ODEs, Morse functions, universal embedding, universal approximation}
		
		\vspace{4mm}
		
		\noindent {\small \textbf{MSC2020: 34A34, 58K05, 58K45, 68T07}}
		
		\blfootnote{\textcolor{white}{.}\\[-2mm]
			\hspace{-5.4mm}\Letter \; ckuehn@ma.tum.de (Christian Kuehn)  \\[0.7mm]
			\Letter\; saraviola.kuntz@ma.tum.de (Sara-Viola Kuntz, corresponding author) \\[2mm]
			This version of the article has been accepted for publication, after peer review but is not the Version of Record and does not reflect post-acceptance improvements, or any corrections. The Version of Record is published in the Journal \textit{Advances in Computational Mathematics}, and is available online at  \url{https://doi.org/10.1007/s10444-025-10273-5}. \\}
		
		\newpage
		
		\tableofcontents
		
		\vspace{3mm}
		
		\section{Introduction} 
		\label{sec:introduction}
		
		Neural Networks are powerful computational models inspired by the functionality of the human brain. A classical neural network consists of interconnected neurons, represented by nodes and weighted edges of a graph. Through the learning process, the weighted connections between the nodes are adapted, such that the output of the neural network better predicts the data \cite{Aggarwal2018}. Often, the nodes of a neural network are organized in layers, and the information is fed forward from layer to layer. In the easiest case, a feed-forward neural network is a perceptron, studied already by Rosenblatt in 1957~\cite{Rosenblatt1957}. \medskip
		
		A classical feed-forward neural network is structured in layers $h_l \in \mathbb{R}^{n_l}$, $l \in \{0,1,\ldots,L\}$ with width $n_l$, where $h_0$ is called the input layer and $h_L$ the output layer. The layers $h_1,h_2,\ldots,h_{L-1}$ in between are called hidden layers. The neural network is a map $\Phi: h_0 \mapsto h_L$. In this work, we study multilayer perceptrons (MLPs), which belong to the most famous feed-forward neural network architectures, and which are defined by the iterative update rule 
		\begin{equation} \label{eq:intro_updaterule}
			h_{l} = f_l(h_{l-1},\theta_l) \coloneqq \widetilde{W}_l\sigma_l(W_lh_{l-1}+b_l)+\tilde{b}_l = \widetilde{W}_l\sigma_l(a_l)+\tilde{b}_l
		\end{equation}
		for $l \in \{1,\ldots,L\}$, where $W_l$, $\widetilde{W}_l$ are weight matrices and $b_l$, $\tilde{b}_l$ are biases of appropriate dimensions. We abbreviate all parameters by $\theta_l \coloneqq (W_l,\widetilde{W}_l,b_l,\tilde{b}_l)$. The function $\sigma_l$ is a component-wise applied non-linear activation function such as $\tanh$, soft-plus, sigmoid, or (normal, leaky, or parametric) $\textup{ReLU}$. The update rule~\eqref{eq:intro_updaterule} includes both the case of an outer nonlinearity if $\widetilde{W}_l = \text{Id}$, $\tilde{b}_l = 0$ and the case of an inner nonlinearity if $W_l = \text{Id}$ and $b_l = 0$. 
		
		Besides MLPs with finite depth $L< \infty$, we aim to study neural ODEs, which can be interpreted as an infinite network limit of residual neural networks (ResNets) \cite{Chen2018, He2016, Kuehn2023, Weinan2017}. ResNets are feed-forward neural networks with the specific property that all layers have the same width $m = n_l$ and the function $f_l$ of the update rule~\eqref{eq:intro_updaterule} is of the form
		\begin{equation}\label{eq:intro_ResNet}
			h_{l} = f_l(h_{l-1},\theta_l) = h_{l-1} + f_{\text{ResNet},l}(h_{l-1},\theta_l).
		\end{equation} 
		If $f_{\textup{ResNet},l}(\cdot,\cdot) =  f_\textup{ResNet}(\cdot,\cdot)$ for all $l \in \{1,\ldots,L\}$, the ResNet update rule~\eqref{eq:intro_ResNet} can be obtained as an Euler discretization of the ordinary differential equation (ODE) 
		\begin{equation} \label{eq:intro_ODE_1}
			\frac{\dd h}{\dd t} = f_\textup{ODE}(h(t),\theta(t)), \qquad h(0) = h_0,
		\end{equation}
		on the time interval $ [0,T]$ with step size $\delta = T/L$ and $f_\textup{ODE}(\cdot,\cdot) = \frac{1}{\delta} f_\textup{ResNet}(\cdot,\cdot)$. Hereby, the function $h$ can be interpreted as the hidden states, and the function $\theta$ as the weights. The output of the neural ODE, which corresponds to the output layer $h_L$ of the neural network, is the time-$T$ map $h_{h_0}(T)$ (cf.~\cite{Guckenheimer2002}) of~\eqref{eq:intro_ODE_1}. The smaller the step size~$\delta$, i.e., the larger the depth~$L$ of the neural network, the better the Euler approximation becomes for fixed $T$. In this work, we drop for most of our results the dependency on the parameter function~$\theta$ and study neural ODEs based on general non-autonomous initial value problems of the form 
		\begin{equation} \label{eq:intro_ODE_2}
			\frac{\dd h}{\dd t} = \tilde{f}(t,h(t)), \qquad h(0) = h_0
		\end{equation}
		with $\tilde{f}:\mathbb{R}\times\mathbb{R}^m\rightarrow \mathbb{R}^m$. To be flexible with respect to the input and output dimensions of the neural ODE, we study architectures defined by $\Phi: x \mapsto \widetilde{\lambda}(h_{\lambda(x)}(T))$, where $\lambda$ and $\widetilde{\lambda}$ are affine linear transformations applied before and after the initial value problem. The initial value problem~\eqref{eq:intro_ODE_2} is a classical ODE studied in many contexts, the main difference in machine learning is that the focus lies on input-output relations over finite time-scales. \medskip
		
		An important and common research area for feed-forward neural networks and neural ODEs is expressivity. For MLPs and ResNets, various universal approximation theorems exist~\cite{Hornik1989, Kidger2022, Lin2018, Pinkus1999, Schaefer2006}, stating that by increasing the width and depth or the number of parameters, any continuous function can be approximated arbitrarily well. If the width of the network is not larger than the input dimension, the approximation capability is restricted, as shown in \cite{Johnson2018}. For neural ODEs, initial explorations in the topic of universal approximation have been made in \cite{Kidger2022, Zhang2020a}. In \cite{Kuehn2023}, different neural ODE architectures are systematically studied with respect to the property of universal embedding. The restriction to this exact representation problem allows the mathematical arguments to gain clarity. As for MLPs, the general observation is that, also for neural ODEs, the expressivity mainly depends on the dimension of the phase space. The main distinction is whether the dimension of the initial value problem~\eqref{eq:intro_ODE_2} is smaller or larger compared to the dimension of the input data. The neural ODE architectures are then called non-augmented or augmented, respectively.  
		
		As first observed in \cite{Kuehn2023}, the universal embedding features of neural ODEs are related to the property of being a Morse function. A Morse function is a scalar function, where every critical point is non-degenerate~\cite{Hirsch1976, Morse1934}, i.e., the determinant of the Hessian matrix at critical points is non-zero. For scalar MLPs with smooth and monotonically increasing activation functions, it is claimed in \cite{Kratsios2021}, that MLPs without bottlenecks are almost surely Morse functions. A neural network has a bottleneck if three layers exist, where the middle layer has a strictly smaller dimension than the first and the third layer. We build upon the initial explorations of~\cite{Kratsios2021} and \cite{Kuehn2023} as a starting point to systematically study the property of being a Morse function depending on the architecture for both MLPs and neural ODEs. To that purpose, we define in both cases what a non-augmented, an augmented, and an architecture with a bottleneck is. Depending on the type of architecture, we prove comparable results about the input-output map of MLPs and neural ODEs. For non-augmented architectures, we can prove in both cases via rank arguments, an explicit calculation of the network gradient, and the usage of linear variational equations in the continuous case, that no critical points can exist. In the augmented and bottleneck cases, it is possible that the input-output map has critical points. Using differential geometry and Morse theory for augmented MLPs  and augmented neural ODEs, we are able to prove that generically, i.e., except for a set of measure zero with respect to the Lebesgue measure in the weight space, every critical point is non-degenerate. For finite-depth neural networks with a bottleneck, critical points can be degenerate or non-degenerate. We derive conditions classifying the regularity of the critical points depending on the specific weights chosen. For degenerate neural ODEs, where the affine linear transformations used do not have full rank, we prove that every critical point is degenerate. Furthermore, we explain why the results obtained for bottleneck/degenerate architectures are comparable in the finite and the infinite depth case.
		
		It is important to study the property of being a Morse function, as for sufficiently smooth functions, it is a generic property to be a Morse function. Hence, to represent general data, it is often a good idea to aim for neural network architectures, which also have the property of being generically a Morse function. Furthermore, examples of non-degenerate critical points are extreme points, which can be global properties. If the data to approximate has a global extreme point, the chosen neural network architecture should also have the possibility to have a non-degenerate critical point, such that a good approximation or an exact representation is possible. \medskip
		
		In Section~\ref{sec:classification}, we introduce Morse functions and collect our main theorems in Table~\ref{tab:results} to compare the results established for MLPs  and neural ODEs depending on the specific architecture. Afterwards, we discuss in Section~\ref{sec:embeddingapproximation} which implications our results have on the universal embedding and approximation property of MLPs and neural ODEs. The analysis of MLPs is situated in Section~\ref{sec:feedforward}, whereas the analysis of neural ODEs can be found in Section~\ref{sec:neuralODEs}. 
		
		We start in Section~\ref{sec:feedforward} by introducing  in Section~\ref{sec:nn_architecture} the different architectures non-augmented, augmented, and bottleneck. We continue in Section~\ref{sec:nn_weightmatrices} with developing a theory for equivalent neural network architectures. If a weight matrix does not have full rank, the size of the network can be decreased without changing the output, but with a possible change in the network architecture. The goal of this section is to derive a normal form of an MLP, where all weight matrices have full rank. The normal form of the neural network is then analyzed in Section~\ref{sec:nn_criticalpoints} with respect to the property of having critical points, and in Section~\ref{sec:nn_regularity}, the regularity of these critical points is studied. We show that non-augmented neural networks have no critical points and that it is a generic property of augmented neural networks to be a Morse function. As MLPs  with a bottleneck show the most complex behavior, we devote Section~\ref{sec:nn_bottleneck} to the analysis of bottleneck architectures. 
		
		In analogy to Section~\ref{sec:feedforward}, we start Section~\ref{sec:neuralODEs} with Section~\ref{sec:node_architecture} to introduce the different neural ODE architectures non-augmented, augmented, and degenerate. We analyze in Section~\ref{sec:node_criticalpoints} the existence of critical points in neural ODE architectures and study the regularity of the critical points in  Section~\ref{sec:node_regularity}. As for MLPs, we show that non-augmented neural ODEs have no critical points, and that it is a generic property of augmented neural ODEs to be a Morse function. Furthermore, we prove that every critical point of a degenerate neural ODE is degenerate, and explain why this result is comparable to the analysis of MLPs with bottlenecks. Finally, we discuss in Section~\ref{sec:NODE_universal} the relationship between general and parameterized neural ODEs and show the universal embedding property for general augmented neural ODEs and the universal approximation property for certain parameterized augmented neural ODEs.
		
		\vspace{3mm}
		
		\section{Overview and Results}
		\label{sec:overview_results}
		
		In this work, we aim to compare MLPs  and neural ODEs with respect to the existence and the regularity of critical points. These two properties can be characterized via Morse functions. One aspect of this work is to fully rigorously prove and fundamentally generalize results indicated by Kurochkin \cite{Kurochkin2021} about the relationship between Morse functions and MLPs. 
		
		\subsection{Classification of Critical Points}\label{sec:classification}
		
		To characterize the existence and regularity of critical points, we use Morse functions, which are defined in the following. 
		
		\begin{definition}[Morse Function~\cite{Hirsch1976, Morse1934}] \label{def:morse}
			A map $\Phi \in C^2(\mathcal{X}, \mathbb{R})$ with $\mathcal{X} \subset \mathbb{R}^n$ open is called a Morse function if all critical points of $\Phi$ are non-degenerate, i.e., for every critical point $p \in \mathcal{X}$ defined by a zero gradient $\nabla \Phi(p) = 0$, the Hessian matrix $H_{\Phi}(p)$ is non-singular.
		\end{definition}
		
		Morse functions are generic in the space $C^k(\mathcal{X},\mathbb{R})$, $\mathcal{X} \subset \mathbb{R}^n$ open and bounded, of $k$ times continuously differentiable scalar functions $\Phi: \mathcal{X} \rightarrow \mathbb{R}$, as the following theorem shows.
		
		\begin{theorem}[\cite{Kuehn2023}]\label{th:morsefunctionsdense}
			Let $\mathcal{X} \subset \mathbb{R}^n$ be open and bounded. For $k \in \mathbb{N}_0$, the vector space
			\begin{equation*}
				C^k(\overbar{\mathcal{X}},\mathbb{R}) \coloneqq \left\{ \Phi \in C^k(\mathcal{X},\mathbb{R}): \Phi^{(i)} \textup{ is continuously continuable on } \overbar{\mathcal{X}} \textup{ for all } i \leq k  \right\},
			\end{equation*}
			endowed with the $C^k$-norm
			\begin{equation*}
				\norm{\Phi}_{C^k(\overbar{\mathcal{X}},\R)} \coloneqq \sum_{\abs{s}\leq k} \norm{\partial^s \Phi}_\infty
			\end{equation*}
			is a Banach space. Hereby $\overbar{\mathcal{X}}$ denotes the closure of $\mathcal{X}$ and $\norm{f}_\infty \coloneqq \sup_{x \in \overbar{\mathcal{X}}} \abs{f(x)}$ the supremum norm  of a continuous function $f: \overbar{\mathcal{X}} \rightarrow \mathbb{R}$. If additionally $k \geq n+1\geq 2$, the set of Morse functions 
			\begin{equation*}
				M \coloneqq \left\{\Phi \in C^k(\overbar{\mathcal{X}},\mathbb{R}): \Phi \big\vert_{\mathcal{X}} \textup{ is a Morse function}\right\}
			\end{equation*}
			is dense in $\left(C^k(\overbar{\mathcal{X}},\mathbb{R}), \norm{\cdot}_{C^k(\overbar{\mathcal{X}},\R)} \right)$.
		\end{theorem}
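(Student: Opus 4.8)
The statement splits into two essentially independent parts: that $\bigl(C^k(\overbar{\mathcal{X}},\R),\norm{\cdot}_{C^k(\overbar{\mathcal{X}},\R)}\bigr)$ is a Banach space, and that the Morse functions form a dense subset. For the first part, the norm axioms are inherited directly from those of the supremum norm, so the only substantive claim is completeness. The plan is to take a Cauchy sequence $(\Phi_j)_j$ and note that for every multi-index $s$ with $\abs{s}\le k$ the sequence $(\partial^s\Phi_j)_j$ is Cauchy in $\bigl(C^0(\overbar{\mathcal{X}}),\norm{\cdot}_\infty\bigr)$, which is complete; hence each $\partial^s\Phi_j$ converges uniformly on $\overbar{\mathcal{X}}$ to a continuous limit $g_s$. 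Applying the classical theorem on uniform convergence of derivatives inductively on $\abs{s}$ identifies $g_s=\partial^s g_0$ for $g_0\coloneqq\lim_j\Phi_j$, so that $g_0\in C^k(\overbar{\mathcal{X}},\R)$ and $\Phi_j\to g_0$ in the $C^k$-norm. This part is routine functional analysis.

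For the density statement the governing idea is to perturb $\Phi$ only by a linear functional, because then the failure of the Morse property is controlled directly by Sard's theorem. Given $\Phi\in C^k(\overbar{\mathcal{X}},\R)$, I would consider the family $\Phi_a(x)\coloneqq\Phi(x)+\langle a,x\rangle$ for $a\in\R^n$. Since $\nabla\Phi_a=\nabla\Phi+a$ while $H_{\Phi_a}=H_\Phi$, a point $p\in\mathcal{X}$ is a \emph{degenerate} critical point of $\Phi_a$ exactly when $\nabla\Phi(p)=-a$ and $\det H_\Phi(p)=0$. Equivalently, $\Phi_a$ fails to be a Morse function precisely when $-a$ is a critical value of the gradient map $g\coloneqq\nabla\Phi:\mathcal{X}\to\R^n$.

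The regularity hypothesis enters at this point: since $k\ge n+1$, the map $g$ is of class $C^{k-1}\subseteq C^{n}$, which comfortably satisfies the hypotheses of Sard's theorem for a map $\R^n\to\R^n$. Hence the set of critical values of $g$ has $n$-dimensional Lebesgue measure zero, and for almost every $a\in\R^n$ the value $-a$ is a regular value of $g$, making $\Phi_a\big\vert_{\mathcal{X}}$ a Morse function. It then remains to control the size of the perturbation: because $\mathcal{X}$ is bounded, $R\coloneqq\sup_{x\in\overbar{\mathcal{X}}}\abs{x}<\infty$, and since $x\mapsto\langle a,x\rangle$ has nonvanishing derivatives only of orders $0$ and $1$, one obtains $\norm{\langle a,\cdot\rangle}_{C^k(\overbar{\mathcal{X}},\R)}\le (R+\sqrt{n})\,\abs{a}$. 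As the admissible set of parameters $a$ has full measure it meets every ball about the origin, so choosing such an $a$ with $\abs{a}$ small enough yields a Morse function arbitrarily close to $\Phi$ in the $C^k$-norm, which proves density.

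The genuinely substantive ingredient is Sard's theorem, used through the observation that a linear perturbation shifts the critical values of $\nabla\Phi$ while leaving the Hessian untouched; I expect this reduction to be the conceptual heart of the argument, whereas the Banach space property and the perturbation estimate are routine and rely only on the boundedness of $\mathcal{X}$. The single point demanding care is the differentiability bookkeeping: the assumption $k\ge n+1$ is exactly what guarantees that $\nabla\Phi$ is regular enough to invoke Sard's theorem, while $k\ge 2$ is what renders the Hessian, and thus the Morse condition, meaningful at all.
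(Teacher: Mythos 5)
Your argument is correct: the completeness part is the routine $C^k$ Cauchy-sequence argument, and the density part is the classical linear-perturbation trick, observing that $\Phi+\langle a,\cdot\rangle$ fails to be Morse exactly when $-a$ is a critical value of $\nabla\Phi$ and then invoking Sard's theorem (for which $k\ge n+1$ gives more than the required $C^1$ regularity of the equidimensional map $\nabla\Phi$), with boundedness of $\mathcal{X}$ controlling the $C^k$-norm of the perturbation. The paper itself gives no proof — it imports the statement from \cite{Kuehn2023} — and your proof is precisely the standard argument underlying that reference, so there is nothing to flag.
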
 
		
		The definition of a Morse function motivates us to subdivide the space $C^k(\mathcal{X},\mathbb{R})$, $\mathcal{X} \subset \mathbb{R}^n$ open, into three disjoint subsets.
		\begin{definition} \label{def:subsets}
			For $k \in \mathbb{N}_{\geq 1}$, the following  subspaces of $C^k(\mathcal{X},\mathbb{R})$, $\mathcal{X} \subset \mathbb{R}^n$ open, are defined:
			\begin{itemize}
				\item The set of functions without critical points: $$(\mathcal{C}1)^k(\mathcal{X},\mathbb{R}) \coloneqq \left\{\Phi \in C^k(\mathcal{X}, \mathbb{R}) : \nabla \Phi(x) \neq 0\;  \forall x\right\}.$$  By definition, these functions are Morse functions for $k \geq 2$.
				\item For $k\geq 2$, the set of functions, which have at least one critical point and where each critical point is non-degenerate: $$(\mathcal{C}2)^k (\mathcal{X},\mathbb{R})\coloneqq\left\{\Phi \in C^k(\mathcal{X}, \mathbb{R}) : \left(\exists \, p:  \nabla \Phi(p) = 0\right) \; \wedge \; \left(\nabla \Phi(q) = 0 \; \Rightarrow \; H_{\Phi}(q) \textup{ is non-singular}\right) \right\}.$$  By definition, these functions are Morse functions.
				\item For $k\geq 2$, the set of functions which have at least one degenerate critical point: $$(\mathcal{C}3)^k(\mathcal{X},\mathbb{R}) \coloneqq\left\{\Phi \in C^k(\mathcal{X}, \mathbb{R}) : \exists \, p:  \left(\nabla \Phi(p) = 0 \; \wedge \; H_{\Phi}(p) \textup{ is singular}\right) \right\}.$$ By definition, these functions are not Morse functions.
			\end{itemize}
			Clearly, it holds for $k\geq 2$ that the three defined subspaces are non-empty and that they are a disjoint subdivision of   $C^k(\mathcal{X},\mathbb{R})$, i.e., it holds $C^k(\mathcal{X},\mathbb{R}) = (\mathcal{C}1)^k(\mathcal{X},\mathbb{R})\, \dot{\cup} \, (\mathcal{C}2)^k(\mathcal{X},\mathbb{R}) \, \dot{\cup}\, (\mathcal{C}3)^k(\mathcal{X},\mathbb{R})$.
		\end{definition}

		\begin{figure}
			\begin{tabular*}{\textwidth}{p{2.1cm}|p{6.1cm}|p{6cm}}
				& \vspace{1pt}\hspace{11mm}\textbf{Multilayer Perceptron} & \vspace{1pt} \hspace{18mm} \textbf{Neural ODE} \\[15pt]
				\hline && \\[0pt]
				\textbf{General}
				&
				Architecture $\Xi^k(\mathcal{X},\mathbb{R})$, $k \geq 0$:
				\begin{itemize}[itemsep=-2pt,topsep=3pt]
					\item MLP  with~update~rule~\eqref{eq:nn_updaterule}
					\item All activation functions $\sigma_l$ are applied component-wise, are strictly monotone in each component, and $[\sigma_l]_i\in C^k(\mathbb{R},\mathbb{R})$
					\item The first layer is the input $h_0 = x \in \mathcal{X}\subset  \mathbb{R}^n$, the last layer is the output $h_L \in \mathbb{R}$
					\item We assume the generic case, that all weight matrices have full rank (cf.\ Lemma~\ref{lem:nn_weightspace})
				\end{itemize} \vspace{25pt} 
				&
				Architecture $\textup{NODE}^k(\mathcal{X},\mathbb{R})$, $k \geq 0$:
				\begin{itemize}[itemsep=-2pt,topsep=3pt]
					\item The scalar neural ODE is defined by~\eqref{eq:NODE}, based on the time-$T$ map of an initial value problem in $\mathbb{R}^m$ and two affine linear layers $\lambda$, $\widetilde{\lambda}$
					\item The ODE has initial condition $\lambda(x)$, $x \in\mathcal{X}\subset \mathbb{R}^n$ and a vector field  $f\in C^{0,k}([0,T]\times \mathbb{R}^m,\mathbb{R}^m)$
					\item The output of the neural ODE is $\widetilde{\lambda}$ applied to the time-$T$ map of the initial value problem
				\end{itemize}  \vspace{25pt} \\
				\hline &&  \\[0pt]
				\textbf{Non-Augmented}
				&
				Special Architecture $\Xi^k_\textup{N}(\mathcal{X},\mathbb{R})$:
				\begin{itemize}[itemsep=-2pt,topsep=2pt]
					\item The width of all layers is less than or equal to $n$ and monotonically decreasing from layer to layer
				\end{itemize} \vspace{15pt} 
				& 
				Special Architecture $\textup{NODE}^k_\textup{N}(\mathcal{X},\mathbb{R})$:
				\begin{itemize}[itemsep=-2pt,topsep=3pt]
					\item It holds $n\geq m$
					\item The weight matrices of the affine linear maps $\lambda$, $\widetilde{\lambda}$ have full rank
				\end{itemize} \vspace{15pt} \\
				& 
				Properties for $k \geq 1$:
				\begin{itemize}[itemsep=-2pt,topsep=3pt]
					\item The neural network is of class $(\mathcal{C}1)^k(\mathcal{X},\mathbb{R})$, see Theorem~\ref{th:nn_nocriticalpoints} 
				\end{itemize}  \vspace{15pt} 
				&
				Properties for $k \geq 1$: 
				\begin{itemize}[itemsep=-2pt,topsep=3pt]
					\item The scalar neural ODE is of class $(\mathcal{C}1)^k(\mathcal{X},\mathbb{R})$, see Theorem~\ref{th:node_nocriticalpoints}
				\end{itemize}  \vspace{15pt} 
				\\
				\hline && \\[0pt]
				\textbf{Augmented}
				&
				Special Architecture $\Xi^k_\textup{A}(\mathcal{X},\mathbb{R})$: 
				\begin{itemize}[itemsep=-2pt,topsep=3pt]
					\item The layer of maximal width has at least \mbox{$n+1$} nodes, before this layer the width is monotonically increasing and after this layer the width is monotonically decreasing from layer to layer
				\end{itemize}\vspace{15pt} 
				& 
				Special Architecture $\textup{NODE}^k_\textup{A}(\mathcal{X},\mathbb{R})$:
				\begin{itemize}[itemsep=-2pt,topsep=3pt]
					\item It holds $n< m$
					\item The weight matrices of the affine linear maps $\lambda$, $\widetilde{\lambda}$ have full rank
				\end{itemize} \vspace{15pt}  \\
				& 
				Properties for $k \geq 2$:
				\begin{itemize}[itemsep=-2pt,topsep=3pt]
					\item For all sets of weights, except for a zero set w.r.t.\ the Lebesgue measure, the network is of class $(\mathcal{C}1)^k(\mathcal{X},\mathbb{R})$ or $(\mathcal{C}2)^k(\mathcal{X},\mathbb{R})$, see Theorem~\ref{th:nn_morse}
				\end{itemize} \vspace{15pt} 
				&
				Properties for $k \geq 2$:
				\begin{itemize}[itemsep=-2pt,topsep=3pt]
					\item For all sets of weights, except for a zero set w.r.t.\ the Lebesgue measure, the neural ODE is of class $(\mathcal{C}1)^k(\mathcal{X},\mathbb{R})$ or $(\mathcal{C}2)^k(\mathcal{X},\mathbb{R})$, see  Theorem~\ref{th:node_augmented}
				\end{itemize} \vspace{15pt} \\
				\hline && \\[0pt]
				\textbf{Bottleneck/ Degenerate} 
				&
				Special Architecture  $\Xi^k_\textup{B}(\mathcal{X},\mathbb{R})$: 
				\begin{itemize}[itemsep=-2pt,topsep=3pt]
					\item There exist three layers $g_i$, $g_j$, $g_l$ with $i<j<l$, such that the width of $g_j$ is strictly smaller than the width of $g_i$ and $g_l$ each
				\end{itemize}\vspace{15pt} 
				& 
				Special Architecture $\textup{NODE}^k_{\textup{D}}(\mathcal{X},\mathbb{R})$:
				\begin{itemize}[itemsep=-2pt,topsep=3pt]
					\item At least one of the weight matrices of the affine linear maps $\lambda$, $\widetilde{\lambda}$ has not full rank
				\end{itemize} \vspace{15pt}  \\
				& 
				Properties for $k \geq 2$:
				\begin{itemize}[itemsep=-2pt,topsep=3pt]
					\item The network can be of all classes, for a detailed classification depending on the type of bottleneck, see Theorem~\ref{th:nn_bottleneck}
				\end{itemize} \vspace{15pt} 
				&
				Properties for $k \geq 2$:
				\begin{itemize}[itemsep=-2pt,topsep=3pt]
					\item The scalar neural ODE is of class $(\mathcal{C}1)^k(\mathcal{X},\mathbb{R})$ or $(\mathcal{C}3)^k(\mathcal{X},\mathbb{R})$, see Theorem~\ref{th:node_bottleneck}
				\end{itemize} \vspace{20pt} 
			\end{tabular*}
			\vspace{3mm}
			\captionof{table}{Comparison of the main results for generic multilayer perceptrons and neural ODEs.} \label{tab:results}
		\end{figure}
		
		The existence of critical points and their regularity is studied in Section~\ref{sec:feedforward} for MLPs  and in Section~\ref{sec:neuralODEs} for neural ODEs. The main results regarding the classification of MLPs  and neural ODEs into the classes $(\mathcal{C}1)^k(\mathcal{X},\mathbb{R})$, $(\mathcal{C}2)^k(\mathcal{X},\mathbb{R})$, and $(\mathcal{C}3)^k(\mathcal{X},\mathbb{R})$ are collected in Table~\ref{tab:results}. The special architectures, non-augmented, augmented, and bottleneck/degenerate, are defined in detail in Section~\ref{sec:nn_architecture}  for MLPs and in Section~\ref{sec:node_architecture} for neural ODEs, but the most important properties of the different architectures are sketched in Table~\ref{tab:results}. For MLPs, we assume the generic case, that all weight matrices have full rank. In Section~\ref{sec:nn_weightmatrices} we show how neural networks with not full rank matrices can be transformed into equivalent neural networks with full rank matrices, which we call the normal form of the neural network. For non-augmented architectures, Table~\ref{tab:results} shows that the network is in the finite and in the infinite depth case of class $(\mathcal{C}1)^k(\mathcal{X},\mathbb{R})$, hence no critical points exist. In the case of an augmented architecture it is for MLPs  and for neural ODEs possible, that critical points exist. Yet, in this case, the critical points are generically non-degenerate, such that except for a Lebesgue measure zero set in the weight space, the input-output map is a Morse function of class $(\mathcal{C}1)^k(\mathcal{X},\mathbb{R})$ or $(\mathcal{C}2)^k(\mathcal{X},\mathbb{R})$. For MLPs  with a bottleneck, the output can be of all classes $(\mathcal{C}1)^k(\mathcal{X},\mathbb{R})$, $(\mathcal{C}2)^k(\mathcal{X},\mathbb{R})$ and $(\mathcal{C}3)^k(\mathcal{X},\mathbb{R})$, whereas degenerate neural ODEs can only be of class $(\mathcal{C}1)^k(\mathcal{X},\mathbb{R})$ or $(\mathcal{C}3)^k(\mathcal{X},\mathbb{R})$. Remark~\ref{rem:node_bottleneck} explains why, also in the bottleneck/degenerate case, the established theorems are comparable. 
		
		The overall result of our full classification in Table~\ref{tab:results} is that MLPs and neural ODEs have comparable properties regarding the existence and regularity of critical points. Furthermore, we can prove which cases may occur depending on the architecture. This provides a precise mathematical link between the underlying structure of the neural network or neural ODE and the geometry of the function it represents, i.e., whether it is a Morse function or not.

		\subsection{Universal Embedding and Universal Approximation} \label{sec:embeddingapproximation}
		
		A desirable property of neural network architectures is universal approximation, i.e., the property to represent every function of a given function space arbitrarily well. In an abstract context, universal approximation can be defined as follows. 
		
		\begin{definition}[Universal Approximation \cite{Kratsios2021}]
			A neural network $\Phi_\theta: \mathcal{X} \rightarrow \mathcal{Y}$ with parameters $\theta$, topological space $\mathcal{X}$ and metric space~$\mathcal{Y}$ has the universal approximation property with respect to the space $C^k(\mathcal{X},\mathcal{Y})$, $k \geq 0$, if for every $\varepsilon >0$ and for each function $\Psi \in C^k(\mathcal{X},\mathcal{Y})$, there exists a choice of parameters~$\theta$, such that $\textup{dist}_\mathcal{Y}(\Phi_\theta(x),\Psi(x)) < \varepsilon$ for all $x \in \mathcal{X}$.
		\end{definition}
		
		The parameters $\theta$ are all possible weights and biases, for MLPs, additionally, the activation functions, and for neural ODEs, the vector field. The property of universal approximation can depend on the metric of the space~$\mathcal{Y}$. For classical feed-forward neural networks like MLPs, ResNets, and recurrent neural networks (RNNs), various universal approximation theorems exist~\cite{Hornik1989, Kidger2022, Lin2018, Pinkus1999, Schaefer2006}. These theorems state, given a suitable activation function, that by increasing the width or depth of the neural network and the number of parameters, any function $\Psi \in C^k( \mathcal{X}, \mathcal{Y})$, $k \geq 0$ can be approximated arbitrarily well. A common property of universal approximation theorems is that the neural network architecture is augmented.
		
		Even though the established universal approximation theorems are extremely powerful, they do not explain why architectures without augmentation or with obstructions like bottlenecks have no universal approximation property. If we ask for an exact representation instead of an approximation, we can directly use the results established in this work to make statements about the universal embedding property of MLPs and neural~ODEs. 
		
		\begin{definition}[Universal Embedding]
			A neural network $\Phi_\theta: \mathcal{X} \rightarrow \mathcal{Y}$ with parameters $\theta$ and topological spaces $\mathcal{X}$ and $\mathcal{Y}$ has the universal embedding property with respect to the space $C^k(\mathcal{X},\mathcal{Y})$, $k \geq 0$, if for every function $ \Psi \in C^k(\mathcal{X},\mathcal{Y})$, there exists a choice of parameters~$\theta$, such that $\Phi_\theta(x) = \Psi(x)$ for all $x \in \mathcal{X}$.
		\end{definition}
		
		In the following, we first explain which direct implications our results have for the universal embedding property. Afterwards, we show how we can use our results about universal embedding to make statements about the universal approximation property.}
		
		\subsubsection*{Implications on the Universal Embedding Property}
		
		\textbf{Neural ODEs:} As the subsets $(\mathcal{C}1)^k(\mathcal{X},\mathbb{R})$,  $(\mathcal{C}2)^k(\mathcal{X},\mathbb{R})$ and  $(\mathcal{C}3)^k(\mathcal{X},\mathbb{R})$ of Definition~\ref{def:subsets} are all non-empty subsets of $C^k(\mathcal{X},\mathbb{R})$, it follows directly from Table~\ref{tab:results}, that non-augmented and degenerate neural ODEs cannot have the universal embedding property with respect to the space $C^k(\mathcal{X},\mathbb{R})$, $k \geq 2$. In particular, this means that non-degenerate critical points, such as non-degenerate extreme points, can only be represented by augmented neural ODEs. In Section~\ref{sec:NODE_universal} in Theorem~\ref{th:node_universalembedding}, we show via an explicit construction that augmented neural ODEs have the universal embedding property with respect to the space $C^1(\mathcal{X},\R)$. To exactly represent any given map $\Psi\in C^1(\mathcal{X},\R)$, the phase space needs to be augmented by at least one additional dimension, and we need to have the property to freely choose the vector field of the underlying initial value problem. \\[-2mm]
		
		\noindent \textbf{Multilayer Perceptrons:} For MLPs, we can also directly conclude from Table~\ref{tab:results}, that non-augmented MLPs with full rank weight matrices cannot have the universal embedding property. In the augmented case, the situation is more complicated, as Theorem~\ref{th:nn_morse} only holds for all weights except for a zero set with respect to the Lebesgue measure in the weight space. As for large classes of augmented MLPs, there exist already various universal approximation theorems, we do not treat the question of whether universal embedding can be proven in that case. If the neural network has a bottleneck, all classes $(\mathcal{C}1)^k(\mathcal{X},\mathbb{R})$,  $(\mathcal{C}2)^k(\mathcal{X},\mathbb{R})$ and  $(\mathcal{C}3)^k(\mathcal{X},\mathbb{R})$ are possible. In Theorem~\ref{th:nn_bottleneck}, we distinguish four different cases of bottlenecks. In the cases~\ref{th:nn_bottleneck_a} and~\ref{th:nn_bottleneck_b} we assume that the first bottleneck is non-augmented and prove that the input-output map cannot be of class $(\mathcal{C}2)^k(\mathcal{X},\mathbb{R})$, such that the neural network cannot have the universal embedding property. In the other two cases the first bottleneck is augmented and we formulate a condition distinguishing in case~\ref{th:nn_bottleneck_c} neural networks with a bottleneck, which show similar behavior like augmented neural networks and in case~\ref{th:nn_bottleneck_d} neural networks with a bottleneck, which cannot be of class $(\mathcal{C}1)^k(\mathcal{X},\mathbb{R})$. Only in case~\ref{th:nn_bottleneck_c}, it is possible that a universal embedding or approximation theorem as for augmented MLPs exists.
		
		\subsubsection*{Implications on the Universal Approximation Property}
		
		To better understand the implications of our results when using MLPs and neural ODEs in practice, we aim to study the universal approximation property. In Table~\ref{tab:results}, we have seen that a major restriction of non-augmented architectures is the non-existence of critical points. To show, that neural network architectures without critical points cannot have the universal approximation property, we estimate in the following theorem the distance between the function $\Psi_{y}: \mathcal{X} \rightarrow \R$, $\Psi_y(x) = \sum_{i = 1}^n (x_i-y_i)^2$, $\mathcal{X}\subset \R^n$ open, $y\in \mathcal{X}$, and a neural network of class $(\mathcal{C}1)^1(\mathcal{X},\mathbb{R})$.
			
			\begin{theorem}\label{th:nocriticalpoint}
				Consider a neural network $\Phi \in C^1(\mathcal{X},\R)$, $\mathcal{X}\subset \R^n$ open, of class $(\mathcal{C}1)^1(\mathcal{X},\mathbb{R})$. Fix $r>0$ and $y\in\mathcal{X}$, such that $K_r(y) \coloneqq \{x \in \R^n: \sum_{i =1}^n (x_i-y_i)^2 \leq r^2\}\subset  \mathcal{X}$. Then it holds for the approximation of the function $\Psi_y: \mathcal{X} \rightarrow \R$, $\Psi_y(x) = \sum_{i = 1}^n (x_i-y_i)^2$ that
				\begin{equation*}
					\sup_{x \in \mathcal{X}} \; \norm{\Phi(x)-\Psi_y(x)}_\infty \geq \frac{r^2}{2}.
				\end{equation*}
			\end{theorem}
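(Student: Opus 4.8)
The plan is to argue by contradiction: I would assume that the network approximates $\Psi_y$ strictly better than the claimed bound and show that this forces $\Phi$ to attain a minimum in the interior of the ball $K_r(y)$, contradicting the hypothesis that $\Phi \in (\mathcal{C}1)^1(\mathcal{X},\mathbb{R})$ has no critical points. Since the output is scalar, the norm $\norm{\cdot}_\infty$ in the statement is simply the absolute value, so I would suppose toward a contradiction that $\sup_{x\in\mathcal{X}} \abs{\Phi(x) - \Psi_y(x)} < r^2/2$.

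First I would compare the values of $\Phi$ at the center $y$ and on the boundary sphere $\partial K_r(y)$. At the center we have $\Psi_y(y) = 0$, so the approximation bound gives $\Phi(y) < r^2/2$. On the boundary every point $x$ satisfies $\Psi_y(x) = r^2$, so the same bound gives $\Phi(x) > r^2 - r^2/2 = r^2/2$. Hence the center value of $\Phi$ lies strictly below $r^2/2$, while all boundary values lie strictly above $r^2/2$, giving a clean separation between interior and boundary behaviour.

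Next I would invoke the extreme value theorem: the set $K_r(y)$ is compact and $\Phi$ is continuous, so $\Phi$ attains its minimum over $K_r(y)$ at some point $p^\ast$. Because $y \in K_r(y)$ we have $\Phi(p^\ast) \leq \Phi(y) < r^2/2$, whereas every boundary value exceeds $r^2/2$; therefore the minimizer cannot lie on $\partial K_r(y)$ and must satisfy $p^\ast \in \mathrm{int}\,K_r(y) \subset \mathcal{X}$. Since $p^\ast$ is an interior local minimum of the $C^1$ function $\Phi$, the first-order optimality condition forces $\nabla\Phi(p^\ast) = 0$, contradicting $\Phi \in (\mathcal{C}1)^1(\mathcal{X},\mathbb{R})$. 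This contradiction yields $\sup_{x} \abs{\Phi(x)-\Psi_y(x)} \geq r^2/2$.

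The argument is essentially elementary, so I do not expect a serious obstacle; the only point requiring care is ensuring the minimizer is \emph{strictly} interior rather than on the boundary, which is exactly what the strict separation between the center value ($< r^2/2$) and the boundary values ($> r^2/2$) guarantees. The constant $r^2/2$ is precisely the midpoint of the extreme values $0$ and $r^2$ that $\Psi_y$ takes on $K_r(y)$, which is the natural threshold at which this separation argument becomes tight; I would therefore expect the bound to be essentially optimal for a scheme of this type.
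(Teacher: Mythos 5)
Your proposal is correct and follows essentially the same argument as the paper: contradiction assumption, comparison of $\Phi$ at the center and on $\partial K_r(y)$, the extreme value theorem on the compact ball to locate an interior minimizer, and the first-order optimality condition to contradict the absence of critical points. No gaps.
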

			
			\begin{proof}
				Assume by contradiction that 
				\begin{equation*}
					\sup_{x \in \mathcal{X}} \; \norm{\Phi(x)-\Psi_y(x)}_\infty <\frac{r^2}{2}.
				\end{equation*}
				As $\Psi_y(y) = 0$ and $\Psi_y(x) = r^2$ for all $x \in \partial K_r(y) \coloneqq  \{x \in \R^n: \sum_{i =1}^n (x_i-y_i)^2 = r^2\}$, our contradiction assumption implies that
				\begin{equation*}
					-\frac{r^2}{2}<\Phi(y)<\frac{r^2}{2}, \qquad \text{and} \qquad \frac{r^2}{2}<\Phi(x)<\frac{3r^2}{2} \quad \text{ for all } x \in    \partial K_r(y),
				\end{equation*}
				so especially it holds $\Phi(y) < \Phi(x)$ for all $x \in \partial K_r(y)$.
				As the neural network map $\Phi$ is continuous, it attains its minimum and maximum on the compact set $K_r(y)$. The point $x_\text{min}\in K_r(y)$ with $\Phi(x_{\min})=\min_{x \in K_r(y)} \Phi(x)$ cannot lie on the boundary $\partial K_r(y)$, as we already showed that $\Phi(x_\text{min}) \leq \Phi(y)<\Phi(x)$ for all $x \in \partial K_r(y)$. Hence, it follows $x_\text{min}\in B_r(y)\coloneqq \{x \in \R^n: \sum_{i =1}^n (x_i-y_i)^2 < r^2\}$ and $\Phi(x_\text{min}) = \min_{x \in B_r(y)} \Phi(x)$. A necessary condition for existence of the minimum $x_\text{min}$ of the function $\Phi: B_r(y) \rightarrow \R$ is that $\nabla \Phi(x_\text{min}) = 0$, as $ B_r(y) \subset \R^n$ is an open set \cite{Forster2017}. As this is a contradiction to the assumption $\nabla \Phi(x) \neq 0$ for all $x \in \mathcal{X}$, the statement of the theorem follows.
			\end{proof}
			
			A direct consequence of this theorem is that every neural network architecture of class $(\mathcal{C}1)^1(\mathcal{X},\mathbb{R})$ cannot have the universal approximation property, as the following corollary shows.
			
			\begin{corollary}\label{cor:uniapprox}
				Consider a neural network architecture, wherein every neural network is of class $(\mathcal{C}1)^1(\mathcal{X},\mathbb{R})$, $\mathcal{X}\subset \R^n$ open. Then the considered class of neural network architectures cannot have the universal approximation property with respect to the space $C^k(\mathcal{X},\R)$ for every $k \geq 0$.
			\end{corollary}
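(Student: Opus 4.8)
The plan is to derive the statement as an immediate consequence of Theorem~\ref{th:nocriticalpoint}, exploiting that the obstruction function $\Psi_y$ appearing there is itself smooth and hence a legitimate target in every space $C^k(\mathcal{X},\R)$. First I would argue by contradiction: suppose the architecture did have the universal approximation property with respect to $C^k(\mathcal{X},\R)$ for some fixed $k\geq 0$. Since $\mathcal{X}\subset\R^n$ is open, I can fix a center $y\in\mathcal{X}$ together with a radius $r>0$ such that the closed ball $K_r(y)$ lies in $\mathcal{X}$, exactly as required by the hypotheses of Theorem~\ref{th:nocriticalpoint}.

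The key observation is that $\Psi_y(x)=\sum_{i=1}^n (x_i-y_i)^2$ is a polynomial and therefore lies in $C^\infty(\mathcal{X},\R)\subset C^k(\mathcal{X},\R)$ for every $k\geq 0$, so $\Psi_y$ is an admissible target for the universal approximation property no matter which $k$ we started with. Applying the definition with the choice $\varepsilon\coloneqq r^2/4>0$ then yields a network $\Phi_\theta$ of the given architecture satisfying $\abs{\Phi_\theta(x)-\Psi_y(x)}<r^2/4$ for all $x\in\mathcal{X}$; here $\textup{dist}_\R$ is the absolute value and, for scalar output, $\norm{\cdot}_\infty$ coincides with $\abs{\cdot}$. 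Passing to the supremum gives $\sup_{x\in\mathcal{X}}\norm{\Phi_\theta(x)-\Psi_y(x)}_\infty\leq r^2/4<r^2/2$.

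By hypothesis every network in the architecture, and in particular $\Phi_\theta$, is of class $(\mathcal{C}1)^1(\mathcal{X},\R)$, so Theorem~\ref{th:nocriticalpoint} applies and forces $\sup_{x\in\mathcal{X}}\norm{\Phi_\theta(x)-\Psi_y(x)}_\infty\geq r^2/2$. This contradicts the previous bound, so the assumed approximation cannot exist; since $k\geq 0$ was arbitrary, the architecture fails to have the universal approximation property for every $k\geq 0$, which is the claim.

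I do not expect a genuine obstacle here, as the entire analytic content is already packaged in Theorem~\ref{th:nocriticalpoint}. The only points that need care are (i) recording that $\Psi_y$ is smooth enough to serve as a target function for \emph{every} $k$, so that the single family $\{\Psi_y\}$ defeats universal approximation uniformly in $k$, and (ii) passing from the strict pointwise inequality in the definition of universal approximation to a supremum bound lying strictly below $r^2/2$, which is precisely why choosing $\varepsilon=r^2/4$ (rather than $\varepsilon=r^2/2$) is the clean bookkeeping choice.
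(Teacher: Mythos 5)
Your proposal is correct and follows essentially the same route as the paper: both reduce the claim to Theorem~\ref{th:nocriticalpoint} by observing that the fixed smooth target $\Psi_y$ lies in $C^k(\mathcal{X},\R)$ for every $k\geq 0$ and cannot be approximated to within any $\eps<r^2/2$ by a network without critical points. Your explicit choice of $\eps=r^2/4$ and the contradiction framing are just slightly more detailed bookkeeping of the same argument.
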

			
			\begin{proof}
				Every neural network $\Phi$ of the considered class of architectures fulfills the assumptions of Theorem~\ref{th:nocriticalpoint} with common and fixed $r>0$ and $y\in \mathcal{X}$. As the function $\Psi_y: \mathcal{X} \rightarrow \R$, $\Psi_y(x) = \sum_{i = 1}^n (x_i-y_i)^2$ is an element of the space  $C^k(\mathcal{X},\R)$ for every $k \geq 0$, the considered class of neural network architectures cannot have the universal approximation property, as $\Psi_y$ cannot be approximated by any neural network $\Phi$ with precision $\eps < \frac{r^2}{2}$.
			\end{proof}

			We can use the statement of Corollary~\ref{cor:uniapprox} together with the results of Table~\ref{tab:results} to make statements about the (non-)universal approximation property of MLPs and neural ODEs. \\[-2mm]
			
			\noindent \textbf{Neural ODEs:} It follows directly from Table~\ref{tab:results} and Corollary~\ref{cor:uniapprox} that non-augmented and degenerate neural ODEs cannot have the universal approximation property with respect to the space $C^k(\mathcal{X},\mathbb{R})$ for $k \geq 0$. For augmented neural ODEs, Theorem~\ref{th:node_universalembedding} shows the universal embedding property with respect to the space $C^1(\mathcal{X},\R)$, $\mathcal{X}\subset \R^n$ open, in the case of a general vector field. In the case of a parameterized vector field, we show in Theorem~\ref{th:node_approx}, under which conditions the universal embedding property of general augmented neural ODEs transfers to a universal approximation property of parameterized augmented neural ODEs. \\[-2mm]
			
			\noindent \textbf{Multilayer Perceptrons:} As for neural ODEs, it follows directly from Table~\ref{tab:results} and Corollary~\ref{cor:uniapprox} that non-augmented MLPs cannot have the universal approximation property. The same statement holds for MLPs with a bottleneck in the cases~\ref{th:nn_bottleneck_a},~\ref{th:nn_bottleneck_b}, and~\ref{th:nn_bottleneck_d} of Theorem~\ref{th:nn_bottleneck}. As discussed earlier, there already exist many universal approximation theorems for augmented MLPs in the literature~\cite{Hornik1989,Pinkus1999}. As typical universal approximation theorems make no statement about the rank of the weight matrices and the existence of bottlenecks, the case~\ref{th:nn_bottleneck_c} of Theorem~\ref{th:nn_bottleneck} about MLPs with a specific augmented bottleneck can be included in the class of MLP architectures, for which universal approximation theorems can exist. 
			
			\begin{remark}
				In \cite[Theorem 1]{Johnson2018} it is shown that MLPs with full rank weight matrices and strictly monotone activation functions, where the dimension of the hidden layer does not exceed the input dimension $n\geq 2$, cannot approximate functions $\Psi:\R^n\rightarrow \R$, which have at least one bounded level set. This result gives only implicit information about the input-output map of non-augmented MLPs. In contrast to that, our results directly characterize, under the additional assumption that the activation functions are continuously differentiable, the geometric obstruction that non-augmented MLPs cannot have critical points. The non-existence of critical points implies that the input-output map of non-augmented MLPs has only unbounded level sets, as a bounded level set would define the boundary of a compact set, in whose interior a critical point exists \cite{Forster2017}. 
			\end{remark}

			Overall, our analysis shows which geometric obstructions, such as the non-existence or the degeneracy of critical points, prevent MLPs and neural ODEs from having the universal embedding and approximation property. Additionally, we show positive results in the case of augmented neural ODEs when general neural ODEs have the universal embedding and parameterized neural ODEs have the universal approximation property.

		\section{Multilayer Perceptrons} \label{sec:feedforward}

		A classical feed-forward neural network is structured in main layers $h_l$, $l \in \{0,1,\ldots, L\}$, $L \geq 1$ with width~$n_l$, i.e., $h_l \in \mathbb{R}^{n_l}$. The network is initialized by input data $h_0 = x \in \mathcal{X}$, $\mathcal{X} \subset \mathbb{R}^n$ open with $n \coloneqq n_0$ and the layers are in the case of multilayer perceptrons (MLPs) for $l \in \{1,\ldots,L\}$ iteratively updated by
		\begin{equation} \label{eq:nn_updaterule} 
			h_{l} = \widetilde{W}_l\sigma_l(W_lh_{l-1}+b_l)+\tilde{b}_l = \widetilde{W}_l\sigma_l(a_l)+\tilde{b}_l,
		\end{equation}
		where $\widetilde{W}_l \in \mathbb{R}^{n_{l} \times m_l}$ and $W_l \in \mathbb{R}^{m_{l}\times n_{l-1}}$ are weight matrices, $b_l \in \mathbb{R}^{m_{l}}$ and $\tilde{b}_l \in \mathbb{R}^{n_{l}}$ are biases and $\sigma_l: \mathbb{R}^{m_{l}}\rightarrow \mathbb{R}^{m_{l}}$ is a component-wise applied activation function, i.e., $\sigma_l = ([\sigma_l]_1,\ldots, [\sigma_l]_{m_{l}})^\top$ with $[\sigma_l]_i:\mathbb{R}\rightarrow \mathbb{R}$ for $i \in \{1,\ldots,m_{l}\}$. $m_l$ is the dimension of the intermediate layers, also called pre-activations (cf.\ \cite{Goodfellow2016}), $a_l \coloneqq W_l h_{l-1}+b_l$, $l \in \{1,\ldots,L\}$, to which the activation function $\sigma$ is applied. For an easier notation, we additionally enumerate the main and intermediate layers together by $g_j \in \mathbb{R}^{d_j}$, $j\in\{0,\ldots,2L
		\}$, where $g_{2l} = h_l \in \mathbb{R}^{d_{2l}} = \mathbb{R}^{n_l}$ for $l \in \{0,\ldots,L\}$ and $g_{2l-1}=a_l \in \mathbb{R}^{d_{2l-1}}=\mathbb{R}^{m_l}$ for $l \in \{1,\ldots,L\}$. Additionally, we enumerate the weight matrices together by $V_j \in \mathbb{R}^{d_j\times d_{j-1}}$, $j\in\{1,\ldots,2L\}$, such that $(W_1, \widetilde{W}_1, \ldots, W_L, \widetilde{W}_L) = (V_1,\ldots,V_{2L})$ with $W_l = V_{2l-1}$ and $\widetilde{W}_l = V_{2l}$ for $l\in\{1,\ldots,L\}$. The output of the neural network is the last layer $h_L$. The structure of the neural network is visualized in Figure~\ref{fig:nn_architecture}. The general neural network update rule~\eqref{eq:nn_updaterule} contains MLPs with an outer nonlinearity $$h_l = \sigma_l(W_lh_{l-1}+b_l)$$ by choosing $m_l = n_l$, $\widetilde{W}_l = \textup{Id}_{n_l}$ and $\tilde{b}_l = 0$ for $l\in \{1,\ldots,L\}$ and MLPs with an inner nonlinearity $$h_l = \widetilde{W}_l\sigma_l(h_{l-1})+\tilde{b}_l$$ by choosing $m_l = n_{l-1}$, $W_l = \textup{Id}_{n_{l-1}}$ and $b_l = 0$ for $l\in \{1,\ldots,L\}$. Both architectures, with outer and inner nonlinearity, as well as their combination with update rule~\eqref{eq:nn_updaterule}, are well-known in machine learning theory~\cite{Esteve2020}. The distinction between the different types of nonlinearities also exists in classical neural field models of mathematical neuroscience: the class of Wilson-Cowan models has outer nonlinearities and the class of Amari models has inner nonlinearities \cite{Cook2022}. 
		
		In the main theorems of this work we assume, that all components of $\sigma_l$ are continuous and strictly monotone on $\mathbb{R}$, i.e., $[\sigma_l]_i'>0$ or $[\sigma_l]_i'<0$ on $\mathbb{R}$ in the case that $[\sigma_l]_i$ is differentiable, $i \in \{1,\ldots,m_l\}$. As we restrict our analysis to single components of the output, we can, without loss of generality, assume that the output $h_L \in \mathbb{R}$ is one-dimensional, i.e., $n_L = 1$. Hence, the neural network mapping $h_0$ to $h_L$ is a function $\Phi: \mathcal{X} \rightarrow \mathbb{R}$.
		
		\begin{lemma} \label{lem:nn_regularity}
			Let $\Phi: \mathcal{X} \rightarrow \mathbb{R}$, $\mathcal{X}\subset \mathbb{R}^n$ open, be an MLP with update rule~\eqref{eq:nn_updaterule}. If the activation functions fulfill for $l \in \{1,\ldots,L\}$, $i \in \{1,\ldots,m_l\}$ that $[\sigma_l]_i\in C^k(\mathbb{R},\mathbb{R})$ with $k \geq 0$, then $\Phi \in C^k(\mathcal{X},\mathbb{R})$. 
		\end{lemma}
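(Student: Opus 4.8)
The plan is to exploit the compositional structure of the network. Writing the single-layer map as $f_l(\cdot) \coloneqq \widetilde{W}_l\,\sigma_l(W_l\,\cdot\,+b_l)+\tilde{b}_l$, the MLP factors as $\Phi = f_L \circ f_{L-1} \circ \cdots \circ f_1$ on $\mathcal{X}$. Since the class $C^k$ is closed under composition---for $k=0$ this is just the continuity of a composition of continuous maps, and for $k\geq 1$ it follows by induction from the chain rule, every derivative of order $\leq k$ of a composite being assembled from products and compositions of derivatives of the constituents---it suffices to show that each layer map $f_l$ is of class $C^k$ on $\mathbb{R}^{n_{l-1}}$.

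Next I would decompose each $f_l$ into three pieces: the affine map $h \mapsto W_l h + b_l$, the component-wise activation $\sigma_l$, and the affine map $u \mapsto \widetilde{W}_l u + \tilde{b}_l$. The two affine maps are polynomial (indeed degree one) in their arguments, hence lie in $C^\infty \subset C^k$ for every $k$. Thus the only nontrivial ingredient is $\sigma_l$, and the heart of the argument reduces to verifying that $\sigma_l \in C^k(\mathbb{R}^{m_l},\mathbb{R}^{m_l})$ whenever each scalar component satisfies $[\sigma_l]_i \in C^k(\mathbb{R},\mathbb{R})$.

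For this I would use that $\sigma_l$ acts diagonally: its $i$-th component is $(\sigma_l(a))_i = [\sigma_l]_i(a_i)$, depending on the single coordinate $a_i$. Consequently every partial derivative of $\sigma_l$ of order $\leq k$ either vanishes identically (any mixed derivative, or any derivative in a coordinate $j \neq i$) or equals $[\sigma_l]_i^{(r)}(a_i)$ for some $r \leq k$, and the latter is continuous by assumption. Hence all partials up to order $k$ exist and are continuous, so $\sigma_l \in C^k$, each $f_l \in C^k$, and therefore $\Phi \in C^k(\mathcal{X},\mathbb{R})$. I expect no genuine obstacle: the statement is essentially bookkeeping with the chain rule, and the only point needing separate care is the base case $k=0$, where the chain rule is unavailable and one argues directly that compositions of continuous maps are continuous.
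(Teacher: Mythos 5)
Your proposal is correct and follows essentially the same route as the paper, which simply invokes closure of $C^k$ under composition via the chain rule; you merely spell out the details (affine maps are $C^\infty$, the diagonal activation is $C^k$ because each component depends on a single coordinate, and the $k=0$ case is handled by continuity of compositions). No gap.
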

		
		\begin{proof}
			As a composition of $k$ times continuously differentiable functions, the chain rule implies that $\Phi$ is also $k$ times continuously differentiable. 
		\end{proof}
		
		\begin{definition}[Multilayer Perceptron] \label{def:feedforward}
			For $k \geq 0$, the set of all MLPs $\Phi: \mathcal{X} \rightarrow \mathbb{R}$, $\mathcal{X}\subset \mathbb{R}^n$ open with update rule~\eqref{eq:nn_updaterule} and component-wise applied strictly monotone activation functions $[\sigma_l]_i\in C^k(\mathbb{R},\mathbb{R})$, $l \in \{1,\ldots,L\}$, $i \in \{1,\ldots,m_{l}\}$ is denoted by $\Xi^k(\mathcal{X},\mathbb{R}) \subset C^k(\mathcal{X},\mathbb{R})$.
		\end{definition}
		
		The set of MLPs $\Xi^k(\mathcal{X},\mathbb{R})$ we study in this work is larger than the class of networks considered in the work by Kurochkin \cite{Kurochkin2021}, where the analysis is restricted to outer nonlinearities with strictly monotonically increasing activation functions. For most of the upcoming theorems, we provide, for illustration purposes, low-dimensional examples. If explicit calculations of gradients and Hessian matrices are needed in the examples, we work with the soft-plus action function $\sigma(x) = \ln(1+\exp(x))$, which has as first derivative the sigmoid function $\sigma'(x) = \frac{1}{1+\exp(-x)}$.

		\begin{figure}
			\centering
			\includegraphics[width=\textwidth]{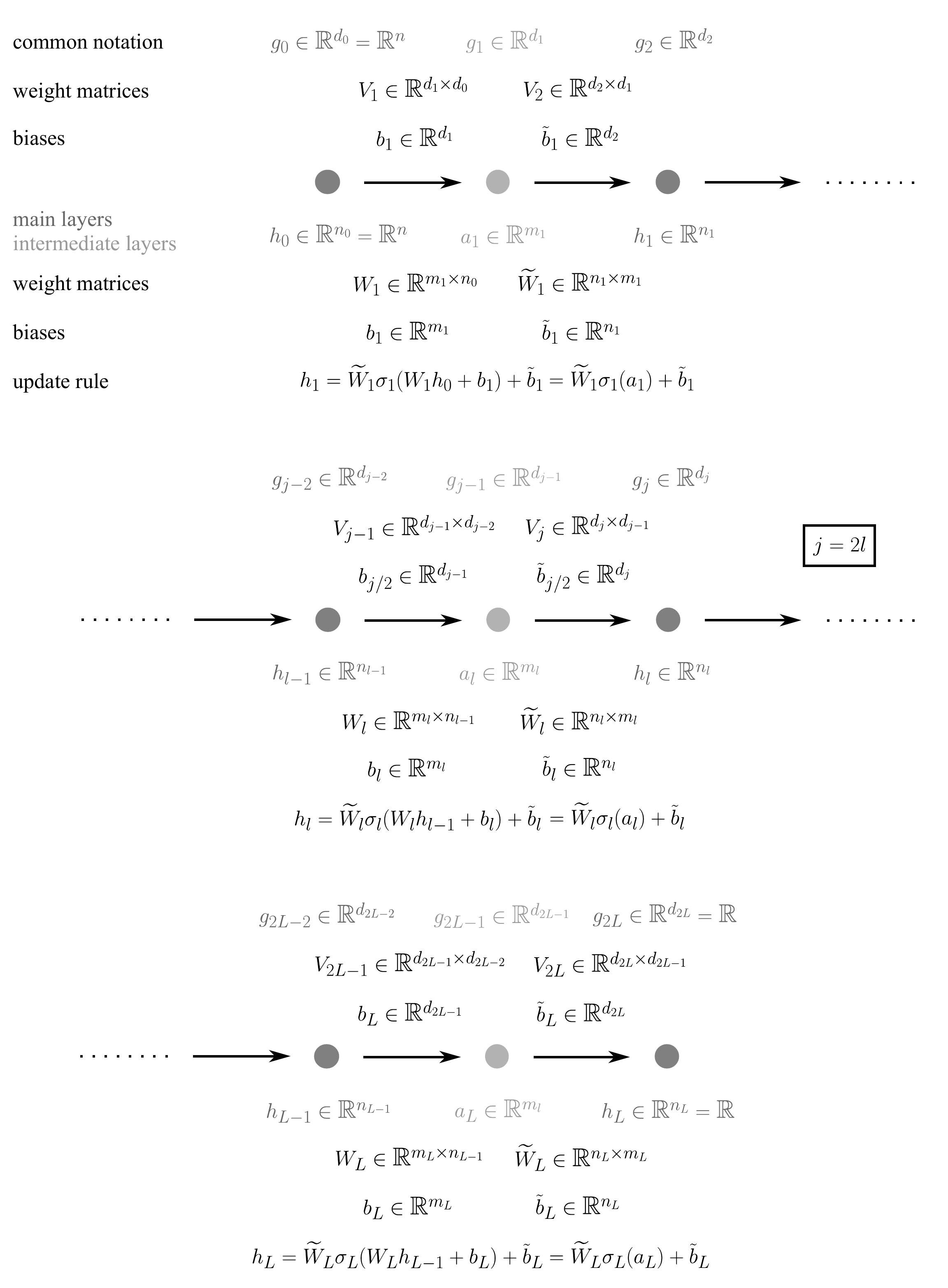}
			\caption{Structure of an MLP with update rule~\eqref{eq:nn_updaterule} in standard notation  $(l\in \{0,\ldots,L\})$ and in common notation $(j \in \{0,\ldots,2L\})$ for main and intermediate layers, visualized in dark and light gray respectively.}
			\label{fig:nn_architecture}
		\end{figure}
		
		\subsection{Special Architectures}
		\label{sec:nn_architecture}
		
		In the following, we subdivide the set of all classical feed-forward neural networks, such as the MLPs $\Xi^k(\mathcal{X},\mathbb{R})$ introduced in Definition~\ref{def:feedforward}, in three different classes: non-augmented neural networks, augmented neural networks, and neural networks with a bottleneck.
		
		\subsubsection*{Non-Augmented}
		
		We call a classical feed-forward neural network non-augmented if it holds for the widths of the layers that $d_j \leq n$ for $j \in \{0,\ldots,2L\}$. The space is non-augmented, as all layers have a width smaller than or equal to the width $n$ of the layer $h_0$, which corresponds to the input data $x \in \mathcal{X}\subset \mathbb{R}^n$. Additionally, we require that the widths of the layers are monotonically decreasing from layer to layer, i.e., $d_{j-1} \geq d_{j} $ for $j \in \{1,\ldots,2L\}$. The subset of non-augmented MLPs is denoted by
		\begin{equation*}
			\Xi^k_\textup{N}(\mathcal{X},\mathbb{R}) \coloneqq \left\{\Phi \in \Xi^k(\mathcal{X},\mathbb{R}) : \Phi \textup{ is non-augmented}\right\}.
		\end{equation*}
		
		\subsubsection*{Augmented}
		
		We call a classical feed-forward neural network $\Phi \in \Xi^k(\mathcal{X},\mathbb{R}) $ augmented if the first layer of maximal width $g_{l^\ast}$ has at least $n+1$ nodes, i.e., $d_{l^\ast}\geq n+1$ and $d_{l^\ast-1}<d_{l^\ast}$. The space is augmented, as there exists a layer with width larger than the dimension of the input data $x \in \mathbb{R}^n$. Furthermore, we require that between layer $g_0$ and $g_{l^\ast}$, the width of the layers is monotonically increasing, i.e., $d_{j-1} \leq d_{j}$ for $0<j \leq l^\ast$ and between layer $g_{l^\ast}$ and $g_{2L}$, the width of the layers is monotonically decreasing, i.e., $d_{j-1} \geq d_{j}$ for $l^\ast < j \leq  2L$. The subset of augmented MLPs is denoted by
		\begin{equation*}
			\Xi^k_\textup{A}(\mathcal{X},\mathbb{R}) \coloneqq \left\{\Phi \in \Xi^k(\mathcal{X},\mathbb{R}) : \Phi \textup{ is augmented}\right\}.
		\end{equation*}

		\subsubsection*{Bottleneck}
		
		We say that a classical feed-forward neural network $\Phi \in \Xi^k(\mathcal{X},\mathbb{R}) $ has a bottleneck in layer $g_{j^\ast}$ if there exists three layers $g_{i^\ast}$, $g_{j^\ast}$ and $g_{l^\ast}$ with $0 \leq i^\ast < j^\ast < l^\ast \leq 2L$, such that $d_{i^\ast}>d_{j^\ast}$ and $d_{j^\ast}<d_{l^\ast}$. Typical neural network architectures that have a bottleneck are auto-encoders, where the dimension is first reduced to extract specific features and then augmented again to the dimension of the initial data. The subset of MLPs with a bottleneck is denoted by
		\begin{equation*}
			\Xi^k_\textup{B}(\mathcal{X},\mathbb{R}) \coloneqq \left\{\Phi \in \Xi^k(\mathcal{X},\mathbb{R}) : \Phi \textup{ has a bottleneck}\right\}.
		\end{equation*}  
		
		The three different types of feed-forward neural networks are visualized in Figure~\ref{fig:nn_architectures}. In the following, we show that these three types of architectures build a disjoint subdivision of all classical feed-forward neural networks.  
		
		\begin{figure}
			\centering
			\begin{subfigure}{0.9\textwidth}
				\centering
				\includegraphics[width=0.9\textwidth]{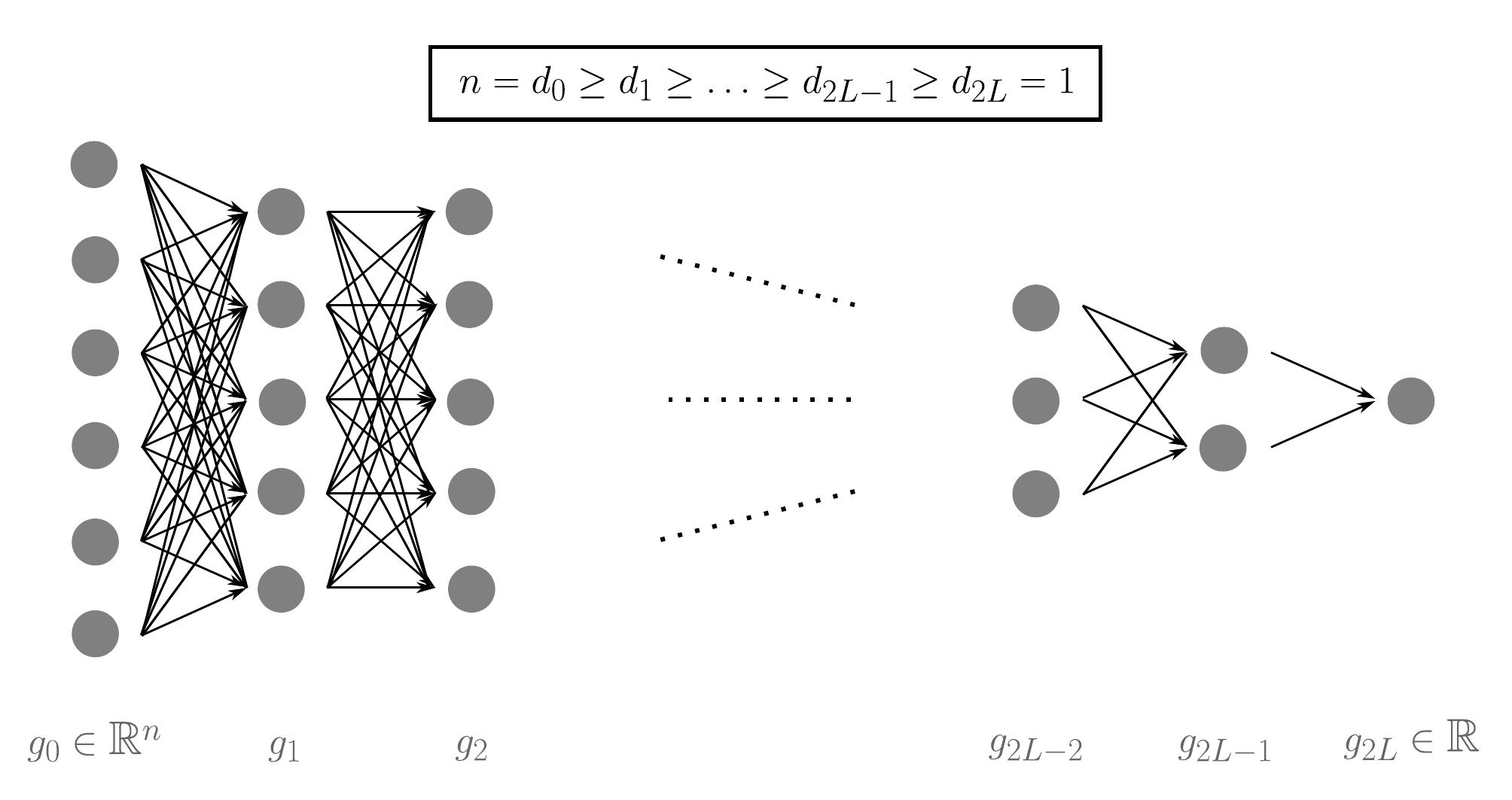}
				\caption{Structure of a non-augmented neural network, like for example the MLP $\Phi \in \Xi_\textup{N}^k(\mathcal{X},\mathbb{R})$.}
				\label{fig:nn_arch_nonaugmented}
			\end{subfigure}
			\begin{subfigure}{0.9\textwidth}
				\vspace{7mm}
				\centering
				\includegraphics[width=0.9\textwidth]{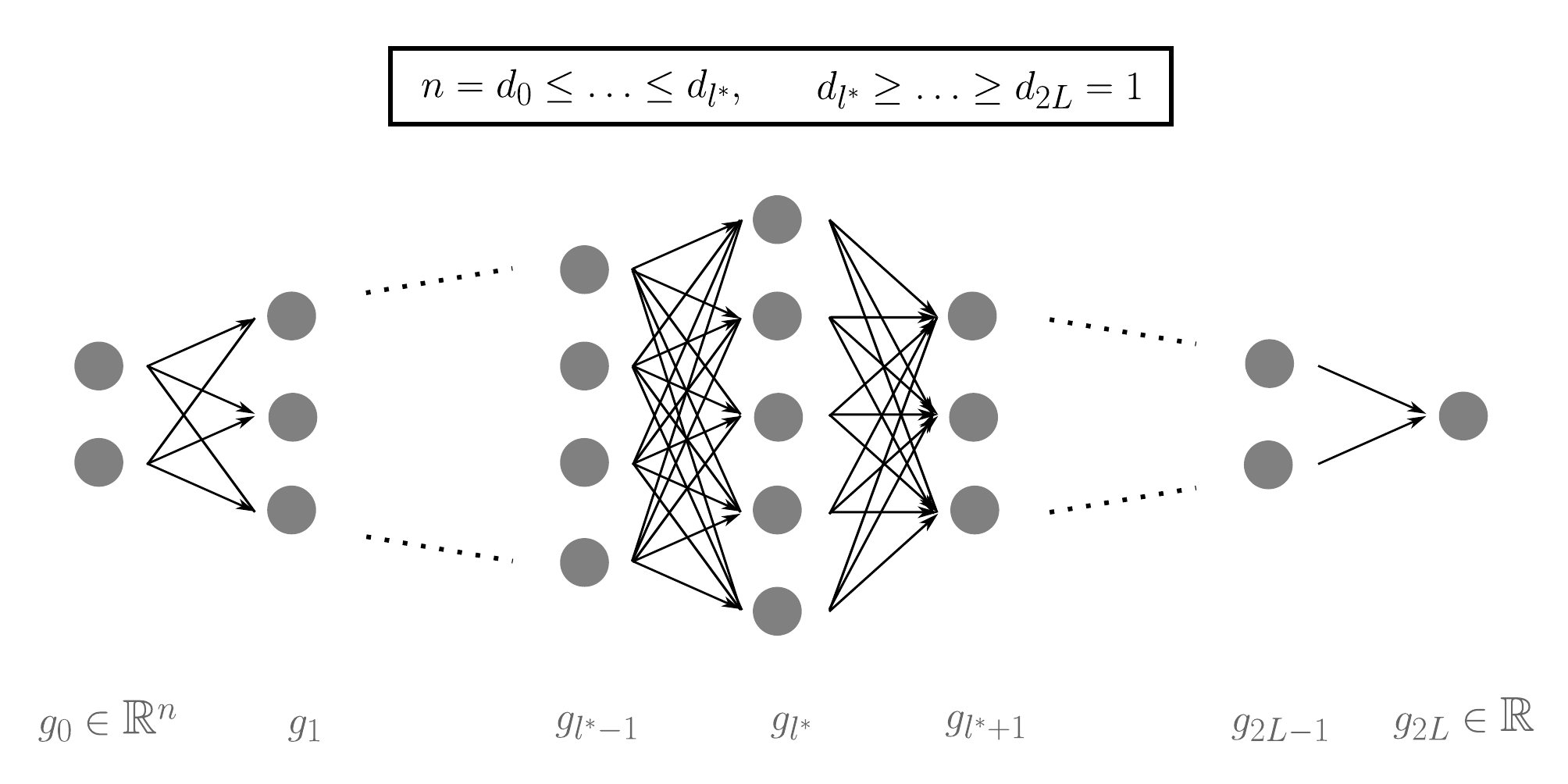}
				\caption{Structure of an augmented neural network, like for example the MLP $\Phi \in \Xi_\textup{A}^k(\mathcal{X},\mathbb{R})$.}
				\label{fig:nn_arch_augmented}
			\end{subfigure}
			\begin{subfigure}{0.9\textwidth}
				\vspace{7mm}
				\centering
				\includegraphics[width=0.9\textwidth]{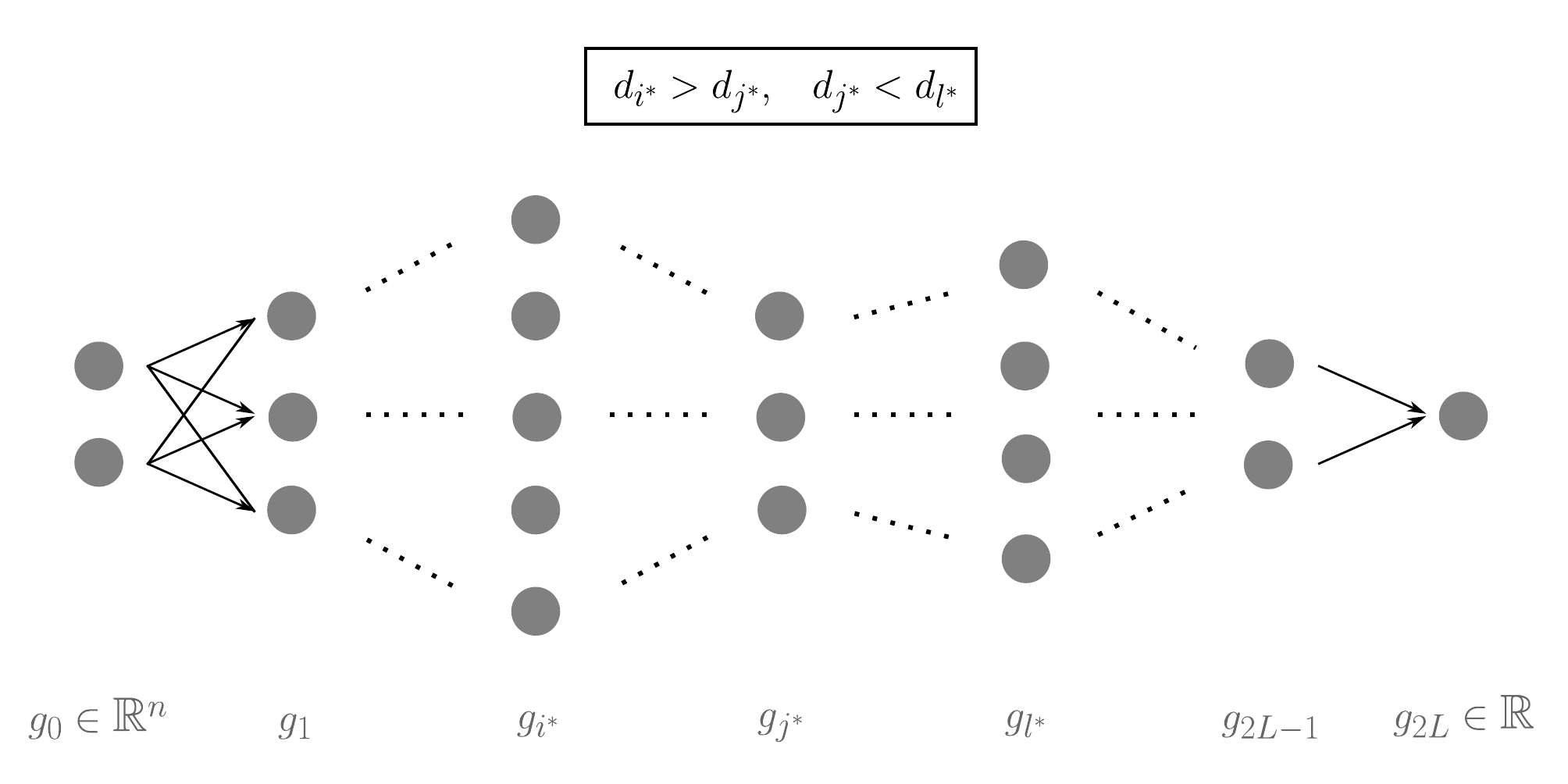}
				\caption{Structure of a neural network with a bottleneck, like for example the MLP $\Phi \in \Xi_\textup{B}^k(\mathcal{X},\mathbb{R})$.}
				\label{fig:nn_arch_bottleneck}
			\end{subfigure}
			\caption{The three different types of architectures non-augmented, augmented, and bottleneck, for classical feed-forward neural networks.}
			\label{fig:nn_architectures}
		\end{figure}

		\begin{proposition} \label{prop:nn_disjointpartition}
			The subdivision of classical feed-forward neural networks in non-augmented neural networks, augmented neural networks, and neural networks with a bottleneck, is a complete partition in three disjoint sub-classes of feed-forward neural networks, i.e., in the case of MLPs it holds
			\begin{equation*}
				\Xi^k(\mathcal{X},\mathbb{R}) = \Xi^k_\textup{N}(\mathcal{X},\mathbb{R})\; \dot{\cup} \;\, \Xi^k_\textup{A}(\mathcal{X},\mathbb{R}) \; \dot{\cup} \;\, \Xi^k_\textup{B}(\mathcal{X},\mathbb{R})
				.		\end{equation*}
		\end{proposition}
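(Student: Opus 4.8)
The plan is to prove the two halves of the statement—pairwise disjointness and completeness—separately, reducing everything to a single combinatorial fact about the width sequence $d_0,d_1,\ldots,d_{2L}$, and keeping in mind throughout that $d_0=n$ by construction.

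For disjointness I would argue directly from the definitions. A non-augmented network is weakly monotonically decreasing, so $d_{j-1}\geq d_j$ for all $j$; since a bottleneck requires indices $j^\ast<l^\ast$ with $d_{j^\ast}<d_{l^\ast}$, the two conditions are incompatible and $\Xi^k_\textup{N}(\mathcal{X},\mathbb{R})\cap\Xi^k_\textup{B}(\mathcal{X},\mathbb{R})=\emptyset$. Likewise, non-augmented forces $d_j\leq d_0=n$ for every $j$, whereas augmented demands a layer with $d_{l^\ast}\geq n+1$, so $\Xi^k_\textup{N}(\mathcal{X},\mathbb{R})\cap\Xi^k_\textup{A}(\mathcal{X},\mathbb{R})=\emptyset$. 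The remaining intersection $\Xi^k_\textup{A}(\mathcal{X},\mathbb{R})\cap\Xi^k_\textup{B}(\mathcal{X},\mathbb{R})=\emptyset$ follows once we know augmented networks have no bottleneck, which is immediate from the characterization below, since the augmented definition already encodes a monotone-increasing-then-decreasing (unimodal) profile.

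The core of the argument is the claim that a network has \emph{no} bottleneck if and only if its width sequence is weakly unimodal, i.e.\ there is an index $p$ with $d_0\leq\cdots\leq d_p\geq\cdots\geq d_{2L}$. The direction ``unimodal $\Rightarrow$ no bottleneck'' is a short case distinction: given a putative bottleneck $i^\ast<j^\ast<l^\ast$, either $j^\ast\leq p$, which forces $d_{i^\ast}\leq d_{j^\ast}$ and contradicts $d_{i^\ast}>d_{j^\ast}$, or $j^\ast\geq p$, which forces $d_{l^\ast}\leq d_{j^\ast}$ and contradicts $d_{j^\ast}<d_{l^\ast}$. For the converse I would let $p$ be the \emph{smallest} index at which the maximal width $M=\max_j d_j$ is attained and show the sequence increases on $[0,p]$ and decreases on $[p,2L]$: a violation $d_l>d_j$ with $p\leq j<l$ yields the bottleneck $(p,j,l)$ because $d_p=M>d_j$ (maximality forces $d_j<M$, hence $j>p$), and symmetrically a violation $d_i>d_j$ with $i<j\leq p$ yields $(i,j,p)$. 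The delicate point—the main obstacle—is handling plateaus and ties so that the required inequalities stay strict; this is exactly why $p$ is chosen as the first index attaining the maximum, so that $d_j<M$ strictly for every $j<p$ (any top plateau is pushed to the right of $p$), which is precisely what makes $(i,j,p)$ a genuine bottleneck.

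Finally, for completeness I would combine this characterization with $d_0=n$. If a network has a bottleneck it lies in $\Xi^k_\textup{B}(\mathcal{X},\mathbb{R})$. Otherwise it is unimodal, and since $d_0=n$ already appears in the sequence the maximal width satisfies $M\geq n$. If $M=n$, then $d_0=n$ attains the maximum, so the first argmax is $p=0$ and the sequence is monotonically decreasing with all $d_j\leq n$; this is exactly the non-augmented case. If $M\geq n+1$, then $d_0=n<M$ shows the first argmax $l^\ast\geq 1$, whence $d_{l^\ast-1}<d_{l^\ast}$, and the unimodal shape supplies precisely the increasing-then-decreasing profile of the augmented definition. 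Together with the pairwise disjointness established above, this yields the claimed three-fold disjoint partition of $\Xi^k(\mathcal{X},\mathbb{R})$.
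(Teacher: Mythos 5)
Your proof is correct, and it takes a somewhat different route from the paper's. The paper partitions the adjacent-index comparisons into sets $\mathcal{J}_{<}$, $\mathcal{J}_{=}$, $\mathcal{J}_{>}$, pivots on the \emph{last} strict ascent $l^\ast=\max\mathcal{J}_{<}$, and in the bottleneck case builds the triple from an adjacent pair ($i^\ast=j^\ast-1$ with $j^\ast$ the last descent index below $l^\ast$); you instead isolate a standalone combinatorial lemma --- no bottleneck if and only if the width sequence is weakly unimodal --- pivoting on the \emph{first} global argmax $p$, and then read the trichotomy off the value of $M=\max_j d_j$ relative to $n=d_0$. The two pivots coincide in the augmented case (the last strict ascent is the first argmax there), but the bottleneck constructions genuinely differ: yours uses possibly non-adjacent indices anchored at $p$, and your choice of the \emph{first} argmax is exactly what keeps the inequalities $d_j<d_p$ strict through plateaus, which you correctly flag as the delicate point. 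What your version buys is a reusable characterization of bottleneck-freeness and a more explicit treatment of pairwise disjointness (the paper only verifies that its three cases are mutually exclusive and relies implicitly on the definitions being incompatible); what the paper's version buys is slightly more economy, since it never needs the equivalence in both directions. Both arguments are complete and correct.
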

		
		\begin{proof}
			We define the three index sets
			\begin{align*}
				\mathcal{J}_{<}&\coloneqq \left\{j \in \{1,\ldots,2L\}: d_{j-1}<d_j \right\}, \\
				\mathcal{J}_{=}&\coloneqq \left\{j \in \{1,\ldots,2L\}: d_{j-1}= d_j \right\}, \\
				\mathcal{J}_{>}&\coloneqq \left\{j \in \{1,\ldots,2L\}: d_{j-1}>d_j \right\},
			\end{align*}
			which are a disjoint subdivision of all indices, i.e., $\mathcal{J}_{<} \, \dot{\cup} \, \mathcal{J}_{=} \, \dot{\cup} \, \mathcal{J}_{>} =  \{1,...,2L\}$. We distinguish now different cases depending on the partition of the indices in the sets $\mathcal{J}_{<}$, $\mathcal{J}_{=}$, and $\mathcal{J}_{>}$.
			\begin{itemize}[itemsep=-1pt,topsep=4pt]
				\item Case 1: $\mathcal{J}_{<} = \varnothing$. In this case it holds $d_{j-1}\geq d_j$ for all $j \in \{1,\ldots,2L\}$. As $d_0 = n$ this implies that $d_j \leq n$ for all $j \in \{0,\ldots,2L\}$. By definition, the feed-forward neural network is non-augmented.
				\item Case 2: $\mathcal{J}_{<} \neq \varnothing$. As the set $\mathcal{J}_{<}$ is finite, there exists a maximal element $l^\ast = \max\{j: j \in \mathcal{J}_{<}\}$.
				\item Case 2.1: $\mathcal{J}_{<} \neq \varnothing$ and $\{1,\ldots,l^\ast\}\subset \left(\mathcal{J}_{<} \cup \mathcal{J}_{=}\right)$. In this case it holds $d_{j-1} \leq d_{j}$ for $0<j \leq l^\ast$. This implies $d_{l^\ast}>d_{l^\ast-1}\geq \ldots \geq d_0 = n$, hence $d_{l^\ast} \geq n+1$. As $l^\ast = \max\{j: j \in \mathcal{J}_{<}\}$ it holds $\{l^\ast+1,\ldots,2L\}\subset \left(\mathcal{J}_{=} \cup \mathcal{J}_{>}\right)$. Consequently $d_{j-1} \geq d_{j}$ for $l^\ast < j \leq  2L$. By definition, the feed-forward neural network is augmented.
				\item Case 2.2: $\mathcal{J}_{<} \neq \varnothing$ and $\{1,\ldots,l^\ast\}\not\subset \left(\mathcal{J}_{<} \cup \mathcal{J}_{=}\right)$. Define now the index $j^\ast$ as the maximal element of $\{1,\ldots,l^\ast\}$ which is not contained in $\mathcal{J}_{<} \cup \mathcal{J}_{=}$: $j^\ast= \max\{j \in \{1,\ldots,l^\ast\}: j \in \mathcal{J}_{>}\}$. It holds $j^\ast < l^\ast$ as $l^\ast \in \mathcal{J}_{<}$. For $i^\ast = j^\ast-1$ it holds $0 \leq i^\ast < j^\ast<l^\ast \leq 2L$ and $d_{i^\ast} = d_{j^\ast-1}> d_{j^\ast}$. As $\{j^\ast+1,\ldots,l^\ast\} \subset\mathcal{J}_{<} \cup \mathcal{J}_{=} $ and $l^\ast \in \mathcal{J}_{<}$ it also holds $d_{j^\ast}\leq \ldots \leq d_{l^\ast-1}<d_{l^\ast}$. This implies that the considered feed-forward neural network has a bottleneck as $d_{i^\ast}> d_{j^\ast}$ and $d_{j^\ast}<d_{l^\ast}$.
			\end{itemize}
			As the three cases 1, 2.1 and 2.2 are a complete and disjoint subdivision of possible partitions of the indices $1,\ldots,2L$ in the sets $\mathcal{J}_{<}$, $\mathcal{J}_{=}$ and $\mathcal{J}_{>}$, also the subdivision of classical feed-forward neural networks, such as the MLPs $\Xi^k(\mathcal{X},\mathbb{R})$ in non-augmented architectures $\Xi^k_\textup{N}(\mathcal{X},\mathbb{R})$, augmented architectures $\Xi^k_\textup{A}(\mathcal{X},\mathbb{R})$ and architectures with a bottleneck $\Xi^k_\textup{B}(\mathcal{X},\mathbb{R})$, is a complete partition in three disjoint sub-classes of feed-forward neural networks.
		\end{proof}

		\subsection{Equivalent Neural Network Architectures}
		\label{sec:nn_weightmatrices}

		In this section, we first define the weight and biases spaces for MLPs. Afterwards, we introduce a definition of equivalent neural network architectures and show via an explicit algorithm how to obtain, under all equivalent neural network architectures, the one with the fewest number of nodes. The resulting architecture has only full rank matrices and is called the normal form of the neural network, which is analyzed in the following Sections~\ref{sec:nn_criticalpoints} and~\ref{sec:nn_regularity}.
		
		To define the weight space, we identify a matrix in $\mathbb{R}^{p\times q}$ with a point in $\mathbb{R}^{pq}$ by stacking the columns of the matrix. This operation is also known as vectorization.
		
		\begin{definition}[Vectorization \cite{Magnus2019}] \label{def:vectorization}
			Let $A \in \mathbb{R}^{p\times q}$ be a matrix and denote the $j$-th column by $a_j \in \mathbb{R}^{p}$, $j\in \{1,\ldots,q\}$. Then the bijective vectorization operator $\ve: \mathbb{R}^{p\times q} \rightarrow \mathbb{R}^{pq}$ is defined as
			\begin{equation*}
				\ve(A) = \begin{pmatrix}
					a_1 \\ \vdots \\ a_q
				\end{pmatrix}
			\end{equation*}
			with inverse  $\ve^{-1}: \mathbb{R}^{pq} \rightarrow \mathbb{R}^{p \times q} $.
		\end{definition} 
		
		In the following, we define weight and bias spaces for MLPs, generalize the vectorization operator of Definition~\ref{def:vectorization} to multiple weight matrices, and introduce a stacking operator for biases.
		
		\begin{definition}[MLP Parameter Space]\label{def:nn_weightspace}
			Let $\Phi \in \Xi^k(\mathcal{X},\mathbb{R})$, $k \geq 0$ be an MLP with weight matrices $W = (W_1, \widetilde{W}_1, \ldots, W_L, \widetilde{W}_L) = (V_1,\ldots,V_{2L})$ and biases $b = (b_1, \tilde{b}_1 \ldots, b_L, \tilde{b}_L)$. 
			\begin{enumerate}[label=(\alph*), font=\normalfont]
				\item 	The weight space $\mathbb{W}$ is defined as the space of all weight matrices,
				\begin{align*}
					\mathbb{W} &\coloneqq \mathbb{R}^{m_1 \times n_0}  \times \mathbb{R}^{n_1 \times m_1}\times \ldots \times \mathbb{R}^{m_L \times n_{L-1}} \times \mathbb{R}^{n_L \times m_L} \\
					& = \mathbb{R}^{d_1 \times d_0}  \times \mathbb{R}^{d_2 \times d_1}\times  \ldots \times \mathbb{R}^{d_{2L-1} \times d_{2L-2}} \times \mathbb{R}^{d_{2L} \times d_{2L-1}} .
				\end{align*}
				\item \label{def:nn_weightspace_b} For $W \in \mathbb{W}$ the bijective multiple vectorization operator  $\vem: \mathbb{W} \rightarrow \mathbb{R}^{N_W}$ with $N_W \coloneqq \sum_{l=1}^L m_l(n_{l-1} + n_l) = \sum_{l=1}^{2L} d_l d_{l-1}$ is defined as $$ \vem(W) \coloneqq \begin{pmatrix}
					\ve(W_1) \\ \vdots \\ \ve(\widetilde{W}_L)
				\end{pmatrix}=\begin{pmatrix}
					\ve(V_1) \\ \vdots \\ \ve(V_{2L})
				\end{pmatrix}\in \mathbb{R}^{N_W}.$$ The inverse of the operator $\vem$ is denoted by $\vem^{-1}: \mathbb{R}^{N_W} \rightarrow \mathbb{W} $. 
				\item  The subset of the weight space $\mathbb{W}$, in which all weight matrices $W$ have full rank is denoted by
				\begin{align*}
					\mathbb{W}^\ast \coloneqq &\Bigl\{W \in \mathbb{W}: \textup{ rank}(W_l) = \min\{m_l,n_{l-1}\},\textup{ rank}(\widetilde{W}_l) = \min\{m_l,n_l\} \; \forall \; l \in \{1,\ldots,L\} \Bigr\} \\
					=&\Bigl\{W \in \mathbb{W}: \textup{ rank}(V_j) = \min\{d_j,d_{j-1}\} \; \forall \; j \in \{1,\ldots,2L\}\Bigr\}.
				\end{align*}
				The set of MLPs $\Phi \in \Xi^k_i(\mathcal{X},\mathbb{R})$, $i \in \{\varnothing,\textup{N},\textup{A},\textup{B}\}$ with weights $W \in \mathbb{W}^\ast$ is denoted by $\Xi^k_{i,\mathbb{W}^\ast}(\mathcal{X},\mathbb{R})$.
				\item The bias space $\mathbb{B}$ is defined as the space of all biases $b = (b_1, \tilde{b}_1 \ldots, b_L, \tilde{b}_L)$,
				\begin{equation*}
					\mathbb{B} \coloneqq \mathbb{R}^{m_1} \times \mathbb{R}^{n_1}  \times \ldots \times \mathbb{R}^{m_L} \times \mathbb{R}^{n_L} = \mathbb{R}^{d_1}\times \mathbb{R}^{d_2}  \times \ldots \times \mathbb{R}^{d_{2L-1}} \times \mathbb{R}^{d_{2L}}.
				\end{equation*}
				\item For $b = (b_1, \tilde{b}_1 \ldots, b_L, \tilde{b}_L) \in \mathbb{B}$ the bijective stacking operator  $\stack: \mathbb{B} \rightarrow \mathbb{R}^{N_B}$ with $N_B \coloneqq \sum_{l=1}^L (m_l + n_l) = \sum_{l=1}^{2L} d_l$ is defined as $$ \stack(b) \coloneqq \begin{pmatrix}
					b_1 \\ \vdots \\ \tilde b_L
				\end{pmatrix}\in \mathbb{R}^{N_B}.$$ The inverse of the operator $\stack$ is denoted by $\stack^{-1}: \mathbb{R}^{N_B} \rightarrow \mathbb{B} $. 
			\end{enumerate}
		\end{definition}
		
		The following lemma shows that the subset of all weight matrices $W \in \mathbb{W}$, where at least one matrix has not full rank, i.e., the set $\mathbb{W}_0 \coloneqq \mathbb{W}\setminus \mathbb{W}^\ast$, is a zero set with respect to the Lebesgue measure in the identified space $\mathbb{R}^{N_W}$.

		\begin{lemma} \label{lem:nn_weightspace}
			For all weights $w \in \mathbb{R}^{N_W}$, except possibly for a zero set in $\mathbb{R}^{N_W}$ with respect to the Lebesgue measure, all corresponding weight matrices $W = \vem^{-1}(w)$ have full rank, i.e., the set $$\vem(\mathbb{W}_0) \coloneqq \left\{\vem(W) \in \mathbb{R}^{N_W}:  W \in \mathbb{W}_0 \right\},$$ where $\mathbb{W}_0 \coloneqq \mathbb{W}\setminus \mathbb{W}^\ast$, is a zero set in $\mathbb{R}^{N_W}$.
		\end{lemma}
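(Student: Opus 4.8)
The plan is to show that $\vem(\mathbb{W}_0)$ is contained in a finite union of proper algebraic subvarieties of $\mathbb{R}^{N_W}$, and that each such variety is a Lebesgue null set. First I would observe that the full-rank condition decouples across the individual weight matrices: a weight configuration $W = (V_1,\ldots,V_{2L})$ fails to lie in $\mathbb{W}^\ast$ precisely when at least one $V_j$ fails to have full rank $\min\{d_j,d_{j-1}\}$. Writing $\mathbb{W}_{0,j} \coloneqq \{W \in \mathbb{W} : \textup{rank}(V_j) < \min\{d_j,d_{j-1}\}\}$, we get $\mathbb{W}_0 = \bigcup_{j=1}^{2L} \mathbb{W}_{0,j}$, so by subadditivity of the Lebesgue measure it suffices to prove that each $\vem(\mathbb{W}_{0,j})$ is a null set in $\mathbb{R}^{N_W}$.

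For a single matrix, the key algebraic fact is that $V_j \in \mathbb{R}^{d_j \times d_{j-1}}$ fails to have full rank $r_j \coloneqq \min\{d_j,d_{j-1}\}$ if and only if all of its $r_j \times r_j$ minors vanish. Each such minor is a polynomial in the entries of $V_j$, and this polynomial is not identically zero, since one can exhibit a full-rank matrix of the given shape (for instance, one whose first $r_j$ rows or columns form an identity block), which makes at least one minor nonzero. Hence the rank-deficiency locus of $V_j$ is the common zero set of finitely many polynomials, at least one of which is a nonzero polynomial, so it is contained in the zero set $Z_j \coloneqq \{v : P_j(v) = 0\}$ of a single nonzero polynomial $P_j$ in the $d_j d_{j-1}$ entries of $V_j$.

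The next step is the standard measure-theoretic lemma that the zero set of a nonzero real polynomial has Lebesgue measure zero. I would prove this by induction on the number of variables using Fubini's theorem: a nonzero univariate polynomial has only finitely many roots (measure zero in $\mathbb{R}$), and in the inductive step one fixes all but one variable, observes that for almost every choice of the fixed variables the resulting univariate polynomial is not identically zero (because the leading-in-one-variable coefficient is itself a nonzero polynomial in the remaining variables, whose zero set has measure zero by the inductive hypothesis), and integrates. Applying this to $P_j$ shows that the rank-deficiency locus of $V_j$, viewed as a subset of $\mathbb{R}^{d_j d_{j-1}}$, is a null set. Since $\vem(\mathbb{W}_{0,j})$ is this null set crossed with a full copy of $\mathbb{R}^{N_W - d_j d_{j-1}}$ in the remaining coordinates, it is a null set in $\mathbb{R}^{N_W}$ by Fubini, and the finite union over $j$ remains null.

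The main obstacle is not conceptual but organizational: one must be careful that the minor-vanishing characterization of rank deficiency produces a genuinely nonzero polynomial $P_j$ for every admissible shape $(d_j, d_{j-1})$, including the degenerate cases where one dimension equals one or the two dimensions are equal, and that the embedding of the single-matrix null set into the full space $\mathbb{R}^{N_W}$ correctly identifies the coordinate block associated with $V_j$ under the $\vem$ identification. Beyond this bookkeeping, the argument is a routine combination of the polynomial-minor criterion for rank with the null-set property of polynomial zero sets, so I expect no serious difficulty.
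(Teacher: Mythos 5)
Your proof is correct, but it takes a genuinely different route from the paper's. You characterize the rank-deficiency locus of each $V_j$ algebraically, as the common zero set of all $r_j\times r_j$ minors, note that at least one such minor is a not-identically-zero polynomial, and then invoke (and prove by induction plus Fubini) the standard fact that the zero set of a nonzero real polynomial is Lebesgue-null. The paper instead cites the geometric stratification result that the set of $p\times q$ matrices of fixed rank $k$ is a manifold of dimension $k(p+q-k)<pq$ whenever $k<\min\{p,q\}$, and concludes nullity from the dimension deficit, taking a finite union over the possible deficient ranks $k$. Both arguments then proceed identically: decompose $\mathbb{W}_0$ as a finite union over which weight matrix fails, cross the single-matrix null set with the full Euclidean factor in the remaining coordinate blocks, and use countable (here finite) subadditivity. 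Your polynomial-minor route is more self-contained and elementary, needing only Fubini and single-variable root counting rather than the manifold structure of the rank strata; the paper's route is shorter on the page because it outsources the dimension count to a citation. Your attention to the degenerate shapes ($d_j=1$ or $d_j=d_{j-1}$) and to locating the correct coordinate block under the $\vem$ identification is exactly the bookkeeping one needs, and nothing in your outline would fail when carried out in full.
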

		
		\begin{proof}
			By \cite[Theorem 5.15]{Prasolov2006}, the set of $p \times q$ matrices with rank $k$
			\begin{equation*}
				M_{p,q,k} \coloneqq \left\{M \in \mathbb{R}^{p \times q}: \textup{rank}(M) = k \right\} \subset \mathbb{R}^{p \times q},
			\end{equation*}
			is in the identified space $\mathbb{R}^{pq}$ a manifold of dimension $k(p+q-k)$. For matrices which do not have full rank, i.e., the rank $k$ fulfills $k < k^\ast \coloneqq \min\{p,q\}$, the dimension of $M_{p,q,k}$ is strictly smaller than $pq$ as for $k = k^\ast - i$, $i \geq 1$ it holds
			\begin{align*}
				k(p+q-k)&= (k^\ast-i)(p+q-k^\ast + i) = (\min\{p,q\}-i)(\max\{p,q\}+i) \\
				&= pq-i(\max\{p,q\}-\min\{p,q\})-i^2 < pq.
			\end{align*}
			Consequently $\ve(M_{p,q,k})\coloneqq  \{\ve(M) \in \mathbb{R}^{pq}:  M \in M_{p,q,k}\}$ has for $k < k^\ast$ Lebesgue measure zero in~$\mathbb{R}^{pq}$. We denote the set of all matrices that do not have full rank by
			\begin{equation*}
				M_{p,q,k<k^\ast} \coloneqq \bigcup_{k=0}^{k^\ast -1} M_{p,q,k}.
			\end{equation*} 
			Since
			\begin{equation*}
				\ve(M_{p,q,k<k^\ast}) = \ve\left(\bigcup_{k=0}^{k^\ast -1} M_{p,q,k}\right) = \bigcup_{k=0}^{k^\ast -1} \ve( M_{p,q,k})
			\end{equation*}
			is a finite union of sets of Lebesgue measure zero in $\mathbb{R}^{pq}$, the Lebesgue measure of $\ve(M_{p,q,k<k^\ast})$ is also zero in $\mathbb{R}^{pq}$. Transferred to the weight space $\mathbb{W}$ it follows that for $j \in \{1,\ldots,2L\}$, the set
			\begin{equation*}
				\vem\left(\mathbb{R}^{d_1 \times d_0} \times \ldots \times \mathbb{R}^{d_{j-1} \times d_{j-2}}\times  M_{d_j,d_{j-1},k<k^\ast(j)} \times \mathbb{R}^{d_{j+1} \times d_{j}} \times \ldots \times \mathbb{R}^{d_{2L} \times d_{2L-1}}\right) 
			\end{equation*}
			is a zero set with respect to the Lebesgue measure in $\mathbb{R}^{N_W}$. Hereby $k^\ast(j) = \min\{d_j,d_{j-1}\}$ denotes the maximal rank of a matrix in $\mathbb{R}^{d_j \times d_{j-1}}$. Consequently, also the finite union  
			\begin{align*}
				\vem( \mathbb{W}_0)  
				=\; & \vem \left( \bigcup_{j=1}^{2L} \mathbb{R}^{d_1 \times d_0} \times \ldots \times \mathbb{R}^{d_{j-1} \times d_{j-2}}\times  M_{d_j,d_{j-1},k<k^\ast(j)} \times \mathbb{R}^{d_{j+1} \times d_{j}} \times \ldots \times \mathbb{R}^{d_{2L} \times d_{2L-1}}\right) \\
				= \; &\bigcup_{j=1}^{2L} \vem\Big(\mathbb{R}^{d_1 \times d_0} \times \ldots \times \mathbb{R}^{d_{j-1} \times d_{j-2}}\times  M_{d_j,d_{j-1},k<k^\ast(j)} \times \mathbb{R}^{d_{j+1} \times d_{j}} \times \ldots \times \mathbb{R}^{d_{2L} \times d_{2L-1}}\Big) 
			\end{align*}
			is a zero set with respect to the Lebesgue measure in $\mathbb{R}^{N_W}$. This implies that the set of weights $w \in \mathbb{R}^{N_W}$, where at least one of the weight matrices of $W = \vem^{-1}(w)$ has not full rank is a zero set in $\mathbb{R}^{N_W}$ with respect to the Lebesgue measure and the result follows.
		\end{proof}
		
		A first reason to study mainly MLPs with full rank weight matrices is that the set of full rank matrices has full measure in $\mathbb{R}^{N_W}$. A second reason is that every MLP, which has at least one non-full rank weight matrix, is equivalent to a MLP in normal form with fewer nodes and only full rank matrices. By equivalent, we mean that both neural network architectures have the same output for every input $x \in \mathcal{X}$.
		
		\begin{definition}[Equivalent MLP] \label{def:nn_equivalence}
			Let $k\geq 0$ and $\Phi \in \Xi^k(\mathcal{X},\mathbb{R})$ be an MLP. We call another MLP $\overline{\Phi} \in \Xi^k(\mathcal{X},\mathbb{R})$ equivalent to $\Phi$ if $\Phi(x) = \overline{\Phi}(x)$ for all $x \in \mathcal{X}$. We say that the neural network $\overline{\Phi}$ has a smaller (larger) architecture than $\Phi$ if the number of nodes per layer fulfills $\overline{d}_j \leq d_j$ ($\overline{d}_j \geq d_j$) for $j \in \{0,\ldots,2L\}$ and the total number of nodes is strictly smaller (larger), i.e., $\overline{N}_B < N_B$ ($\overline{N}_B > N_B$). It is possible that the number of layers $\overline{L}$ of $\overline{\Phi}$ is smaller than the number of layers $L$ in $\Phi$, then we set $\overline{d}_j = 0$ for $j \in \{2 \overline{L} + 1, \ldots, 2L\}$.
		\end{definition}
		
		We always denote the equivalent neural network and all corresponding weight matrices, biases, and dimensions with an overbar. In the following Lemmata~\ref{lem:nn_notfullrank1} and~\ref{lem:nn_notfullrank2}, we study MLPs, which have at least one non-full rank matrix and explicitly construct an equivalent neural network, where the considered matrix has full rank. Afterwards, we show in Theorem~\ref{th:nn_fullrank_equivalent} how to combine the two lemmata to construct an equivalent neural network architecture with only full rank weight matrices $W \in \mathbb{W}^\ast$. As the construction of the equivalent MLPs is quite technical, we postpone the proofs of the two lemmata to Appendix~\ref{app:normalform}.
		
		First, we consider the case that an inner weight matrix $W_{l+1}\in \mathbb{R}^{m_{l+1} \times n_{l}}$ has not full rank for some $l \in \{1,\ldots,L-1\}$ and show that the MLP is equivalent to a network with a smaller architecture, which fulfills certain rank properties.
		
		\begin{lemma} \label{lem:nn_notfullrank1}
			Let $\Phi \in \Xi^k_{ \mathbb{W}_0}(\mathcal{X},\mathbb{R})$ with a non-zero, non-full rank matrix $W_{l+1} \in \mathbb{R}^{m_{l+1} \times n_{l}}$ for some $l \in \{1,\ldots,L-1\}$, i.e., $0<\textup{rank}(W_{l+1}) < \min\{m_{l+1},n_{l}\}$. Then there exists an equivalent MLP $\overline{\Phi} \in \Xi^k(\mathcal{X},\mathbb{R})$ with smaller architecture. Especially only the number of nodes in layer $h_{l}$ is reduced by one, i.e., $\overline{n}_{l} = n_{l}-1\geq 1$, and $W_{l+1}$, $\widetilde{W}_{l}$ and $\tilde{b}_{l}$ are replaced by new matrices $\overline{W}_{l+1} \in \mathbb{R}^{m_{l+1} \times \overline{n}_{l}}$ and $\overline{\widetilde{W}}_{l} \in \mathbb{R}^{\overline{n}_{l} \times m_{l}}$ with $ \textup{rank}(\overline{W}_{l+1}) = \textup{rank}(W_{l+1})$, $ \textup{rank}(\overline{\widetilde{W}}_{l}) \leq \textup{rank}(\widetilde{W}_{l})$, $\overline{W}_{l+1} \overline{\widetilde{W}}_{l} = W_{l+1} \widetilde{W}_{l}$ and a new bias $\overline{\tilde{b}}_{l} \in \mathbb{R}^{\overline{n}_{l}}$.
		\end{lemma}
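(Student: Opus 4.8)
The plan is to exploit that the layer $h_l$ enters the remainder of the network only through the product $W_{l+1}h_l$, together with the fact that the non-trivial kernel of $W_{l+1}$ lets us discard one coordinate of $h_l$ without altering any subsequent pre-activation $a_{l+1}$. Writing $r \coloneqq \textup{rank}(W_{l+1})$, the hypothesis $0 < r < \min\{m_{l+1},n_l\}$ gives $1 \le r < n_l$, hence $n_l \ge 2$ and $\overline{n}_l = n_l - 1 \ge 1$, and it also guarantees a nonzero kernel vector $v \in \mathbb{R}^{n_l}$ with $W_{l+1}v = 0$.

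First I would choose an index $i^\ast$ with $v_{i^\ast} \neq 0$; after permuting the nodes of $h_l$ (which only permutes the columns of $W_{l+1}$, the rows of $\widetilde{W}_l$ and the entries of $\tilde b_l$ simultaneously, and therefore yields an equivalent network) I may assume $i^\ast = n_l$. Denoting the columns of $W_{l+1}$ by $w^{(1)},\dots,w^{(n_l)}$, the relation $W_{l+1}v = 0$ rearranges to $w^{(n_l)} = -\frac{1}{v_{n_l}}\sum_{j=1}^{n_l-1} v_j\, w^{(j)}$, so the last column is a linear combination of the others. I then set $\overline{W}_{l+1} \coloneqq [\,w^{(1)},\dots,w^{(n_l-1)}\,] \in \mathbb{R}^{m_{l+1}\times(n_l-1)}$ and the reduction matrix $T \coloneqq [\,\textup{Id}_{n_l-1} \mid c\,] \in \mathbb{R}^{(n_l-1)\times n_l}$ with $c \coloneqq -\frac{1}{v_{n_l}}(v_1,\dots,v_{n_l-1})^{\transpose}$, so that by construction $W_{l+1} = \overline{W}_{l+1}T$.

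Next, I would define the replacements $\overline{\widetilde{W}}_l \coloneqq T\widetilde{W}_l \in \mathbb{R}^{(n_l-1)\times m_l}$ and $\overline{\tilde b}_l \coloneqq T\tilde b_l \in \mathbb{R}^{n_l-1}$, leave every other weight and bias (in particular $W_l$, $b_l$ and $b_{l+1}$) unchanged, and verify equivalence directly. Since $a_l$ is untouched, the new layer is $\overline{h}_l = \overline{\widetilde{W}}_l\sigma_l(a_l) + \overline{\tilde b}_l = T(\widetilde{W}_l\sigma_l(a_l) + \tilde b_l) = T h_l$, whence $\overline{W}_{l+1}\overline{h}_l + b_{l+1} = \overline{W}_{l+1}T h_l + b_{l+1} = W_{l+1}h_l + b_{l+1} = a_{l+1}$. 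Thus $\overline{a}_{l+1} = a_{l+1}$ and all subsequent layers coincide, giving $\overline{\Phi}(x) = \Phi(x)$ for all $x \in \mathcal{X}$. The requested rank relations then fall out of the same factorization: $\overline{W}_{l+1}$ has the same column space as $W_{l+1}$ (the dropped column lies in the span of the retained ones), so $\textup{rank}(\overline{W}_{l+1}) = r = \textup{rank}(W_{l+1})$; left multiplication cannot increase rank, so $\textup{rank}(\overline{\widetilde{W}}_l) = \textup{rank}(T\widetilde{W}_l) \le \textup{rank}(\widetilde{W}_l)$; and $\overline{W}_{l+1}\overline{\widetilde{W}}_l = \overline{W}_{l+1}T\widetilde{W}_l = W_{l+1}\widetilde{W}_l$.

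The construction itself is short, and I expect no deep obstacle. The only points requiring care are the bookkeeping that the node permutation placing a nonzero kernel entry in the last slot is applied consistently to the columns of $W_{l+1}$, the rows of $\widetilde{W}_l$ and the entries of $\tilde b_l$ at once, and the observation that all three requested conditions (the two rank statements and the product identity, plus the bias matching $\overline{W}_{l+1}\overline{\tilde b}_l = W_{l+1}\tilde b_l$) must hold for the \emph{single} common choice of $T$ rather than being arranged separately. The main (though mild) conceptual step is recognizing that factoring $W_{l+1} = \overline{W}_{l+1}T$ through the kernel direction simultaneously preserves the rank of $W_{l+1}$ and cannot raise the rank of $\widetilde{W}_l$.
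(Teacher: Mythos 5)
Your construction is correct and is essentially the paper's own proof: you drop the column of $W_{l+1}$ corresponding to a nonzero kernel coordinate and compensate by the row operations on $\widetilde{W}_l$ and $\tilde b_l$ encoded in your reduction matrix $T$, which is exactly the paper's component-wise construction with $\alpha_j = -v_j/v_{n_l}$. Packaging it as the factorization $W_{l+1} = \overline{W}_{l+1}T$ merely makes the product and rank verifications one-line consequences rather than explicit summations, and the preliminary node permutation is harmless bookkeeping.
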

		
		\begin{proof}
			Lemma~\ref{lem:nn_notfullrank1} is proven as Lemma~\ref{lem:nn_notfullrank1_app} in Appendix~\ref{app:normalform}.
		\end{proof}

		Also in the case that an outer weight matrix $\widetilde{W}_l\in \mathbb{R}^{m_l \times n_{l-1}}$ has for some $l \in \{1,\ldots,L-1\}$ not full rank, we show that the neural network is equivalent to a 
		smaller architecture with certain rank properties. 
		
		\begin{lemma} \label{lem:nn_notfullrank2}
			Let $\Phi \in \Xi^k_{ \mathbb{W}_0}(\mathcal{X},\mathbb{R})$ with a non-zero, non-full rank matrix $\widetilde{W}_l \in \mathbb{R}^{n_l \times m_l}$ for some $l \in \{1,\ldots,L-1\}$, i.e., $0<\textup{rank}(\widetilde{W}_l) < \min\{m_l,n_{l}\}$. Then there exists an equivalent MLP $\overline{\Phi} \in \Xi^k(\mathcal{X},\mathbb{R})$ with smaller architecture. Especially only the number of nodes in layer $h_l$ is reduced by one, i.e., $\overline{n}_{l} = n_{l}-1\geq 1$, and $\widetilde{W}_{l}$, $W_{l+1}$ and $\tilde{b}_{l}$ are replaced by new matrices  $\overline{\widetilde{W}}_{l} \in \mathbb{R}^{\overline{n}_{l} \times m_{l}}$ and $\overline{W}_{l+1} \in \mathbb{R}^{m_{l+1} \times \overline{n}_{l}}$ with $ \textup{rank}(\overline{\widetilde{W}}_l) = \textup{rank}(\widetilde{W}_l)$, $ \textup{rank}(\overline{W}_{l+1}) \leq \textup{rank}(W_{l+1})$, $\overline{W}_{l+1}\overline{\widetilde{W}}_{l} = W_{l+1} \widetilde{W}_l$ and a new bias $\overline{\tilde{b}}_{l} \in \mathbb{R}^{\overline{n}_{l}}$. If $W_{l+1} \in \mathbb{R}^{m_{l+1}\times n_l}$ has full rank, i.e., $\textup{rank}(W_{l+1}) = \min\{m_{l+1},n_l\}$, then $\overline{W}_{l+1}$ has also full rank, i.e., $ \textup{rank}(\overline{W}_{l+1}) = \min\{m_{l+1},\overline{n}_l\}$.
		\end{lemma}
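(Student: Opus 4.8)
The plan is to exploit that a rank-deficient $\widetilde{W}_l$ sends $\sigma_l(a_l)$ into a proper subspace of $\mathbb{R}^{n_l}$, so that one coordinate of the layer $h_l$ is redundant and can be collapsed, its (now constant) value being absorbed into the bias feeding $a_{l+1}$. Writing $r=\textup{rank}(\widetilde{W}_l)$ and $U:=\operatorname{im}(\widetilde{W}_l)\subseteq\mathbb{R}^{n_l}$, we have $\dim U=r\le n_l-1$. Since the layer acts by $h_l=\widetilde{W}_l\sigma_l(a_l)+\tilde{b}_l$ and $a_{l+1}=W_{l+1}h_l+b_{l+1}$, only the composite $W_{l+1}\widetilde{W}_l$ and the constant $W_{l+1}\tilde{b}_l+b_{l+1}$ enter the output; hence any replacement with $\overline{W}_{l+1}\overline{\widetilde{W}}_l=W_{l+1}\widetilde{W}_l$ and a matching $\overline{b}_{l+1}$ yields an equivalent network. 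The claims on $\textup{rank}(\overline{\widetilde{W}}_l)=r$, $\textup{rank}(\overline{W}_{l+1})\le\textup{rank}(W_{l+1})$ and the preserved composite are exactly the algebraic content of Lemma~\ref{lem:nn_notfullrank1} read on the transpose $W_{l+1}\widetilde{W}_l=\big((\widetilde{W}_l)^{\transpose}(W_{l+1})^{\transpose}\big)^{\transpose}$, where the rank-deficient factor now sits on the left; transposing the output of Lemma~\ref{lem:nn_notfullrank1} already produces matrices of the required shapes with these three properties. Thus the only genuinely new ingredient is the last clause about preserving full rank of $W_{l+1}$.

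Because that clause needs control over \emph{which} coordinate is collapsed, I would carry out the construction explicitly. Choose an invertible $P\in\mathbb{R}^{n_l\times n_l}$ whose last row $p^{\transpose}$ lies in $U^{\perp}$ (possible since $\dim U^{\perp}=n_l-r\ge 1$), so that $p^{\transpose}\widetilde{W}_l=0$, i.e.\ the last row of $P\widetilde{W}_l$ vanishes. Set $\overline{\widetilde{W}}_l$ to be the first $n_l-1$ rows of $P\widetilde{W}_l$ and $\overline{\tilde{b}}_l$ the first $n_l-1$ entries of $P\tilde{b}_l$; the last coordinate of $Ph_l$ is then the constant $(P\tilde{b}_l)_{n_l}$. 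Splitting $W_{l+1}P^{-1}=[\,\overline{W}_{l+1}\mid w\,]$ into its first $n_l-1$ columns and a last column $w$, the choice $\overline{b}_{l+1}:=b_{l+1}+(P\tilde{b}_l)_{n_l}\,w$ makes $\overline{\Phi}$ equivalent to $\Phi$. Here $\textup{rank}(\overline{\widetilde{W}}_l)=r$ is immediate (deleting a zero row preserves rank), $\overline{W}_{l+1}$ is a column-submatrix of $W_{l+1}P^{-1}$ so $\textup{rank}(\overline{W}_{l+1})\le\textup{rank}(W_{l+1})$, and the vanishing last row of $P\widetilde{W}_l$ gives $\overline{W}_{l+1}\overline{\widetilde{W}}_l=(W_{l+1}P^{-1})(P\widetilde{W}_l)=W_{l+1}\widetilde{W}_l$.

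The hard part will be the full-rank clause, and the plan is to spend the remaining freedom in $P$ to make the deleted column of $W_{l+1}P^{-1}$ redundant. Encode the collapsed direction as $d:=P^{-1}e_{n_l}$, the last column of $P^{-1}$, which satisfies $p^{\transpose}d=1$; the deleted column is $W_{l+1}d$. If $m_{l+1}\ge n_l$ and $W_{l+1}$ has full (column) rank, then $W_{l+1}P^{-1}$ is injective and any $n_l-1$ of its columns stay independent, so $\overline{W}_{l+1}$ is automatically of full rank for every admissible $P$. The delicate regime is $m_{l+1}<n_l$: there I would choose $d\in\ker(W_{l+1})$, making the deleted column zero, whence $\textup{rank}(\overline{W}_{l+1})=\textup{rank}(W_{l+1}P^{-1})=\textup{rank}(W_{l+1})=m_{l+1}=\min\{m_{l+1},n_l-1\}$. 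Such a $d$ with $d\in\ker(W_{l+1})$ and $p^{\transpose}d=1$ for some $p\in U^{\perp}$ exists precisely when $\ker(W_{l+1})\not\subseteq U=\operatorname{im}(\widetilde{W}_l)$, and one then builds $P^{-1}$ from $d$ together with a basis of the hyperplane $p^{\perp}$. I expect the true obstacle to be the degenerate configuration $\ker(W_{l+1})\subseteq\operatorname{im}(\widetilde{W}_l)$, where $W_{l+1}\widetilde{W}_l$ drops rank and this trick is unavailable; there the constraints $\textup{rank}(\overline{\widetilde{W}}_l)=r>0$ and $\overline{W}_{l+1}\overline{\widetilde{W}}_l=W_{l+1}\widetilde{W}_l$ bound $\overline{W}_{l+1}$ from below, so full rank must be forced either through the downstream structure or through the ordering of reductions in Theorem~\ref{th:nn_fullrank_equivalent}, in which $W_{l+1}$ is already brought to full rank before $\widetilde{W}_l$ is treated. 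This is the point on which I would concentrate the careful argument.
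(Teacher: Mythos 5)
Your construction in the first two paragraphs is, up to packaging, the paper's own proof (Lemma~\ref{lem:nn_notfullrank2_app}): the paper picks a dependency $[\widetilde{W}_l^\top]_i=\sum_{j\neq i}\alpha_j[\widetilde{W}_l^\top]_j$, deletes the $i$-th row of $\widetilde{W}_l$ unchanged, and compensates by the column operations $[W_{l+1}]_j\mapsto[W_{l+1}]_j+\alpha_j[W_{l+1}]_i$; this is exactly your $P$ specialised to an elementary matrix, and the three algebraic claims come out identically. One deviation: you absorb the constant released by the collapsed coordinate into $b_{l+1}$, whereas the statement requires that only $\widetilde{W}_l$, $W_{l+1}$ and $\tilde b_l$ change. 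The paper keeps $b_{l+1}$ fixed and instead solves $\overline{W}_{l+1}c_l=W_{l+1}\tilde b_l-\overline{W}_{l+1}y_l$ for a correction $c_l$ placed into $\overline{\tilde b}_l$, with solvability via Rouch\'e--Capelli because every column of $W_{l+1}$ is a combination of columns of $\overline{W}_{l+1}$. This is a cosmetic fix you should make to match the statement.

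On the full-rank clause your analysis is sharper than the paper's, and the ``degenerate configuration'' $\ker(W_{l+1})\subseteq\operatorname{im}(\widetilde{W}_l)$ that you flag is not merely the hard case: in it the clause is false for \emph{any} admissible construction, so no amount of care in choosing $P$ can close the gap. Take $m_l=n_l=3$, $m_{l+1}=2$,
\begin{equation*}
\widetilde{W}_l=\begin{pmatrix}1&0&0\\0&0&0\\0&1&0\end{pmatrix},\qquad W_{l+1}=\begin{pmatrix}1&0&0\\0&1&0\end{pmatrix}.
\end{equation*}
Then $0<\textup{rank}(\widetilde{W}_l)=2<3$, $W_{l+1}$ has full rank, and $\textup{rank}(W_{l+1}\widetilde{W}_l)=1$; but any $\overline{\widetilde{W}}_l\in\mathbb{R}^{2\times3}$ of rank $2$ and $\overline{W}_{l+1}\in\mathbb{R}^{2\times2}$ with $\overline{W}_{l+1}\overline{\widetilde{W}}_l=W_{l+1}\widetilde{W}_l$ force $\textup{rank}(\overline{W}_{l+1})\leq1$, since an invertible $\overline{W}_{l+1}$ would make the product have rank $2$. (The paper's argument for this clause asserts that every $m_{l+1}$ columns of $W_{l+1}$, and hence of the column-operated matrix $Y_{l+1}$, are linearly independent; here columns $1$ and $3$ of $W_{l+1}$ already fail this, so that step is a non sequitur.) Your proposed fallback to the ordering in Theorem~\ref{th:nn_fullrank_equivalent} does not rescue it either: in the example $W_{l+1}$ already has full rank and $W_{l+1}\widetilde{W}_l\neq0$, which is all that steps~(i)--(ii) of the algorithm guarantee before Lemma~\ref{lem:nn_notfullrank2} is invoked. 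So you have correctly located a genuine defect; the honest conclusion is that the final clause needs an additional hypothesis (e.g.\ $\ker(W_{l+1})\not\subseteq\operatorname{im}(\widetilde{W}_l)$, equivalently $\textup{rank}(W_{l+1}\widetilde{W}_l)=\textup{rank}(W_{l+1})$ in the wide case), or the downstream algorithm must be allowed to revisit $W_{l+1}$ after this reduction.
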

		
		\begin{proof}
			Lemma~\ref{lem:nn_notfullrank2} is proven as Lemma~\ref{lem:nn_notfullrank2_app} in Appendix~\ref{app:normalform}.
		\end{proof}

		Finally, we aim to apply in the upcoming theorem Lemma~\ref{lem:nn_notfullrank1} and Lemma~\ref{lem:nn_notfullrank2} to find smaller equivalent architectures, until all weight matrices of the MLP have full rank. Theorem~\ref{th:nn_fullrank_equivalent} is the main reason why we restrict ourselves to the analysis of neural networks with full rank weight matrices $W \in \mathbb{W}^\ast$. It shows that the neural network is either up to a linear change of coordinates equivalent to a smaller architecture with full rank matrices, or the neural network is a constant function. The resulting architecture is called the MLP normal form. To prove the following theorem, we additionally need some basic results from linear algebra stated in Appendix \ref{sec:app_LA}.
		
		\begin{theorem}[MLP Normal Form] \label{th:nn_fullrank_equivalent}
			Let $\Phi \in \Xi^k_{ \mathbb{W}_0}(\mathcal{X},\mathbb{R})$, $k \geq 0$, $\mathcal{X}\subset \mathbb{R}^n$ open, be an MLP with weight matrices $W = (W_1, \widetilde{W}_1, \ldots, W_L, \widetilde{W}_L) \in  \mathbb{W}_0$ and biases $b = (b_1,\tilde{b}_1,\ldots,b_L,\tilde{b}_L) \in \mathbb{B}$.
			\begin{enumerate}[label=(\alph*), font=\normalfont]
				\item \label{th:nn_fullrank_equivalent_a} If $\textup{rank}(W_1)>0$, $\textup{rank}(\widetilde{W}_L)>0$ and $\textup{rank}(W_{l+1} \widetilde{W}_l) > 0 $ for all $l\in \{1,\ldots,L-1\}$, there exists an MLP $\overline{\Phi} \in \Xi^k_{\mathbb{W}^\ast}(\mathcal{Y},\mathbb{R})$, $\mathcal{Y} \subset \mathbb{R}^{\bar{n}}$ for some $\bar{n} \leq n$, which has a smaller architecture, only full rank weight matrices and which is up to a linear change of coordinates, represented by a full rank matrix $A \in \mathbb{R}^{\bar{n} \times n}$, equivalent to $\Phi$. If $\bar{n} = n$, then $A$ is the identity matrix and $\mathcal{Y} = \mathcal{X}$, if $\bar{n}<n$, then $\mathcal{Y}=\{y\in \mathbb{R}^{\bar{n}}: y = Ax, x \in \mathcal{X}\}$.
				\item  \label{th:nn_fullrank_equivalent_b} If $\textup{rank}(W_1)=0$ or $\textup{rank}(\widetilde{W}_L)=0$  or $\textup{rank}(W_{l+1} \widetilde{W}_l) = 0 $ for some $l\in \{1,\ldots,L-1\}$, it holds $\Phi(x) = c_{W,b}$ for all $x \in \mathcal{X}$, where $c_{W,b}$ is a scalar independent of $x$, only depending on $W$ and~$b$. $\Phi$ is equivalent to  $\overline{\Phi}\in \Xi^k(\mathcal{X},\mathbb{R})$ defined by $\overline{L} = 1$, $\overline{W}_1 = 0 \in \mathbb{R}^{1\times n}$, $\overline{b}_1 = 0 \in \mathbb{R}$, $\overline{\widetilde{W}}_1 = 0 \in \mathbb{R}$ and $\overline{\tilde{b}}_1 = c_{W,b} \in \mathbb{R}$, which has a smaller architecture than $\Phi$ if $\overline{\Phi} \neq \Phi$.
			\end{enumerate}
		\end{theorem}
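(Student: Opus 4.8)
The plan is to treat the two cases separately, observing that the hypotheses of \ref{th:nn_fullrank_equivalent_a} and \ref{th:nn_fullrank_equivalent_b} are logically complementary and together exhaust $\mathbb{W}_0$. I would dispose of part \ref{th:nn_fullrank_equivalent_b} first, as it is a direct computation: in each of the three sub-cases some pre-activation $a_j$ becomes independent of the input, after which every later layer is constant. If $W_1 = 0$ then $a_1 = b_1$ is constant; if $\widetilde{W}_L = 0$ then $h_L = \tilde{b}_L$; and if $W_{l+1}\widetilde{W}_l = 0$ for some interior $l$, inserting the update rule gives $a_{l+1} = W_{l+1}\widetilde{W}_l\,\sigma_l(a_l) + W_{l+1}\tilde{b}_l + b_{l+1} = W_{l+1}\tilde{b}_l + b_{l+1}$, which no longer depends on $a_l$ and hence not on $x$. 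In all three situations $\Phi \equiv c_{W,b}$, and the trivial one-layer network stated in the theorem reproduces this value.

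For part \ref{th:nn_fullrank_equivalent_a}, the idea is an iterated reduction driven by Lemmas~\ref{lem:nn_notfullrank1} and~\ref{lem:nn_notfullrank2}, together with a separate treatment of the two boundary matrices $W_1 = V_1$ and $\widetilde{W}_L = V_{2L}$, which those lemmas do not cover. First I would handle $W_1$: writing $r_1 = \textup{rank}(W_1) > 0$ and using a rank factorization $W_1 = BA$ with $A \in \mathbb{R}^{r_1 \times n}$ of full row rank and $B \in \mathbb{R}^{m_1 \times r_1}$ of full column rank, the substitution $y = Ax$ turns $\Phi$ into an equivalent network whose first weight matrix $B$ has full rank, on the input domain $\mathcal{Y} = A\mathcal{X}$, which is open because the surjective linear map $A$ is open. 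This is precisely the change of coordinates $A$ and the reduced input dimension $\bar{n} = r_1 \le n$, with $A = \textup{Id}$ and $\bar{n} = n$ exactly when $W_1$ already has full column rank. The last matrix $\widetilde{W}_L \in \mathbb{R}^{1 \times m_L}$ is a row vector, so the hypothesis $\textup{rank}(\widetilde{W}_L) > 0$ already forces full rank, and the reductions never touch it.

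It then remains to make all interior matrices $V_2,\ldots,V_{2L-1}$ full rank. The key invariant is that a reduction at layer $h_l$ preserves the product $W_{l+1}\widetilde{W}_l$ exactly (as guaranteed by both lemmas) and leaves untouched the matrix pairs entering the products through every other layer; hence each product $W_{l+1}\widetilde{W}_l$, $1 \le l \le L-1$, keeps its original positive rank throughout. This forces every interior matrix to remain nonzero, so whenever an interior matrix is non-full-rank it is in particular nonzero and the appropriate lemma applies---Lemma~\ref{lem:nn_notfullrank1} if it is some $W_{l+1}$, Lemma~\ref{lem:nn_notfullrank2} if it is some $\widetilde{W}_l$---each application producing an equivalent network with the total node count $N_B$ strictly decreased by one. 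Since $N_B \in \mathbb{N}_0$ is bounded below, the iteration terminates, and at termination no interior matrix can be non-full-rank. Collecting the three ingredients, the resulting $\overline{\Phi}$ lies in $\Xi^k_{\mathbb{W}^\ast}(\mathcal{Y},\mathbb{R})$, has a smaller (at least non-increasing, with the input dimension reduced by the coordinate change) architecture, and is equivalent to $\Phi$ up to the linear change of coordinates $A$.

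The main obstacle I anticipate is exactly the bookkeeping of this iteration. A single application of either lemma may lower the rank of a neighbouring matrix, since the lemmas only guarantee $\textup{rank}(\overline{\widetilde{W}}_l) \le \textup{rank}(\widetilde{W}_l)$, respectively $\textup{rank}(\overline{W}_{l+1}) \le \textup{rank}(W_{l+1})$, so one must rule out that the process cycles or gets stuck at a zero matrix to which neither lemma applies. The product-rank invariant above resolves both dangers at once, while the strict decrease of $N_B$ furnishes termination; carefully checking that reductions at distinct layers act on disjoint pairs of weight matrices, and that the boundary steps for $W_1$ and $\widetilde{W}_L$ do not interfere with the interior iteration, is the technical heart of the argument.
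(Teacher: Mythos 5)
Your proposal is correct and follows the same overall strategy as the paper's proof: part~\ref{th:nn_fullrank_equivalent_b} by the direct observation that some layer becomes constant, and part~\ref{th:nn_fullrank_equivalent_a} by a coordinate change for $W_1$, the automatic full rank of the row vector $\widetilde{W}_L$, and iterated applications of Lemmas~\ref{lem:nn_notfullrank1} and~\ref{lem:nn_notfullrank2} to the interior matrices, with the preserved products $W_{l+1}\widetilde{W}_l$ guaranteeing the lemmas remain applicable. Two execution details differ, and both are worth noting. First, you reduce the input dimension in one shot via a rank factorization $W_1 = BA$, whereas the paper removes linearly dependent columns of $W_1$ one at a time and composes the resulting elementary coordinate changes into $A$; the constructions yield the same $\bar{n}$ and the same full-rank $A$, and your remark that $\mathcal{Y} = A\mathcal{X}$ is open because the surjection $A$ is an open map is a point the paper leaves implicit (and one that is needed for $\overline{\Phi}\in\Xi^k(\mathcal{Y},\mathbb{R})$ to be well defined). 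Second, and more substantively, you run the interior reductions in arbitrary order and prove termination by the strict decrease of the node count, using the product-rank invariant only to rule out getting stuck at a zero matrix to which neither lemma applies; the paper instead fixes a two-pass order --- first all $W_{l+1}$ via Lemma~\ref{lem:nn_notfullrank1} from left to right, then all $\widetilde{W}_l$ via Lemma~\ref{lem:nn_notfullrank2} --- exploiting the asymmetry that Lemma~\ref{lem:nn_notfullrank2} preserves full rank of $W_{l+1}$ while Lemma~\ref{lem:nn_notfullrank1} may lower the rank of $\widetilde{W}_l$, and invokes Lemma~\ref{lem:nn_iterations} for termination of each pass. Your potential-function argument is shorter and order-independent; the paper's ordered algorithm is deterministic and each interior matrix is repaired exactly once. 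Both correctly establish that at termination every weight matrix has full rank, so I see no gap.
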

		
		\begin{proof}
			For case~\ref{th:nn_fullrank_equivalent_a} we consider an MLP $\Phi \in \Xi^k_{ \mathbb{W}_0}(\mathcal{X},\mathbb{R})$, where at least one weight matrix has not full rank and denote the weight matrices by $W^{(0)} = (W_1^{(0)}, \widetilde{W}_1^{(0)}, \ldots, W_L^{(0)}, \widetilde{W}_L^{(0)})$ and its input by $x^{(0)} \in \mathcal{X}$. The assumption $\textup{rank}(W_1^{(0)})>0$, $\textup{rank}(\widetilde{W}_L^{(0)})>0$ and $\textup{rank}(W_{l+1}^{(0)} \widetilde{W}_l^{(0)}) > 0 $ for all $l\in \{1,\ldots,L-1\}$ implies, that all weight matrices of $W^{(0)}$ have at least rank one and every weight matrix, which has not full rank, has at least width and height two. In the following, we explicitly construct equivalent MLPs with smaller architectures, until all weight matrices have full rank. The structure of the algorithm is visualized in Figure~\ref{fig:nn_algorithm}. We start with step~\ref{th:nn_step_i} and $r = 0$.
			
			\begin{enumerate}[label=(\roman*)]
				\item \label{th:nn_step_i} If this is the first iteration of step~\ref{th:nn_step_i}, denote the considered MLP by $\Phi^{(\textup{(i)},0)} \in \Xi^k_{ \mathbb{W}_0}(\mathcal{X},\mathbb{R})$, the weight matrices by $(W_1^{(0)}, \widetilde{W}_1^{(0)},W_2^{(0)}, \ldots, W_L^{(0)}, \widetilde{W}_L^{(0)})$ and set $r = 0$. 
				
				If  $W_1^{(r)}\in \mathbb{R}^{m_1 \times (n-r)}$ has full rank, go to step~\ref{th:nn_step_ii} and in the case that $r = 0$ set $r_0 = 0$, $W_1^{(\ast)} \coloneqq  W_1^{(r_0)} = W_1^{(0)}$,  $\mathcal{Y} \coloneqq \mathcal{X}$, $\Phi^{(\textup{(i)},r_0)} = \Phi^{(\textup{(i)},0)}$ and define $A \coloneqq \text{Id}_n$, which is the identity transformation on~$\mathbb{R}^{n}$. 
				
				If $W_1^{(r)}\in \mathbb{R}^{m_1 \times (n-r)}$ has not full rank, it holds $0<\textup{rank}(W_1^{(r)}) < \min\{m_1,n-r\}$. The columns of $W_1^{(r)} $ are linearly dependent, hence there exist scalars $\tilde{\alpha}_j^{(r)}$, $j \in \{1,\ldots,n-r\}$ not all equal to zero, such that
				\begin{equation*}
					\sum_{j = 1}^{n-r} \tilde{\alpha}_j^{(r)} [W_1^{(r)}]_j = 0 \in \mathbb{R}^{m_1}.
				\end{equation*}
				Since $\tilde{\alpha}_i^{(r)} \neq 0$ for some $i \in \{1,\ldots,n-r\}$, it holds 
				\begin{equation*}
					[W_1^{(r)}]_i = \sum_{j = 1, j \neq i}^{n-r} \alpha_j^{(r)} [W_1^{(r)}]_j
				\end{equation*}
				with $\alpha_j^{(r)} \coloneqq-\tilde{\alpha}_j^{(r)}/\tilde{\alpha}_i^{(r)}$. We replace the matrix $W_1^{(r)}$ by the matrix  
				\begin{equation*}
					W_1^{(r+1)} = \begin{pmatrix}
						[W_1^{(r)}]_{1} &\cdots& [W_1^{(r)}]_{i-1} & [W_1^{(r)}]_{i+1} \cdots &[W_1^{(r)}]_{n-r} 
					\end{pmatrix} \in \mathbb{R}^{m_1 \times (n-r-1)}
				\end{equation*}
				which has the same rank as $W_1^{(r)}$,  $W_1^{(r)}$ arises from $W_1^{(r+1)}$ by adding a new column which is a linear combination of the other columns and hence does not increase the maximal number of linearly independent columns. As $\text{rank}(W_1^{(r+1)}) = \text{rank}(W_1^{(r)})$, it holds \mbox{$\text{rank}(W_1^{(r+1)})>0$}. 
				Furthermore we make a linear change of coordinates $A^{(r)}: \mathbb{R}^{n-r} \rightarrow \mathbb{R}^{n-r-1}$ of the input data $x^{(r)} \subset \mathbb{R}^{n-r}$ defined by
				\begin{equation*}
					x^{(r+1)}  = A^{(r)}(x^{(r)}) = \begin{pmatrix}
						1 & \ldots & 0 &\alpha_1 & 0 & \ldots & 0 \\
						\vdots & &0& \ldots & 0 && \vdots \\
						0 & \ldots & 1 & \alpha_{i-1} & 0 & \ldots &0 \\
						0 & \ldots & 0 & \alpha_{i+1} & 1 &\ldots &0 \\
						\vdots & &0& \ldots & 0 && \vdots \\
						0 & \ldots & 0 &\alpha_{n-r} & 0 & \ldots & 1
					\end{pmatrix} x^{(r)} = 
					\begin{pmatrix}
						x_1 + \alpha_1 x_i \\
						\vdots \\
						x_{i-1} + \alpha_{i-1} x_i \\
						x_{i+1} + \alpha_{i+1} x_i \\
						\vdots \\
						x_{n-r} + \alpha_{n-r} x_i
					\end{pmatrix} \in \mathbb{R}^{n-r-1},
				\end{equation*}
				such that $W_1^{(r)} x^{(r)} = W_1^{(r+1)} x^{(r+1)} =  W_1^{(r+1)} A^{(r)}(x^{(r)}) $ for all $x^{(r)} \in \mathbb{R}^{n-r}$. By construction, the matrix $A^{(r)}$ has full rank. The new neural network $\Phi^{(\textup{(i)},r+1)}$ with $W_1^{(r)}$ replaced by $W_1^{(r+1)}$ and $x^{(r)}$ replaced by $x^{(r+1)}$ is up to the linear change of coordinates $A^{(r)}$ equivalent to $\Phi^{(\textup{(i)},r)}$ and has a smaller architecture. 
				
				If $W_1^{(r+1)}$ has full rank, continue with step~\ref{th:nn_step_ii} and define $r_0 = r+1$ and
				\begin{equation*}
					A:\mathbb{R}^n \rightarrow \mathbb{R}^{\bar{n}}, \quad A \coloneqq A^{(r)} \circ \ldots \circ A^{(0)},
				\end{equation*}
				$\mathcal{Y} \coloneqq A(\mathcal{X}) \subset \mathbb{R}^{\bar{n}}$, $\bar{n}\coloneqq n-r-1$, and $W_1^{(\ast)} \coloneqq  W_1^{(r_0)}$. The matrix $A$ has by Lemma~\ref{lem:nn_productfullrank} full rank $\bar{n}$, as all matrices $A^{(i)}$, $i \in \{0,\ldots,r\}$ have full rank and the dimensions of the matrices are monotone. The neural network $\Phi^{(\textup{(i)},r_0)} \in \Xi^k(\mathcal{Y},\mathbb{R})$ is up to the linear change of coordinates $A$ equivalent to  $\Phi^{(\textup{(i)},0)} \in \Xi^k(\mathcal{X},\mathbb{R})$.
				
				\begin{figure}
					\centering
					\includegraphics[width=0.9\textwidth]{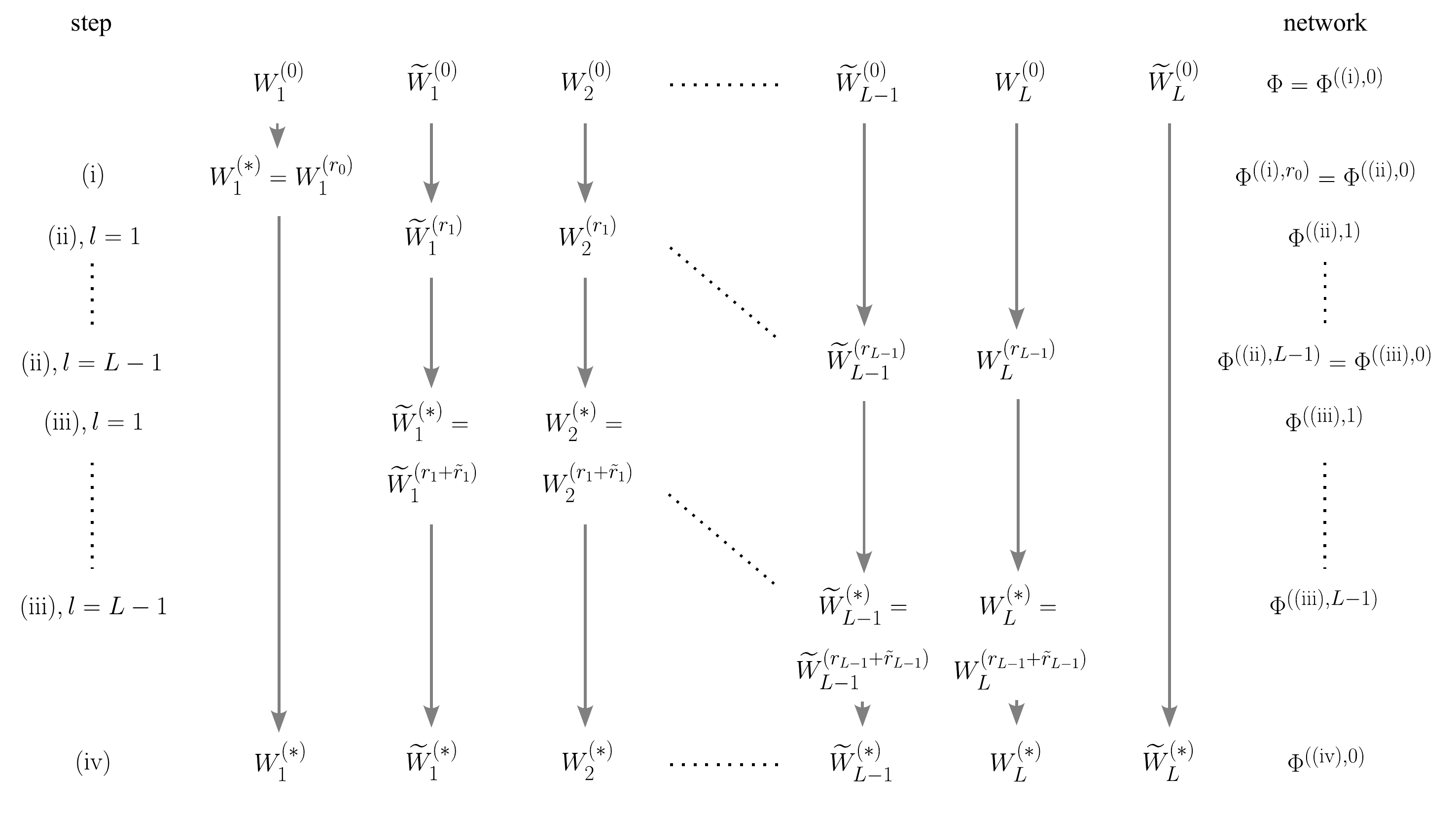}
					\caption{Structure of the algorithm defined in  the proof of Theorem~\ref{th:nn_fullrank_equivalent}\ref{th:nn_fullrank_equivalent_a}.}
					\label{fig:nn_algorithm}
				\end{figure}
				
				If $W_1^{(r+1)}$ has not full rank, repeat step~\ref{th:nn_step_i} for the neural network $\Phi^{(\textup{(i)},r)}$ with the counter~$r$ increased by one. Lemma~\ref{lem:nn_iterations} guarantees that after a finite number of iterations, $W_1^{(r+1)}$ has full rank and step~\ref{th:nn_step_i} ends.
				
				\item \label{th:nn_step_ii} If this is the first iteration of step~\ref{th:nn_step_ii}, denote the MLP $\Phi^{(\textup{(i)},r_0)} \in \Xi^k(\mathcal{Y},\mathbb{R})$ obtained from~\ref{th:nn_step_i} with weight matrices $(W_1^{(\ast)}, \widetilde{W}_1^{(0)},W_2^{(0)}, \ldots, W_L^{(0)}, \widetilde{W}_L^{(0)})$ by $\Phi^{(\textup{(ii)},0)}$ and set $l = 1$. If $L = 1$, go to step~\ref{th:nn_step_iii}.
				
				If $W_{l+1}^{(0)}\in \mathbb{R}^{m_{l+1}  \times n_{l}}$ has full rank, set $r_l = 0$ and define  $\Phi^{(\textup{(ii)},l)} \coloneqq \Phi^{(\textup{(ii)},l-1)}$. If $l<L-1$, increase the counter $l$ by one and repeat step~\ref{th:nn_step_ii}. If $l=L-1$, notice that  $\Phi^{(\textup{(ii)},L-1)}$ is equivalent to $\Phi^{(\textup{(ii)},0)}$ and  continue with step~\ref{th:nn_step_iii}. 
				
				For $r\geq 0$, if $W_{l+1}^{(r)} \in \mathbb{R}^{m_{l+1} \times (n_{l}-r)}$ has not full rank, it has by assumption at least rank one and Lemma~\ref{lem:nn_notfullrank1} guarantees the existence of an MLP, which is equivalent to $\Phi^{(\textup{(ii)},l-1)}$, where the matrix $W_{l+1}^{(r)}$ is replaced by a matrix $W_{l+1}^{(r+1)} \in \mathbb{R}^{m_{l+1} \times (n_{l}-r-1)}$ with the same rank, $\widetilde{W}_{l}^{(r)}$ is replaced by a matrix $\widetilde{W}_{l}^{(r+1)} \in \mathbb{R}^{(n_{l}-r-1)\times m_{l}}$, the bias $\tilde{b}_{l}$ is replaced by a new bias and it holds $W_{l+1}^{(r+1)}\widetilde{W}_{l}^{(r+1)} = W_{l+1}^{(r)}\widetilde{W}_{l}^{(r)}$. As $\textup{rank}(W_{l+1}^{(r+1)}) = \textup{rank}(W_{l+1}^{(r)})$, Lemma~\ref{lem:nn_iterations} implies that there exist an index $r_{l}$, such that after $r_{l}$ applications of Lemma~\ref{lem:nn_notfullrank1}, the matrix $W_{l+1}^{(r_{l})} \in  \mathbb{R}^{m_{l+1} \times (n_{l}-r_{l})}$ has full rank. We denote the equivalent neural network with smaller architecture obtained after applying Lemma~\ref{lem:nn_notfullrank1} $r_{l}$ times to $\Phi^{(\textup{(ii)},l-1)}$ by $\Phi^{(\textup{(ii)},l)}$. $\Phi^{(\textup{(ii)},l)}$ has the matrix $W_{l+1}^{(0)}$ replaced by the full rank matrix  $W_{l+1}^{(r_{l})}\in  \mathbb{R}^{m_{l+1} \times (n_{l}-r_{l})}$, the matrix $\widetilde{W}_{l}^{(0)}$ replaced by a matrix $\widetilde{W}_{l}^{(r_{l})}\in  \mathbb{R}^{(n_{l}-r_{l}) \times m_{l}}$ and the bias $\tilde{b}_{l}$ replaced by a new bias. It holds $W_{l+1}^{(r_l)}\widetilde{W}_{l}^{(r_{l})} = W_{l+1}^{(0)}\widetilde{W}_{l}^{(0)}$ and as $\text{rank}(W_{l+1}^{(0)}\widetilde{W}_{l}^{(0)})>0$, the matrix $\widetilde{W}_{l}^{(r_{l})}$ has at least rank one. If $l<L-1$, increase the counter $l$ by one and repeat step~\ref{th:nn_step_ii}. If $l=L-1$, notice that  $\Phi^{(\textup{(ii)},L-1)}$ is equivalent to $\Phi^{(\textup{(ii)},0)}$ and continue with step~\ref{th:nn_step_iii}. 
				
				\item \label{th:nn_step_iii} If this is the first iteration of step~\ref{th:nn_step_iii}, denote the MLP $\Phi^{(\textup{(ii)},L-1)}(\mathcal{Y},\mathbb{R})\in \Xi^k(\mathcal{Y},\mathbb{R})$ obtained from~\ref{th:nn_step_ii} with weight matrices $(W_1^{(\ast)}, \widetilde{W}_1^{(r_1)},W_2^{(r_1)}, \ldots, W_L^{(r_{L-1})}, \widetilde{W}_L^{(0)})$ by $\Phi^{(\textup{(iii)},0)}$ and set $l = 1$. If $L = 1$, go to step~\ref{th:nn_step_iv}.
				
				If $\widetilde{W}_l^{(r_{l})}\in \mathbb{R}^{(n_l-r_{l}) \times m_{l}}$ has full rank, set $\tilde{r}_l = 0$, $\widetilde{W}_l^{(\ast)} = \widetilde{W}_l^{(r_{l})}$, $W_{l+1}^{(\ast)} = W_{l+1}^{(r_l)}$ and define $\Phi^{(\textup{(iii)},l)} = \Phi^{(\textup{(iii)},l-1)}$. If $l<L-1$, increase the counter $l$ by one and repeat step~\ref{th:nn_step_iii}. If $l=L-1$, notice that  $\Phi^{(\textup{(iii)},L-1)}$ is equivalent to $\Phi^{(\textup{(iii)},0)}$ and continue with step~\ref{th:nn_step_iv}. 
				
				For $r\geq 0$, if $\widetilde{W}_l^{(r_{l}+r)}\in \mathbb{R}^{(n_l-r_{l}-r) \times m_{l}}$ has not full rank, it has by step~\ref{th:nn_step_ii} at least rank one and Lemma~\ref{lem:nn_notfullrank2} guarantees the existence of an  MLP, which is equivalent to $\Phi^{(\textup{(iii)},l-1)}$, where the matrix $\widetilde{W}_l^{(r_{l}+r)}$ is replaced by a matrix $\widetilde{W}_l^{(r_{l}+r+1)} \in \mathbb{R}^{(n_l-r_{l}-r-1) \times m_l}$ with the same rank, the full rank matrix $W_{l+1}^{(r_l+r)}$ is replaced by a full rank matrix $W_{l+1}^{(r_l+r+1)} \in \mathbb{R}^{m_{l+1} \times (n_{l}-r_{l}-r-1)}$ and the bias $\tilde{b}_{l}$ is replaced by a new bias. As the rank of $\widetilde{W}_l^{(r_{l}+r+1)}$ is the same as the rank of $\widetilde{W}_l^{(r_{l}+r)}$, Lemma~\ref{lem:nn_iterations} implies that there exist an index $\tilde{r}_l$, such that after $\tilde{r}_l$ applications of Lemma~\ref{lem:nn_notfullrank2}, the matrix $\widetilde{W}_l^{(r_{l}+\tilde{r}_l)} \in  \mathbb{R}^{(n_l-r_{l}-\tilde{r}_l) \times m_l}$ has full rank. We denote the equivalent neural network obtained after applying Lemma~\ref{lem:nn_notfullrank2} $\tilde{r}_l$ times to $\Phi^{(\textup{(iii)},l-1)}$ by $\Phi^{(\textup{(iii)},l)}$. $\Phi^{(\textup{(iii)},l)}$ has the matrix $\widetilde{W}_l^{(r_{l})}$ replaced by the full rank matrix $\widetilde{W}_l^{(\ast)} \coloneqq \widetilde{W}_l^{(r_{l}+\tilde{r}_l)}\in  \mathbb{R}^{(n_l-r_{l}-\tilde{r}_l) \times m_l}$, the full rank matrix $W_{l+1}^{(r_l)}$ replaced by the full rank matrix $W_{l+1}^{(\ast)}\coloneqq W_{l+1}^{(r_l + \tilde{r}_l)}\in \mathbb{R}^{m_{l+1} \times (n_{l}-r_{l}-\tilde{r}_l)}$ and the bias $\tilde{b}_{l}$ replaced by a new bias. If $l<L-1$, increase the counter $l$ by one and repeat step~\ref{th:nn_step_iii}. If $l=L-1$, notice that  $\Phi^{(\textup{(iii)},L-1)}$ is equivalent to $\Phi^{(\textup{(iii)},0)}$ and continue with step~\ref{th:nn_step_iv}.
				
				\item \label{th:nn_step_iv} Denote the MLP $\Phi^{(\textup{(iii)},L-1)}\in  \Xi^k(\mathcal{Y},\mathbb{R})$ obtained from~\ref{th:nn_step_iii} with weight matrices $$W^\ast \coloneqq \left(W_1^{(\ast)}, \widetilde{W}_1^{(\ast)},W_2^{(\ast)}, \ldots, \widetilde{W}_{L-1}^{(\ast)}, W_L^{(\ast)}, \widetilde{W}_L^{(0)}\right)$$ by $\Phi^{(\textup{(iv)},0)}$. By step~\ref{th:nn_step_i}, the matrix $W_1^{(\ast)}$ has full rank and by steps~\ref{th:nn_step_ii} and~\ref{th:nn_step_iii}, all the matrices $\widetilde{W}_1^{(\ast)}$, $W_2^{(\ast)}$, $\ldots$, $\widetilde{W}_{L-1}^{(\ast)}$, $W_L^{(\ast)}$ have full rank. As $\widetilde{W}_L^{(0)} \in \mathbb{R}^{n_L \times m_L} = \mathbb{R}^{1\times m_L}$ and $\widetilde{W}_L^{(0)}$ has by assumption at least rank one, it holds $\text{rank}(\widetilde{W}_L^{(0)}) = 1$, such that $\widetilde{W}_L^{(\ast)} = \widetilde{W}_L^{(0)}$ has full rank. Consequently, all matrices of $W^\ast$ are full rank matrices, such that $\Phi^{(\textup{(iv)},0)}\in  \Xi^k_{\mathbb{W}^\ast}(\mathcal{Y},\mathbb{R})$. $\Phi^{(\textup{(iv)},0)}=\Phi^{(\textup{(iii)},L-1)}$ is equivalent to $\Phi^{(\textup{(iii)},0)} = \Phi^{(\textup{(ii)},L-1)}$, which is equivalent to $\Phi^{(\textup{(ii)},0)}= \Phi^{(\textup{(i)},r_0)}$, which is up to the linear change of coordinates $A$ defined in step~\ref{th:nn_step_i} equivalent to $\Phi^{(\textup{(i)},0)}= \Phi \in \Xi^k_{ \mathbb{W}_0}(\mathcal{X},\mathbb{R})$. The architecture $\Phi^{(\textup{(iv)},0)}$ is smaller than the architecture $\Phi$, as at least one weight matrix of $\Phi$ has not full rank, such that in at least one of the steps~\ref{th:nn_step_i},~\ref{th:nn_step_ii} and~\ref{th:nn_step_iii} the number of nodes in one of the layers $h_l$, $l \in \{0,\ldots,L-1\}$ is reduced by one, which implies the result.
			\end{enumerate}
			For case~\ref{th:nn_fullrank_equivalent_b} we consider an MLP $\Phi \in \Xi^k_{ \mathbb{W}_0}(\mathcal{X},\mathbb{R})$ with  weight matrices $W = (W_1, \widetilde{W}_1,W_2, \ldots, W_L, \widetilde{W}_L)$, biases $b = (b_1,\tilde{b}_1,\ldots,b_L,\tilde{b}_L)$ and assume that \mbox{$\textup{rank}(W_1)=0$} or $\textup{rank}(\widetilde{W}_L)=0$ or $\textup{rank}(W_{l+1} \widetilde{W}_l) = 0 $ for some $l\in \{1,\ldots,L-1\}$. Consequently, at least one of the vectors $h_0$ or $\sigma_l(a_l)$, $l\in \{1,\ldots,L\}$ is multiplied with a zero matrix, such that $\Phi(x) = c_{W,b}$ for all $x \in \mathcal{X}$, where $c_{W,b}$ is a scalar independent of $x$, only depending on $W$ and $b$. $\Phi$ is equivalent to the MLP $\overline{\Phi}\in \Xi^k(\mathcal{X},\mathbb{R})$ defined by $\overline{L} = 1$, $\overline{W}_1 = 0 \in \mathbb{R}^{1 \times n}$, $\overline{b}_1 = 0 \in \mathbb{R}$, $\overline{\widetilde{W}}_1 = 0 \in \mathbb{R}$ and $\overline{\tilde{b}}_1 = c_{W,b} \in \mathbb{R}$ as $\overline{\Phi}(x) = c_{W,b}$ for all $x \in \mathcal{X}$. If $\overline{\Phi} \neq \Phi$, $\overline{\Phi}$ has a smaller architecture than $\Phi$, as $\overline{\Phi}$ only consists of the $n$-dimensional input layer, the one-dimensional intermediate layer, and the one-dimensional output layer.
		\end{proof}
		
		\begin{remark}
			The rank of the weight matrices of the MLPs $\Phi$ in Theorem~\ref{th:nn_fullrank_equivalent} determines the widths of the layers of the equivalent neural network $\overline{\Phi}$. Hence, we refer to the term normal form of an MLP as the geometry of the smallest equivalent neural network, i.e., to the number and the widths of the layers. The proof of Theorem~\ref{th:nn_fullrank_equivalent} is constructive, but the chosen weights and biases of the equivalent neural network architectures are not unique. For a given neural network $\Phi$, there exists a unique geometry of the normal form architecture $\overline{\Phi}$, but multiple choices of the weights and biases, which define equivalent neural networks, are possible.
		\end{remark}

		In the following example, we illustrate the explicit algorithm of the proof of Theorem~\ref{th:nn_fullrank_equivalent}\ref{th:nn_fullrank_equivalent_a} for a two-layer neural network architecture.
		
			\begin{figure}[b!]
			\centering
			\begin{subfigure}[t]{0.9\textwidth}
				\centering
				\includegraphics[width=0.52\textwidth]{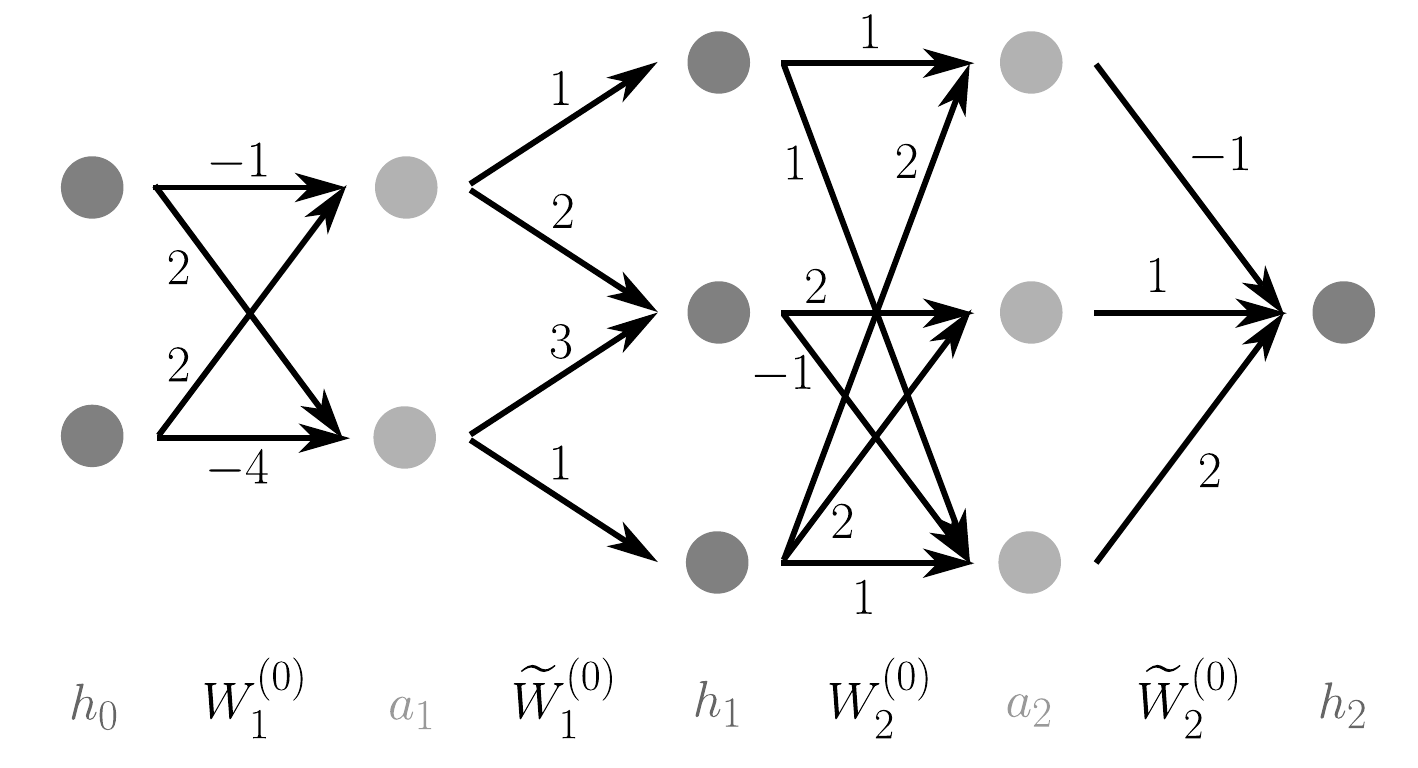}
				\caption{Example of a two-layer augmented neural network $\Phi \in \Xi^k_{\textup{A}, \mathbb{W}_0}(\mathcal{X},\mathbb{R})$, where the two weight matrices $W_1^{(0)}$ and $W_2^{(0)}$ have both not full rank.}
				\label{fig:nn_normalform_a}
			\end{subfigure}
			\begin{subfigure}[t]{0.9\textwidth}
				\centering
				\includegraphics[width=0.52\textwidth]{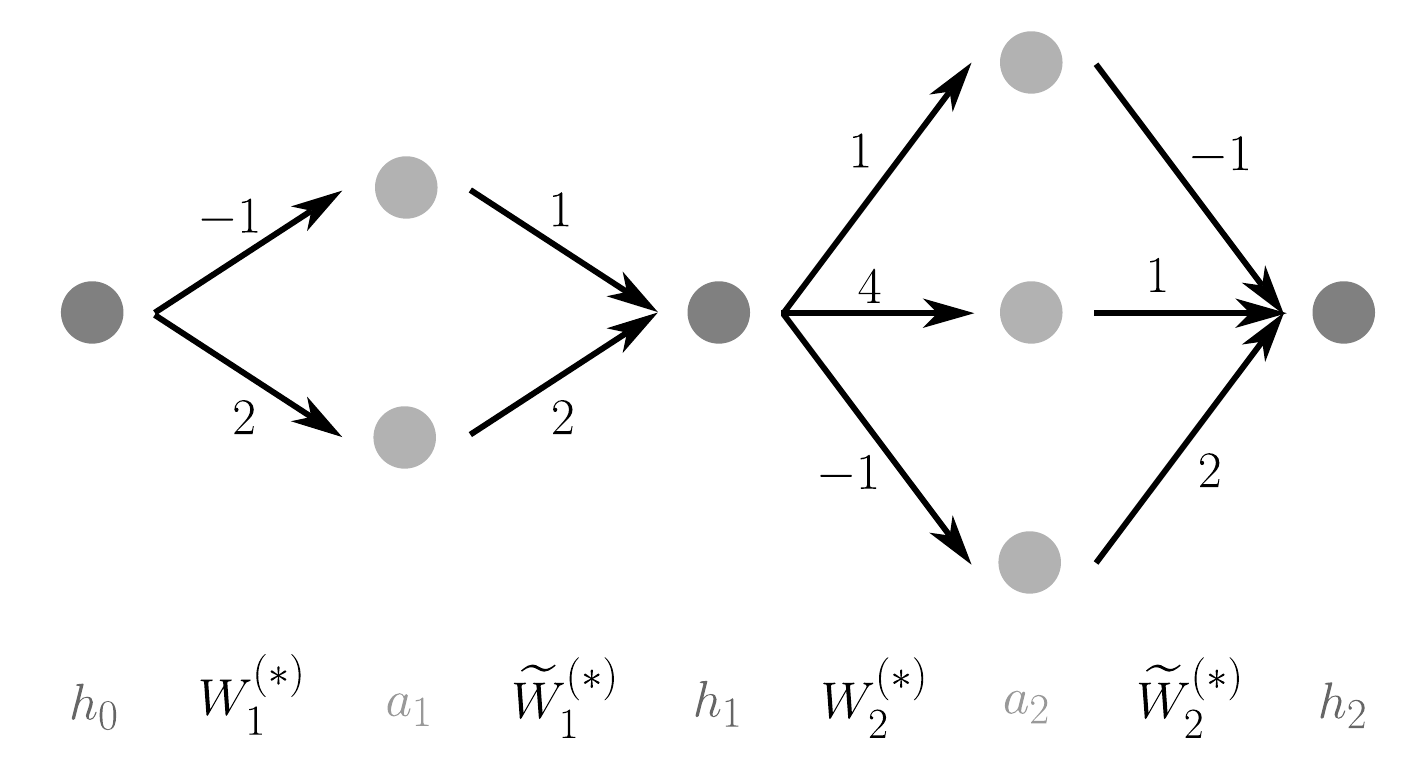}
				\caption{Two-layer neural network $\overline{\Phi} \in \Xi^k_{\textup{B},\mathbb{W}^\ast}(\mathcal{Y},\mathbb{R})$ with a bottleneck, which has only full rank matrices and which is up to a linear change of coordinates equivalent to the neural network $\Phi$ of  \subref{fig:nn_normalform_a}.}
				\label{fig:nn_normalform_b}
			\end{subfigure}
			\caption{Equivalent neural network architectures of Example~\ref{ex:nn_normalform} to illustrate Theorem~\ref{th:nn_fullrank_equivalent}\ref{th:nn_fullrank_equivalent_a}.}
			\label{fig:nn_normalform}
		\end{figure}

		\begin{example} \label{ex:nn_normalform}
			Consider the neural network $\Phi = \Phi^{(\textup{(i)},0)} \in \Xi^k_{\textup{A},  \mathbb{W}_0}(\mathcal{X},\mathbb{R})$ with $\mathcal{X} = (-1,1)\times (-1,1)\subset \mathbb{R}^2$ and $L = 2$ defined by the weight matrices
			\begin{align*}
				W_1^{(0)} = \begin{pmatrix}
					-1 & 2 \\ 2 & -4
				\end{pmatrix}, \quad \widetilde{W}_1^{(0)} = \begin{pmatrix}
					1 & 0 \\ 2 & 3 \\ 0 & 1
				\end{pmatrix}, \quad W_2^{(0)} = \begin{pmatrix}
					1 & 0 & 2 \\ 0 & 2 & 2 \\ 1 & -1 & 1
				\end{pmatrix}, \quad \widetilde{W}_2^{(0)} = \begin{pmatrix}
					-1 & 1 & 2
				\end{pmatrix},
			\end{align*}
			and arbitrary biases, see also Figure~\ref{fig:nn_normalform}\subref{fig:nn_normalform_a}. It holds $\textup{rank}(W_1^{(0)}) = 1<2$, $\textup{rank}(\widetilde{W}_1^{(0)})=2$, $\textup{rank}(W_2^{(0)})  =2<3$ and $\textup{rank}(\widetilde{W}_2^{(0)}) = 1$, such that $W^{(0)} = (W_1^{(0)},\widetilde{W}_1^{(0)},W_2^{(0)},\widetilde{W}_2^{(0)}) \in  \mathbb{W}_0$. As $\textup{rank}(W_1^{(0)})>0$, $\textup{rank}(W_2^{(0)} \widetilde{W}_1^{(0)}) = 1> 0$ and $\textup{rank}(\widetilde{W}_2^{(0)})>0$, Theorem~\ref{th:nn_fullrank_equivalent}\ref{th:nn_fullrank_equivalent_a} implies, that there exists a neural network~$\overline{\Phi}$, which is up to a linear change of coordinates equivalent to $\Phi$ and which has only full rank matrices. For step~\ref{th:nn_step_i} of the proof of Theorem~\ref{th:nn_fullrank_equivalent}\ref{th:nn_fullrank_equivalent_a}, we find $[W_1^{(0)}]_2 = -2 \cdot [W_1^{(0)}]_1$, such that
			\begin{equation*}
				A: \mathbb{R}^2 \rightarrow \mathbb{R},  \qquad A\cdot \begin{pmatrix}
					x_1 \\ x_2
				\end{pmatrix} = \begin{pmatrix}
					1 & -2
				\end{pmatrix}\cdot \begin{pmatrix}
					x_1 \\ x_2
				\end{pmatrix} = x_1 - 2x_2, \qquad \mathcal{Y} = A(\mathcal{X}) = (-3,3) \subset \mathbb{R},
			\end{equation*}
			and $\Phi^{(\textup{(i)},0)}$ is up to the linear change of coordinates $A$ equivalent to the neural network $\Phi^{(\textup{(i)},1)}$, where the matrix $W_1^{(0)}$ is replaced by the matrix $W_1^{(\ast)} = \begin{pmatrix} -1 \\ 2 \end{pmatrix}$. In step~\ref{th:nn_step_ii} Lemma~\ref{lem:nn_notfullrank1} is applied and as $[W_2^{(0)}]_3 = 2 \cdot [W_2^{(0)}]_1 + 1 \cdot [W_2^{(0)}]_2$, the matrices $\widetilde{W}_1^{(0)}$ and $W_2^{(0)}$ are replaced by
			\begin{equation*}
				\widetilde{W}_1^{(1)} =  \begin{pmatrix}
					1 + 2\cdot 0 & 0 + 2\cdot 1 \\ 2+ 1\cdot 0 & 3 + 1\cdot 1 \end{pmatrix} = \begin{pmatrix}
					1 & 2 \\ 2 & 4
				\end{pmatrix}, \qquad W_2^{(1)} = \begin{pmatrix}
					1 & 0  \\ 0 & 2  \\ 1 & -1 
				\end{pmatrix},
			\end{equation*}
			and the bias $\tilde{b}_1$ is adapted accordingly. The obtained architecture $\Phi^{(\textup{(ii)},1)}$ is equivalent to $\Phi^{(\textup{(i)},1)}$.  We continue with step~\ref{th:nn_step_iii} since $W_2^{(1)}$ is a full rank matrix, but $\textup{rank}(\widetilde{W}_1^{(1)}) = 1<2$. As $[[\widetilde{W}_1^{(1)}]^\top]_2 = 2 \cdot [[\widetilde{W}_1^{(1)}]^\top]_1$, the application of Lemma~\ref{lem:nn_notfullrank2} in step~\ref{th:nn_step_iii} yields
			\begin{equation*}
				\widetilde{W}_1^{(\ast)} = \widetilde{W}_1^{(2)} = (1,2), \qquad W_2^{(\ast)} = W_2^{(2)} = \begin{pmatrix}
					1 +2\cdot 0  \\ 0 +2\cdot 2  \\ 1 -2\cdot1 
				\end{pmatrix} = \begin{pmatrix} 1 \\ 4 \\ -1 \end{pmatrix},
			\end{equation*}
			and the bias $\tilde{b}_1$ is again adapted accordingly. The resulting architecture is denoted by $\Phi^{(\textup{(iii)},1)}$. As $\widetilde{W}_1^{(\ast)}$ and $ W_2^{(\ast)} $ have both full rank, we continue with step~\ref{th:nn_step_iv}, which implies with $\widetilde{W}_2^{(\ast)} = \widetilde{W}_2^{(0)}$ that $\Phi^{(\textup{(iv)},0)} =  \Phi^{(\textup{(iii)},1)}$ has only full rank matrices and is up to the linear change of coordinates $A$ equivalent to the neural network architecture  $\Phi=\Phi^{(\textup{(i)},0)}  \in \Xi^k_{\textup{A}, \mathbb{W}_0}(\mathcal{X},\mathbb{R})$. The equivalent normal form $\overline{\Phi} \coloneqq\Phi^{(\textup{(iv)},0)}\in \Xi_{\textup{B},\mathbb{W}^\ast}^k(\mathcal{Y},\mathbb{R})$ has a bottleneck and is visualized in Figure~\ref{fig:nn_normalform}\subref{fig:nn_normalform_b}.
		\end{example}

		Theorem~\ref{th:nn_fullrank_equivalent} is an explicit algorithm showing equivalence up to a linear change of coordinates. Due to the linear change of coordinates, it is not guaranteed that the regularity of the critical points of equivalent neural networks remains the same. The following theorem shows that if a coordinate transformation is necessary to obtain an equivalent neural network with full rank matrices, it is possible that degenerate critical points become non-degenerate.
		
		\begin{theorem} \label{th:nn_classes_coordinatechange}
			Let $\Phi \in \Xi^k(\mathcal{X},\mathbb{R})$, $k \geq 2$, $\mathcal{X} \subset \mathbb{R}^n$ be an MLP, which is up to the linear change of coordinates $A \in \mathbb{R}^{\bar{n}\times n}$, $\bar{n} \leq n$, $\textup{rank}(A) = \bar{n}$ equivalent to  $\overline{\Phi}\in \Xi^k(\mathcal{Y},\mathbb{R})$, $\mathcal{Y} = A(\mathcal{X})\subset \mathbb{R}^{\bar{n}}$. Then
			\begin{equation*}
				\overline{\Phi} \in (\mathcal{C}1)^k(\mathcal{Y},\mathbb{R})\quad \Leftrightarrow \quad  \Phi \in (\mathcal{C}1)^k(\mathcal{X},\mathbb{R}).
			\end{equation*}
			If $\bar{n} = n$, it holds additionally
			\begin{align*}
				\overline{\Phi} \in (\mathcal{C}2)^k(\mathcal{Y},\mathbb{R})\quad &\Leftrightarrow \quad \Phi \in (\mathcal{C}2)^k(\mathcal{X},\mathbb{R}), \\
				\overline{\Phi} \in (\mathcal{C}3)^k(\mathcal{Y},\mathbb{R})\quad &\Leftrightarrow \quad \Phi \in (\mathcal{C}3)^k(\mathcal{X},\mathbb{R}).
			\end{align*}
			If $\bar{n}<n$, then $ \Phi \notin(\mathcal{C}2)^k(\mathcal{X},\mathbb{R}) $ and it holds
			\begin{equation*}
				\overline{\Phi} \in(\mathcal{C}2)^k(\mathcal{X},\mathbb{R}) \cup (\mathcal{C}3)^k(\mathcal{Y},\mathbb{R}) 	\quad \Leftrightarrow \quad\Phi \in (\mathcal{C}3)^k(\mathcal{X},\mathbb{R}).
			\end{equation*}
		\end{theorem}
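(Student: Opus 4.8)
The plan is to reduce everything to the chain rule applied to the identity $\Phi(x) = \overline{\Phi}(Ax)$, which is the meaning of ``$\Phi$ is up to the linear change of coordinates $A$ equivalent to $\overline{\Phi}$''. Differentiating once gives $\nabla \Phi(x) = A^\top \nabla \overline{\Phi}(Ax)$, and differentiating again gives the congruence relation $H_\Phi(x) = A^\top H_{\overline{\Phi}}(Ax)\, A$. Two structural facts about $A$ then do all the work: since $\textup{rank}(A) = \bar{n}$, the matrix $A$ has full row rank, so $A^\top$ is injective (i.e.\ $A^\top z = 0 \Rightarrow z = 0$); and since $\mathcal{Y} = A(\mathcal{X})$, the map $A: \mathcal{X} \to \mathcal{Y}$ is surjective, so every $y \in \mathcal{Y}$ is of the form $Ax$ for some $x \in \mathcal{X}$.

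For the $(\mathcal{C}1)^k$-equivalence, which should hold for all $\bar{n} \leq n$, I would argue that $x$ is a critical point of $\Phi$ if and only if $Ax$ is a critical point of $\overline{\Phi}$: injectivity of $A^\top$ turns $A^\top \nabla \overline{\Phi}(Ax) = 0$ into $\nabla \overline{\Phi}(Ax) = 0$, and the converse is immediate. Combined with surjectivity of $A$ onto $\mathcal{Y}$, this shows that $\Phi$ possesses a critical point exactly when $\overline{\Phi}$ does; since $(\mathcal{C}1)^k$ is precisely the absence of critical points, the equivalence follows.

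When $\bar{n} = n$, the matrix $A$ is square of full rank, hence invertible, so $A : \mathcal{X} \to \mathcal{Y}$ is a bijection and the congruence $H_\Phi(x) = A^\top H_{\overline{\Phi}}(Ax)\, A$ is by an invertible matrix. A standard fact from linear algebra is that congruence by an invertible matrix preserves the rank of a symmetric matrix, so $H_\Phi(x)$ is non-singular if and only if $H_{\overline{\Phi}}(Ax)$ is non-singular. Thus critical points correspond bijectively and their (non-)degeneracy is preserved, which yields at once $\Phi \in (\mathcal{C}2)^k \Leftrightarrow \overline{\Phi} \in (\mathcal{C}2)^k$ and $\Phi \in (\mathcal{C}3)^k \Leftrightarrow \overline{\Phi} \in (\mathcal{C}3)^k$.

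The case $\bar{n} < n$ is where the asymmetry appears, and I expect the rank count of the Hessian to be the decisive step. Here $\textup{rank}\big(H_\Phi(x)\big) = \textup{rank}\big(A^\top H_{\overline{\Phi}}(Ax)\, A\big) \leq \textup{rank}(A) = \bar{n} < n$, so $H_\Phi(x)$ is singular at every critical point $x$; hence $\Phi$ cannot have a non-empty set of critical points all of which are non-degenerate, i.e.\ $\Phi \notin (\mathcal{C}2)^k$. It remains to show $\overline{\Phi} \in (\mathcal{C}2)^k \cup (\mathcal{C}3)^k \Leftrightarrow \Phi \in (\mathcal{C}3)^k$. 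By the disjoint partition of Definition~\ref{def:subsets}, $(\mathcal{C}2)^k \cup (\mathcal{C}3)^k$ is exactly the class of functions possessing at least one critical point, while $(\mathcal{C}3)^k$ is the class possessing at least one degenerate critical point; but the Hessian rank count forces every critical point of $\Phi$ to be degenerate, so for $\Phi$ the two statements ``has a critical point'' and ``has a degenerate critical point'' coincide. The desired equivalence therefore collapses again to the existence statement ``$\Phi$ has a critical point $\Leftrightarrow \overline{\Phi}$ has a critical point'', which is precisely the gradient-plus-surjectivity argument already used for $(\mathcal{C}1)^k$. The only point requiring genuine care is keeping the logic of the three disjoint classes straight: the loss of injectivity of $A$ when $\bar{n} < n$ costs us the bijection between critical points, but not the equivalence of their \emph{existence}, and existence is all the $(\mathcal{C}3)^k$-statement needs.
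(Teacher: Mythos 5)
Your proposal is correct and follows essentially the same route as the paper: the chain-rule identities $\nabla\Phi(x) = A^\top\nabla\overline{\Phi}(Ax)$ and $H_\Phi(x) = A^\top H_{\overline{\Phi}}(Ax)A$, injectivity of $A^\top$ plus surjectivity of $A$ onto $\mathcal{Y}$ for the correspondence of critical points, and the rank count $\textup{rank}(H_\Phi(x)) = \textup{rank}(H_{\overline{\Phi}}(Ax)) \leq \bar{n}$ for the regularity statements. The only cosmetic difference is that you split the Hessian argument into the two cases $\bar{n}=n$ (congruence by an invertible matrix) and $\bar{n}<n$ (rank bound), whereas the paper derives the single rank identity once and reads off all cases from it.
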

		
		\begin{proof}
			As $\Phi \in \Xi^k(\mathcal{X},\mathbb{R})$ is up to the linear change of coordinates $A$ equivalent to $\overline{\Phi}\in \Xi^k(\mathcal{Y},\mathbb{R})$, it holds
			\begin{equation*}
				\Phi(x) = \overline{\Phi}(Ax) \qquad \text{for all } x \in \mathcal{X}.
			\end{equation*}
			Calculating the gradient leads to 
			\begin{equation*}
				\nabla\Phi(x) = A^\top \cdot \nabla\overline{\Phi}(Ax) \qquad \text{for all } x \in \mathcal{X}.
			\end{equation*}
			If $\nabla\overline{\Phi}(Ax) = 0$ for some $x \in \mathcal{X}$ with $y = Ax \in \mathcal{Y}$, then  $\nabla\overline{\Phi}(Ax) = 0$ for all $x \in\mathcal{X}_y$, where
			\begin{equation*}
				\mathcal{X}_y \coloneqq \left\{x\in\mathcal{X}: Ax = y \in \mathcal{Y}\right\}.
			\end{equation*}
			Given $y \in \mathcal{Y}$, $\nabla\overline{\Phi}(y) = 0$ implies that  $\nabla \Phi(x) = 0$ for all $x \in \mathcal{X}_y$.
			As the matrix $A^\top \in \mathbb{R}^{n \times \bar{n}}$ has  rank $\bar{n}\leq n$, the linear system $A^\top \cdot \nabla\overline{\Phi}(y) = 0$ has only the solution $\nabla\overline{\Phi}(y) = 0$, such that from $\nabla \Phi(x) = 0$ for some $x \in \mathcal{X}$ it follows $ \nabla\overline{\Phi}(Ax) = 0$ for $Ax \in \mathcal{Y}$. Hence, there exists a bijection between the critical points $y \in \mathcal{Y}$ of $\overline{\Phi}$  and the set of critical points $\mathcal{X}_y\subset \mathcal{X}$ of   $\Phi$. Especially, $\overline{\Phi}$ has critical points if and only if $\Phi$ has critical points, such that the first assertion follows.
			
			To determine the regularity of the critical points, we calculate the Hessian matrices	 of $\Phi$ and $\overline{\Phi}$: 
			\begin{equation*}
				H_\Phi(x) = \nabla(\nabla\Phi(x)^\top) = \nabla( \nabla\overline{\Phi}(Ax)^\top \cdot A) = A^\top \cdot H_{\overline{\Phi}}(x) \cdot A  \qquad \text{for all } x \in \mathcal{X}.
			\end{equation*}
			As the matrix $A$ has rank $\bar{n}$ and $H_{\overline{\Phi}}(x)$ is a $\bar{n}\times \bar{n}$ matrix, it holds
			\begin{equation*}
				\text{rank}(H_\Phi(x)) = \text{rank}(  A^\top \cdot H_{\overline{\Phi}}(x) \cdot A) = \text{rank}(A^\top \cdot H_{\overline{\Phi}}(x)) =\text{rank}(H_{\overline{\Phi}}(x))  \qquad \text{for all } x \in \mathcal{X},
			\end{equation*}
			which implies with the fact that $H_{\Phi}(x)$ is a $n\times n$ matrix the remaining assertions of the theorem.
		\end{proof}
		
		The following example shows a neural network with degenerate critical points, which is up to a linear change of coordinates equivalent to a neural network with only non-degenerate critical points. It illustrates the case of Theorem~\ref{th:nn_classes_coordinatechange}, where under a linear change of coordinates represented by a full rank matrix $A$, the class of the neural network can change.
		
			\begin{figure}[b!]
			\centering
			\begin{subfigure}[t]{0.04\textwidth}
				\textcolor{white}{.}
			\end{subfigure}
			\begin{subfigure}[t]{0.42\textwidth}
				\centering
				\includegraphics[width=0.8\textwidth]{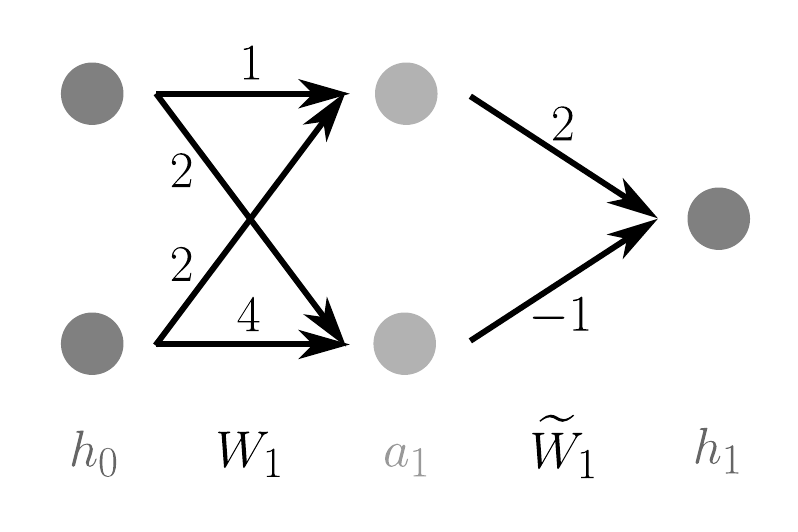}
				\caption{Example of a one-layer neural network $\Phi \in \Xi^\infty_{\mathbb{W}_0}(\mathbb{R}^2,\mathbb{R})$ of class $(\mathcal{C}3)^\infty(\mathbb{R}^2,\mathbb{R})$, which has one line of degenerate equilibria.}
				\label{fig:nn_example_classes_a}
			\end{subfigure}
			\begin{subfigure}[t]{0.04\textwidth}
				\textcolor{white}{.}
			\end{subfigure}
			\begin{subfigure}[t]{0.42\textwidth}
				\centering
				\includegraphics[width=0.8\textwidth]{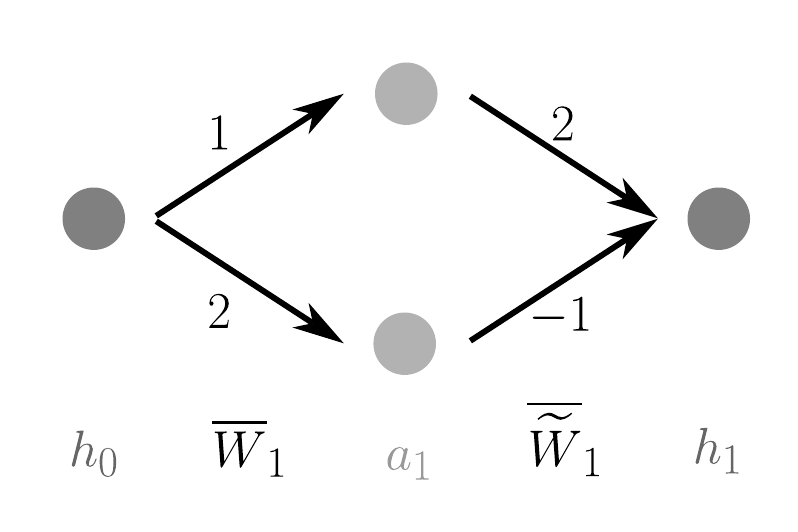}
				\caption{One-layer neural network $\overline{\Phi} \in \Xi^\infty_{\mathbb{W}^\ast}(\mathbb{R},\mathbb{R})$ of class $(\mathcal{C}2)^\infty(\mathbb{R},\mathbb{R})$, which is up to a linear change of coordinates equivalent to $\Phi$ of  \subref{fig:nn_example_classes_a}. $\overline{\Phi}$ has one non-degenerate critical point.}
				\label{fig:nn_example_classes_b}
			\end{subfigure}
			\begin{subfigure}[t]{0.04\textwidth}
				\textcolor{white}{.}
			\end{subfigure}
			\caption{Equivalent neural network architectures of Example~\ref{ex:nn_classes} to illustrate Theorem~\ref{th:nn_classes_coordinatechange}.}
			\label{fig:nn_example_classes}
		\end{figure}
		
		\begin{example} \normalfont \label{ex:nn_classes}
			Consider the neural network $\Phi \in \Xi^\infty_{\mathbb{W}_0}(\mathbb{R}^2,\mathbb{R})$ with $L = 1$ defined by the weight matrices
			\begin{equation*}
				W_1 = \begin{pmatrix}
					1 & 2 \\ 2 & 4
				\end{pmatrix}, \qquad \widetilde{W}_1 = \begin{pmatrix}
					2 & -1
				\end{pmatrix}, \qquad b_1 = \begin{pmatrix}
					0 \\ 0
				\end{pmatrix}, \qquad \tilde{b}_1 = 0,
			\end{equation*}
			arbitrary biases and soft-plus activation functions $[\sigma_1]_1(x) = [\sigma_1]_2(x) = \ln(1+\exp(x))$, see Figure~\ref{fig:nn_example_classes}\subref{fig:nn_example_classes_a}. As $\textup{rank}(W_1) = 1<2$ and $\textup{rank}(\widetilde{W}_1) = 1$, Theorem~\ref{th:nn_fullrank_equivalent}\ref{th:nn_fullrank_equivalent_a} implies that there exists a neural network architecture $\overline{\Phi} \in \Xi^\infty_{\mathbb{W}^\ast}(\mathbb{R},\mathbb{R})$, which is up to a linear change of coordinates $A:\mathbb{R}^2\rightarrow \mathbb{R}$ equivalent to $\Phi$. As $[W_1]_2 = 2 [W_1]_1$, it follows by the proof of Theorem~\ref{th:nn_fullrank_equivalent}\ref{th:nn_fullrank_equivalent_a}  that the weights of $\overline{\Phi}$ and the matrix $A$ are given by
			\begin{equation*}
				\overline{W}_1 = \begin{pmatrix}
					1 \\ 2
				\end{pmatrix}, \qquad \overline{\widetilde{W}}_1 = \begin{pmatrix}
					2 & -1
				\end{pmatrix}, \qquad \bar{b}_1 = \begin{pmatrix}
					0 \\ 0
				\end{pmatrix}, \qquad \bar{\tilde{b}}_1 = 0, \qquad A = \begin{pmatrix}
					1 & 2
				\end{pmatrix},
			\end{equation*}
			such that $\Phi(x_1,x_2) = \overline{\Phi}(x_1+2x_2) = \Phi(y)$ for $y = x_1 + 2x_2$. We verify now the properties of Theorem~\ref{th:nn_classes_coordinatechange}. It holds
			\begin{equation*}
				\overline{\Phi}(y) = 2\ln(1+\exp(y))-\ln(1+\exp(2y))
			\end{equation*}
			with gradient
			\begin{equation*}
				\nabla \overline{\Phi}(y) = \frac{2\exp(y)(1-\exp(y))}{(1+\exp(y))(1+\exp(2y))},
			\end{equation*}
			such that $\overline{\Phi}$ has exactly one critical point at $y^\ast = 0$. The critical point is non-degenerate as
			\begin{equation*}
				H_{\overline{\Phi}}(y^\ast) = -0.5 \neq 0.  
			\end{equation*}
			Consequently $\overline{\Phi}$ is of class $(\mathcal{C}2)^\infty(\mathbb{R},\mathbb{R})$. By Theorem~\ref{th:nn_classes_coordinatechange} the original neural network $\Phi$ has to be of class $(\mathcal{C}3)^\infty(\mathbb{R}^2,\mathbb{R})$. It holds
			\begin{equation*}
				\Phi(x_1,x_2) = 2\ln(1+\exp(x_1+2x_2))-\ln(1+\exp(2x_1+4x_2)),
			\end{equation*}
			with gradient
			\begin{equation*}
				\nabla \overline{\Phi}(x_1,x_2) = 	\frac{\exp(x_1+2x_2)(1-\exp(x_1+2x_2))}{(1+\exp(x_1+2x_2))(1+\exp(2x_1+4x_2))} \cdot \begin{pmatrix}
					2 \\4
				\end{pmatrix} ,
			\end{equation*}
			such that $\Phi$ has a line of equilibria defined by $x_1 + 2x_2 = 0$. As the entries of $\nabla \overline{\Phi}(x_1,x_2)$ are linearly dependent, the Hessian matrix $H_\Phi(x_1,x_2)$ is for every $(x_1,x_2)\in\mathbb{R}^2$ singular, which implies that all equilibria of $\Phi$ are degenerate and $\Phi$ is of class  $(\mathcal{C}3)^\infty(\mathbb{R}^2,\mathbb{R})$.
		\end{example}

		If we aim to have a large expressivity of MLPs with respect to the space of  $k$ times continuously differentiable functions, Theorem~\ref{th:nn_classes_coordinatechange} implies that we should choose the input dimension as small as possible. If the input dimension $n$ can be reduced, the considered architecture cannot be of class $(\mathcal{C}2)^k(\mathcal{X},\mathbb{R})$, so at least one critical point is degenerate. Consequently, the generic class of Morse functions, where every critical point is non-degenerate, cannot be completely represented.

		Due to Lemma~\ref{lem:nn_weightspace} and Theorems~\ref{th:nn_fullrank_equivalent} and~\ref{th:nn_classes_coordinatechange}, we assume in the upcoming analysis, that the considered neural network architecture has only full rank matrices, as every neural network with non-trivial dynamics is up to a linear change of coordinates equivalent to a smaller architecture in normal form and the set of matrices, where at least one weight matrix has not full rank, is a zero set in the weight space.

		\subsection{Existence of Critical Points}
		\label{sec:nn_criticalpoints}
		
		In this section, we study the existence of critical points dependent on the special architecture of the neural network. To that purpose, we first calculate the network gradient.
		
		\begin{lemma}[MLP Network Gradient] \label{lem:nn_gradient}
			Let $\Phi \in \Xi^k(\mathcal{X},\mathbb{R})$, $k \geq 1$ be an MLP with weight matrices $W = (W_1, \widetilde{W}_1, \ldots, W_L, \widetilde{W}_L) \in \mathbb{W}$ and biases $b = (b_1,\tilde{b}_1,\ldots,b_L,\tilde{b}_L) \in \mathbb{B}$. Then 
			\begin{equation*}
				\nabla \Phi(x) = \left[\widetilde{W}_L\Psi_L(a_{L})W_{L} \ldots \widetilde{W}_1\Psi_1(a_{1})W_1\right]^\top \in \mathbb{R}^{n},
			\end{equation*}
			where $\Psi_l(a_{l}) = \textup{diag}([\sigma_l]_i'([a_l]_i)) \in \mathbb{R}^{m_{l}\times m_{l}}$, $i \in \{1,\ldots,m_l\}$ is a diagonal matrix.
		\end{lemma}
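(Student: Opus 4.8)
The plan is to obtain the formula by a direct application of the chain rule to the composition of layer maps given by the update rule~\eqref{eq:nn_updaterule}. Since the output $h_L \in \mathbb{R}$ is scalar, the gradient $\nabla \Phi(x)$ is simply the transpose of the $1 \times n$ Jacobian $\partial h_L / \partial h_0$, so it suffices to compute this Jacobian and transpose at the very end. The existence of the gradient is already guaranteed by Lemma~\ref{lem:nn_regularity}, since $k \geq 1$ forces $\Phi \in C^1(\mathcal{X},\mathbb{R})$; the same assumption $k \geq 1$ makes each $[\sigma_l]_i$ differentiable, so that the diagonal matrix $\Psi_l(a_l)$ is well-defined.

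First I would compute the Jacobian of a single layer map $h_{l-1} \mapsto h_l$. Writing $a_l = W_l h_{l-1} + b_l$ and $h_l = \widetilde{W}_l \sigma_l(a_l) + \tilde{b}_l$, the three elementary Jacobians are $\partial a_l / \partial h_{l-1} = W_l$, then $\partial \sigma_l(a_l) / \partial a_l = \Psi_l(a_l)$ (this matrix is diagonal precisely because $\sigma_l$ acts component-wise, so $[\sigma_l]_i$ depends only on $[a_l]_i$), and finally $\partial h_l / \partial \sigma_l(a_l) = \widetilde{W}_l$. Chaining these three linear maps yields the single-layer Jacobian $\partial h_l / \partial h_{l-1} = \widetilde{W}_l \Psi_l(a_l) W_l$.

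Next I would assemble the full Jacobian, either by a straightforward induction on the number of layers or, equivalently, by one application of the multivariate chain rule to $\Phi = f_L \circ \cdots \circ f_1$. The layer maps compose as $h_L = f_L(\cdots f_1(h_0))$, so the chain rule gives
\begin{equation*}
	\frac{\partial h_L}{\partial h_0} = \frac{\partial h_L}{\partial h_{L-1}} \cdot \frac{\partial h_{L-1}}{\partial h_{L-2}} \cdots \frac{\partial h_1}{\partial h_0} = \widetilde{W}_L\Psi_L(a_{L})W_{L} \cdots \widetilde{W}_1\Psi_1(a_{1})W_1,
\end{equation*}
which is a $1 \times n$ matrix. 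Transposing then produces the claimed column vector $\nabla \Phi(x) \in \mathbb{R}^n$. There is no genuine obstacle here; the only point requiring care is the bookkeeping of the factor order, since the chain rule multiplies the layer Jacobians from the output layer on the left down to the input layer on the right, which is exactly the ordering appearing in the statement.
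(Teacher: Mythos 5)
Your proof is correct and follows essentially the same route as the paper: both apply the multivariate chain rule to the composition of layer maps, identify the single-layer Jacobian $\widetilde{W}_l \Psi_l(a_l) W_l$, and transpose the resulting $1\times n$ Jacobian at the end. Your version merely spells out the three elementary Jacobians per layer in slightly more detail, which the paper leaves implicit.
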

		
		\begin{proof}
			Due to the layer structure of the MLP it holds for $l \in \{1,.\ldots,L-1\}$:
			\begin{equation*}
				\Phi(x) = h_L = \widetilde{W}_L\sigma_L(W_L(\ldots\sigma_2(W_2(\widetilde{W}_1\sigma_1(W_1x+b_1)+\tilde{b}_1)+b_2)\ldots)+b_L)+\tilde{b}_L.
			\end{equation*}
			By Lemma~\ref{lem:nn_regularity} it holds $\Phi \in C^1(\mathcal{X},\mathbb{R})$, such that the multi-dimensional chain rule implies 
			\begin{equation*}
				\frac{\dd \Phi}{\dd x} = \widetilde{W}_L\Psi_L(a_{L})W_{L} \ldots \widetilde{W}_1\Psi_1(a_{1})W_1 \in \mathbb{R}^{1\times n},
			\end{equation*}
			where $\Psi_l(a_{l}) = \textup{diag}([\sigma_l]_i'([a_l]_i)) \in \mathbb{R}^{m_{l}\times m_{l}}$, $i \in \{1,\ldots,m_l\}$ is a diagonal matrix. The result follows by taking the transpose. 
		\end{proof}
		
		Due to the MLP normal form derived in Section~\ref{sec:nn_weightmatrices}, we restrict our upcoming analysis to neural networks $\Phi \in \Xi^k_{\mathbb{W}^\ast}(\mathcal{X},\mathbb{R})$ with full rank weight matrices. The upcoming theorems established for networks in $\Xi^k_{\mathbb{W}^\ast}(\mathcal{X},\mathbb{R})$ can be generalized to general MLPs $\Xi^k(\mathcal{X},\mathbb{R})$ in the following way: given an MLP $\Phi \in \Xi^k_{\mathbb{W}_0}(\mathcal{X},\mathbb{R})$, where at least one weight matrix has not full rank, we can apply the algorithm of Theorem~\ref{th:nn_fullrank_equivalent} and find an MLP  $\overline{\Phi} \in \Xi^k_{\mathbb{W}^\ast}(\mathcal{X},\mathbb{R})$, which is (up to a linear change of coordinates) equivalent to $\Phi$. Hence, the input-output dynamics of $\Phi$ can be analyzed via the input-output dynamics of the normal form MLP $\overline{\Phi}$.
			
		For non-augmented MLPs $\Phi \in \Xi^k_{\textup{N},\mathbb{W}^\ast}(\mathcal{X},\mathbb{R})$, the following theorem shows that $\Phi$ has no critical points if all weight matrices have full rank. Using the statement of Lemma~\ref{lem:nn_weightspace}, that $\vem(\mathbb{W}_0)$ is a zero set in $\mathbb{R}^{N_W}$, the result of the upcoming theorem can also be stated for all MLPs $\Phi \in \Xi^k_{\textup{N}}(\mathcal{X},\mathbb{R})$ by adding the restriction, that the statement holds except possibly for a Lebesgue measure zero set in the weight space.
		
		\begin{theorem} \label{th:nn_nocriticalpoints}
			Any non-augmented MLP $\Phi \in \Xi^k_{\textup{N},\mathbb{W}^\ast}(\mathcal{X},\mathbb{R})$, $k \geq 1$, with weight matrices $W\in \mathbb{W}^\ast$ has no critical point, i.e., $\nabla \Phi(x) \neq 0$ for all $x \in \mathcal{X}$. Consequently it holds for $k \geq 1$ $$ \Xi^k_{\textup{N},\mathbb{W}^\ast}(\mathcal{X},\mathbb{R}) \subset (\mathcal{C}1)^k (\mathcal{X},\mathbb{R}).$$
			This implies that for all weights $(\vem(W);\stack(b)) \in \mathbb{R}^{N_W+N_B} $, $W \in \mathbb{W}$, $b \in \mathbb{B}$ except possibly for a zero set in $\mathbb{R}^{N_W+N_B}$ with respect to the Lebesgue measure, a non-augmented MLP $\Phi \in \Xi^k_{\textup{N}}(\mathcal{X},\mathbb{R})$, $k\geq 1$ is of class $(\mathcal{C}1)^k (\mathcal{X},\mathbb{R})$ and hence a Morse function. 
		\end{theorem}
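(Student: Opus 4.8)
The plan is to read the gradient off Lemma~\ref{lem:nn_gradient} and show, factor by factor, that it is a product of full-rank maps, so that the resulting $1\times n$ matrix can never be the zero row. Concretely, Lemma~\ref{lem:nn_gradient} gives
\[
\nabla\Phi(x)^\top = \widetilde{W}_L\Psi_L(a_L)W_L\cdots\widetilde{W}_1\Psi_1(a_1)W_1,
\]
and I would reinterpret the right-hand side as the composition of the linear maps $W_1,\Psi_1(a_1),\widetilde{W}_1,\ldots,\widetilde{W}_L$ acting on $\mathbb{R}^n$. The whole argument then reduces to showing that each factor is surjective (equivalently, has full row rank), since a composition of surjective linear maps is surjective, the final target space is $\mathbb{R}^{d_{2L}}=\mathbb{R}$, and a map onto $\mathbb{R}$ is represented by a nonzero row vector.

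Two facts carry the proof. First, I would argue that every diagonal matrix $\Psi_l(a_l)=\textup{diag}([\sigma_l]_i'([a_l]_i))$ is invertible. This is exactly where the standing hypothesis on the activations enters: each $[\sigma_l]_i$ is strictly monotone with $[\sigma_l]_i'>0$ or $[\sigma_l]_i'<0$ on all of $\mathbb{R}$, so no diagonal entry vanishes, for any value of the pre-activation $a_l$, hence for every $x\in\mathcal{X}$; thus $\Psi_l(a_l)$ is a bijection of $\mathbb{R}^{m_l}$. Second, I would combine the non-augmented structure with $W\in\mathbb{W}^\ast$: because the widths are monotonically decreasing, $d_j\le d_{j-1}$ for all $j$, so $\min\{d_j,d_{j-1}\}=d_j$, and the full-rank assumption reads $\textup{rank}(V_j)=d_j$. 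In other words each weight matrix $V_j\in\mathbb{R}^{d_j\times d_{j-1}}$ has full row rank and represents a surjection $\mathbb{R}^{d_{j-1}}\to\mathbb{R}^{d_j}$.

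Composing $W_1$ (surjective onto $\mathbb{R}^{d_1}$), then the bijection $\Psi_1(a_1)$, then $\widetilde{W}_1$ (surjective onto $\mathbb{R}^{d_2}$), and so on, yields a surjection $\mathbb{R}^n\to\mathbb{R}^{d_{2L}}=\mathbb{R}$. Equivalently, a short induction using Sylvester's rank inequality shows that each partial product retains rank equal to its (decreasing) output dimension, which is the content of the product-of-full-rank-matrices lemma in Appendix~\ref{sec:app_LA}, Lemma~\ref{lem:nn_productfullrank}. Either way, $\nabla\Phi(x)^\top$ has rank one, hence $\nabla\Phi(x)\neq 0$; and since the only $x$-dependence sits in the always-invertible factors $\Psi_l(a_l)$, this holds for every $x\in\mathcal{X}$. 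Therefore $\Phi\in(\mathcal{C}1)^k(\mathcal{X},\mathbb{R})$. For the generic statement I would note that the biases play no role in the rank of the gradient, so the bias coordinates are free: by Lemma~\ref{lem:nn_weightspace} the set $\vem(\mathbb{W}_0)$ is Lebesgue-null in $\mathbb{R}^{N_W}$, whence $\vem(\mathbb{W}_0)\times\mathbb{R}^{N_B}$ is Lebesgue-null in $\mathbb{R}^{N_W+N_B}$ by Tonelli's theorem. For any weight-bias pair outside this null set one has $W\in\mathbb{W}^\ast$, so the first part applies and $\Phi$ is of class $(\mathcal{C}1)^k$, hence a Morse function once $k\ge 2$.

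I expect the only genuinely load-bearing points to be the twofold full-rank bookkeeping: verifying $\min\{d_j,d_{j-1}\}=d_j$ under monotonically decreasing widths, and that strict monotonicity forces the derivative diagonals to be nonvanishing \emph{everywhere}, so that the rank argument holds uniformly in $x$. After that, the surjectivity/Sylvester step is routine, and the measure-zero extension is a direct application of Lemma~\ref{lem:nn_weightspace}.
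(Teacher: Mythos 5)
Your proof is correct and follows essentially the same route as the paper: read off the gradient from Lemma~\ref{lem:nn_gradient}, observe that the $\Psi_l(a_l)$ are invertible by strict monotonicity and that the weight matrices have full (row) rank with monotone dimensions, conclude via Lemma~\ref{lem:nn_productfullrank} that the product is a rank-one row vector, and dispose of the generic statement via Lemma~\ref{lem:nn_weightspace}. Your surjectivity-of-compositions framing is just an equivalent phrasing of the same rank bookkeeping, and your caveat that the Morse-function conclusion needs $k\geq 2$ is a fair (and slightly more careful) reading of Definition~\ref{def:subsets}.
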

		
		\begin{proof}
			Let $\Phi \in \Xi^k_{N,\mathbb{W}^\ast}(\mathcal{X},\mathbb{R})$ with weight matrices $W = (W_1, \widetilde{W}_1, \ldots, W_L, \widetilde{W}_L)$. By Lemma~\ref{lem:nn_gradient} it holds 
			\begin{equation*}
				\nabla \Phi(x) = \left[\widetilde{W}_L\Psi_L(a_{L})W_{L} \ldots \widetilde{W}_1\Psi_1(a_{1})W_1\right]^\top \in \mathbb{R}^{n},
			\end{equation*}
			where $\Psi_l(a_{l}) = \textup{diag}([\sigma_l]_i'([a_l]_i)) \in \mathbb{R}^{m_{l}\times m_{l}}$, $i \in \{1,\ldots,m_l\}$, is a diagonal matrix with $a_l = W_lh_{l-1}+b_l$. As $[\sigma_l]_i\in C^k(\mathbb{R},\mathbb{R})$, $k \geq 1$  is by Definition~\ref{def:feedforward} strictly monotone for $l \in \{1,\ldots,L\}$, $i \in \{1,\ldots,m_{l}\}$, it either holds $[\sigma_l]_i'>0$ or $[\sigma_l]_i'<0$. Consequently each matrix $\Psi_l(a_{l})$, $l \in \{1,\ldots,L\}$ has non-zero diagonal entries and hence full rank. Due to the assumption $W\in \mathbb{W}^\ast$ all matrices $W_l$ and $\widetilde{W}_l$ have full rank for $l \in \{1,\ldots,L\}$. As the neural network is non-augmented, the dimensions of the full rank matrices $\widetilde{W}_L,\Psi_L(a_{L}), \ldots,\Psi_1(a_{1}), W_1$ are monotonically increasing, such that Lemma~\ref{lem:nn_productfullrank} implies that the vector $[\nabla \Phi(x)]^\top \in \mathbb{R}^{1 \times n}$ has independently of $x \in \mathcal{X}$ always rank one. Consequently $\nabla \Phi(x) \neq 0$ for all $x \in \mathcal{X}$. By Definition~\ref{def:subsets} it follows $ \Xi^k_{\textup{N},\mathbb{W}^\ast}(\mathcal{X},\mathbb{R}) \subset (\mathcal{C}1)^k (\mathcal{X},\mathbb{R})$. The second statement is a consequence of the first part and Lemma~\ref{lem:nn_weightspace}: as $\vem( \mathbb{W}_0)$ is a zero set with respect to the Lebesgue measure in $\mathbb{R}^{N_W}$, it follows that the set of non-full rank matrices and arbitrary biases $\vem( \mathbb{W}_0) \times \stack(\mathbb{B})$ is a zero set in $\mathbb{R}^{N_W+N_B}$ with respect to the Lebesgue measure. 
		\end{proof}
		
		\begin{example} \label{ex:nn_nonaugmented}
			\normalfont Consider the one-layer neural network architecture $\Phi \in \Xi^k_\textup{N}(\mathbb{R}^3,\mathbb{R})$, $k\geq 1$ with $L = 1$ defined by the weight matrices
			\begin{equation*}
				W_1 = \begin{pmatrix}
					w_{11} & w_{12} & w_{13} \\
					w_{21} & w_{22} & w_{23}
				\end{pmatrix} \in \mathbb{R}^{2 \times 3}, \qquad \widetilde{W}_1 = (\widetilde{w}_1,\widetilde{w}_2) \in \mathbb{R}^{1\times 2},
			\end{equation*}
			and arbitrary biases, see Figure~\ref{fig:nn_nonaugmented}. By Lemma~\ref{lem:nn_gradient} it holds
			\begin{align*}
				\nabla \Phi(x) &= \begin{pmatrix}
					w_{11} & w_{21} \\
					w_{12} & w_{22} \\
					w_{13} & w_{23}
				\end{pmatrix}
				\begin{pmatrix}
					[\sigma_1]_1'([a_1]_1) & 0 \\
					0 & [\sigma_1]_2'([a_1]_2)
				\end{pmatrix}
				\begin{pmatrix}
					\widetilde{w}_1 \\ \widetilde{w}_2
				\end{pmatrix}\\ &= \begin{pmatrix}
					\widetilde{w}_1 w_{11} [\sigma_1]_1'([a_1]_1) + \widetilde{w}_2 w_{21}  [\sigma_1]_2'([a_1]_2)\\
					\widetilde{w}_1 w_{12} [\sigma_1]_1'([a_1]_1) + \widetilde{w}_2 w_{22}  [\sigma_1]_2'([a_1]_2)\\
					\widetilde{w}_1 w_{13} [\sigma_1]_1'([a_1]_1) + \widetilde{w}_2 w_{23}   [\sigma_1]_2'([a_1]_2)
				\end{pmatrix}.
			\end{align*}
			By Theorem~\ref{th:nn_nocriticalpoints}, there exists no critical point if $W_1$ and $\widetilde{W}_1$ have both full rank. This is the case, as if $\nabla \Phi(x) = 0$ for some $x \in \mathcal{X}$, then  
			\begin{equation*}
				\widetilde{w}_1 [\sigma_1]_1'([a_1]_1)	\begin{pmatrix}
					w_{11} \\ w_{12} \\ w_{13}
				\end{pmatrix}+\widetilde{w}_2 [\sigma_1]_2'([a_1]_2) 
				\begin{pmatrix}
					w_{21} \\ w_{22} \\ w_{23}
				\end{pmatrix} = \begin{pmatrix}
					0 \\ 0 \\ 0
				\end{pmatrix},
			\end{equation*}
			such that either $W_1$ has only rank one as its columns are linearly dependent or the matrix $\widetilde{W}_1$ is the zero matrix and has rank zero.
		\end{example}
		
		\begin{figure}[b!]
			\centering
			\includegraphics[width=0.28\textwidth]{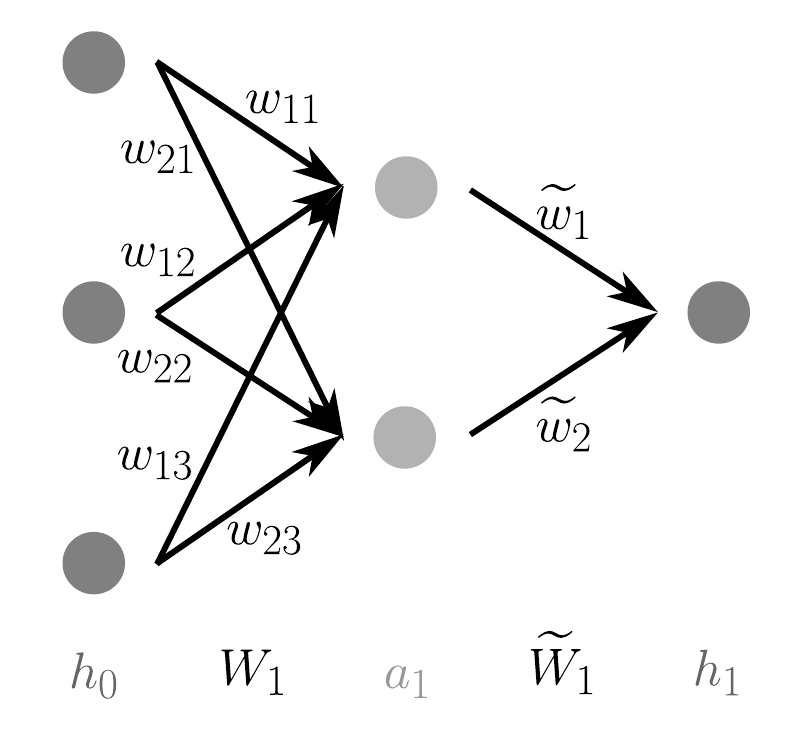}
			\caption{One-layer non-augmented neural network architecture $\Phi \in \Xi^k_\textup{N}(\mathbb{R}^3,\mathbb{R})$ of Example~\ref{ex:nn_nonaugmented}.}
			\label{fig:nn_nonaugmented}
		\end{figure}

		In contrast to non-augmented neural networks, critical points can exist in the case of augmented or bottleneck architectures. The following proposition shows, that for augmented and bottleneck architectures in normal form, there exist for every $x\in \mathcal{X}$ a choice of the full rank matrices $W \in \mathbb{W}^\ast$, such that the neural network has a critical point at $x \in \mathcal{X}$. For the explicit construction in the following theorem, results from linear algebra stated in Appendix \ref{sec:app_LA} are used.		
		
		\begin{theorem} \label{th:nn_criticalpoints}
			Given $L\geq 1$ and $\mathcal{X}\subset \mathbb{R}^n$, for every $x \in \mathcal{X}$ there exist weight matrices $W_x\in \mathbb{W}^\ast$ corresponding to an augmented architecture $\Phi \in \Xi^k_{\textup{A},\mathbb{W}^\ast}(\mathcal{X},\mathbb{R})$ and weight matrices $W_x\in \mathbb{W}^\ast$ corresponding to a bottleneck architecture $\Phi \in \Xi^k_{\textup{B},\mathbb{W}^\ast}(\mathcal{X},\mathbb{R})$, $k \geq 1$,  such that $\nabla \Phi(x) = 0$, i.e., $\Phi$ has a critical point at $x \in \mathcal{X}$.
		\end{theorem}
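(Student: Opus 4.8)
The plan is to reduce the statement to a purely linear-algebraic annihilation problem via the gradient formula of Lemma~\ref{lem:nn_gradient}, which gives
\[
\nabla\Phi(x)^\top = \widetilde{W}_L\Psi_L(a_L)W_L\cdots \widetilde{W}_1\Psi_1(a_1)W_1 \in \mathbb{R}^{1\times n},
\]
where each $\Psi_l(a_l)$ is a diagonal matrix with nonzero entries, hence invertible, because the activation functions are strictly monotone. Thus $\nabla\Phi(x)=0$ is equivalent to making this matrix product vanish. The single elementary fact I would use repeatedly is: if $M\in\mathbb{R}^{q\times p}$ has $\textup{rank}(M)<q$, then there exists a nonzero row vector $v^\top\in\mathbb{R}^{1\times q}$ with $v^\top M=0$, and such a $v^\top$ is itself a full-rank ($\textup{rank}\,1$) matrix. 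The strategy is therefore to arrange, at a suitable layer, that the product of all factors feeding into that layer has rank strictly smaller than the layer's width, and then to choose the one remaining factor to be a full-rank matrix annihilating it.

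For the augmented case I would pick an augmented architecture in which the last pre-activation layer $a_L$ is the first layer of maximal width, so that $m_L=d_{2L-1}\ge n+1$, the widths increasing up to it and the single final width $d_{2L}=1$ decreasing afterwards; this satisfies the augmentation conditions of Section~\ref{sec:nn_architecture}. Write the product as $\widetilde{W}_L R$ with $R\coloneqq\Psi_L(a_L)W_L\cdots\widetilde{W}_1\Psi_1(a_1)W_1\in\mathbb{R}^{m_L\times n}$. I would first fix the point $x$, all biases, and the full-rank matrices $W_1,\widetilde{W}_1,\ldots,\widetilde{W}_{L-1},W_L$ arbitrarily; this determines the forward pass, hence every $\Psi_l(a_l)$, and hence the concrete matrix $R$, which satisfies $\textup{rank}(R)\le n<m_L$ simply because $R$ has only $n$ columns. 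By the elementary fact there is a nonzero $\widetilde{W}_L=v^\top\in\mathbb{R}^{1\times m_L}$ with $\widetilde{W}_L R=0$; since $\widetilde{W}_L\neq 0$ it has full rank, so all weight matrices are full rank, $W_x\in\mathbb{W}^\ast$, and $\nabla\Phi(x)=(\widetilde{W}_L R)^\top=0$.

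For the bottleneck case the same annihilation works once the rank of $R$ is capped below $m_L$ by a narrow layer rather than by the input dimension. I would choose a bottleneck architecture containing a bottleneck layer $g_{j^\ast}$ with $i^\ast<j^\ast<2L-1$, $d_{i^\ast}>d_{j^\ast}$, and $d_{j^\ast}<d_{2L-1}=m_L$, so that the defining inequalities $d_{i^\ast}>d_{j^\ast}<d_{2L-1}$ of a bottleneck hold while the final pre-activation layer is strictly wider than the bottleneck. Because every factor of $R$ is traversed and the product passes through the layer $g_{j^\ast}$ of width $d_{j^\ast}$, we have $\textup{rank}(R)\le d_{j^\ast}<m_L$; fixing everything upstream as before and then choosing a nonzero $\widetilde{W}_L\in\mathbb{R}^{1\times m_L}$ orthogonal to the columns of $R$ again yields full-rank matrices with $\nabla\Phi(x)=0$.

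The main obstacle, and the reason the construction must be ordered carefully, is that the diagonal matrices $\Psi_l(a_l)$ are not free parameters: they are determined by the forward pass and therefore depend on $x$, the biases, and all the weight matrices upstream of them, so one cannot choose all matrices simultaneously to force the product to vanish. The resolution is exactly the order above --- fix $x$, the biases, and every factor except the final matrix $\widetilde{W}_L$; this freezes the product $R$ as a concrete matrix whose rank I can bound a priori from the dimension, respectively the bottleneck, constraint, and only then select $\widetilde{W}_L$ as a nonzero functional annihilating $R$. This decoupling removes the apparent circularity, and the only properties of $R$ actually used are its number of columns and the upper bound on its rank, both of which are guaranteed by the augmented, respectively bottleneck, width pattern.
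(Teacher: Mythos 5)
Your construction is sound as far as it goes, and it has one genuine merit over the paper's argument: by making the free matrix the very last factor $\widetilde{W}_L$, which never feeds back into the forward pass, you guarantee that the matrix $R=\Psi_L(a_L)W_L\cdots\Psi_1(a_1)W_1$ you want to annihilate is completely determined before you choose the annihilator, so there is no circularity between the unknown weight and the activation derivatives $\Psi_l(a_l)$. The rank bounds you invoke ($\operatorname{rank}(R)\le n$ in the augmented case, $\operatorname{rank}(R)\le d_{j^\ast}$ in the bottleneck case) are correct, and a nonzero row vector in the left kernel of $R$ does have full rank $1=\min\{1,m_L\}$, so $W_x\in\mathbb{W}^\ast$.

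The gap is that you prove a strictly weaker statement than the paper does. You get to \emph{choose} the architecture so that the annihilation can be performed by the final row vector; the paper's proof fixes an \emph{arbitrary} augmented or bottleneck MLP $\Phi$ (arbitrary admissible widths $d_0,\ldots,d_{2L}$) and then constructs full-rank weights with a critical point at the prescribed $x$. That stronger version is what the lead-in text asserts (``for augmented and bottleneck architectures in normal form, there exist for every $x\in\mathcal{X}$ a choice of the full rank matrices $W\in\mathbb{W}^\ast$\ldots'') and what is later invoked in the proof of Theorem~\ref{th:nn_morse}, where $\Phi$ is a given augmented network. Your method genuinely fails for some admissible architectures: take $n=2$, $L=2$ with widths $d=(2,3,3,2,1)$, which is augmented. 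Then $R\in\mathbb{R}^{2\times2}$ is a product of full-rank matrices whose dimensions first increase and then decrease back to $2$, so it is generically invertible, its left kernel is trivial, and no nonzero $\widetilde{W}_2\in\mathbb{R}^{1\times2}$ annihilates it. The paper handles this by placing the free matrix immediately \emph{after} the first layer of maximal width $g_{l^\ast}$ (respectively after the last bottleneck): one picks $v_{l^\ast}$ with $v_{l^\ast}V_{l^\ast}=0$, which exists because $d_{l^\ast-1}<d_{l^\ast}$, and then solves the linear system $\bigl[V_{2L}\cdots\bigr]V_{l^\ast+1}=v_{l^\ast}$ (or $v_{l^\ast}\Psi^{-1}$) for a \emph{full-rank} $V_{l^\ast+1}$, which requires Lemma~\ref{lem:nn_linearsystem} and the monotone decrease of the widths downstream of $g_{l^\ast}$. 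To repair your proof you would need to move the annihilating matrix to this interior position and address both the full-rank solvability of the resulting system and the dependence of the downstream $\Psi$-factors on that interior matrix.
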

		
		\begin{proof}
			For $\Phi \in \Xi^k_{\textup{A}}(\mathcal{X},\mathbb{R})$, the first layer of maximal width $g_{l^\ast}$, $l^\ast \in \{1,\ldots,2L-1\}$ fulfills $d_{l^\ast-1} < d_{l^\ast}$ and it holds $d_{j-1} \geq d_j$ for $l^\ast < j \leq 2L$. For $\Phi \in \Xi^k_{\textup{B}}(\mathcal{X},\mathbb{R})$, consider the last bottleneck of $\Phi$. Then there exist layers $g_i^\ast$, $g_j^\ast$ and $g_l^\ast$ with $0 \leq i^\ast < j^\ast < l^\ast \leq 2L$, such that $d_i^\ast>d_j^\ast$, $d_j^\ast\leq d_{j^\ast +1} \leq \ldots \leq d_{l^\ast-1} <d_l^\ast$ and $d_{j-1} \geq d_j$ for $l^\ast < j \leq 2L$. In both cases there exists a layer $g_{l^\ast}$ such that $d_{l^\ast-1}<d_{l^\ast}$, $d_{l^\ast}\geq d_{l^\ast+1}$ and from layer $g_{l^\ast}$ on-wards the width of the layers is monotonically decreasing.
			
			Let $\Phi \in \Xi^k_{\textup{A}}(\mathcal{X},\mathbb{R}) \cup \Xi^k_{\textup{B}}(\mathcal{X},\mathbb{R})$ be an augmented or bottleneck MLP, fix some $x \in \mathcal{X}$ and denote the weight matrices by $W =(W_1, \widetilde{W}_1, \ldots, W_L, \widetilde{W}_L) = (V_1,\ldots,V_{2L})\in \mathbb{W}$, where $V_1,\ldots,V_{l^\ast},V_{l^\ast+2},\ldots,V_{2L}$ are arbitrary but fixed full rank matrices. As $d_{l^\ast-1} < d_{l^\ast}$, the corresponding weight matrix $V_{l^\ast} \in \mathbb{R}^{d_{l^\ast} \times d_{l^\ast -1}}$ has rank $d_{l^\ast-1}$, where $V_{l^\ast} = \widetilde{W}_{l^\ast/2}$ if $l^\ast$ is even and $V_{l^\ast} = W_{(l^\ast+1)/2}$ if $l^\ast$ is odd. As $d_{l^\ast}>d_{l^\ast-1}$ and $\text{rank}(V_{l^\ast}) = d_{l^\ast-1}$, the $d_{l^\ast}$ rows of $V_{l^\ast}$ are linearly dependent. Hence, there exists a non-trivial linear combination of the rows of $V_{l^\ast}$, which results in a zero row, i.e., there exists $v_{l^\ast} \in \mathbb{R}^{1 \times d_{l^\ast}}$, $\text{rank}(v_{l^\ast}) = 1$, such that
			\begin{equation*}
				v_{l^\ast} V_{l^\ast} = 0 \in \mathbb{R}^{1 \times d_{l^\ast-1}}.
			\end{equation*}
			If $l^\ast$ is even, then
			\begin{equation*}
				(\nabla \Phi(x))^\top = V_{2L}\Psi_L(a_{L})V_{2L-1} \ldots \Psi_{l^\ast/2+1}(a_{l^\ast/2+1}) V_{l^\ast+1} V_{l^\ast} \ldots V_2\Psi_1(a_{1})V_1 \in \mathbb{R}^{1 \times n}.
			\end{equation*}
			As $V_{2L}$, $\Psi_L(a_{L})$, \ldots, $\Psi_{l^\ast/2+1}(a_{l^\ast/2+1}) $ are full rank matrices with monotonically increasing width, Lemma~\ref{lem:nn_productfullrank} implies that the matrix product $V_{2L} \Psi_L(a_{L}) \ldots \Psi_{l^\ast/2+1}(a_{l^\ast/2+1}) \in \mathbb{R}^{1 \times d_{l^\ast +1}} $ has rank one, independently of the choice of $x \in \mathcal{X}$. Since $d_{l^\ast}\geq d_{l^\ast+1}$, Lemma~\ref{lem:nn_linearsystem} shows via an explicit construction that there exist a full rank solution $V_{l^\ast +1} \in \mathbb{R}^{d_{l^\ast+1}\times d_{l^\ast}}$ of the linear system
			\begin{equation*}
				\left[V_{2L}\ldots \Psi_{l^\ast/2+1}(a_{l^\ast/2+1})\right] V_{l^\ast +1} = v_{l^\ast} \in \mathbb{R}^{1 \times d_{l^\ast}}.
			\end{equation*}
			Hence, there exist weight matrices $W_x \in \mathbb{W}^\ast$, which depend on $x$, such that $\nabla \Phi(x) = 0 \in \mathbb{R}^n$ for the fixed $x \in \mathcal{X}$. If $l^\ast$ is odd, then
			\begin{equation*}
				(\nabla \Phi(x))^\top = V_{2L}\Psi_L(a_{L})V_{2L-1} \ldots V_{l^\ast+2}V_{l^\ast+1} \Psi_{(l^\ast+1)/2}(a_{(l^\ast+1)/2})  V_{l^\ast} \ldots V_2\Psi_1(a_{1})V_1 \in \mathbb{R}^{1 \times n}.
			\end{equation*}
			As $V_{2L}$, \ldots, $V_{l^\ast+2}$ are full rank matrices with monotonically increasing width, Lemma~\ref{lem:nn_productfullrank} guarantees that the matrix product $\widetilde{W}_L \ldots V_{l^\ast+2} \in \mathbb{R}^{1 \times d_{l^\ast +1}} $ has rank one, independently of the choice of $x \in \mathcal{X}$. As $\Psi_{(l^\ast+1)/2}(a_{(l^\ast+1)/2}) $ is a diagonal, invertible full rank matrix, $v_{l^\ast} \left[\Psi_{(l^\ast+1)/2}(a_{(l^\ast+1)/2})\right]^{-1}$ has rank one. Since $d_{l^\ast}\geq d_{l^\ast+1}$, Lemma~\ref{lem:nn_linearsystem} implies that there exist a full rank solution $V_{l^\ast +1} \in \mathbb{R}^{d_{l^\ast+1}\times d_{l^\ast}}$ of the linear system 
			\begin{equation*}
				\left[V_{2L}\ldots V_{l^\ast+2}\right] V_{l^\ast+1}= v_{l^\ast} \left[\Psi_{(l^\ast+1)/2}(a_{(l^\ast+1)/2})\right]^{-1}\in \mathbb{R}^{1 \times d_{l^\ast}}.
			\end{equation*}
			Hence there exist weight matrices $W_x \in \mathbb{W}^\ast$, which depend on $x$, such that $\nabla \Phi(x) = 0 \in \mathbb{R}^n$ for the fixed $x \in \mathcal{X}$. As $\Phi$ was an augmented or a bottleneck neural network, the result holds for both architectures.
		\end{proof}
		
		Examples of augmented and bottleneck architectures, which have critical points, are shown in the upcoming Examples~\ref{ex:nn_augmented} and~\ref{ex:nn_bottleneck}, when the regularity of the critical points is analyzed. The last theorem has shown that for every $x \in \mathcal{X}$, the weight matrices of augmented neural networks and neural networks with a bottleneck can be chosen in such a way that $x \in \mathcal{X}$ is a critical point. From this statement, we cannot deduce if it is a generic property of augmented and bottleneck architectures to have a critical point or not. In the following, we show that if there exists for a fixed architecture a non-degenerate critical point, then the set of weights, such that the corresponding neural network has a non-degenerate critical point, has non-zero Lebesgue measure in the weight space. Hence, if a non-degenerate critical point exists for one set of weights, it is locally a generic property of the chosen architecture to have at least one non-degenerate critical point.
		
		\begin{theorem} \label{th:nn_criticalpoint_fulllebesgue}
			Let $\Phi \in \Xi^k(\mathcal{X},\mathbb{R})$, $k \geq 2$ be an MLP with weight matrices $W^\ast \in \mathbb{W}$. If $\Phi$ has a non-degenerate critical point $x^\ast \in \mathcal{X}$, then there exist an open neighborhood $\mathcal{V}$ of $w^\ast = \vem(W^\ast) \in \mathbb{R}^{N_W}$, which has non-zero Lebesgue measure in $\mathbb{R}^{N_W}$, such that for every $w \in \mathcal{V}$, the corresponding neural network has at least one critical point. By choosing $\mathcal{V}$ small enough, it can be guaranteed that at least one critical point of the corresponding neural network is non-degenerate.
		\end{theorem}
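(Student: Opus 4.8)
The plan is to recognize this as a persistence-of-critical-points statement and to prove it via the implicit function theorem, treating the vectorized weights as parameters. First I would regard the network output as a function $\Phi(x;w)$ of both the input $x \in \mathcal{X}$ and the weights $w = \vem(W) \in \mathbb{R}^{N_W}$, holding the biases fixed at their given values. Since each activation component lies in $C^k(\mathbb{R},\mathbb{R})$ and is composed with the affine maps $a_l = W_l h_{l-1} + b_l$, which depend polynomially on the entries of $W$, the map $(x,w) \mapsto \Phi(x;w)$ is jointly $C^k$. In particular, for $k \geq 2$, the gradient map
\begin{equation*}
	F: \mathcal{X} \times \mathbb{R}^{N_W} \to \mathbb{R}^n, \qquad F(x,w) \coloneqq \nabla_x \Phi(x;w),
\end{equation*}
whose explicit form is given by Lemma~\ref{lem:nn_gradient}, is of class $C^{k-1}$ and hence at least $C^1$ jointly in $(x,w)$.

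Next I would apply the implicit function theorem at the base point $(x^\ast, w^\ast)$. By hypothesis $F(x^\ast,w^\ast) = \nabla \Phi(x^\ast) = 0$, and the partial Jacobian of $F$ with respect to $x$ at this point is precisely the Hessian $H_\Phi(x^\ast)$ of the unperturbed network, which is non-singular because $x^\ast$ is a non-degenerate critical point. The implicit function theorem then furnishes an open neighborhood $\mathcal{V} \subset \mathbb{R}^{N_W}$ of $w^\ast$, an open neighborhood of $x^\ast$ contained in $\mathcal{X}$, and a unique $C^{k-1}$ map $w \mapsto \xi(w)$ with $\xi(w^\ast) = x^\ast$ and $F(\xi(w),w) = 0$ for all $w \in \mathcal{V}$. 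Shrinking $\mathcal{V}$ if needed and using continuity of $\xi$ together with openness of $\mathcal{X}$, I keep $\xi(w) \in \mathcal{X}$, so that $\xi(w)$ is a genuine critical point of the network with weights $w$. Because any non-empty open subset of $\mathbb{R}^{N_W}$ has strictly positive Lebesgue measure, this already establishes the first assertion.

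For the non-degeneracy claim I would invoke continuity of the determinant. The scalar function $w \mapsto \det H_\Phi(\xi(w);w)$ is continuous on $\mathcal{V}$, since the Hessian depends continuously on $(x,w)$ and $\xi$ is continuous, and at $w = w^\ast$ it equals $\det H_\Phi(x^\ast) \neq 0$. Hence there is a possibly smaller open neighborhood $\mathcal{V}' \subseteq \mathcal{V}$ on which this determinant stays non-zero, so that for every $w \in \mathcal{V}'$ the critical point $\xi(w)$ is non-degenerate, which is the second assertion.

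I do not expect a genuine obstacle here: the only points demanding care are the bookkeeping that the $x$-Jacobian of $F$ coincides with the Hessian and the verification of joint $C^{k-1}$ regularity in $(x,w)$, both of which follow from the explicit gradient formula of Lemma~\ref{lem:nn_gradient} and the chain rule. The essential structural input is simply the invertibility of the Hessian at the non-degenerate critical point, which is exactly what makes the implicit function theorem applicable and what drives the persistence of both the critical point and its non-degeneracy under small weight perturbations.
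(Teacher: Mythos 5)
Your proposal is correct and follows essentially the same route as the paper: both apply the implicit function theorem to the gradient map $F(x,w)=\nabla_x\Phi(x;w)$ at $(x^\ast,w^\ast)$, using non-singularity of the Hessian to obtain a continuously differentiable branch $w\mapsto g(w)$ of critical points on an open (hence positive-measure) neighborhood $\mathcal{V}$, and then shrink $\mathcal{V}$ via continuity of $\det H_\Phi$ to preserve non-degeneracy. No gaps; the bookkeeping points you flag (joint regularity and the identification of the $x$-Jacobian of $F$ with the Hessian) are handled the same way in the paper.
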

		
		\begin{proof}
			Let $w^\ast = \vem(W^\ast) \in \mathbb{R}^{N_W}$ be the vector of stacked weight matrices of the neural network~$\Phi$. Define the function
			\begin{equation*}
				F:  \mathcal{X} \times \mathbb{R}^{N_W} \rightarrow \mathbb{R}^n, \qquad (x,w) \mapsto F(x,w) = \nabla \Phi(x)
			\end{equation*}
			with $\mathcal{X}\subset \mathbb{R}^n $ open  and $\nabla \Phi(x)$ defined in Lemma~\ref{lem:nn_gradient}, which also depends on $w  \in \mathbb{R}^{N_W}$. As $k\geq 2$, $\Phi$ is a composition of functions, which are at least twice differentiable in $x$ and $w$, such that $F$ is a continuously differentiable function. By assumption, the neural network $\Phi$ has a critical point at $x^\ast \in \mathcal{X}$, such that $F(x^\ast,w^\ast) = 0$. The $n \times n$ matrix
			\begin{equation*}
				\frac{\partial F}{\partial x} = \frac{\nabla \Phi(x)}{\partial x} = H_{\Phi}(x)
			\end{equation*}
			is non-singular at the point $(x^\ast,w^\ast)$ as by assumption $x^\ast$ is a non-degenerate critical point. The Implicit Function Theorem (cf.\ \cite{Forster2017}) now implies that there exist open neighborhoods $\mathcal{U}\subset \mathcal{X}$ and $\mathcal{V}\subset \mathbb{R}^{N_W}$ with $(x^\ast,w^\ast) \in \mathcal{U} \times \mathcal{V}$ and a continuously differentiable function $g: \mathcal{V} \rightarrow \mathcal{U}$ with $g(w^\ast) = x^\ast$ such that
			\begin{equation*}
				F(g(w),w) = 0 \qquad \text{for all } w \in \mathcal{V}.
			\end{equation*}
			Consequently, for every $w \in \mathcal{V}$, the corresponding neural network has a critical point at $x = g(w)$. As $\mathcal{V}$ is an open set in $\mathbb{R}^{N_W}$, it has non-zero Lebesgue measure. As the determinant of the matrix $\frac{\partial F}{\partial x} = H_{\Phi}(x)$ is a continuous function in $w$, the neighborhood $\mathcal{V}$ can be chosen small enough, such that the critical point $x = g(w)$ is for every $w \in \mathcal{V}$ non-degenerate.
		\end{proof}
		
		\subsection{Regularity of Critical Points}
		\label{sec:nn_regularity}
		
		To prove the main results of this section, we use the following lemma of differential geometry, characterizing Morse functions not via their Hessian, but via the mixed second derivatives with respect to the weights and the input data. 
		
		\begin{lemma}[\cite{Nicolaescu2011}]\label{lem:nn_morsefunction}
			Let $\widehat{\Lambda}\in C^{2}(\mathcal{X} \times \mathcal{V},\mathbb{R})$, $\mathcal{X}\subset \mathbb{R}^n$, $\mathcal{V}\subset \mathbb{R}^p$ with $n \leq p$ and  denote by $\Lambda_v\in C^2(\mathcal{X},\mathbb{R})$ the family of functions $\Lambda_v(x) \coloneqq \widehat{\Lambda}(x,v)$, which depend continuously on the parameter $v \in \mathcal{V}$. If
			\begin{equation*}
				\frac{\partial^2 \widehat{\Lambda}}{\partial v \partial x }(x,v)\in \mathbb{R}^{n \times p}, \quad  \left[\frac{\partial^2 \widehat{\Lambda}}{\partial v \partial x }(x,v)\right]_{ij} = \frac{\partial^2 \widehat{\Lambda}}{\partial v_j \partial x_i}(x,v), \quad i \in \{1,\ldots,n\},\;  j \in \{1,\ldots,p\},
			\end{equation*}
			is for every $(x,v) \in \mathcal{X}\times \mathcal{V}$ surjective, i.e., the matrix $\frac{\partial^2 \widehat{\Lambda}}{\partial v \partial x }(x,v)$ has for every $(x,v) \in \mathcal{X}\times \mathcal{V}$ full rank~$n$, then there exists a set $\mathcal{V}_0 \subset \mathcal{V}$ of Lebesgue measure zero in $\mathbb{R}^p$, such that the function $\Lambda_v$ is for all weights $v \in \mathcal{V}\setminus\mathcal{V}_0 $  a Morse function. 
		\end{lemma}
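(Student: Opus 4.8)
The plan is to recognize this as an instance of the parametric transversality theorem and to prove it by combining the regular value theorem with Sard's theorem, exactly as in the classical argument for genericity of Morse functions. Throughout, write $G(x,v) \coloneqq \left(\frac{\partial \widehat{\Lambda}}{\partial x}(x,v)\right)^\top \in \mathbb{R}^n$ for the $x$-gradient of $\widehat{\Lambda}$, so that $x$ is a critical point of $\Lambda_v$ precisely when $G(x,v)=0$, and abbreviate the mixed block $M(x,v) \coloneqq \frac{\partial^2 \widehat{\Lambda}}{\partial v\,\partial x}(x,v) \in \mathbb{R}^{n\times p}$. Since $\widehat{\Lambda}\in C^2$, the map $G$ is $C^1$, and its total derivative decomposes into two blocks
\begin{equation*}
DG(x,v) = \left[\, H(x,v) \;\Big|\; M(x,v)\,\right] \in \mathbb{R}^{n\times(n+p)},
\end{equation*}
where $H(x,v)\in\mathbb{R}^{n\times n}$ is the Hessian of $\Lambda_v$ in the $x$-variables.

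First I would exploit the hypothesis: since $M(x,v)$ has full rank $n$ at every $(x,v)$, the matrix $DG(x,v)$ has rank $n$ everywhere, so $0$ is a regular value of $G$. By the regular value theorem (which needs only $G\in C^1$), the zero set $\Sigma \coloneqq G^{-1}(0)$ is a $C^1$-submanifold of $\mathcal{X}\times\mathcal{V}$ of dimension $(n+p)-n = p$. This is where the assumption $n\leq p$ enters, since an $n\times p$ matrix can have rank $n$ only when $p\geq n$.

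The heart of the argument is the projection $\pi\colon\Sigma\to\mathcal{V}$, $(x,v)\mapsto v$, together with the claim that $v\in\mathcal{V}$ is a \emph{regular value} of $\pi$ if and only if $\Lambda_v$ is a Morse function. To see this, fix $(x,v)\in\Sigma$ and identify the tangent space $T_{(x,v)}\Sigma = \ker DG(x,v) = \{(\dot x,\dot v): H\dot x + M\dot v = 0\}$. The differential of $\pi$ sends $(\dot x,\dot v)\mapsto \dot v$, which is surjective onto $\mathbb{R}^p$ exactly when, for every $\dot v$, the equation $H\dot x = -M\dot v$ is solvable in $\dot x$; because $M$ has rank $n$ its image is all of $\mathbb{R}^n$, so solvability for every $\dot v$ is equivalent to $H$ being surjective, i.e.\ to $H=H_{\Lambda_v}(x)$ being nonsingular. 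Thus a point of $\Sigma$ is regular for $\pi$ exactly when the corresponding critical point is non-degenerate, and $v$ is a regular value of $\pi$ exactly when every critical point of $\Lambda_v$ is non-degenerate, that is, when $\Lambda_v$ is a Morse function.

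Finally I would invoke Sard's theorem for $\pi\colon\Sigma\to\mathcal{V}$. Since $\dim\Sigma = \dim\mathcal{V} = p$, the smoothness threshold required by Sard is $C^{\max\{1,\,p-p+1\}} = C^1$, which $\pi$ satisfies (if $\Sigma$ is empty the statement is vacuous). Hence the set $\mathcal{V}_0\subset\mathcal{V}$ of critical values of $\pi$ has Lebesgue measure zero in $\mathbb{R}^p$, and for every $v\in\mathcal{V}\setminus\mathcal{V}_0$ the function $\Lambda_v$ is a Morse function, as claimed. The main obstacle to watch is the bookkeeping in the regular-value/Sard step: one must check that the $C^2$ regularity of $\widehat{\Lambda}$ is exactly enough, since it renders $G$ only $C^1$, which is precisely why the equal-dimension case of Sard is essential; and one must carry out the linear-algebra equivalence above cleanly, as it is what converts the analytic condition ``regular value of the projection'' into the geometric condition ``non-degenerate critical point.''
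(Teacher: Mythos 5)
The paper does not prove this lemma at all --- it is quoted verbatim from \cite{Nicolaescu2011} --- and your argument is precisely the standard parametric-transversality proof given in that reference: the regular value theorem applied to $G(x,v)=\nabla_x\widehat{\Lambda}(x,v)$ using the full rank of the mixed block, the identification of regular values of the projection $\pi\colon G^{-1}(0)\to\mathcal{V}$ with parameters for which $\Lambda_v$ is Morse, and the equal-dimension case of Sard's theorem. Your proof is correct, and you handle the two delicate points properly (where $n\leq p$ enters, and why $C^2$ regularity of $\widehat{\Lambda}$, hence only $C^1$ data for $G$ and $\pi$, still suffices for Sard when $\dim\Sigma=\dim\mathcal{V}$).
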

		
		\begin{example}\normalfont
			Let $\widehat{\Lambda}\in C^2(\mathbb{R} \times \mathbb{R}^2,\mathbb{R})$ be defined by $\widehat{\Lambda}(x,v_1,v_2) = v_1x^2+v_2x$. As the matrix $$\frac{\partial^2 \widehat{\Lambda}}{\partial v \partial x }(x,v) = \left(2x , 1\right)$$ has for all $(x,v) \in \mathbb{R} \times \mathbb{R}^2$ rank one, Lemma~\ref{lem:nn_morsefunction} implies that $\Lambda_v(x) = v_1x^2+v_2x$ is for all $v \in \mathbb{R}^2$, except possibly for a set $\mathcal{V}_0 \subset \mathbb{R}$ of Lebesgue measure zero, a Morse function. In this example $\mathcal{V}_0 = (0,0)$ as for $v_1 \neq 0$ it holds for the Hessian $H_{\Lambda_v}(x) = 2v_1 \neq 0$, such that every possible critical point is non-degenerate. For $v_1 = 0$ and $v_2 \neq 0$, the map $\Lambda_v$ has no critical point and hence is a Morse function, only for $v_1 = v_2 = 0$ the map $\Lambda_v$ has degenerate critical points and is not a Morse~function.
		\end{example}
		
		The idea to apply Theorem~\ref{lem:nn_morsefunction} to MLPs was already used by Kurochkin in \cite{Kurochkin2021}. In this work, we rigorously prove an analogous result to the main theorem of \cite{Kurochkin2021} for our more general setting. To use Lemma~\ref{lem:nn_morsefunction}, the mixed partial second derivatives with respect to the input $x$ and the parameters~$v$ need to be calculated. For MLPs, this means to differentiate the network gradient of Lemma~\ref{lem:nn_gradient} with respect to the weights $W$, $\widetilde{W}$, and the biases $b$, $\tilde{b}$. As these computations are lengthy, the proof is postponed to Appendix~\ref{app:regularity}. Due to the chain rule, the partial derivatives with respect to $W$ and $b$ or with respect to $\widetilde{W}$ and $\tilde{b}$ have a similar form, which is used to show together with the structure of an augmented MLP the surjectivity of the matrix of mixed partial second derivatives. The result we obtain for augmented MLPs is stated below.
		
		\begin{theorem} \label{th:nn_morse}
			Any augmented MLP $\Phi \in \Xi^k_{\textup{A},\mathbb{W}^\ast}(\mathcal{X},\mathbb{R})$, $k \geq 2$, with weight matrices $W\in \mathbb{W}^\ast$ and biases $b \in \mathbb{B}$, is for all weights $(\vem(W);\stack(B)) \in \mathbb{R}^{N_W+N_B} $, except possibly for a zero set in $\mathbb{R}^{N_W+N_B}$ with respect to the Lebesgue measure, of class $(\mathcal{C}1)^k (\mathcal{X},\mathbb{R})$ or $(\mathcal{C}2)^k (\mathcal{X},\mathbb{R})$ and hence a Morse function. The same statement holds if $\mathbb{W}^\ast$ is replaced by $\mathbb{W}$. 
		\end{theorem}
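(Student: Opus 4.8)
The plan is to apply the parametric Morse Lemma~\ref{lem:nn_morsefunction} with $\widehat\Lambda(x,v)=\Phi(x)$, where the parameter vector $v$ collects a carefully chosen subset of the weights and biases. By that lemma it suffices to exhibit, for \emph{every} admissible $(x,v)$, a full-rank ($=n$) mixed second derivative matrix $\partial^2\Phi/\partial v\,\partial x$; the conclusion that $\Phi$ is a Morse function for almost every $v$, i.e.\ that $\Phi\in(\mathcal{C}1)^k(\mathcal{X},\R)\cup(\mathcal{C}2)^k(\mathcal{X},\R)$, then follows from the measure-zero exceptional set supplied by the lemma. The heart of the argument is therefore to establish surjectivity of this matrix uniformly in $(x,v)$, and this is exactly where the augmented structure enters.

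Let $g_{l^\ast}$ be the first layer of maximal width, so $d_{l^\ast}\ge n+1$, the widths increase on $\{0,\dots,l^\ast\}$ and decrease on $\{l^\ast,\dots,2L\}$. Writing $\Phi=B\circ F$ with $F:x\mapsto g_{l^\ast}\in\R^{d_{l^\ast}}$ and $B:g_{l^\ast}\mapsto\Phi\in\R$, the chain rule gives $\nabla_x\Phi=(\mathrm{D}F)^\top\nabla B$. Two structural facts hold for all $x$ and all $W\in\mathbb{W}^\ast$: first, since the widths are monotonically increasing up to $l^\ast$ and all weight matrices have full rank, Lemma~\ref{lem:nn_productfullrank} shows $\mathrm{D}F\in\R^{d_{l^\ast}\times n}$ has full column rank $n$, so $(\mathrm{D}F)^\top$ is surjective onto $\R^n$; second, since the widths are monotonically decreasing after $l^\ast$, the backward-propagated covector $\nabla B\in\R^{d_{l^\ast}}$ is obtained by applying injective transposed full-rank maps (and invertible diagonal factors $\Psi_l$) to the nonzero terminal factor $\widetilde W_L^\top$, hence $\nabla B\neq 0$. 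Both facts are unconditional, holding even at critical points of $\Phi$.

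I would then take as parameters $v$ the weight matrix $V_{l^\ast}$ feeding into $g_{l^\ast}$ together with the bias $b_{l^\ast}$ added at that layer, and differentiate $\nabla_x\Phi=(\mathrm{D}z)^\top V_{l^\ast}^\top\nabla B(g_{l^\ast})$, where $z=g_{l^\ast-1}$ and $g_{l^\ast}=V_{l^\ast}z+b_{l^\ast}$. Differentiating with respect to $[V_{l^\ast}]_{pq}$ produces a \emph{direct} term $[\nabla B]_p\,\nabla_x z_q$ plus an \emph{indirect} term lying in $\mathrm{Range}\big((\mathrm{D}z)^\top V_{l^\ast}^\top H_B\big)$, where $H_B$ is the Hessian of $B$; the columns of $\partial(\nabla_x\Phi)/\partial b_{l^\ast}$ span exactly this same range, so subtracting a suitable combination of them isolates the pure direct terms $[\nabla B]_p\,\nabla_x z_q$. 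Choosing $p$ with $[\nabla B]_p\neq0$ (possible by the second fact) and letting $q$ range over the columns of $\mathrm{D}z$ (whose transpose surjects onto $\R^n$ by the first fact) yields $n$ vectors spanning $\R^n$. Hence $\partial^2\Phi/\partial v\,\partial x$ has rank $n$ at every $(x,v)$, and Lemma~\ref{lem:nn_morsefunction} applies.

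The main obstacle—and the reason for localizing the perturbation at $g_{l^\ast}$ rather than at the input layer—is establishing this surjectivity uniformly, including at critical points. A naive variation of the first-layer parameters only affects $\nabla_x\Phi$ through $W_1^\top$ times the \emph{fully} backward-propagated covector, which can vanish once the increasing-width transposes develop a kernel, destroying full rank precisely where it is most needed. Working at the maximal-width layer circumvents this, because there the forward Jacobian is still injective and the backward covector is still nonzero, both without exception. Finally, the extension from $\mathbb{W}^\ast$ to $\mathbb{W}$ is immediate: by Lemma~\ref{lem:nn_weightspace} the non-full-rank weights $\vem(\mathbb{W}_0)$ form a Lebesgue null set in $\R^{N_W}$, so adjoining them to the exceptional set keeps it of measure zero.
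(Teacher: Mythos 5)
Your proposal is correct and follows essentially the same route as the paper's proof (Theorem~\ref{th:nn_morse_app} in Appendix~\ref{app:regularity}): apply Lemma~\ref{lem:nn_morsefunction}, cancel the indirect terms by subtracting bias-derivative columns from weight-derivative columns, and use the augmented structure together with Lemma~\ref{lem:nn_productfullrank} to show that the remaining rank-one ``direct'' block of the form $[\nabla B]_p\,[(\mathrm{D}z)^\top]_q$ has rank $n$. The only (immaterial) difference is that you perturb the weight matrix feeding \emph{into} the maximal-width layer $g_{l^\ast}$ while the paper perturbs the one feeding \emph{out} of it; both work because the forward Jacobian up to $g_{l^\ast}$ has full column rank $n$ and the backward covector after $g_{l^\ast}$ is nonzero, and in either case the full-rank sub-block forces the whole matrix $\partial^2 \widehat{\Phi}/\partial v\,\partial x$ over all weights and biases to have rank $n$, yielding the null exceptional set in $\mathbb{R}^{N_W+N_B}$.
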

		
		\begin{proof}
			Theorem~\ref{th:nn_morse} is proven as Theorem~\ref{th:nn_morse_app} in Appendix~\ref{app:regularity}.
		\end{proof}
		
		\begin{remark}\label{rem:nn_morse}
			In Appendix~\ref{app:regularity}, we explain after the proof of Theorem~\ref{th:nn_morse_app} in Remark~\ref{rem:nn_morsetheorem}, that the proof of Theorem~\ref{th:nn_morse_app} can also apply in certain cases to MLPs with a bottleneck. In the analysis of bottleneck architectures in Theorem~\ref{th:nn_bottleneck}, we prove in part~\ref{th:nn_bottleneck_c} the necessary conditions to show the same statement as in Theorem~\ref{th:nn_morse}/Theorem~\ref{th:nn_morse_app}  for specific bottleneck architectures.
		\end{remark}
		
		We end this section with an example of a one-layer augmented neural network illustrating the assertions of Theorems~\ref{th:nn_criticalpoint_fulllebesgue} and~\ref{th:nn_morse}.

		\begin{example} \label{ex:nn_augmented}
			\normalfont Consider the one-layer neural network architecture $\Phi \in \Xi^\infty_\textup{A}(\mathbb{R},\mathbb{R})$ with $L = 1$ defined by the weights and biases
			\begin{equation*}
				W_1 = \begin{pmatrix}
					w_1 \\ w_2
				\end{pmatrix} \in \mathbb{R}^{2}, \qquad b_1 = \begin{pmatrix}
					b_{11} \\ b_{12}
				\end{pmatrix} \in \mathbb{R}^2, \qquad \widetilde{W}_1 = (\widetilde{w}_1,\widetilde{w}_2) \in \mathbb{R}^{1\times 2}, \qquad \tilde{b}_1 \in \mathbb{R},
			\end{equation*}
			and soft-plus activation functions $[\sigma_1]_1(x) = [\sigma_1]_2(x) = \ln(1+\exp(x))$. $\Phi$ is visualized in Figure~\ref{fig:nn_augmented}. By Theorem~\ref{th:nn_morse}, the network $\Phi$ is for all $v = (w_1,w_2,\widetilde{w}_1,\widetilde{w}_2,b_{11},b_{12},\tilde{b}_1)^\top \in \mathbb{R}^7$, except possibly for a zero set with respect to the Lebesgue measure in $\mathbb{R}^7$ a Morse function, which we verify in the following. By Lemma~\ref{lem:nn_gradient} it holds for the gradient
			\begin{align*}
				\nabla \Phi(x) = (w_1,w_2)
				\begin{pmatrix}
					[\sigma_1]_1'([a_1]_1) & 0 \\
					0 & [\sigma_1]_2'([a_1]_2)
				\end{pmatrix}
				\begin{pmatrix}
					\widetilde{w}_1 \\ \widetilde{w}_2
				\end{pmatrix} = \frac{w_1 \widetilde{w}_1}{1+\exp(-a_1)}
				+\frac{w_2 \widetilde{w}_2}{1+\exp(-a_2)}
			\end{align*}
			and the Hessian is given by
			\begin{equation*}
				H_\Phi(x) = \frac{w_1^2 \widetilde{w}_1\exp(-a_1)}{(1+\exp(-a_1))^2}
				+\frac{w_2^2 \widetilde{w}_2\exp(-a_2)}{(1+\exp(-a_2))^2},
			\end{equation*}
			where $a_1 =w_1x+b_{11}$ and $a_2 = w_2x+b_{12}$.
			The set of weights
			\begin{equation*}
				\mathcal{W}_0 \coloneqq \left\{v\in \mathbb{R}^7: w_1 = 0 \vee w_2 = 0 \vee \widetilde{w}_1 = 0 \vee \widetilde{w}_2 = 0 \vee w_1 = w_2\right\}
			\end{equation*}
			describes a lower-dimensional set and hence is a zero set with respect to the Lebesgue measure in $\mathbb{R}^7$. From now on, we assume $v \in \mathbb{R}^7\setminus \mathcal{W}_0$. 
			If $\Phi$ has no critical point it is of class $(\mathcal{C}1)^\infty(\mathbb{R},\mathbb{R})$. Otherwise, let $x^\ast \in \mathbb{R}$ be a critical point, then the Hessian matrix evaluated at $x^\ast$ is given by
			\begin{equation*}
				H_\Phi(x^\ast) = \frac{\widetilde{w}_1}{\widetilde{w}_2} \frac{w_1^2}{(1+\exp(-a_1^\ast))^2}\left(\widetilde{w}_2\exp(-a_1^\ast)+\widetilde{w}_1\exp(-a_2^\ast)\right),
			\end{equation*}
			where we used the algebraic constraint $\nabla \Phi(x^\ast) = 0$ and defined $a_1^\ast =w_1x^\ast+b_{11}$ and $a_2^\ast = w_2x^\ast+b_{12}$. As $v \in \mathbb{R}^7 \setminus \mathcal{W}_0$ it holds
			\begin{equation*}
				H_\Phi(x^\ast) = 0  \qquad \Leftrightarrow \qquad g(v,x^\ast) \coloneqq \widetilde{w}_2\exp(-a_1^\ast)+\widetilde{w}_1\exp(-a_2^\ast) = 0.
			\end{equation*}
			If $\widetilde{w}_1\widetilde{w}_2>0$, $g(v,x^\ast)$ is always non-zero, such that every critical point is non-degenerate. Otherwise $\widetilde{w}_1\widetilde{w}_2<0$ and
			the equation $g(v,x^\ast) = 0$ is for fixed $v$ uniquely solvable for $x^\ast$:
			\begin{equation} \label{eq:ex_augmented1}
				g(v,x^\ast) = 0 \qquad \Leftrightarrow \qquad x^\ast  = h(v) =  \frac{\ln(-\widetilde{w}_2/\widetilde{w}_1)-b_{11}+b_{12}}{w_1-w_2}.
			\end{equation}
			This implies that for fixed $v \in \mathbb{R}^7\setminus \mathcal{W}_0$, there can exist at most one degenerate critical point. Multiple critical points may exist, but~\eqref{eq:ex_augmented1} implies that there cannot be multiple degenerate critical points. In the appendix, we prove in Lemma \ref{lem:app_regular} that in the case $\widetilde{w}_1\widetilde{w}_2<0$, the set of weights, such that $\Phi$ has one degenerate critical point, has Lebesgue measure zero in $\R^7$. Hereby, we use Theorem \ref{th:nn_criticalpoint_fulllebesgue} that non-degenerate critical points perturb to non-degenerate critical points under small perturbations of the weights. Hence, for all weights and biases, except possibly for a Lebesgue measure zero set in $\R^7$, $\Phi$ is a Morse function.
		\end{example}

		\begin{figure}
			\centering
			\includegraphics[width=0.35\textwidth]{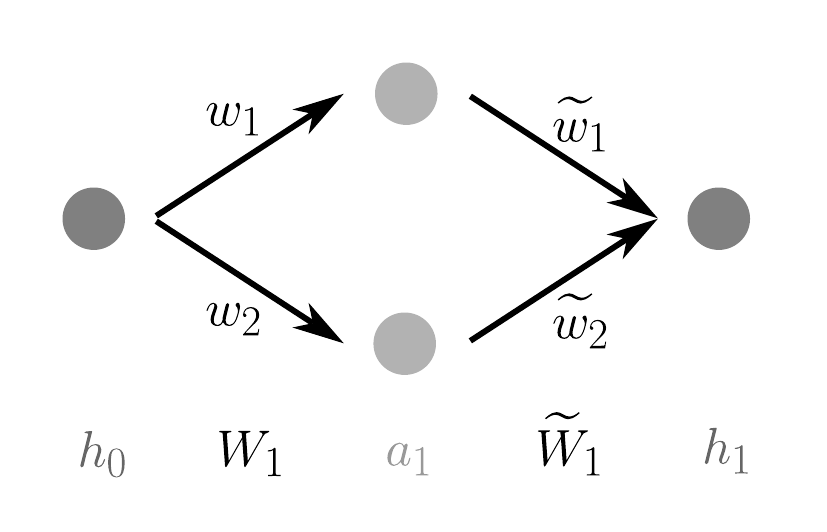}
			\caption{One-layer augmented neural network architecture $\Phi \in \Xi^\infty_\textup{A}(\mathbb{R},\mathbb{R})$ of Example~\ref{ex:nn_augmented}.}
			\label{fig:nn_augmented}
		\end{figure}
		
		\subsection{Analysis of Bottleneck Architectures} \label{sec:nn_bottleneck}
		
		In the last section, we proved Theorem~\ref{th:nn_morse} and showed the regularity of the critical points of augmented neural networks. As mentioned in Remark~\ref{rem:nn_morse}, the theorem can also be applied to neural networks with a bottleneck if they have a specific structure. In the following, we study different types of neural networks with a bottleneck, and show that they can be of class $(\mathcal{C}1)^k(\mathcal{X},\mathbb{R})$, $(\mathcal{C}2)^k(\mathcal{X},\mathbb{R})$, or $(\mathcal{C}3)^k(\mathcal{X},\mathbb{R})$. The main difference between the types of bottlenecks defined in the following theorem is whether the neural network has an augmented part and whether a certain matrix product is non-zero or not.
		
		\begin{theorem}\label{th:nn_bottleneck}
			Let $\Phi \in \Xi_\textup{B}^k(\mathcal{X},\mathbb{R})$, $k \geq 2$, $\mathcal{X}\subset \mathbb{R}^n$ be a MLP with a bottleneck with weight matrices $W =  (W_1, \widetilde{W}_1, \ldots, W_L, \widetilde{W}_L) = (V_1,\ldots,V_{2L}) \in \mathbb{W}$. Assume that $\Phi$ has its first bottleneck in layer $g_{j^\ast} \in \mathbb{R}^{d_{j^\ast}}$. Define for $l \in \{1,\ldots,2L-1\}$ the matrix product
			\begin{equation*}
				Z_l(x) \coloneqq \widetilde{W}_L \Psi_L(a_L)W_L\ldots V_{l+1}\in \mathbb{R}^{1\times d_l},
			\end{equation*}
			where $V_{l+1} = W_{l/2+1}$ if $l$ is even and $V_{l+1} = \widetilde{W}_{(l+1)/2}$ if $l$ is odd. $Z_l(x)$ is the product of all matrices occurring in the gradient defined in Lemma~\ref{lem:nn_gradient} until $V_{l+1}$.
			\begin{enumerate}[label=(\alph*), font=\normalfont]
				\item \label{th:nn_bottleneck_a}Let the first bottleneck of $\Phi$ be non-augmented, i.e., $n = d_0 \geq \ldots > d_{j^\ast}$ and let $Z_{j^\ast}(x)\neq 0 $ for all $x \in \mathcal{X}$. If $W \in \mathbb{W}^\ast$, $\Phi$ is of class $(\mathcal{C}1)^k(\mathcal{X},\mathbb{R})$. Consequently, for all sets of weights  $(\vem(W);\stack(B)) \in \mathbb{R}^{N_W+N_B} $, $W \in \mathbb{W}$, $b \in \mathbb{B}$, except possibly for a zero set in $\mathbb{R}^{N_W+N_B}$ with respect to the Lebesgue measure, $\Phi$ is of class $(\mathcal{C}1)^k (\mathcal{X},\mathbb{R})$.
				\item \label{th:nn_bottleneck_b}Let the first bottleneck of $\Phi$ be non-augmented, i.e., $n = d_0 \geq \ldots > d_{j^\ast}$ and denote the set of points $x \in \mathcal{X}$ such that $Z_{j^\ast}(x) = 0 $ by $\mathcal{X}^\ast \neq \varnothing$. Then every critical point $x^\ast \in \mathcal{X}^\ast$ is degenerate, such that $\Phi$ is of class $(\mathcal{C}3)^k (\mathcal{X},\mathbb{R})$.
				\item \label{th:nn_bottleneck_c}Let the first bottleneck of $\Phi$ be augmented, i.e., there exists an index $i^\ast<j^\ast $ with $n = d_0 \leq \ldots < d_{i^\ast}$ and let $Z_{i^\ast}(x)\neq 0$ for all $x \in \mathcal{X}$. Then $\Phi$ is for all weights $(\vem(W);\stack(B)) \in \mathbb{R}^{N_W+N_B} $, $W \in \mathbb{W}^\ast$, $b \in \mathbb{B}$, except possibly for a zero set in $\mathbb{R}^{N_W+N_B}$ with respect to the Lebesgue measure, of class $(\mathcal{C}1)^k (\mathcal{X},\mathbb{R})$ or $(\mathcal{C}2)^k (\mathcal{X},\mathbb{R})$. The same statement holds if $\mathbb{W}^\ast$ is replaced by $\mathbb{W}$.
				\item \label{th:nn_bottleneck_d} Let the first bottleneck of $\Phi$ be augmented, i.e., there exists an index $i^\ast<j^\ast $ with   $n = d_0 \leq \ldots < d_{i^\ast}$ and denote the set of points $x \in \mathcal{X}$ such that $Z_{i^\ast}(x) = 0 $ by $\mathcal{X}^\ast \neq \varnothing$. Then $x^\ast\in\mathcal{X}^\ast$ can be either degenerate or non-degenerate, so $\Phi$ is  of class $(\mathcal{C}2)^k (\mathcal{X},\mathbb{R})$ or $(\mathcal{C}3)^k (\mathcal{X},\mathbb{R})$.
			\end{enumerate}
		\end{theorem}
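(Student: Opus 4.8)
The plan is to handle all four cases through a single factorization of the gradient together with the chain rule for the Hessian of a composition. Writing $\Phi = P\circ G$, where $G$ collects the layers from the input $g_0 = x$ up to an intermediate layer $g_l$ and $P$ collects the remaining layers up to the scalar output, Lemma~\ref{lem:nn_gradient} gives $\nabla\Phi(x)^\top = Z_l(x)\,\frac{\partial g_l}{\partial g_0}(x)$, where $Z_l(x)^\top = \nabla P(G(x))$ is exactly the upper product defining $Z_l$ and $\frac{\partial g_l}{\partial g_0}(x) = DG(x)$ is the lower Jacobian. First I would record the rank of $DG$ via Lemma~\ref{lem:nn_productfullrank}: in the non-augmented cases~\ref{th:nn_bottleneck_a} and~\ref{th:nn_bottleneck_b} (cut at $l = j^\ast$) the dimensions $d_{j^\ast}\le\ldots\le d_0 = n$ force $DG$ to have full row rank $d_{j^\ast}$, while in the augmented cases~\ref{th:nn_bottleneck_c} and~\ref{th:nn_bottleneck_d} (cut at $l = i^\ast$) the dimensions $n = d_0\le\ldots\le d_{i^\ast}$ with $d_{i^\ast}\ge n+1$ force $DG$ to have full column rank $n$, i.e.\ to be injective. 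I would also record the composition Hessian
\begin{equation*}
H_\Phi(x) = DG(x)^\top H_P(G(x))\,DG(x) + \sum_k [\nabla P(G(x))]_k\,H_{G_k}(x),
\end{equation*}
whose second (cross) term vanishes at any point where $\nabla P(G(x)) = Z_l(x)^\top = 0$.

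For part~\ref{th:nn_bottleneck_a}, since $DG = \frac{\partial g_{j^\ast}}{\partial g_0}$ has full row rank $d_{j^\ast}$, the map $z\mapsto z\,DG$ is injective, so $Z_{j^\ast}(x)\neq 0$ forces $\nabla\Phi(x)^\top = Z_{j^\ast}(x)\,DG(x)\neq 0$ for every $x$; hence $\Phi\in(\mathcal{C}1)^k(\mathcal{X},\mathbb{R})$, and the Lebesgue-measure statement follows from Lemma~\ref{lem:nn_weightspace} exactly as in Theorem~\ref{th:nn_nocriticalpoints}. For part~\ref{th:nn_bottleneck_b}, every $x^\ast\in\mathcal{X}^\ast$ satisfies $Z_{j^\ast}(x^\ast)=0$, hence $\nabla\Phi(x^\ast)=0$, so $x^\ast$ is a critical point; there the cross term drops out and $H_\Phi(x^\ast) = DG(x^\ast)^\top H_P\,DG(x^\ast)$ has rank at most $d_{j^\ast}<n$, so it is singular and $\Phi\in(\mathcal{C}3)^k(\mathcal{X},\mathbb{R})$.

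Parts~\ref{th:nn_bottleneck_c} and~\ref{th:nn_bottleneck_d} use the augmented front, where $DG = \frac{\partial g_{i^\ast}}{\partial g_0}$ is injective. For part~\ref{th:nn_bottleneck_d}, any $x^\ast\in\mathcal{X}^\ast$ is again critical, so $\Phi\notin(\mathcal{C}1)^k$; the cross term once more vanishes and $H_\Phi(x^\ast) = DG(x^\ast)^\top H_P\,DG(x^\ast)$, but now injectivity of $DG(x^\ast)$ makes this $n\times n$ matrix nonsingular precisely when $H_P$ is nondegenerate on the $n$-dimensional subspace $\mathrm{im}\,DG(x^\ast)$. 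Since $H_P$ is unconstrained by the hypotheses, I would exhibit explicit weights realising both a nondegenerate and a degenerate restriction, giving classes $(\mathcal{C}2)^k$ and $(\mathcal{C}3)^k$ respectively. Part~\ref{th:nn_bottleneck_c} is the genericity statement: here I would mirror the proof of Theorem~\ref{th:nn_morse_app}, applying Lemma~\ref{lem:nn_morsefunction} with the parameter family taken among the weights and biases of the augmented lower subnetwork $G$. The augmentation $d_{i^\ast}\ge n+1$ together with injectivity of $DG$ supplies the needed room, and the hypothesis $Z_{i^\ast}(x)\neq 0$ plays the role that the nonvanishing output product played in the purely augmented case, ensuring that the mixed second-derivative matrix $\frac{\partial^2\Phi}{\partial v\,\partial x}(x,v)$ is surjective onto $\mathbb{R}^n$ for every $(x,v)$; the lemma then yields a Morse function off a Lebesgue-null set of weights, so $\Phi\in(\mathcal{C}1)^k\cup(\mathcal{C}2)^k$.

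I expect the main obstacle to be part~\ref{th:nn_bottleneck_c}: verifying surjectivity of $\frac{\partial^2\Phi}{\partial v\,\partial x}$ requires differentiating the gradient of Lemma~\ref{lem:nn_gradient} with respect to the lower-network parameters and tracking how the factor $Z_{i^\ast}(x)$ threads through the bottleneck — the same lengthy computation deferred to the appendix for Theorem~\ref{th:nn_morse}, now carried out with the cut placed at the widest pre-bottleneck layer $g_{i^\ast}$ rather than at a global maximal-width layer. The remaining parts are comparatively short rank computations once the factorization and the composition-Hessian formula are in place.
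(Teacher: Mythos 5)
Your proposal is correct and follows essentially the same route as the paper: the same splitting $\nabla\Phi(x)^\top = Z_l(x)Y_l(x)$ at $l=j^\ast$ resp.\ $l=i^\ast$, Lemma~\ref{lem:nn_productfullrank} for the rank of the lower Jacobian, the reduction of part (c) to the argument of Theorem~\ref{th:nn_morse_app} via Lemma~\ref{lem:nn_morsefunction}, and explicit weight choices to realise both classes in part (d). The only cosmetic difference is that you package the Hessian at critical points via the second-order chain rule $H_\Phi = DG^\top H_P\,DG + \sum_k[\nabla P]_kH_{G_k}$, whereas the paper differentiates the product $Z_lY_l$ row by row; at points where $Z_l$ vanishes both yield the identical matrix $DG^\top H_P\,DG$ and the same rank bounds.
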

		
		\begin{proof}
			Let $\Phi \in \Xi^k_{\textup{B}}(\mathcal{X},\mathbb{R})$, with weight matrices $W = (W_1, \widetilde{W}_1, \ldots, W_L, \widetilde{W}_L) = (V_1,\ldots,V_{2L}) \in \mathbb{W}$. By definition, there exist layers $g_i^\ast$, $g_j^\ast$ and $g_l^\ast$ with $0 \leq i^\ast < j^\ast < l^\ast \leq 2L$, such that $d_i^\ast>d_j^\ast$ and $d_j^\ast <d_l^\ast$. The network gradient is by Lemma~\ref{lem:nn_gradient} given by
			\begin{align*}
				\nabla \Phi(x)^\top &= \widetilde{W}_L\Psi_L(a_{L})W_{L} \ldots V_{l +1}V_{l} \ldots \widetilde{W}_1\Psi_1(a_{1})W_1 \\
				&=V_{2L}\Psi_L(a_{L})V_{2L-1} \ldots V_{l +1}V_{l}\ldots V_2\Psi_1(a_{1})V_1
			\end{align*}
			with $l \in \{1,\ldots,2L-1\}$, where $V_{l} = \widetilde{W}_{l/2} \in \mathbb{R}^{d_{l} \times d_{l-1} }$, $V_{l+1} = W_{l/2+1} \in \mathbb{R}^{d_{l+1} \times d_{l} }$ if $l$ is even and $V_{l} =  \Psi_{(l+1)/2}(a_{(l+1)/2}) \in \mathbb{R}^{d_{l} \times d_{l} }$, $V_{l+1} = \widetilde{W}_{(l+1)/2} \in \mathbb{R}^{d_{l+1}\times d_l}$ if $l$ is odd. We denote by $Z_l(x) \in \mathbb{R}^{1\times d_l}$ the matrix product $\widetilde{W}_L \ldots V_{l+1}$ and by $Y_l(x)\in \mathbb{R}^{d_l \times n}$ the matrix product $V_l \ldots W_1$, such that
			\begin{equation} \label{eq:nn_matrixproduct}
				\nabla \Phi(x)^\top = Z_l(x)Y_l(x) \in \mathbb{R}^{1 \times n}.
			\end{equation}
			
			For case~\ref{th:nn_bottleneck_a}, we assume that $n = d_0 \geq \ldots > d_{j^\ast}$ and that $Z_{j^\ast}(x)\neq 0 $ for all $x \in \mathcal{X}$. If $W \in \mathbb{W}^\ast$, the assumption $n = d_0 \geq \ldots > d_{j^\ast}$ implies that the dimensions of the matrices in the product $Y_{j^\ast}(x)$ are monotonically increasing, such that by Lemma~\ref{lem:nn_productfullrank}  $Y_{j^\ast}(x)$ has for every $x \in \mathcal{X}$ full rank $d_{j^\ast}$. As $Z_{j^\ast}(x) \neq 0$ for all $x \in \mathcal{X}$ also the matrix $Z_{j^\ast}(x)$ has always full rank 1, such that another application of Lemma~\ref{lem:nn_productfullrank} implies that $\nabla \Phi(x)$ has for every $x \in \mathcal{X}$ full rank. Consequently no critical point exists and $\Phi$ is of class $(\mathcal{C}1)^k(\mathcal{X},\mathbb{R})$ if $W \in \mathbb{W}^\ast$. The same argumentation as in Theorem~\ref{th:nn_nocriticalpoints} for non-augmented neural networks shows that for all sets of weights  $(\vem(W);\stack(B)) \in \mathbb{R}^{N_W+N_B} $, except possibly for a zero set in $\mathbb{R}^{N_W+N_B}$ with respect to the Lebesgue measure, $\Phi$ is of class $(\mathcal{C}1)^k (\mathcal{X},\mathbb{R})$.
			
			For case~\ref{th:nn_bottleneck_b}, we assume that $n = d_0 \geq \ldots > d_{j^\ast}$ and that $Z_{j^\ast}(x^\ast) = 0 $ for all $x^\ast \in \mathcal{X}^\ast \neq \varnothing$. The structure of the gradient~\eqref{eq:nn_matrixproduct} implies that every $x^\ast\in \mathcal{X}^\ast$ is a critical point of $\Phi$. The $i$-th row  $[H_\Phi(x)^\top]_i^\top$ of the Hessian $H_\Phi(x)\in \mathbb{R}^{n \times n}$ evaluated at a critical point $x^\ast \in \mathcal{X}^\ast$ is given by
			\begin{equation*}
				[H_\Phi(x^\ast)^\top]_i^\top = \frac{\partial }{\partial x_i}\left[Z_{j^\ast}(x)Y_{j^\ast}(x)\right] \bigg\vert_{x = x^\ast} =   \frac{\partial Z_{j^\ast}}{\partial x_i}(x^\ast) Y_{j^\ast}(x^\ast) + Z_{j^\ast}(x^\ast)\frac{\partial Y_{j^\ast}}{\partial x_i}(x^\ast) = \frac{\partial Z_{j^\ast}}{\partial x_i}(x^\ast) Y_{j^\ast}(x^\ast),
			\end{equation*}
			as $Z_{j^\ast}(x^\ast) = 0$ for $x^\ast \in \mathcal{X}^\ast$. 
			The $n$ vectors $\frac{\partial Z_{j^\ast}}{\partial x_i}(x^\ast) \in \mathbb{R}^{1 \times d_{j^\ast}}$, $i \in \{1,\ldots,n\}$ are linearly dependent as $d_{j^\ast}<n$. Hence, there exist $\alpha_1, \ldots, \alpha_n$ not all equal to zero such that
			\begin{equation*}
				\sum_{i = 1}^n \alpha_i \frac{\partial Z_{j^\ast}}{\partial x_i}(x^\ast) = 0 \in \mathbb{R}^{1\times d_{j^\ast}}.
			\end{equation*}
			As matrix multiplication is distributive, it follows
			\begin{equation*}
				\sum_{i = 1}^n  \alpha_i	[H_\Phi(x^\ast)^\top]_i^\top  = 	\sum_{i = 1}^n  \alpha_i \left[\frac{\partial Z_{j^\ast}}{\partial x_i}(x^\ast) Y_{j^\ast}(x^\ast)\right] = \left[\sum_{i = 1}^n  \alpha_i \frac{\partial Z_{j^\ast}}{\partial x_i}(x^\ast)\right]  Y_{j^\ast}(x^\ast) = 0\in \mathbb{R}^n,
			\end{equation*}
			which implies that the rows of the Hessian $H_\Phi(x^\ast)$ are linearly dependent. Consequently, $H_\Phi(x^\ast)$ does not have full rank and at least one eigenvalue is zero, such that the critical point $x^\ast$ is degenerate. As $x^\ast$ was arbitrary, it follows that every critical point of $\Phi$ is degenerate and $\Phi$ is of class $(\mathcal{C}3)^k(\mathcal{X},\mathbb{R})$.
			
			For case~\ref{th:nn_bottleneck_c} we assume that there exists an index $i^\ast<j^\ast $ with $n = d_0 \leq \ldots < d_{i^\ast}$ and that $Z_{i^\ast}(x)\neq 0$ for all $x \in \mathcal{X}$. We aim to show the statement of Theorem~\ref{th:nn_morse}, which is proven as Theorem~\ref{th:nn_morse_app} in Appendix~\ref{app:regularity}, for the considered bottleneck architecture, as proposed in Remark~\ref{rem:nn_morse}. To that purpose, we compare the matrices $\overline{Y}$ and $\overline{Z}$ of the proof of Theorem~\ref{th:nn_morse_app} with the matrices $Y_{i^\ast}(x)$ and $Z_{i^\ast}(x)$ defined in this proof by identifying the index $l^\ast$ of Theorem~\ref{th:nn_morse_app} with the index $i^\ast$ considered here. If $i^\ast$ is even, equation~\eqref{eq:modified1a} implies that $Y_{i^\ast}(x) = \overline{Y}^\top$ and $Z_{i^\ast}(x) = \overline{Z}^\top W_{i^\ast/2+1}$, and if $i^\ast$ is odd,~\eqref{eq:modified2a} shows that $Y_{i^\ast}(x) = \overline{Y}^\top$ and $Z_{i^\ast}(x) = \overline{Z}^\top \widetilde{W}_{(i^\ast+1)/2}$. If $W \in \mathbb{W}^\ast$, the assumption $n = d_0 \leq \ldots < d_{j^\ast}$ implies that the dimensions of the matrices in the product $Y_{i^\ast}(x)$ are monotonically decreasing, such that by Lemma~\ref{lem:nn_productfullrank}  $Y_{i^\ast}(x)$ has for every $x \in \mathcal{X}$ full rank $n$, and hence also $\overline{Y}$ has full rank $n$. The assumption $Z_{i^\ast}(x)\neq 0$ for all $x \in \mathcal{X}$ implies that it is necessary that $\overline{Z}\neq 0$ for all $x \in \mathcal{X}$, so $\overline{Z}$ has full rank 1. By Remark~\ref{rem:nn_morsetheorem}, the proof of Theorem~\ref{th:nn_morse_app} works analogously for the considered neural network $\Phi$ with a bottleneck, such that $\Phi$ is for all weights $(\vem(W);\stack(B)) \in \mathbb{R}^{N_W+N_B} $, $W \in \mathbb{W}^\ast$, $b \in \mathbb{B}$, except possibly for a zero set in $\mathbb{R}^{N_W+N_B}$ with respect to the Lebesgue measure, of class $(\mathcal{C}1)^k (\mathcal{X},\mathbb{R})$ or $(\mathcal{C}2)^k (\mathcal{X},\mathbb{R})$. As $\vem( \mathbb{W}_0) \times \stack(\mathbb{B})$ is a zero set with respect to the Lebesgue measure in $\mathbb{R}^{N_W+N_B}$ and the union of two sets with measure zero has again measure zero, the same statement holds if $\mathbb{W}^\ast$ is replaced by $\mathbb{W}$.
			
			For case~\ref{th:nn_bottleneck_d} we assume that there exists an index $i^\ast<j^\ast $ with $n = d_0 \leq \ldots < d_{i^\ast}$ and that $Z_{i^\ast}(x^\ast)= 0$ for all $x^\ast \in \mathcal{X}^\ast \neq \varnothing$. The structure of the gradient~\eqref{eq:nn_matrixproduct} implies that every $x^\ast\in \mathcal{X}^\ast$ is a critical point of $\Phi$. As in case~\ref{th:nn_bottleneck_b}, the $i$-th row  $[H_\Phi(x)^\top]_i^\top$ of the Hessian $H_\Phi(x)\in \mathbb{R}^{n \times n}$ evaluated at a critical point $x^\ast \in \mathcal{X}^\ast$ is given by
			\begin{equation*}
				[H_\Phi(x^\ast)^\top]_i^\top = \frac{\partial }{\partial x_i}\left[Z_{j^\ast}(x)Y_{j^\ast}(x)\right] \bigg\vert_{x = x^\ast} =   \frac{\partial Z_{j^\ast}}{\partial x_i}(x^\ast) Y_{j^\ast}(x^\ast) + Z_{j^\ast}(x^\ast)\frac{\partial Y_{j^\ast}}{\partial x_i}(x^\ast) = \frac{\partial Z_{j^\ast}}{\partial x_i}(x^\ast) Y_{j^\ast}(x^\ast),
			\end{equation*}	
			as $Z_{j^\ast}(x^\ast) = 0$ for $x^\ast \in \mathcal{X}^\ast$.
			As $d_{j^\ast}>n$, the $n$ vectors $\frac{\partial Z_{j^\ast}}{\partial x_i}(x^\ast) \in \mathbb{R}^{1 \times d_{j^\ast}}$, $i \in \{1,\ldots,n\}$ can be linearly dependent, but they can also be linearly independent. This implies, in analogy to case~\ref{th:nn_bottleneck_b}, that the Hessian matrix can have full rank, but does not need to. In Example~\ref{ex:nn_bottleneck} we show that both cases are possible. As $\Phi$ always has at least one critical point, it follows that $\Phi$ is of class $(\mathcal{C}2)^k (\mathcal{X},\mathbb{R})$ or $(\mathcal{C}3)^k (\mathcal{X},\mathbb{R})$.
		\end{proof}
		
		We end this section with a few examples of neural networks with bottleneck architectures, which show that all cases mentioned in Theorem~\ref{th:nn_bottleneck} exist. The considered architectures are visualized in Figure~\ref{fig:nn_bottleneck}.

		\begin{example} \label{ex:nn_bottleneck}
			\normalfont We present for each of the cases~\ref{th:nn_bottleneck_a}-\ref{th:nn_bottleneck_d} of Theorem~\ref{th:nn_bottleneck} a two-layer neural network architecture $\Phi \in \Xi_{\textup{B},\mathbb{W}^\ast}^k(\mathcal{X},\mathbb{R})$, $k \geq 2$ with full rank weight matrices $W \in \mathbb{W}^\ast$ and nonlinear, strictly monotonically increasing activation functions. 
			\begin{enumerate}[label=(\alph*), font=\normalfont]
				\item Let $\mathcal{X} = \mathbb{R}^2$, $L = 2$ and $\Phi$ be defined by the weight matrices
				\begin{equation*}
					W_1 = (1,1), \quad \widetilde{W}_1 = \begin{pmatrix}
						1 \\ 1
					\end{pmatrix}, \quad W_2 = (1,1), \quad \widetilde{W}_2=1
				\end{equation*}
				and arbitrary biases $b\in \mathbb{B}$, see Figure~\ref{fig:nn_bottleneck}\subref{fig:bottleneck_a}. Then $\Phi$ has a non-augmented bottleneck at layer $g_1 = a_1$, so $j^\ast = 1$. It holds
				\begin{equation*}
					Z_1(x) = 1 \cdot \Psi_2(a_2) \cdot (1,1) \cdot \begin{pmatrix}
						1 \\ 1
					\end{pmatrix} = 2 \sigma_2'(a_2) \neq 0 \qquad \forall \, x \in \mathcal{X}.
				\end{equation*}
				By Theorem~\ref{th:nn_bottleneck}\ref{th:nn_bottleneck_a}, $\Phi$ is off class $(\mathcal{C}1)^k(\mathcal{X},\mathbb{R})$. As
				\begin{equation*}
					\nabla \Phi(x)^\top = 1 \cdot \Psi_2(a_2) \cdot (1,1) \cdot \begin{pmatrix}
						1 \\ 1
					\end{pmatrix} \cdot \Psi_1(a_1) \cdot (1,1) = (
					2\sigma_2'(a_2)\sigma_1'(a_1), 2\sigma_2'(a_2)\sigma_1'(a_1)) \neq (0,0)
				\end{equation*}
				for all $x \in \mathcal{X}$, 	we verified that $\Phi$ cannot have any critical points. 
				\item Let $\mathcal{X} = \mathbb{R}^2$, $L = 2$ and $\Phi$ be defined by the weight matrices
				\begin{equation*}
					W_1 = (1,1), \quad \widetilde{W}_1 = \begin{pmatrix}
						1 \\ 0
					\end{pmatrix}, \quad W_2 = (0,1), \quad \widetilde{W}_2=1
				\end{equation*}
				and arbitrary biases $b\in \mathbb{B}$, see Figure~\ref{fig:nn_bottleneck}\subref{fig:bottleneck_b}. Then $\Phi$ has a non-augmented bottleneck at layer $g_1 = a_1$, so $j^\ast = 1$. It holds
				\begin{equation*}
					Z_1(x) = 1 \cdot \Psi_2(a_2) \cdot (0,1) \cdot \begin{pmatrix}
						1 \\ 0
					\end{pmatrix} = 0 \qquad \forall \, x \in \mathcal{X}.
				\end{equation*}
				By Theorem~\ref{th:nn_bottleneck}\ref{th:nn_bottleneck_b}, $\Phi$ is off class $(\mathcal{C}3)^k(\mathcal{X},\mathbb{R})$.  As
				\begin{equation*}
					\nabla \Phi(x)^\top = 1 \cdot \Psi_2(a_2) \cdot (0,1) \cdot \begin{pmatrix}
						1 \\ 0
					\end{pmatrix} \cdot \Psi_1(a_1) \cdot (1,1) = (
					2\sigma_2'(a_2)\sigma_1'(a_1), 2\sigma_2'(a_2)\sigma_1'(a_1))  = (0,0)
				\end{equation*}
				for all $x \in \mathcal{X}$, the gradient $\nabla \Phi$ is the constant zero function, hence the Hessian matrix $H_\Phi$ is also zero everywhere. Hence, we verified that $\Phi$ is of class $(\mathcal{C}3)^k(\mathcal{X},\mathbb{R})$, as every critical point is degenerate.

				\begin{figure}
					\centering
					\begin{subfigure}{0.01\textwidth}
						\textcolor{white}{.}
					\end{subfigure}
					\begin{subfigure}{0.46\textwidth}
						\centering
						\includegraphics[width=\textwidth]{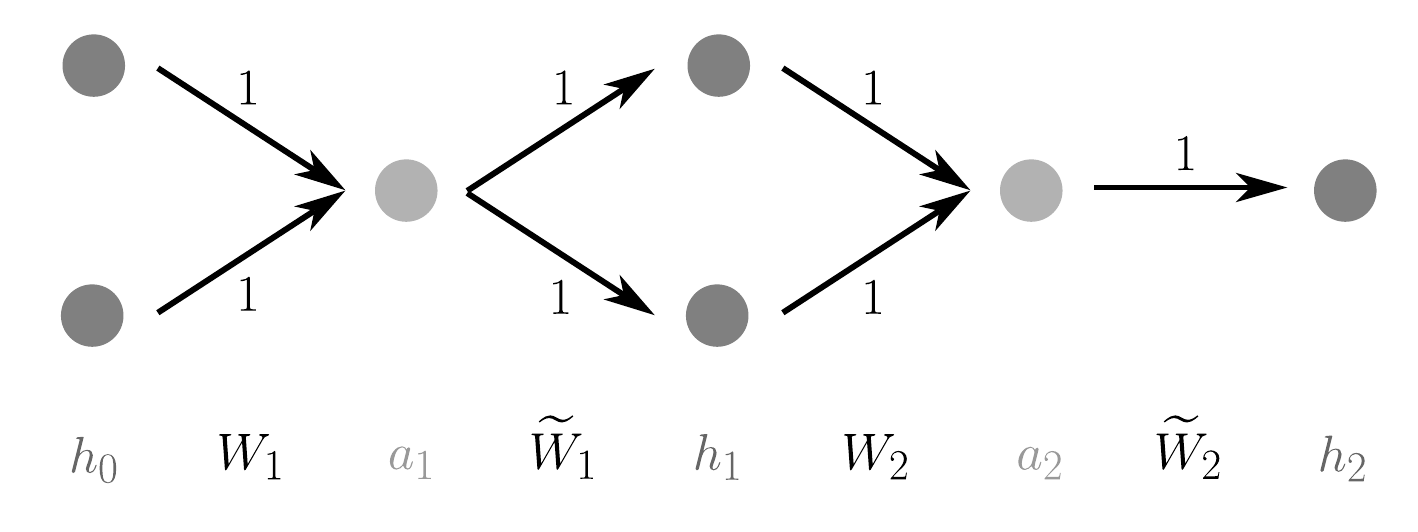}
						\caption{Example of a neural network $\Phi \in \Xi_{\textup{B},\mathbb{W}^\ast}^k(\mathbb{R}^2,\mathbb{R})$ with a non-augmented bottleneck at layer $a_1$, which is of class $(\mathcal{C}1)^k(\mathcal{X},\mathbb{R})$.}
						\label{fig:bottleneck_a}
					\end{subfigure}
					\begin{subfigure}{0.02\textwidth}
						\textcolor{white}{.}
					\end{subfigure}
					\begin{subfigure}{0.46\textwidth}
						\centering
						\includegraphics[width=\textwidth]{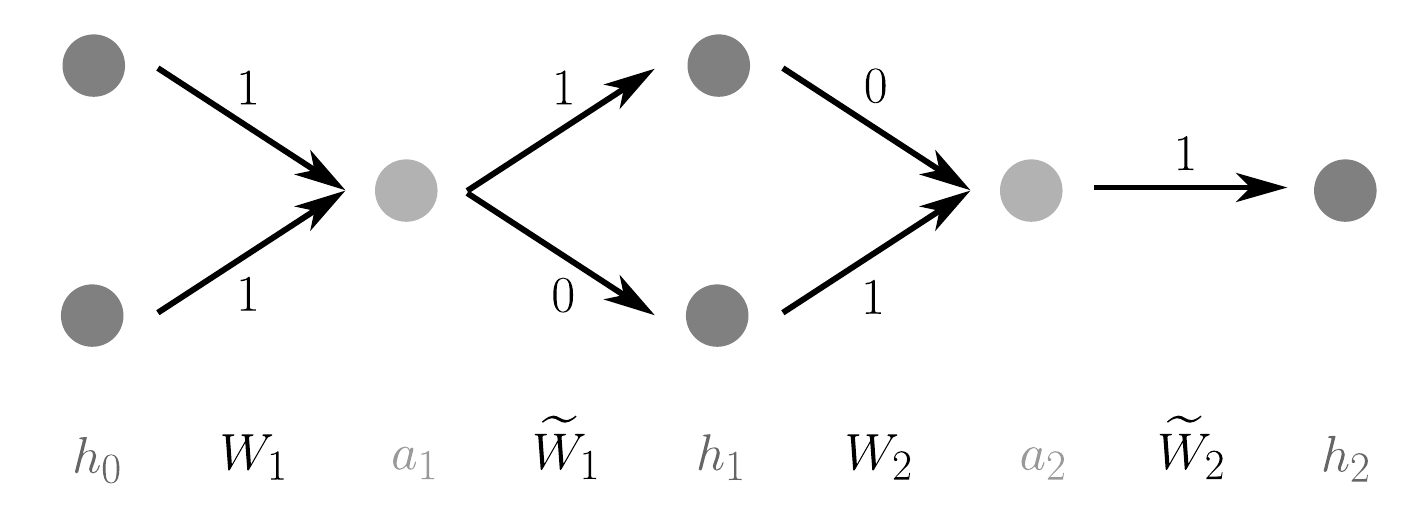}
						\caption{Example of a neural network  $\Phi \in \Xi_{\textup{B},\mathbb{W}^\ast}^k(\mathbb{R}^2,\mathbb{R})$ with a non-augmented bottleneck at layer $a_1$, which is of class $(\mathcal{C}3)^k(\mathcal{X},\mathbb{R})$.}
						\label{fig:bottleneck_b}
					\end{subfigure}
					\begin{subfigure}{0.01\textwidth}
						\textcolor{white}{.}
					\end{subfigure}
					\begin{subfigure}{0.01\textwidth}
						\textcolor{white}{.}
					\end{subfigure}
					\begin{subfigure}{0.46\textwidth}
						\vspace{6mm}
						\centering
						\includegraphics[width=\textwidth]{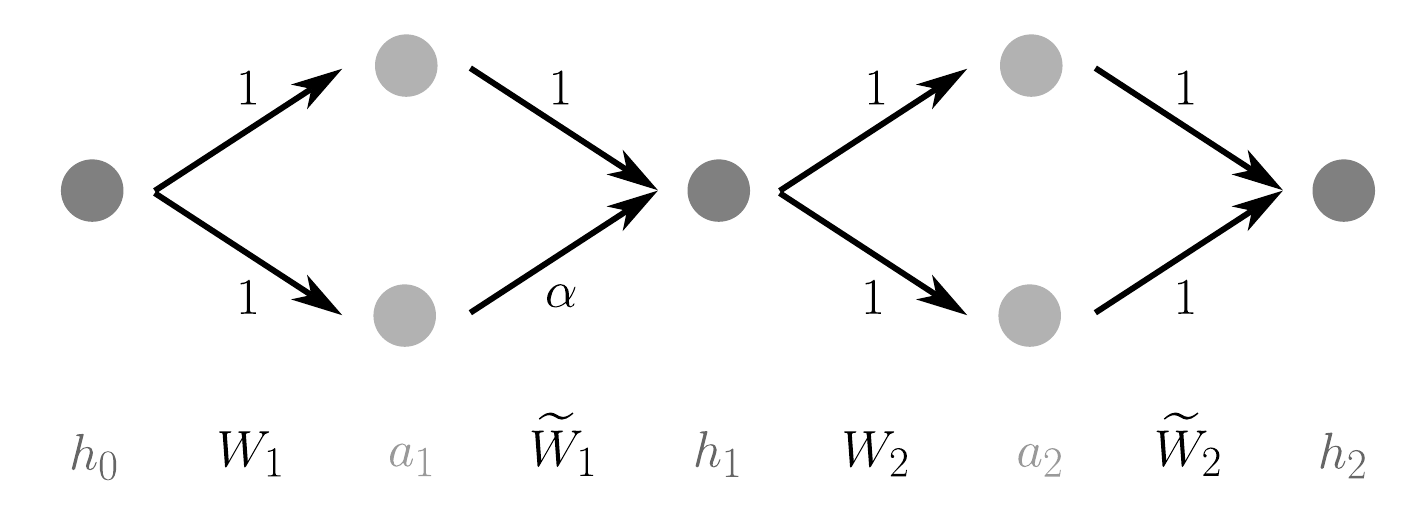}
						\caption{Example of a neural network  $\Phi \in \Xi_{\textup{B},\mathbb{W}^\ast}^k(\mathbb{R},\mathbb{R})$ with an augmented bottleneck  at layer $h_1$, which is depending on $\alpha$ of class $(\mathcal{C}1)^k(\mathcal{X},\mathbb{R})$ or $(\mathcal{C}2)^k(\mathcal{X},\mathbb{R})$.}
						\label{fig:bottleneck_c}
					\end{subfigure}
					\begin{subfigure}{0.02\textwidth}
						\textcolor{white}{.}
					\end{subfigure}
					\begin{subfigure}{0.46\textwidth}
						\vspace{6mm}
						\centering
						\includegraphics[width=\textwidth]{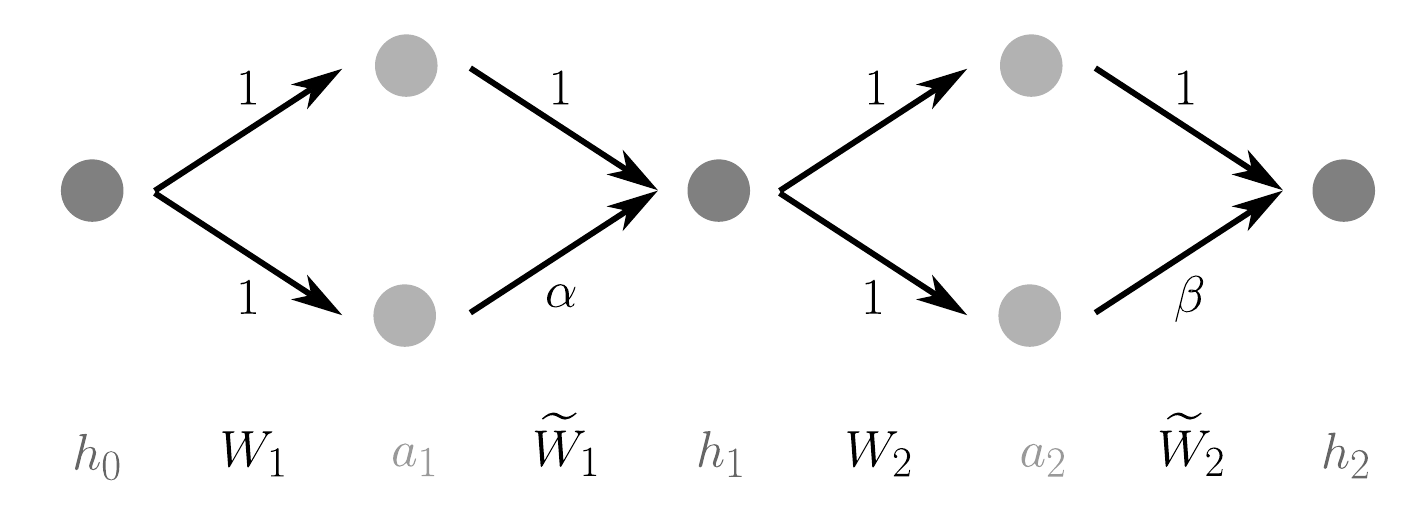}
						\caption{Example of a neural network $\Phi \in \Xi_{\textup{B},\mathbb{W}^\ast}^k(\mathbb{R},\mathbb{R})$ with an augmented bottleneck  at layer $h_1$, which is depending on $\alpha$ and $\beta$ of class $(\mathcal{C}1)^k(\mathcal{X},\mathbb{R})$, $(\mathcal{C}2)^k(\mathcal{X},\mathbb{R})$ or $(\mathcal{C}3)^k(\mathcal{X},\mathbb{R})$}
						\label{fig:bottleneck_d}
					\end{subfigure}
					\begin{subfigure}{0.01\textwidth}
						\textcolor{white}{.}
					\end{subfigure}
					\caption{Two-layer neural network architectures $\Phi \in \Xi_{\textup{B},\mathbb{W}^\ast}^k(\mathcal{X},\mathbb{R})$ with nonlinear, strictly monotonically increasing activation functions, which are analyzed in Example~\ref{ex:nn_bottleneck}.}
					\label{fig:nn_bottleneck}
				\end{figure}

				\item \label{ex:nn_bottleneck_c} Let $\mathcal{X} = \mathbb{R}$, $L = 2$ and $\Phi$ be defined by the weight matrices
				\begin{equation*}
					W_1 = \begin{pmatrix}
						1 \\ 1
					\end{pmatrix}, \quad \widetilde{W}_1 = (1,\alpha), \quad W_2 = \begin{pmatrix}
						1 \\ 1
					\end{pmatrix}, \quad \widetilde{W}_2=(1,1)
				\end{equation*}
				with $\alpha \in \mathbb{R}$ specified in the following and arbitrary biases $b\in \mathbb{B}$, see Figure~\ref{fig:nn_bottleneck}\subref{fig:bottleneck_c}. Then $\Phi$ has an augmented bottleneck at layer $g_2 = h_1$, so $i^\ast = 1$ and $j^\ast = 2$. It holds
				\begin{align*}
					Z_1(x) &= (1,1) \cdot \Psi_2(a_2) \cdot \begin{pmatrix}
						1 \\ 1
					\end{pmatrix} \cdot (1,\alpha) \\
					&= \big([\sigma_2]_1'([a_2]_1) + [\sigma_2]_2'([a_2]_2), \alpha([\sigma_2]_1'([a_2]_1) + [\sigma_2]_2'([a_2]_2))\big) \neq 0 \qquad \forall \, x \in \mathcal{X},
				\end{align*}
				as the sum of two strictly monotonically increasing functions is again strictly monotonically increasing. By Theorem~\ref{th:nn_bottleneck}\ref{th:nn_bottleneck_c}, $\Phi$ is for all sets of weights, except possibly for a zero set with respect to the Lebesgue measure, of class $(\mathcal{C}1)^k(\mathcal{X},\mathbb{R})$ or $(\mathcal{C}2)^k(\mathcal{X},\mathbb{R})$. We show that both cases are possible. It holds
				\begin{align*}
					\nabla \Phi(x)^\top &= (1,1) \cdot \Psi_2(a_2) \cdot \begin{pmatrix}
						1 \\ 1
					\end{pmatrix} \cdot (1,\alpha)\cdot \Psi_1(a_1) \cdot \begin{pmatrix}
						1 \\ 1
					\end{pmatrix} \\
					&= \big([\sigma_2]_1'([a_2]_1) + [\sigma_2]_2'([a_2]_2)\big)\cdot \big([\sigma_1]_1'([a_1]_1) + \alpha [\sigma_1]_2'([a_1]_2)\big).
				\end{align*}
				We define the set
				\begin{equation*}
					\mathcal{S} = \left\{s \in \mathbb{R}: s = -\frac{[\sigma_1]_1'([a_1]_1)}{[\sigma_1]_2'([a_1]_2)} \textup{ for some } x \in \mathbb{R}\right\} \subset \mathbb{R}_{<0},
				\end{equation*}
				which is non-empty and by construction of the network independent of the choice of $\alpha$. Also $\mathbb{R}\setminus\mathcal{S}$ is non-empty as  $\mathcal{S}\subset\mathbb{R}_{<0}$. If $\alpha \in \mathbb{R}\setminus \mathcal{S}$, then $\nabla \Phi(x) \neq 0$ for all $x \in \mathcal{X}$, such that $\Phi$ is of class $(\mathcal{C}1)^k(\mathcal{X},\mathbb{R})$. If the activation functions $[\sigma_1]_1$ and $[\sigma_1]_2$ are non-linear, then $[\sigma_1]_1'$ and $[\sigma_1]_2'$ are non-constant, such that $\mathcal{S}$ has non-zero Lebesgue measure. Hence, for all $\alpha \in \mathcal{S}$, except possibly for a zero set with respect to the Lebesgue measure,  Theorem~\ref{th:nn_bottleneck}\ref{th:nn_bottleneck_c} implies that $\Phi$ is of class $(\mathcal{C}2)^k(\mathcal{X},\mathbb{R})$ and has only non-degenerate critical points. 
				\item Let $\mathcal{X} = \mathbb{R}$, $L = 2$ and $\Phi$ be defined by the weight matrices
				\begin{equation*}
					W_1 = \begin{pmatrix}
						1 \\ 1
					\end{pmatrix}, \quad \widetilde{W}_1 = (1,\alpha), \quad W_2 = \begin{pmatrix}
						1 \\ 1
					\end{pmatrix}, \quad \widetilde{W}_2=(1,\beta)
				\end{equation*}
				with $\alpha,\beta \in \mathbb{R}$ specified in the following and arbitrary biases $b\in \mathbb{B}$, see Figure~\ref{fig:nn_bottleneck}\subref{fig:bottleneck_d}. It holds
				\begin{align*}
					Z_1(x) &= (1,\beta) \cdot \Psi_2(a_2) \cdot \begin{pmatrix}
						1 \\ 1
					\end{pmatrix} \cdot (1,\alpha) \\
					&= \big([\sigma_2]_1'([a_2]_1) + \beta [\sigma_2]_2'([a_2]_2), \alpha([\sigma_2]_1'([a_2]_1) + \beta [\sigma_2]_2'([a_2]_2))\big).
				\end{align*}
				We define the set 
				\begin{equation*}
					\mathcal{S}_\alpha = \left\{s \in \mathbb{R}: s = -\frac{[\sigma_2]_1'([a_2]_1)}{[\sigma_2]_2'([a_2]_2)} \textup{ for some } x \in \mathbb{R}\right\},
				\end{equation*}
				which is non-empty and by construction of the network dependent on the choice of $\alpha$, but independent of the choice of $\beta$. If $\beta \notin \mathcal{S}_\alpha$, the analysis of the neural network is the same as in case~\ref{ex:nn_bottleneck_c} and $\Phi$ can be of class $(\mathcal{C}1)^k(\mathcal{X},\mathbb{R})$ or $(\mathcal{C}2)^k(\mathcal{X},\mathbb{R})$.  In the following we choose $\beta \in \mathcal{S}_\alpha$, such that for every choice of $\alpha,\beta$ there exist $x^\ast \in \mathbb{R}$ such that $Z_1(x^\ast) = 0$ and the assumptions of Theorem~\ref{th:nn_bottleneck}\ref{th:nn_bottleneck_d} are fulfilled. Consequently $\Phi$ is off class $(\mathcal{C}2)^k(\mathcal{X},\mathbb{R})$ or $(\mathcal{C}3)^k(\mathcal{X},\mathbb{R})$. We show that both cases are possible. It holds
				\begin{align*}
					\nabla \Phi(x)^\top &= (1,\beta) \cdot \Psi_2(a_2) \cdot \begin{pmatrix}
						1 \\ 1
					\end{pmatrix} \cdot (1,\alpha)\cdot \Psi_1(a_1) \cdot \begin{pmatrix}
						1 \\ 1
					\end{pmatrix} \\
					&= \big([\sigma_2]_1'([a_2]_1) +\beta [\sigma_2]_2'([a_2]_2)\big)\cdot \big([\sigma_1]_1'([a_1]_1) + \alpha [\sigma_1]_2'([a_1]_2)\big).
				\end{align*}
				As $\beta \in \mathcal{S}_\alpha$, for every choice of $\alpha,\beta$ there exist at least one critical point $x^\ast \in \mathbb{R}$. The Hessian matrix $H_\Phi(x)$ is by the product rule given by
				\begin{align*}
					H_\Phi(x) & = \left([\sigma_2]_1''([a_2]_1)\frac{\partial [a_2]_1}{\partial x} +\beta [\sigma_2]_2''([a_2]_2)\frac{\partial [a_2]_2}{\partial x}\right)\cdot \left([\sigma_1]_1'([a_1]_1) + \alpha [\sigma_1]_2'([a_1]_2)\right) \\
					& \; \;\; + \left([\sigma_2]_1'([a_2]_1) +\beta [\sigma_2]_2'([a_2]_2)\right)\cdot \left([\sigma_1]_1''([a_1]_1)\frac{\partial [a_1]_1}{\partial x} + \alpha [\sigma_1]_2''([a_1]_2)\frac{\partial [a_1]_2}{\partial x}\right).
				\end{align*}
				Evaluated at a critical point $x^\ast$, it holds 
				\begin{equation*}
					H_\Phi(x^\ast) = \left([\sigma_2]_1''([a_2]_1)\frac{\partial [a_2]_1}{\partial x} +\beta [\sigma_2]_2''([a_2]_2)\frac{\partial [a_2]_2}{\partial x}\right)\cdot \left([\sigma_1]_1'([a_1]_1) + \alpha [\sigma_1]_2'([a_1]_2)\right),
				\end{equation*}
				as $Z_1(x^\ast) = 0$.
				By choosing $\alpha \in \mathcal{S}$, which is defined in part~\ref{ex:nn_bottleneck_c}, we can guarantee that the critical point $x^\ast$ can be degenerate, so it is possible that $\Phi$ is of class $(\mathcal{C}3)^k(\mathcal{X},\mathbb{R})$. To show that $\Phi$ can also be of class $(\mathcal{C}2)^k(\mathcal{X},\mathbb{R})$, we choose some $\alpha \in \mathbb{R}\setminus \mathcal{S}$, such that $[\sigma_1]_1'([a_1]_1) + \alpha [\sigma_1]_2'([a_1]_2) \neq 0$ for all $x \in \mathbb{R}$. $\alpha \in \mathbb{R}\setminus \mathcal{S}$ exists by part~\ref{ex:nn_bottleneck_c}. 	If we choose for example soft-plus activation functions $[\sigma_2]_1(x) = [\sigma_2]_2(x) = \ln(1+\exp(x))$, then
				\begin{equation*}
					\frac{\partial [a_2]_1}{\partial x} =	\frac{\partial [a_2]_2}{\partial x} = [\sigma_1]_1'([a_1]_1) + \alpha [\sigma_1]_2'([a_1]_2),
				\end{equation*}
				such that
				\begin{align*}
					H_\Phi(x^\ast) & = 
					\left(\frac{\exp(-[a_2]_1)}{(1+\exp(-[a_2]_1))^2}+\beta \frac{\exp(-[a_2]_2)}{(1+\exp(-[a_2]_2))^2}\right)\cdot \left([\sigma_1]_1'([a_1]_1) + \alpha [\sigma_1]_2'([a_1]_2)\right)^2 \\
					& = \left(\frac{\exp(-[a_2]_1)}{(1+\exp(-[a_2]_1))^2} -  \frac{1+\exp(-[a_2]_2)}{1+\exp(-[a_2]_1)}\frac{\exp(-[a_2]_2)}{(1+\exp(-[a_2]_2))^2}\right)\\
					& \hspace{4mm}\cdot \left([\sigma_1]_1'([a_1]_1) + \alpha [\sigma_1]_2'([a_1]_2)\right)^2 \\
					& = \frac{1}{1+\exp(-[a_2]_1)} \left(\frac{1}{1+\exp([a_2]_1)} -  \frac{1}{1+\exp([a_2]_2)}\right)\cdot \left([\sigma_1]_1'([a_1]_1) + \alpha [\sigma_1]_2'([a_1]_2)\right)^2 ,
				\end{align*}
				which is non-zero as long as $[a_2]_1 \neq[a_2]_2$ as $\frac{1}{1+\exp(x)}$ is a monotonically decreasing function. As $a_2 = W_1 h_1 + b_2$, variation of the bias $b_2 \in \mathbb{R}^2$ guarantees that for all $b_2 \in \mathbb{R}^2$, except possibly for a set of measure with respect to the Lebesgue measure in $\mathbb{R}^2$, it holds  $[a_2]_1 \neq[a_2]_2$. Hence, we showed the existence of a neural network architecture $\Phi$ which fulfills the assumptions of Theorem~\ref{th:nn_bottleneck}\ref{th:nn_bottleneck_d} and which is of class $(\mathcal{C}2)^k(\mathcal{X},\mathbb{R})$.
			\end{enumerate}
		\end{example}

		\section{Neural ODEs}
		\label{sec:neuralODEs}
		
		Each neural ODE architecture is based on the solution $h: \mathcal{I}_a \rightarrow \mathbb{R}^m$ of an initial value problem 
		\begin{equation} \label{eq:IVP} 
			\frac{\dd h}{\dd t} = f(t,h(t)), \qquad h(0) = a \in \mathcal{A} \subset \mathbb{R}^m, \tag{IVP}
		\end{equation}
		with $f :\Omega \rightarrow \mathbb{R}^m$, $\Omega = \Omega_t \times \Omega_h \subset \R \times \R^m$ open, where $\mathcal{I}_a \subset \Omega_t$ open denotes the maximal time interval of existence for the solution with $h(0) = a \in \mathcal{A}$, where $ \varnothing \neq \mathcal{A}\subset \Omega_h \subset \mathbb{R}^m$ is a non-zero set of possible initial conditions. We denote a solution with initial condition $h(0) = a $ by $h_a(t)$ to take into account the dependence on the initial data. The considered neural ODE architectures depend on the time-$T$ map $h_a(T)$ of~\eqref{eq:IVP}, hence we have to assume that for every $a \in \mathcal{A}$, the maximal time interval of existence fulfills $[0,T] \subset \mathcal{I}_a \subset \Omega_t$. 
		The vector field $f: \Omega \rightarrow \mathbb{R}^m$ is a continuous function, which can depend on parameters. As the results established in Sections~\ref{sec:node_architecture} to~\ref{sec:node_regularity} do not depend on the choice of the vector field, we consider no specific parameterization of $f$. In Section~\ref{sec:NODE_universal}, we discuss the relationship between general and parameterized vector fields of neural ODEs. First, we state a basic result from ODE theory regarding the regularity of the solution map of the initial value problem~\eqref{eq:IVP}.
		
		\begin{lemma}[\cite{Hale1980}]\label{lem:ivp_regularity}
			Let $f \in C^{0,k}(\Omega,\mathbb{R}^m)$, $\Omega = \Omega_t \times \Omega_h \subset \R \times \R^m$ open, $k \geq 1$ and assume $[0,T]\subset \mathcal{I}_a \subset \Omega_t$ for all $a \in \mathcal{A}\subset \R^m$. Then the solution $h_a:\mathcal{I}_a \rightarrow \mathbb{R}^m$ of the initial value problem~\eqref{eq:IVP} is unique and it holds for the time-$T$ map that $h_a(T) \in C^k(\mathcal{A},\mathbb{R}^m)$.
		\end{lemma}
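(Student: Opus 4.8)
The plan is to treat the two assertions separately: first uniqueness, then the $C^k$-regularity of the time-$T$ map, the latter being precisely the statement of smooth dependence on initial conditions. For uniqueness, note that $f \in C^{0,k}(\Omega,\R^m)$ with $k \geq 1$ is continuous and continuously differentiable in its second argument, hence locally Lipschitz in $h$, uniformly on compact time intervals inside $\Omega$. The Picard--Lindel\"of theorem then guarantees that through each initial condition $h(0) = a$ there passes a unique maximal solution $h_a$, which justifies the notation. Throughout one uses that the set of initial data whose maximal interval of existence contains $[0,T]$ is open and that the flow is defined on an open set, so that perturbed solutions $h_{a+\eta}$ are still defined on all of $[0,T]$ for $\eta$ small; this is what makes differentiation in $a$ meaningful locally near each $a \in \mathcal{A}$.

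For the regularity I would argue by induction on $k$, establishing the base case $k=1$ via the variational equation. Fix $a$; since $[0,T]\subset\mathcal{I}_a$ keeps the trajectory in a compact subset of $\Omega$ and $f$ is $C^{0,1}$, the matrix-valued linear initial value problem
\begin{equation*}
	\frac{\dd Y}{\dd t} = D_h f(t, h_a(t))\, Y(t), \qquad Y(0) = \mathrm{Id}_m,
\end{equation*}
has continuous coefficient $t \mapsto D_h f(t, h_a(t))$ and thus a unique solution $Y_a(t)$ on $[0,T]$. The crux is to show that $Y_a(t)$ is in fact the Jacobian $\partial h_a(t)/\partial a$. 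To this end I would estimate the remainder $R_\eta(t) \coloneqq h_{a+\eta}(t) - h_a(t) - Y_a(t)\eta$, derive from the mean value theorem applied to $f$ together with the continuity of $D_h f$ an integral inequality of the form $\norm{R_\eta(t)} \leq \int_0^t \left( C\,\norm{R_\eta(s)} + o(\norm{\eta}) \right) \dd s$, and close it with Gronwall's inequality to conclude $\norm{R_\eta(T)} = o(\norm{\eta})$ as $\eta \to 0$. Continuity of $a \mapsto Y_a(T)$ then follows from continuous dependence of both $h_a$ and the linear variational problem on $a$, so that $a \mapsto h_a(T)$ is of class $C^1$.

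For the induction step, suppose the claim holds up to order $k-1$. The higher partial derivatives $\partial_a^\alpha h_a(t)$ with $\abs{\alpha} \leq k$ are governed by linear inhomogeneous initial value problems obtained by differentiating the variational equation $\abs{\alpha}$ times in $a$ and applying the multivariate chain and product rules (Fa\`a di Bruno). By the induction hypothesis the lower-order derivatives $\partial_a^\beta h_a$, $\abs{\beta} < \abs{\alpha}$, are already $C^{k-1}$ and jointly continuous in $(t,a)$, while the coefficients and inhomogeneities are built from $\partial_h^\gamma f$, $\abs{\gamma}\leq k$, which are continuous by assumption. Each such linear problem has a unique solution depending continuously on $a$, which shows that $\partial_a^\alpha h_a(T)$ exists and is continuous for all $\abs{\alpha} \leq k$, i.e.\ $h_a(T) \in C^k(\mathcal{A},\R^m)$.

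The main obstacle is the base case: verifying rigorously that the solution of the variational equation coincides with the derivative of the flow, rather than merely differentiating formally. This is exactly the Gronwall estimate of the difference quotient above, and it relies on the assumption $[0,T]\subset\mathcal{I}_a$ to keep the trajectory in a compact subset of $\Omega$, so that the Lipschitz bound and the modulus of continuity of $D_h f$ are uniform in $s \in [0,T]$ and in $\eta$ small. Once the $C^1$ case is secured, the higher-order steps are essentially bookkeeping with linear ODEs, and the joint continuity needed to feed the induction is supplied by standard continuous-dependence results for linear systems.
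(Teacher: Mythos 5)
The paper does not prove this lemma at all: it is quoted verbatim from the cited reference (Hale, \emph{Ordinary Differential Equations}), so there is no in-paper argument to compare against. Your proof is the classical one that the citation points to, and it is correct in substance: local Lipschitz continuity of $f$ in $h$ (from $k\geq 1$) plus Picard--Lindel\"of gives uniqueness of the maximal solution; openness of the domain of the flow makes $h_{a+\eta}$ well defined on $[0,T]$ for small $\eta$; and the $C^1$ case is secured by the variational equation together with the Gronwall estimate on the remainder $R_\eta$, using compactness of the trajectory to get a uniform Lipschitz constant and a uniform modulus of continuity for $D_hf$.

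One remark on the induction step: the difficulty you correctly isolate in the base case --- that the solution of the formally differentiated equation must be shown to \emph{be} the derivative, not merely to satisfy the right ODE --- recurs at every order, and your phrase ``each such linear problem has a unique solution depending continuously on $a$'' does not by itself discharge it. The standard way to avoid repeating the Gronwall argument $k$ times is to apply the already-proved $C^1$ case to the augmented system for $(h,Y)$ with right-hand side $\bigl(f(t,h),\, D_hf(t,h)Y\bigr)$, which is of class $C^{0,k-1}$ in the extended state; this yields $C^1$ dependence of $Y_a$ on $a$, i.e.\ $C^2$ dependence of $h_a$, and iterating gives $C^k$. With that adjustment (or with an explicit remainder estimate at each order), your argument is complete and matches the proof in the cited source.
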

		
		%\begin{proof}
		%	The result follows directly from the standard existence, uniqueness, and differential dependence results for ODEs in \cite{Hale1980}.
		%\end{proof}
		
		We need to assume the regularity of the vector field to be able to differentiate the network and characterize whether the neural ODE is a Morse function. The analysis is restricted to neural ODE architectures with scalar output, for more general networks, the results apply to the scalar components of the output. The architectures we consider have an affine linear layer before and after the time-$T$ map of the initial value problem, resulting in the neural ODE architecture
		\begin{equation} \label{eq:NODE}
			\Phi: \mathcal{X} \rightarrow \mathbb{R}, \qquad x \mapsto \widetilde{\lambda}( h_{\lambda(x)}(T)) = \widetilde{W} \cdot h_{Wx+b}(T) + \tilde{b} = \widetilde{W} \cdot h_{a}(T) + \tilde{b},
		\end{equation} 
		with $\mathcal{X}\subset \mathbb{R}^n$ open, where the first affine linear layer $\lambda: \mathbb{R}^n \rightarrow \mathbb{R}^m$ is represented by a matrix $W \in \mathbb{R}^{m \times n}$ and a vector $b \in \mathbb{R}^m$ and the second affine linear layer $\widetilde{\lambda}: \mathbb{R}^m\rightarrow \mathbb{R}$ is represented by a row vector $\widetilde{W} \in \mathbb{R}^{1 \times m}$ and a scalar $\tilde{b} \in \mathbb{R}$. The intermediate layer, also called pre-activated state, which defines the initial condition of~\eqref{eq:IVP} is calculated by $a = \lambda(x) = Wx+b$. We define the set of initial conditions of~\eqref{eq:IVP} to be $\mathcal{A} \coloneqq \lambda(\mathcal{X})$. Compared to MLPs, the neural ODE architecture~\eqref{eq:NODE} has one inner affine linear layer corresponding to the weights $W_1$ and $b_1$, then an initial value problem evolves in the dimension $m$ of the intermediate layer $a \coloneqq Wx+b$ and then a second outer affine linear layer corresponding to the weights $\widetilde{W}_L$ and $\tilde{b}_L$ is applied to the time-$T$ map of~\eqref{eq:IVP}. Under the assumptions specified in the following, we can define the set of all well-defined scalar neural ODEs with architecture~\eqref{eq:NODE} studied in this work.
		
		\begin{definition}[Neural ODE] \label{def:node}
			For $k \geq 1$, the set of all scalar neural ODE architectures $\Phi: \mathcal{X} \rightarrow \mathbb{R}$, $\mathcal{X}\subset \mathbb{R}^n$ open, defined by~\eqref{eq:NODE} with vector field $f \in C^{0,k}(\Omega,\mathbb{R}^m)$, $\Omega = \Omega_t \times \Omega_h \subset \R \times \R^m$ open, where $\mathcal{A} = \lambda(\mathcal{X})\subset \Omega_h$ and $[0,T]\subset \mathcal{I}_a \subset \Omega_t$ for all $a \in \mathcal{A}$, is denoted by $\textup{NODE}^k(\mathcal{X},\mathbb{R})$.
		\end{definition}

		The regularity of the neural ODEs of Definition~\ref{def:node} follows directly from the regularity of the underlying initial value problem.
		
		\begin{lemma}\label{lem:node_regularity}
			Let $\Phi \in \textup{NODE}^k(\mathcal{X},\mathbb{R})$ with underlying vector field $f \in C^{0,k}(\Omega,\mathbb{R}^m)$, $\Omega = \Omega_t \times \Omega_h \subset \R \times \R^m$ open and $k \geq 1$. Then it holds $\Phi\in C^k(\mathcal{X},\mathbb{R})$ and hence  $\textup{NODE}^k(\mathcal{X},\mathbb{R}) \subset C^k(\mathcal{X},\mathbb{R}) $.
		\end{lemma}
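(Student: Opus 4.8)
The plan is to recognize $\Phi$ as a composition of three maps and to track the regularity through each factor. By the defining formula~\eqref{eq:NODE}, I would write $\Phi = \widetilde{\lambda} \circ \Theta_T \circ \lambda$, where $\lambda(x) = Wx + b$ and $\widetilde{\lambda}(y) = \widetilde{W} y + \tilde{b}$ are affine linear maps, hence of class $C^\infty$, and $\Theta_T: \mathcal{A} \rightarrow \mathbb{R}^m$, $\Theta_T(a) = h_a(T)$, denotes the time-$T$ map of the initial value problem~\eqref{eq:IVP}. This decomposition reduces the claim to establishing the regularity of the single genuinely nonlinear factor $\Theta_T$ and then invoking stability of $C^k$ regularity under composition.

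First I would check that the hypotheses of Lemma~\ref{lem:ivp_regularity} are met. By Definition~\ref{def:node} the vector field satisfies $f \in C^{0,k}(\Omega,\mathbb{R}^m)$ with $k \geq 1$, the set of admissible initial conditions is $\mathcal{A} = \lambda(\mathcal{X}) \subset \Omega_h$, and $[0,T] \subset \mathcal{I}_a$ holds for every $a \in \mathcal{A}$. These are exactly the assumptions required by Lemma~\ref{lem:ivp_regularity}, which then guarantees that the solution of~\eqref{eq:IVP} is unique and that the time-$T$ map obeys $\Theta_T \in C^k(\mathcal{A},\mathbb{R}^m)$.

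It remains to reassemble the composition. Since $\lambda$ and $\widetilde{\lambda}$ are $C^\infty$, in particular of class $C^k$, and $\Theta_T \in C^k(\mathcal{A},\mathbb{R}^m)$ with $\lambda(\mathcal{X}) = \mathcal{A}$ contained in the domain of $\Theta_T$, the chain rule for compositions of $k$ times continuously differentiable maps yields $\Phi = \widetilde{\lambda} \circ \Theta_T \circ \lambda \in C^k(\mathcal{X},\mathbb{R})$. As the neural ODE $\Phi \in \textup{NODE}^k(\mathcal{X},\mathbb{R})$ was arbitrary, the inclusion $\textup{NODE}^k(\mathcal{X},\mathbb{R}) \subset C^k(\mathcal{X},\mathbb{R})$ follows. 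I expect no substantial obstacle here: the entire analytic content is carried by Lemma~\ref{lem:ivp_regularity}, and the remaining work is the elementary composition argument together with the bookkeeping of the open-set conditions ($\lambda(\mathcal{X}) \subset \Omega_h$ and $[0,T] \subset \mathcal{I}_a$) that ensure $\Theta_T$ is well defined and $C^k$ on all of $\mathcal{A}$.
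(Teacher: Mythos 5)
Your proof is correct and takes essentially the same route as the paper: the paper's one-line argument likewise combines Lemma~\ref{lem:ivp_regularity} for the time-$T$ map with the smoothness of the affine layers $\lambda$ and $\widetilde{\lambda}$. You have simply spelled out the composition and the verification of the hypotheses in more detail.
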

		
		\begin{proof}
			The result follows directly from Lemma~\ref{lem:ivp_regularity} and the smoothness of the affine linear layers $\lambda$ and $\tilde \lambda$.
		\end{proof}

		\begin{remark} \label{rem:uniqueness}
			To have a well-defined neural ODE $\Phi \in \textup{NODE}^k(\mathcal{X},\mathbb{R})$, $\mathcal{X}\subset \mathbb{R}^n$ open, the solution of~\eqref{eq:IVP} needs to be unique. Continuous and unique solutions of~\eqref{eq:IVP} can also be guaranteed under weaker conditions than in Lemma~\ref{lem:ivp_regularity}: as stated in \cite{Hale1980}, sufficient conditions are for example local Lipschitz continuity of $f$ with respect to the second variable and continuity with respect to the first variable, or the Carathéodory conditions, which allow for discontinuities in the time variable. For presentation purposes, we assume in this work that $f \in C^{0,k}(\Omega,\mathbb{R}^m)$, for the analysis of neural ODEs under weaker conditions, see for example~\cite{Kuehn2023}.
		\end{remark}

		\subsection{Special Architectures}
		\label{sec:node_architecture}
		
		In the following, we subdivide the set of all neural ODE architectures $\text{NODE}^k(\mathcal{X},\mathbb{R})$ introduced in Definition~\ref{def:node} in three different classes: non-augmented neural ODEs, augmented neural ODEs, and degenerate neural ODEs. 
		
		\subsubsection*{Non-Augmented}
		
		We call a neural ODE $\Phi \in \text{NODE}^k(\mathcal{X},\mathbb{R}) $ non-augmented if $n \geq m$, i.e., the dimension of the ODE is smaller than the dimension of the input data. We require that the weight matrices of both affine linear layers have full rank, i.e., $\text{rank}(W) = m$ and $\text{rank}(\widetilde{W})=1$. The subset of non-augmented neural ODEs is denoted by
		\begin{equation*}
			\textup{NODE}^k_\textup{N}(\mathcal{X},\mathbb{R}) \coloneqq \left\{\Phi \in \textup{NODE}^k(\mathcal{X},\mathbb{R}) : \Phi \textup{ is non-augmented}\right\}.
		\end{equation*}
		
		\subsubsection*{Augmented}
		
		We call a neural ODE $\Phi \in \text{NODE}^k(\mathcal{X},\mathbb{R}) $ augmented if $n<m$, i.e., the dimension of the ODE is larger than the dimension of the input data. We require that the weight matrices of both affine linear layers have full rank, i.e., $\text{rank}(W)=n$ and  $\text{rank}(\widetilde{W}) = 1$. The subset of augmented neural ODEs is denoted by
		\begin{equation*}
			\textup{NODE}^k_\textup{A}(\mathcal{X},\mathbb{R}) \coloneqq \left\{\Phi \in \textup{NODE}^k(\mathcal{X},\mathbb{R}) : \Phi \textup{ is augmented}\right\}.
		\end{equation*}
		
		\subsubsection*{Degenerate}	
		
		We say that a neural ODE $\Phi \in \text{NODE}^k(\mathcal{X},\mathbb{R})$ is degenerate if at least one of the weight matrices of the two affine linear layers has not full rank, i.e., $\text{rank}(W)<\min\{m,n\}$ or  $\text{rank}(\widetilde{W}) = 0$. The subset of  degenerate neural ODEs is denoted by
		\begin{equation*}
			\textup{NODE}^k_{\textup{D}}(\mathcal{X},\mathbb{R}) \coloneqq \left\{\Phi \in \textup{NODE}^k(\mathcal{X},\mathbb{R}) : \Phi \textup{ is degenerate}\right\}.
		\end{equation*}

		\begin{figure}[t!]
			\centering
			\begin{subfigure}{0.9\textwidth}
				\centering
				\includegraphics[width=0.69\textwidth]{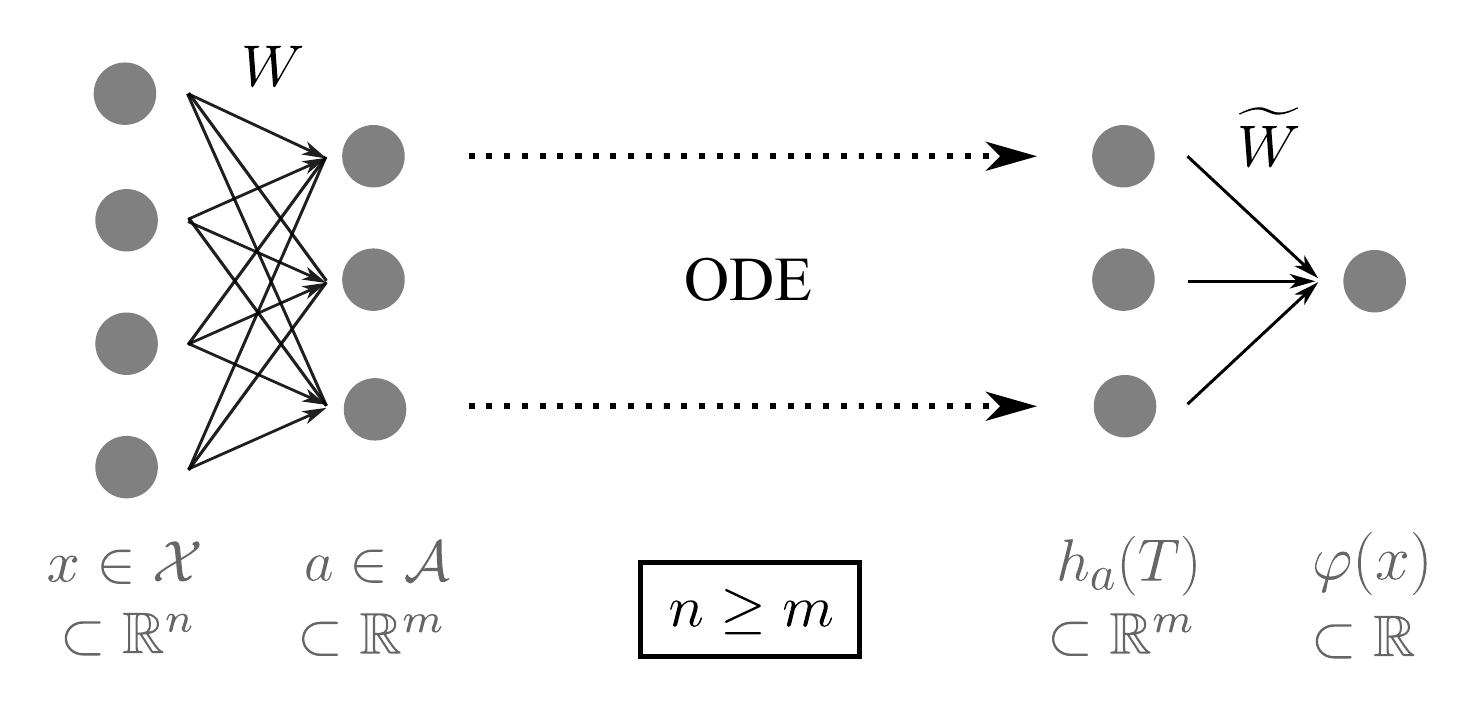}
				\caption{Structure of a non-augmented neural ODE $\Phi \in \text{NODE}_\textup{N}^k(\mathcal{X},\mathbb{R})$, $W$ and $\widetilde{W}$ have full rank. }
				\label{fig:node_arch_nonaugmented}
			\end{subfigure}
			\begin{subfigure}{0.9\textwidth}
				\vspace{5mm}
				\centering
				\includegraphics[width=0.69\textwidth]{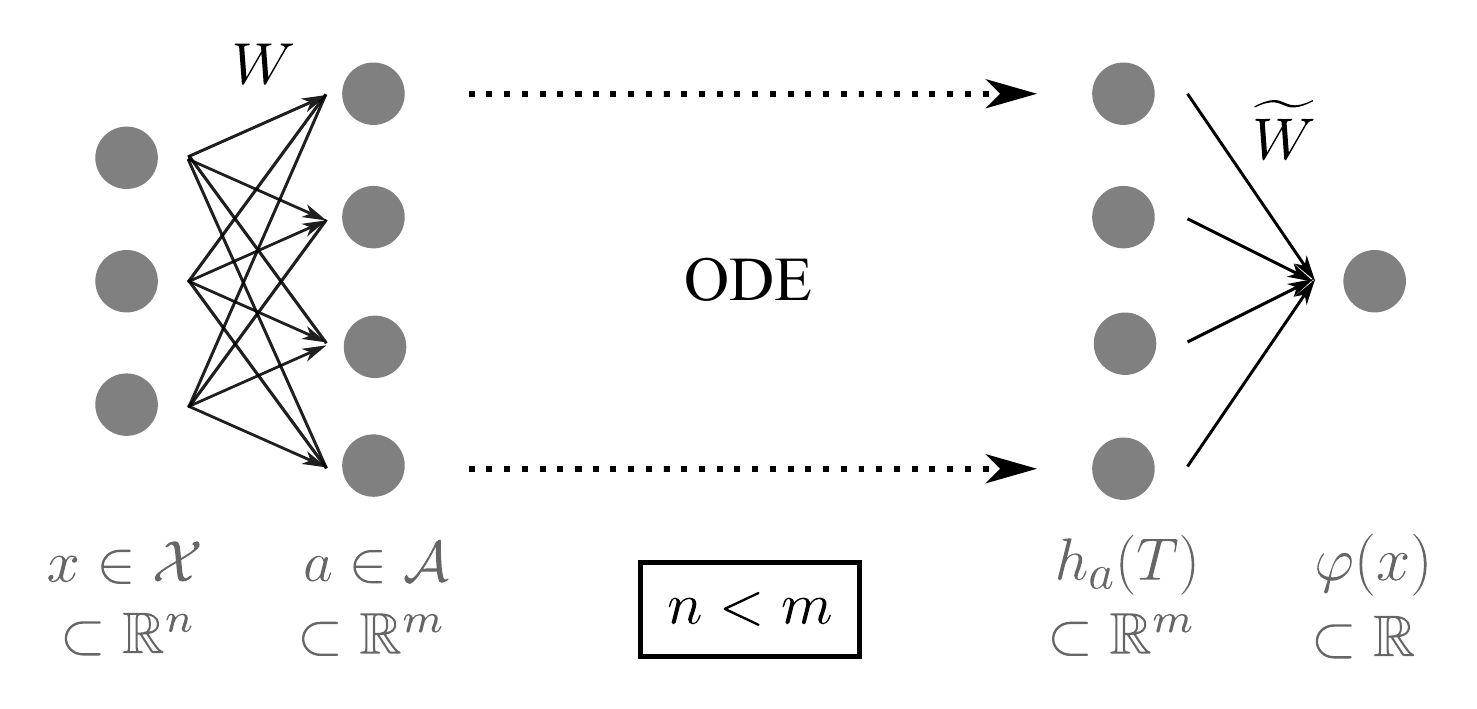}
				\caption{Structure of an augmented neural ODE $\Phi \in \text{NODE}_\textup{A}^k(\mathcal{X},\mathbb{R})$, $W$ and $\widetilde{W}$ have full rank.}
				\label{fig:node_arch_augmented}
			\end{subfigure}
			\begin{subfigure}{0.9\textwidth}
				\vspace{5mm}
				\centering
				\includegraphics[width=0.69\textwidth]{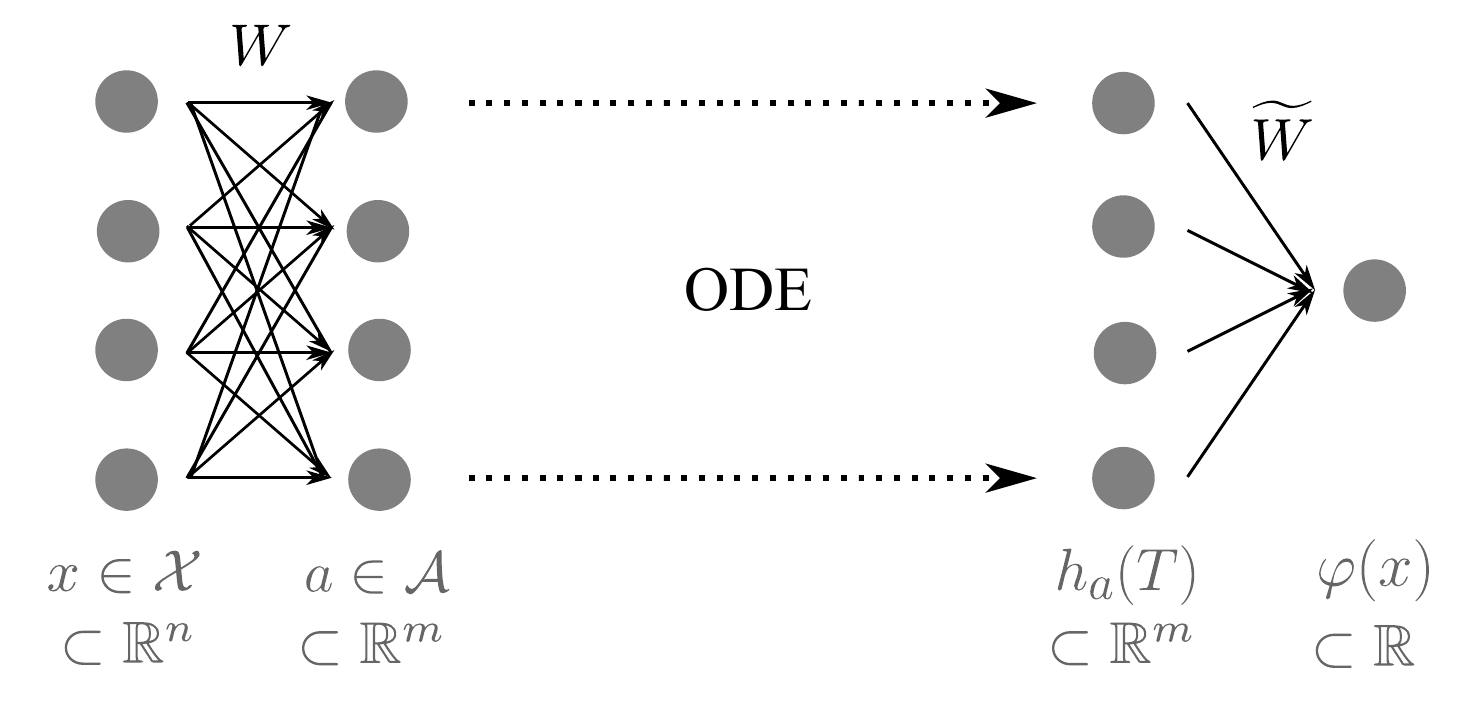}
				\caption{Structure of a degenerate neural ODE $\Phi \in \text{NODE}^k_\textup{D}(\mathcal{X},\mathbb{R})$, where at least one of the weight matrices $W$ or $\widetilde{W}$ has not full rank. There is no relationship between the dimensions $m$ and $n$.}
				\label{fig:node_arch_bottleneck}
			\end{subfigure}
			\caption{The three different types of architectures non-augmented, augmented, and degenerate, for neural ODEs $\Phi \in \text{NODE}^k(\mathcal{X},\mathbb{R})$.}
			\label{fig:node_architectures}
		\end{figure}
		
		The three different types of neural ODEs are visualized in Figure~\ref{fig:node_architectures}. In the following, we show that these three types of architectures build a disjoint subdivision of all neural ODE architectures.

		\begin{proposition} \label{prop:node_disjointpartition}
			The subdivision of neural ODEs in non-augmented neural ODEs of the form $\textup{NODE}^k_\textup{N}(\mathcal{X},\mathbb{R})$, augmented neural ODEs of the form $\textup{NODE}^k_\textup{A}(\mathcal{X},\mathbb{R})$ and degenerate neural ODEs $\textup{NODE}^k_{\textup{D}}(\mathcal{X},\mathbb{R})$, is a complete partition in three disjoint sub-classes of neural ODEs, i.e., 
			\begin{equation*}
				\textup{NODE}^k(\mathcal{X},\mathbb{R}) = \textup{NODE}^k_\textup{N}(\mathcal{X},\mathbb{R})\; \dot{\cup} \;\, \textup{NODE}^k_\textup{A}(\mathcal{X},\mathbb{R}) \; \dot{\cup} \;\, \textup{NODE}^k_{\textup{D}}(\mathcal{X},\mathbb{R})
				.		
			\end{equation*}
		\end{proposition}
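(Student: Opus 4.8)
The plan is to prove this by a direct logical case distinction on the ranks of the two weight matrices $W \in \mathbb{R}^{m \times n}$ and $\widetilde{W} \in \mathbb{R}^{1 \times m}$, which is considerably simpler than the index-set argument needed for MLPs in Proposition~\ref{prop:nn_disjointpartition}. The key observation is that the degenerate class $\textup{NODE}^k_{\textup{D}}(\mathcal{X},\mathbb{R})$ is precisely the complement, within $\textup{NODE}^k(\mathcal{X},\mathbb{R})$, of the event that both weight matrices have full rank, while the non-augmented and augmented classes split the remaining full-rank case according to the mutually exclusive dimension conditions $n \geq m$ and $n < m$.

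First I would record what full rank means for each matrix. Since $W \in \mathbb{R}^{m \times n}$, full rank means $\textup{rank}(W) = \min\{m,n\}$, and since $\widetilde{W} \in \mathbb{R}^{1 \times m}$, full rank means $\textup{rank}(\widetilde{W}) = 1$, i.e.\ $\widetilde{W} \neq 0$. Hence the logical negation ``at least one of $W$, $\widetilde{W}$ fails to have full rank'' is exactly the condition $\textup{rank}(W) < \min\{m,n\}$ or $\textup{rank}(\widetilde{W}) = 0$ that defines $\textup{NODE}^k_{\textup{D}}(\mathcal{X},\mathbb{R})$.

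For completeness, given any $\Phi \in \textup{NODE}^k(\mathcal{X},\mathbb{R})$ I would distinguish two cases. If at least one weight matrix fails to have full rank, then $\Phi$ is degenerate by definition. Otherwise both $W$ and $\widetilde{W}$ have full rank, and exactly one of $n \geq m$ or $n < m$ holds, placing $\Phi$ in $\textup{NODE}^k_{\textup{N}}(\mathcal{X},\mathbb{R})$ or $\textup{NODE}^k_{\textup{A}}(\mathcal{X},\mathbb{R})$ respectively. These cases exhaust all possibilities. For disjointness I would note that $\textup{NODE}^k_{\textup{N}}(\mathcal{X},\mathbb{R})$ and $\textup{NODE}^k_{\textup{A}}(\mathcal{X},\mathbb{R})$ are disjoint because they impose the contradictory dimension conditions $n \geq m$ and $n < m$, and that $\textup{NODE}^k_{\textup{D}}(\mathcal{X},\mathbb{R})$ is disjoint from both because it requires some matrix to lack full rank, whereas the other two classes both require $W$ and $\widetilde{W}$ to have full rank.

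I expect essentially no obstacle here; the argument is purely a partition of logical alternatives. The only point that genuinely requires care is the bookkeeping in the very first step, namely verifying that the rank bound $\textup{rank}(W) < \min\{m,n\}$ in the degenerate definition correctly specializes to $\textup{rank}(W) < m$ when $n \geq m$ and to $\textup{rank}(W) < n$ when $n < m$, so that ``not full rank'' for $W$ is matched consistently across both regimes. Once this is confirmed, completeness and disjointness follow immediately from the two-way split on full rank and the dimension dichotomy.
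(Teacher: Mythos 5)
Your proposal is correct and follows essentially the same route as the paper: first split on whether both $W$ and $\widetilde{W}$ have full rank (the complement being exactly the degenerate class), then divide the full-rank case by the mutually exclusive conditions $n \geq m$ and $n < m$. The extra bookkeeping you flag about how $\textup{rank}(W) < \min\{m,n\}$ specializes in each regime is a reasonable sanity check but does not change the argument.
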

		
		\begin{proof}
			All scalar neural ODEs of the form $\textup{NODE}^k(\mathcal{X},\mathbb{R})$ can be split up in neural ODEs, where the weight matrices $W$ and $\widetilde{W}$ have both full rank or at least one weight matrix has not full rank and hence the neural ODE is of class $\textup{NODE}^k_{\textup{D}}(\mathcal{X},\mathbb{R})$. The neural ODEs with full rank matrices are subdivided into the classes $\textup{NODE}^k_\textup{N}(\mathcal{X},\mathbb{R})$ and $ \textup{NODE}^k_\textup{A}(\mathcal{X},\mathbb{R})$, depending on whether $n \geq m$ or if $n<m$, such that we obtain a complete partition in three disjoint sub-classes of neural ODEs.
		\end{proof}
		
		\begin{remark}
			The comparison of non-augmented and augmented neural ODEs with MLPs with full rank matrices is straightforward. Degenerate neural ODEs resemble neural networks with a bottleneck in the following sense: if $\textup{rank}(W) = r<\min\{m,n\}$, then $\lambda(x)$ has $r$ linearly independent components, such that the effective dimension of the input is reduced from $n$ to $r$. As the ODE lives in $\mathbb{R}^m$, the dimension is increased from $r$ to~$m$, inducing a hidden bottleneck as $n>r$ and $r<m$. In the case that $\textup{rank}(\widetilde{W}) = 0$, the output of the neural ODE is constant and has no non-trivial dynamics.
		\end{remark}
		
		For the upcoming analysis, we define in analogy to MLPs a parameter space for the weight matrices and biases of scalar neural ODEs. For neural ODEs, the vector field $f$, which corresponds to all weights, biases, and activation functions of the hidden layer of an MLP, can be freely chosen. As our results do not depend on the choice of the vector field~$f$, we define in the following only a parameter space for the weights and biases $W$, $\widetilde{W}$, $b$ and $\tilde{b}$ building the affine linear layers $\lambda$ and $\tilde{\lambda}$.
		
		\begin{definition}[Neural ODE Parameter Space]
			Let $\Phi\in \textup{NODE}^k(\mathcal{X},\mathbb{R})$ be a scalar neural ODE with weight matrices $W\in \mathbb{R}^{m \times n}$, $\widetilde{W}\in\mathbb{R}^{1 \times m}$ and biases $b \in \mathbb{R}^m$, $\tilde{b}\in \mathbb{R}$. The weight space $\mathbb{V}$ of all possible weights and biases is defined as
			\begin{equation*}
				\mathbb{V}\coloneqq \mathbb{R}^{m \times n} \times \mathbb{R}^{1 \times m} \times \mathbb{R}^m \times \mathbb{R}.
			\end{equation*}
			The subset of $\mathbb{V}$, such that both weight matrices $W$ and $\widetilde{W}$ have full rank, is defined by
			\begin{equation*}
				\mathbb{V}^\ast \coloneqq \left\{(W,\widetilde{W},b,\tilde{b}) \in \mathbb{V}: \textup{rank}(W) = \min\{m,n\}, \textup{rank}(\widetilde{W}) = 1\right\}.
			\end{equation*}
			For $V \in \mathbb{V}$ it holds $\vem(V) \in \mathbb{R}^{N_V}$, where $N_V \coloneqq mn+2m+1$ and $\ve$ is the operator of Definition~\ref{def:nn_weightspace}\ref{def:nn_weightspace_b}.
		\end{definition}
		
		By definition, it holds that the weight matrices of non-augmented and augmented neural ODEs lie in $\mathbb{V}^\ast$ and the weight matrices of degenerate neural ODEs lie in $\mathbb{V}_0 \coloneqq \mathbb{V}\setminus \mathbb{V}^\ast$. As for MLPs, it also holds for neural ODEs that the set of weights and biases $\vem(\mathbb{V}_0)$, where at least one weight matrix has not full rank, is a set of Lebesgue measure zero in $\mathbb{R}^{N_V}$. 	
		
		\begin{lemma}
			The set $\vem(\mathbb{V}_0)$ is a zero set with respect to the Lebesgue measure in $\mathbb{R}^{N_V}$.
		\end{lemma}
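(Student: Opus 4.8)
The plan is to reproduce the argument of Lemma~\ref{lem:nn_weightspace} in the simpler two-matrix setting. First I would decompose $\mathbb{V}_0$ according to which of the two weight matrices fails to have full rank. Writing $W \in \mathbb{R}^{m\times n}$ and $\widetilde{W}\in \mathbb{R}^{1\times m}$, a tuple $(W,\widetilde{W},b,\tilde{b})$ lies in $\mathbb{V}_0$ exactly when $\textup{rank}(W)<\min\{m,n\}$ or $\textup{rank}(\widetilde{W})=0$. Denoting by $M_W \subset \mathbb{R}^{m\times n}$ the set of matrices without full rank, this gives
\[
\mathbb{V}_0 = \bigl(M_W \times \mathbb{R}^{1\times m}\times \mathbb{R}^m\times\mathbb{R}\bigr) \;\cup\; \bigl(\mathbb{R}^{m\times n}\times \{0\}\times\mathbb{R}^m\times\mathbb{R}\bigr).
\]
Since a finite union of Lebesgue null sets is again null and $\vem$ is a linear bijection (a reordering of coordinates, hence mapping null sets to null sets), it suffices to show that the $\vem$-image of each of the two pieces is a null set in $\mathbb{R}^{N_V}$.

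For the first piece I would invoke exactly the computation already carried out in the proof of Lemma~\ref{lem:nn_weightspace}: by \cite[Theorem 5.15]{Prasolov2006} the rank-$k$ matrices in $\mathbb{R}^{m\times n}$ form a manifold of dimension $k(m+n-k)$, which is strictly smaller than $mn$ whenever $k<\min\{m,n\}$, so the finite union $\ve(M_W)$ has Lebesgue measure zero in $\mathbb{R}^{mn}$. The image of $M_W \times \mathbb{R}^{1\times m}\times \mathbb{R}^m\times\mathbb{R}$ under $\vem$ is then the product of this null set with the full Euclidean factors $\mathbb{R}^{1\times m}\times \mathbb{R}^m\times\mathbb{R}$, and a product of a null set with any Euclidean space is null by the product structure of Lebesgue measure (Fubini). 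For the second piece, the row vector $\widetilde{W}$ fails to have full rank only for $\widetilde{W}=0$, a single point in $\mathbb{R}^{1\times m}$ and hence a null set; its product with the remaining full factors is again null in $\mathbb{R}^{N_V}$.

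Combining the two, $\vem(\mathbb{V}_0)$ is a union of two null sets and therefore a Lebesgue null set in $\mathbb{R}^{N_V}$, which is the claim. There is essentially no serious obstacle here: the statement is the neural-ODE analogue of Lemma~\ref{lem:nn_weightspace}, and the degeneracy condition on $\widetilde{W}$ is trivial because $\widetilde{W}$ is a row vector, so the only facts requiring care are the reuse of the rank-variety dimension count from \cite{Prasolov2006} and the elementary observation that a null set times a full Euclidean factor remains null.
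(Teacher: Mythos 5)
Your proposal is correct and follows essentially the same route as the paper: the paper simply cites Lemma~\ref{lem:nn_weightspace} to conclude that the set of pairs $(W,\widetilde{W})$ with at least one rank-deficient matrix is null in $\mathbb{R}^{mn+m}$ and then takes the product with the bias factors, whereas you unfold that citation by splitting into the two failure cases and reusing the rank-variety dimension count directly. Both arguments are sound and rest on the same two facts: rank-deficient matrices form a Lebesgue null set, and a null set times a full Euclidean factor remains null.
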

		
		\begin{proof}
			Let $(W,\widetilde{W},b,\tilde{b}) \in \mathbb{V}$. By Lemma~\ref{lem:nn_weightspace}, the set
			\begin{equation*}
				Z_0 \coloneqq \left\{\vem(W,\widetilde{W}) \in \mathbb{R}^{mn+m}: \textup{rank}(W)<\min\{m,n\} \text{ or } \textup{rank}(\widetilde{W})=0 \right\}
			\end{equation*}
			is a zero set with respect to the Lebesgue measure in $\mathbb{R}^{mn+m}$. It follows that also the set
			\begin{equation*}
				\vem(\mathbb{V}_0) = Z_0 \times \mathbb{R}^{m}\times \mathbb{R} \subset \mathbb{R}^{mn+m} \times \mathbb{R}^{m}\times \mathbb{R}  = \mathbb{R}^{N_V}
			\end{equation*}
			is a zero set with respect to the Lebesgue measure in $\mathbb{R}^{N_V}$.
		\end{proof}

		\subsection{Existence of Critical Points}
		\label{sec:node_criticalpoints}
		
		In this section, we study the existence of critical points dependent on the special architecture of the scalar neural ODE. As for MLPs, we first calculate the network gradient.
		
		\begin{lemma}[Neural ODE Network Gradient]\label{lem:node_gradient}
			Let $\Phi \in \textup{NODE}^k(\mathcal{X},\mathbb{R})$, $k\geq 1$, $\mathcal{X}\subset \mathbb{R}^n$ open, be a scalar neural ODE with weight matrices $(W,\widetilde{W},b,\tilde{b})\in \mathbb{V}$. Then
			\begin{equation*}
				\nabla \Phi(x) = W^\top \left(\frac{\partial h_a(T)}{\partial a}\right)^\top \widetilde{W}^\top \in \mathbb{R}^n,
			\end{equation*}
			where $a = Wx+b$ and $\frac{\partial h_a(T)}{\partial a} \in \mathbb{R}^{m \times m}$ is the Jacobian matrix of the time-$T$ map $h_a(T)$ with respect to the initial value $a$ of the ODE, i.e.,  $\left[\frac{\partial h_a(T)}{\partial a}\right]_{ij} = \frac{\partial [h_a(T)]_i}{\partial a_j}$.
		\end{lemma}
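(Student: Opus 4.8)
The plan is to read off the gradient by applying the multi-dimensional chain rule to the composition that defines the neural ODE, the only genuine input being the differentiable dependence of the time-$T$ map on its initial condition. First I would decompose $\Phi$ into three maps: the inner affine layer $\lambda: x \mapsto a = Wx + b$, the solution (flow) map $S_T: a \mapsto h_a(T)$ of~\eqref{eq:IVP}, and the outer affine layer $\widetilde{\lambda}: y \mapsto \widetilde{W}y + \tilde{b}$, so that $\Phi = \widetilde{\lambda} \circ S_T \circ \lambda$. Since $k \geq 1$, Lemma~\ref{lem:node_regularity} (via Lemma~\ref{lem:ivp_regularity}) guarantees $\Phi \in C^k(\mathcal{X},\mathbb{R}) \subset C^1(\mathcal{X},\mathbb{R})$, so $\Phi$ is continuously differentiable and the chain rule is applicable at every $x \in \mathcal{X}$.

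Next I would compute the three factors of the chain rule. The inner affine layer $\lambda$ has constant Jacobian $\tfrac{\partial a}{\partial x} = W \in \mathbb{R}^{m \times n}$, and the outer affine layer $\widetilde{\lambda}$ has constant Jacobian $\widetilde{W} \in \mathbb{R}^{1 \times m}$. The middle factor is the Jacobian of the time-$T$ map with respect to the initial value, $\tfrac{\partial h_a(T)}{\partial a} \in \mathbb{R}^{m \times m}$, which is precisely the object whose existence and continuity is furnished by Lemma~\ref{lem:ivp_regularity}, evaluated at $a = \lambda(x) = Wx + b$. Because $\Phi$ is scalar-valued, its total derivative is the $1 \times n$ row vector obtained by multiplying these factors in order:
\begin{equation*}
	\frac{\dd \Phi}{\dd x} = \widetilde{W} \, \frac{\partial h_a(T)}{\partial a} \, W \in \mathbb{R}^{1 \times n}.
\end{equation*}
Transposing this identity, and using $(ABC)^\top = C^\top B^\top A^\top$, yields
\begin{equation*}
	\nabla \Phi(x) = \left(\frac{\dd \Phi}{\dd x}\right)^\top = W^\top \left(\frac{\partial h_a(T)}{\partial a}\right)^\top \widetilde{W}^\top \in \mathbb{R}^n,
\end{equation*}
which is the claimed formula.

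The computation itself is routine linear algebra together with the chain rule; the one substantive point is the smooth dependence of the flow on its initial condition, i.e.\ the differentiability of $a \mapsto h_a(T)$. This is exactly where the hypothesis $f \in C^{0,k}(\Omega,\mathbb{R}^m)$ with $k \geq 1$ enters, and it is supplied in black-box form by Lemma~\ref{lem:ivp_regularity}. I therefore expect no real obstacle: once that lemma is invoked to justify the existence and continuity of $\tfrac{\partial h_a(T)}{\partial a}$, the rest is a direct chain-rule evaluation and a transpose. (If desired, one may additionally remark that $\tfrac{\partial h_a(T)}{\partial a}$ is the time-$T$ solution of the first variational equation along $h_a$, but this identification is not needed for the gradient formula and can be deferred to the analysis of the regularity of critical points.)
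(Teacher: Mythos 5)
Your proof is correct and follows essentially the same route as the paper: invoke Lemma~\ref{lem:node_regularity} (resting on Lemma~\ref{lem:ivp_regularity}) for $C^1$-regularity, apply the multi-dimensional chain rule to the composition $\widetilde{\lambda}\circ S_T\circ\lambda$ to obtain $\frac{\dd \Phi}{\dd x} = \widetilde{W}\,\frac{\partial h_a(T)}{\partial a}\,W$, and transpose. The extra detail you give on the constant Jacobians of the affine layers is a harmless elaboration of the same argument.
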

		
		\begin{proof}
			By Lemma~\ref{lem:node_regularity} it holds $\Phi\in C^1(\mathcal{X},\mathbb{R})$, such that the multi-dimensional chain rule applied to 
			\begin{equation*}
				\Phi(x)\coloneqq \widetilde{\lambda}( h_{\lambda(x)}(T)) = \widetilde{W} \cdot h_{Wx+b}(T) + \tilde{b} = \widetilde{W} \cdot h_{a}(T) + \tilde{b}
			\end{equation*}
			yields
			\begin{equation*}
				\frac{\dd \Phi}{\dd x} = \widetilde{W} \frac{\partial h_a(T)}{\partial a} W \in \mathbb{R}^{1 \times n}
			\end{equation*}
			where $a = Wx+b$. The result follows by taking the transpose.
		\end{proof}
		
		The network gradient is a product of the weight matrices $W$, $\widetilde{W}$ and the Jacobian matrix $\frac{\partial h_a(T)}{\partial a}$. In the following, we show that independently of the choice of the vector field $f$ of~\eqref{eq:IVP} and the initial condition $a \in \mathcal{A}$, the Jacobian $\frac{\partial h_a(T)}{\partial a}$ has always full rank. The proof of the following proposition needs some results from ODE theory, stated for completeness in Appendix~\ref{sec:app_ODE}.
		
		\begin{proposition} \label{prop:node_jacobian}
			Let $\Phi \in \textup{NODE}^k(\mathcal{X},\mathbb{R})$, $k\geq 1$, $\mathcal{X}\subset \mathbb{R}^n$ open. Then the Jacobian $\frac{\partial h_a(T)}{\partial a}$ of the time-$T$ map $h_a(T)$ with respect to the initial data $a \in \mathcal{A} = \lambda(\mathcal{X})$ of the underlying initial value problem~\eqref{eq:IVP} has always full rank $m$.
		\end{proposition}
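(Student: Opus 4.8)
The plan is to exploit the variational equation governing the dependence of the flow on its initial data. First, I would recall from the ODE regularity results collected in Appendix~\ref{sec:app_ODE} that, since $f \in C^{0,k}(\Omega,\mathbb{R}^m)$ with $k \geq 1$, the time-$T$ map $a \mapsto h_a(T)$ is continuously differentiable and its Jacobian $M(t) \coloneqq \frac{\partial h_a(t)}{\partial a} \in \mathbb{R}^{m \times m}$ solves the linear matrix initial value problem (the variational equation)
\begin{equation*}
	\frac{\dd M}{\dd t}(t) = \frac{\partial f}{\partial h}(t, h_a(t)) \, M(t), \qquad M(0) = \mathrm{Id}_m,
\end{equation*}
obtained by differentiating $\frac{\dd h}{\dd t} = f(t,h(t))$ with respect to $a$ and interchanging the $t$- and $a$-derivatives. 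The initial condition is the identity because $h_a(0) = a$.

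Second, the crucial observation is that $M(0) = \mathrm{Id}_m$ has full rank $m$ and that $M(t)$ is a fundamental matrix solution of this linear (though non-autonomous) system. To transfer full rank from $t = 0$ to $t = T$, I would invoke Liouville's formula (Abel's identity), which gives
\begin{equation*}
	\det M(t) = \det M(0) \cdot \exp\!\left( \int_0^t \mathrm{tr}\!\left( \frac{\partial f}{\partial h}(s, h_a(s)) \right) \dd s \right).
\end{equation*}
Since $\det M(0) = 1$ and the exponential factor is strictly positive for every finite $t$, it follows that $\det M(t) \neq 0$ for all $t \in [0,T]$; in particular $\det M(T) \neq 0$, so $\frac{\partial h_a(T)}{\partial a} = M(T)$ has full rank $m$. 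As the argument uses no property of the specific $a \in \mathcal{A}$ nor of the particular vector field $f$ beyond its regularity, the conclusion holds uniformly in $a$ and is independent of $f$, as claimed.

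The main obstacle is not the determinant computation, which is routine once the variational equation is available, but rather the rigorous justification that $h_a(T)$ is differentiable in $a$ and that its Jacobian genuinely solves the variational equation with enough continuity to apply Liouville's formula. This is precisely where the hypothesis $f \in C^{0,k}$ and the supporting ODE lemmata of Appendix~\ref{sec:app_ODE} enter: I would cite the smooth-dependence-on-initial-conditions theorem stated there both to legitimize the interchange of derivatives and to guarantee that $t \mapsto \frac{\partial f}{\partial h}(t, h_a(t))$ is continuous along the trajectory, so that the fundamental matrix $M$ and its determinant are well defined and $C^1$ on all of $[0,T]$.
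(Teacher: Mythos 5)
Your proposal is correct and follows essentially the same route as the paper: both derive the variational equation for $\frac{\partial h_a(t)}{\partial a}$ with initial condition $\mathrm{Id}_m$ (Lemma~\ref{lem:node_variational}) and then apply Liouville's formula (Lemma~\ref{lem:node_linearODE}) to conclude that the determinant remains nonzero at $t = T$. The paper only adds the minor remark that the trace of the real matrix $A(t)$ is real, so the exponential factor is a positive real number.
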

		
		\begin{proof}
			As $k\geq 1$, the initial value problem~\eqref{eq:IVP} with initial condition $a \in \mathcal{A} = \lambda(\mathcal{X})$  has by Lemma~\ref{lem:ivp_regularity} a unique solution $h_a: \mathcal{I}_a \rightarrow \mathbb{R}^m$ with $[0,T]\subset \mathcal{I}_a \subset \mathbb{R}$. Furthermore, given the continuously differentiable solution $h_a: \mathcal{I}_a  \rightarrow \mathbb{R}^m$ of~\eqref{eq:IVP}, the matrix function $A: \mathcal{I}_a  \rightarrow \mathbb{R}^{m \times m}$, $A(t) \coloneqq \frac{\partial f(t,h_a(t))}{\partial h_y}$, is continuous. By Lemma~\ref{lem:node_variational}, the Jacobian matrix $\frac{\partial h_a(t)}{\partial a}\in\mathbb{R}^{m \times m}$ satisfies the linear homogeneous ODE
			\begin{equation*}
				\frac{\dd Y}{\dd t} = A(t) Y, \qquad Y(0) = \text{Id}_m,
			\end{equation*}
			where $\text{Id}_m\in\mathbb{R}^{m \times m}$ is the identity matrix. By Lemma~\ref{lem:node_linearODE}\ref{lem:node_linearODE_a}, each column of $Y$ defines a unique solution $[Y]_i: \mathcal{I}_a \rightarrow\mathbb{R}^m$ with initial condition $[Y]_i(0) = \mathrm{e}_i^{(m)}$, where  $\mathrm{e}_i^{(m)}$ is the $i$-th unit vector in~$\mathbb{R}^m$. Hence, $Y:\mathcal{I}_a \rightarrow \mathbb{R}^{m \times m}$ is a unique matrix solution defined for all $t \in [0,T]\subset \mathcal{I}_a $ with initial condition $Y(0) = \text{Id}_m$, such that $\det(Y(0)) = 1$. As the matrix $A(t) \in \mathbb{R}^{m \times m}$ has only real entries, all eigenvalues of $A(t)$ are real or they come in complex conjugate pairs, such that the trace of $A(t)$ is a real number for all $t\in \mathcal{I}_a $. Consequently, $$\exp\left(\int_{t_0}^t \textup{tr}(A(r))\; \dd r\right)$$ is a real-valued, positive number, such that it follows by Lemma~\ref{lem:node_linearODE}\ref{lem:node_linearODE_b} that $\det (Y(T))\neq 0$. Consequently $Y(T) = \frac{\partial h_a(T)}{\partial a}$ has full rank $m$, independently of the initial data $a \in \mathcal{A}$.
		\end{proof}
		
		The characterization of the Jacobian matrix $\frac{\partial h_a(T)}{\partial a}$ in Proposition~\ref{prop:node_jacobian} is sufficient to prove that for non-augmented scalar ODEs, no critical points can exist.
		
		\begin{theorem} \label{th:node_nocriticalpoints}
			Any non-augmented scalar neural ODE $\Phi \in \textup{NODE}_\textup{N}^k(\mathcal{X},\mathbb{R})$, $\mathcal{X}\subset \R^n$ open, $k \geq 1$ has no critical point, i.e., $\nabla\Phi(x) \neq 0$ for all $x \in \mathcal{X}$. Consequently, it holds for $k\geq 1$ that 
			\begin{equation*}
				\textup{NODE}_\textup{N}^k(\mathcal{X},\mathbb{R}) \subset (\mathcal{C}1)^k(\mathcal{X},\mathbb{R}),
			\end{equation*}
			such that every non-augmented scalar neural ODE is a Morse function.
		\end{theorem}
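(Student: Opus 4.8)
The plan is to combine the explicit gradient formula of Lemma~\ref{lem:node_gradient} with the full-rank property of the Jacobian established in Proposition~\ref{prop:node_jacobian}, mirroring the rank argument used for non-augmented MLPs in Theorem~\ref{th:nn_nocriticalpoints}. By Lemma~\ref{lem:node_gradient}, the gradient factors as
\begin{equation*}
	\nabla\Phi(x) = W^\top \left(\frac{\partial h_a(T)}{\partial a}\right)^\top \widetilde{W}^\top \in \mathbb{R}^n,
\end{equation*}
with $a = Wx+b$, so it suffices to show that this product of three matrices never vanishes on $\mathcal{X}$.

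First I would record the ranks of the three factors. The non-augmented assumption gives $\textup{rank}(W) = m$ and $\textup{rank}(\widetilde{W}) = 1$, hence $W^\top \in \mathbb{R}^{n \times m}$ has rank $m$ and $\widetilde{W}^\top \in \mathbb{R}^{m \times 1}$ has rank~$1$, i.e.\ $\widetilde{W}^\top \neq 0$. Proposition~\ref{prop:node_jacobian} shows that $\frac{\partial h_a(T)}{\partial a} \in \mathbb{R}^{m \times m}$ has full rank $m$ for every $a \in \mathcal{A}$ and every underlying vector field, independently of $x$, and therefore so does its transpose.

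With these ranks in hand, the conclusion follows from a single application of the product rule for full-rank matrices. Reading the product from right to left, the factors $\widetilde{W}^\top$, $\left(\frac{\partial h_a(T)}{\partial a}\right)^\top$, $W^\top$ have dimensions $m \times 1$, $m \times m$, $n \times m$, so the intermediate widths $1 \leq m \leq m \leq n$ increase monotonically; this is exactly where the hypothesis $n \geq m$ enters. Lemma~\ref{lem:nn_productfullrank} then guarantees that the product $\nabla\Phi(x)$ attains the maximal rank of an $n \times 1$ matrix, namely rank $1$, for every $x \in \mathcal{X}$, which is equivalent to $\nabla\Phi(x) \neq 0$. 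Hence $\Phi$ has no critical points, so $\Phi \in (\mathcal{C}1)^k(\mathcal{X},\mathbb{R})$ by Definition~\ref{def:subsets}, and for $k \geq 2$ this makes $\Phi$ vacuously a Morse function.

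The only genuinely substantive step is the full-rank property of the Jacobian, and this is precisely what Proposition~\ref{prop:node_jacobian} supplies through the linear variational equation and Liouville's formula; once it is available, the theorem reduces to bookkeeping of matrix dimensions. The point worth emphasizing is that the argument is completely independent of the choice of vector field $f$ and of the point $a = Wx+b$, so the non-existence of critical points is structural, governed solely by the inequality $n \geq m$ together with the full rank of the two affine layers. This makes the continuous statement a transparent analogue of the finite-depth result in Theorem~\ref{th:nn_nocriticalpoints}, with the invertible time-$T$ Jacobian playing the role of the product of invertible diagonal activation matrices in the MLP gradient.
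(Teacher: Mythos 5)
Your proposal is correct and follows essentially the same route as the paper's proof: the factorization of $\nabla\Phi(x)$ from Lemma~\ref{lem:node_gradient}, the full rank of the time-$T$ Jacobian from Proposition~\ref{prop:node_jacobian}, and the monotone-dimension product rule of Lemma~\ref{lem:nn_productfullrank} to conclude $\nabla\Phi(x)\neq 0$ for all $x\in\mathcal{X}$. Your added care in restricting the Morse-function conclusion to $k\geq 2$ is a reasonable refinement, since non-degeneracy of critical points only makes sense for $C^2$ maps.
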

		
		\begin{proof}
			Denote the weight matrices of $\Phi$ by $(W,\widetilde{W},b,\tilde{b})\in \mathbb{V}$. By Lemma~\ref{lem:node_gradient} it holds
			\begin{equation*}
				\nabla \Phi(x) = W^\top \left(\frac{\partial h_a(T)}{\partial a}\right)^\top \widetilde{W}^\top \in \mathbb{R}^n.
			\end{equation*}
			As the neural ODE is non-augmented, $W^\top \in\mathbb{R}^{n \times m}$ has rank $m \leq n$ and $\widetilde{W}^\top\in\mathbb{R}^{m \times 1}$ has rank~1. By Proposition~\ref{prop:node_jacobian}, the Jacobian $\left(\frac{\partial h_a(T)}{\partial a}\right)^\top \in \mathbb{R}^{m \times m}$ has independently of $a \in \mathcal{A}$ and hence also independently of $x \in \mathcal{X}$ full rank $m\geq 1$. By Lemma~\ref{lem:nn_productfullrank}, the product $\nabla \Phi(x)$ of the three given full rank matrices with monotonically decreasing widths has also full rank $1$, which implies that $\nabla \Phi(x) \neq 0$ for all $x \in \mathcal{X}$ and the result follows.
		\end{proof}
		
		\begin{remark}
			Theorem~\ref{th:node_nocriticalpoints} generalizes the results obtained in \cite{Kuehn2023}, where it is shown that a non-augmented neural ODE with $n = m$ and $W = \textup{Id}_n$ cannot be of class $(\mathcal{C}2)^k(\mathcal{X},\mathbb{R})$.
		\end{remark}
		
		The following theorem shows that the proof of Theorem~\ref{th:node_nocriticalpoints} does not apply to augmented and degenerate neural ODEs. It guarantees the existence of neural ODE architectures $\Phi \in \textup{NODE}^k_\textup{A}(\mathcal{X},\mathbb{R})$ and $\Phi \in \textup{NODE}^k_\textup{D}(\mathcal{X},\mathbb{R})$, which do have critical points.
		
		\begin{theorem} \label{th:node_criticalpoint}
			Let $\Phi \in \textup{NODE}^k(\mathcal{X},\mathbb{R})$, $k\geq 1$, be a scalar augmented or degenerate neural ODE.
			\begin{enumerate}[label=(\alph*), font=\normalfont]
				\item \label{th:node_criticalpoint_a} Given $W \in \mathbb{R}^{n \times m}$ with $n<m$ and $\textup{rank}(W) = n$ and a point $x \in \mathcal{X}$, then the weight matrix $\widetilde{W}\in\mathbb{R}^{1 \times m}$ can be chosen in such a way, that $\Phi \in \textup{NODE}^k_\textup{A}(\mathcal{X},\mathbb{R})$ has a critical point at $x$.
				\item \label{th:node_criticalpoint_b} Given $W \in \mathbb{R}^{n \times m}$ with $\textup{rank}(W) < \min\{m,n\}$ and a point $x \in \mathcal{X}$, then the weight matrix $\widetilde{W}\in\mathbb{R}^{1 \times m}$ can be chosen in such a way, that $\Phi \in \textup{NODE}^k_\textup{D}(\mathcal{X},\mathbb{R})$ has a critical point at $x$.
			\end{enumerate}
		\end{theorem}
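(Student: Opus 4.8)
The plan is to exploit the explicit gradient formula of Lemma~\ref{lem:node_gradient}, namely
\[
\nabla \Phi(x) = W^\top \left(\frac{\partial h_a(T)}{\partial a}\right)^\top \widetilde{W}^\top, \qquad a = Wx+b,
\]
together with the structural fact established in Proposition~\ref{prop:node_jacobian} that the Jacobian $J \coloneqq \frac{\partial h_a(T)}{\partial a} \in \mathbb{R}^{m\times m}$ has full rank $m$ and is therefore invertible. The decisive observation is that $J$ depends only on the vector field $f$, the bias $b$, and the initial condition $a = Wx+b$, but is completely independent of the outer weight $\widetilde{W}$. Hence, once $W$, $b$, $f$, and the target point $x$ are fixed, the matrix $J$ (and thus $J^\top$) is a fixed invertible matrix, and we are free to choose $\widetilde{W}$ afterwards so as to force $\nabla \Phi(x) = 0$.

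For both parts I would first locate a nonzero vector in the kernel of $W^\top$. In part~\ref{th:node_criticalpoint_a}, since $W$ has full rank $n$ and $n<m$, the transpose $W^\top \in \mathbb{R}^{n \times m}$ has rank $n$ and hence a kernel of dimension $m-n \geq 1$; in part~\ref{th:node_criticalpoint_b}, since $\textup{rank}(W) = r < \min\{m,n\} \leq m$, the kernel of $W^\top$ has dimension $m - r \geq 1$. In either case I pick some $v \in \ker(W^\top)$ with $v \neq 0$ and define $\widetilde{W}$ through $\widetilde{W}^\top \coloneqq (J^\top)^{-1} v \in \mathbb{R}^m$, which is well-defined and nonzero because $J^\top$ is invertible. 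Substituting into the gradient then yields
\[
\nabla \Phi(x) = W^\top J^\top (J^\top)^{-1} v = W^\top v = 0,
\]
so that $x$ is a critical point of the resulting neural ODE.

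It remains to check that the constructed networks lie in the claimed classes. In part~\ref{th:node_criticalpoint_a} the matrix $W$ has full rank $n$, the vector $\widetilde{W} \neq 0$ has rank $1$, and $n < m$, so $\Phi \in \textup{NODE}^k_\textup{A}(\mathcal{X},\mathbb{R})$; in part~\ref{th:node_criticalpoint_b} the matrix $W$ fails to have full rank by hypothesis, so $\Phi \in \textup{NODE}^k_\textup{D}(\mathcal{X},\mathbb{R})$ regardless of $\widetilde{W}$. I do not expect a genuine obstacle here: once Proposition~\ref{prop:node_jacobian} is available, the entire argument reduces to linear algebra, the only substantive input being the invertibility of the variational (state-transition) matrix $J$, which guarantees that $J^\top$ maps $\mathbb{R}^m$ onto $\mathbb{R}^m$ and can therefore steer $\widetilde{W}^\top$ onto any prescribed vector of $\ker(W^\top)$. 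The single point requiring care is to keep $\widetilde{W}$ nonzero, so that the augmented architecture in part~\ref{th:node_criticalpoint_a} indeed satisfies $\textup{rank}(\widetilde{W}) = 1$; this is automatic since $v \neq 0$ and $(J^\top)^{-1}$ is injective.
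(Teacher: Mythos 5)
Your proposal is correct and follows essentially the same route as the paper: both rest on the gradient formula of Lemma~\ref{lem:node_gradient} and the invertibility of the Jacobian from Proposition~\ref{prop:node_jacobian}, and both conclude that a nonzero $\widetilde{W}$ annihilating $W^\top \bigl(\tfrac{\partial h_a(T)}{\partial a}\bigr)^\top$ exists because that product has rank at most $\min\{m,n\}<m$ resp.\ $r<m$. Your explicit choice $\widetilde{W}^\top=(J^\top)^{-1}v$ with $v\in\ker(W^\top)\setminus\{0\}$ is just a constructive parametrization of the kernel whose nontriviality the paper asserts via linear dependence of the columns; the two arguments are interchangeable.
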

		
		\begin{proof}
			By Lemma~\ref{lem:node_gradient} it holds
			\begin{equation*}
				\nabla \Phi(x) = W^\top \left(\frac{\partial h_a(T)}{\partial a}\right)^\top \widetilde{W}^\top \in \mathbb{R}^n,
			\end{equation*}
			where $a = Wx+b$ for the given $x \in\mathcal{X}$. The Jacobian $\frac{\partial h_a(T)}{\partial a}$ has by Lemma~\ref{prop:node_jacobian} full rank $m$. 
			
			In case~\ref{th:node_criticalpoint_a}, the matrix product $W^\top \left(\frac{\partial h_a(T)}{\partial a}\right)^\top \in \mathbb{R}^{n \times m}$ has by Lemma~\ref{lem:nn_productfullrank} full rank $n<m$, which implies that the $m$ columns of the matrix product are linearly dependent. Hence, there exists a non-zero choice of the matrix $\widetilde{W} \in \mathbb{R}^{1\times m}$, such that $\nabla \Phi(x) = 0$ for the given $x \in \mathcal{X}$.
			
			In case~\ref{th:node_criticalpoint_b}, the matrix $W$ has non-full rank, i.e., $\text{rank}(W) = r < \min\{m,n\}$. As the Jacobian matrix $\frac{\partial h_a(T)}{\partial a}$, has full rank, it follows that the matrix product  $W^\top \left(\frac{\partial h_a(T)}{\partial a}\right)^\top \in \mathbb{R}^{n \times m}$ has also rank~$r$. Since $r < \min\{m,n\}$, the $m$ columns of the matrix product are linearly dependent. Hence, there exists a non-zero choice of the matrix $\widetilde{W} \in \mathbb{R}^{1\times m}$, such that $\nabla \Phi(x) = 0$ for the given $x \in \mathcal{X}$.
		\end{proof}

		\subsection{Regularity of Critical Points}
		\label{sec:node_regularity}
		
		For a further characterization of augmented and degenerate neural ODEs, we study the regularity of critical points. First, we show in analogy to augmented MLPs, that augmented neural ODEs are for all weights and biases, except possibly for a zero set with respect to the Lebesgue measure, Morse functions.
		
		\begin{theorem}\label{th:node_augmented}
			Any augmented scalar neural ODE $\Phi \in \textup{NODE}^k_\textup{A}(\mathcal{X},\mathbb{R})$, $k\geq 2$ with weights and biases $(W,\widetilde{W},b,\tilde{b})\in \mathbb{V}^\ast$, is for all $\vem(W,\widetilde{W},b,\tilde{b}) \in \mathbb{R}^{N_V},$ except possibly for a zero set in $\mathbb{R}^{N_V}$ with respect to the Lebesgue measure, of class $(\mathcal{C}1)^k(\mathcal{X},\mathbb{R})$ or $(\mathcal{C}2)^k(\mathcal{X},\mathbb{R})$ and hence a Morse function.
		\end{theorem}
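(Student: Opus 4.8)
The plan is to apply the parametric Morse lemma, Lemma~\ref{lem:nn_morsefunction}, in exact parallel to the augmented MLP case of Theorem~\ref{th:nn_morse}. I set $\widehat{\Lambda}(x,v) \coloneqq \Phi(x)$, where $v = \vem(W,\widetilde{W},b,\tilde{b}) \in \mathbb{R}^{N_V}$ collects all weights and biases, so that $\mathbb{R}^{N_V}$ plays the role of the parameter space $\mathcal{V}$ and $p = N_V$. Since $k \geq 2$, Lemma~\ref{lem:node_regularity} gives $\Phi \in C^k(\mathcal{X},\mathbb{R})$; because the parameters $\widetilde{W},\tilde{b}$ enter affinely while $W,b$ enter only through the smooth substitution $a = Wx+b$ composed with the $C^k$ time-$T$ map $h_a(T)$ (Lemma~\ref{lem:ivp_regularity}), the joint map $\widehat{\Lambda}$ is of class $C^2$ on $\mathcal{X}\times\mathbb{R}^{N_V}$. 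One also checks the harmless dimension hypothesis $n \leq N_V = mn+2m+1$.

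The crux is to verify that the mixed second-derivative matrix $\frac{\partial^2 \widehat{\Lambda}}{\partial v \partial x}(x,v) \in \mathbb{R}^{n \times N_V}$ is surjective at every point. Rather than computing the full matrix, I only need one block of full rank, and the natural choice is the block of derivatives with respect to the output weight $\widetilde{W}$. Starting from the gradient formula of Lemma~\ref{lem:node_gradient}, $\nabla \Phi(x) = W^\top (\partial h_a(T)/\partial a)^\top \widetilde{W}^\top$, and using that $\Phi$ is linear in $\widetilde{W}$, a short differentiation yields $\left[\frac{\partial^2 \Phi}{\partial \widetilde{W} \partial x}\right]_{ij} = [W^\top J(a)^\top]_{ij}$, i.e.\ this block equals $W^\top J(a)^\top \in \mathbb{R}^{n \times m}$, where $J(a) = \partial h_a(T)/\partial a$.

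It remains to establish the rank. Since $\Phi$ is augmented, $W$ has full rank $n$ with $n < m$, so $W^\top \in \mathbb{R}^{n \times m}$ has full row rank $n$; by Proposition~\ref{prop:node_jacobian}, the Jacobian $J(a)$ has full rank $m$ for every $a \in \mathcal{A}$ and is therefore invertible. Because right-multiplication by an invertible matrix preserves rank, $W^\top J(a)^\top$ has rank $n$ at every $(x,v)$ with $(W,\widetilde{W},b,\tilde{b}) \in \mathbb{V}^\ast$. Consequently the full $n \times N_V$ mixed-derivative matrix, which contains these $m$ columns, has rank exactly $n$ everywhere and is thus surjective, so Lemma~\ref{lem:nn_morsefunction} produces a Lebesgue-null set $\mathcal{V}_0 \subset \mathbb{R}^{N_V}$ outside of which $\Phi$ is a Morse function; by the trichotomy of Definition~\ref{def:subsets}, such a function lies in $(\mathcal{C}1)^k(\mathcal{X},\mathbb{R})$ or $(\mathcal{C}2)^k(\mathcal{X},\mathbb{R})$. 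The only genuinely delicate point is the joint $C^2$-regularity of $\widehat{\Lambda}$ in $(x,v)$ together with the interchange of derivatives needed to identify the $\widetilde{W}$-block; everything else reduces to the elementary rank bookkeeping above, which is why the augmented ODE case is markedly cleaner than its MLP counterpart, where the weights enter through a long product of matrices.
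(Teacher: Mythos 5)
Your proposal is correct and follows essentially the same route as the paper's own proof: both single out the block of mixed second derivatives with respect to $\widetilde{W}$, identify it as $W^\top \left(\frac{\partial h_a(T)}{\partial a}\right)^\top$, use Proposition~\ref{prop:node_jacobian} together with the full row rank of $W^\top$ to get surjectivity, and then invoke Lemma~\ref{lem:nn_morsefunction}. The only cosmetic difference is that the paper takes the parameter domain to be $\vem(\mathbb{V}^\ast)$ rather than all of $\mathbb{R}^{N_V}$ (which is where your rank claim actually holds), but since $\vem(\mathbb{V}_0)$ is itself a Lebesgue null set this does not affect the conclusion.
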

		
		\begin{proof}
			Let $\Phi \in \textup{NODE}^k_\textup{A}(\mathcal{X},\mathbb{R})$ be an augmented scalar neural ODE with full rank weight matrices $W\in \mathbb{R}^{m \times n}$, $\widetilde{W}\in \mathbb{R}^{1 \times m}$ and biases $b\in\mathbb{R}^n$, $\tilde{b}\in \mathbb{R}$. By Theorem~\ref{th:node_criticalpoint}\ref{th:node_criticalpoint_a}, it is possible that the neural ODE $\Phi$ has a critical point. In the case that $\Phi$ has no critical point, $\Phi$ is of class $(\mathcal{C}1)^k(\mathcal{X},\mathbb{R})$ and hence a Morse function. In the following we show, that if $\Phi$ has a critical point, then it is for all weights, where $\Phi$ has critical points, except possibly for a zero set with respect to the Lebesgue measure in $\mathbb{R}^{N_V}$ of class $(\mathcal{C}2)^k(\mathcal{X},\mathbb{R})$. By Lemma~\ref{lem:node_gradient}, the gradient of $\Phi$ is given by
			\begin{equation*}
				\nabla \Phi(x) = W^\top \left(\frac{\partial h_a(T)}{\partial a}\right)^\top \widetilde{W}^\top \in \mathbb{R}^n,
			\end{equation*}
			where $a = Wx+b$ and $\frac{\partial h_a(T)}{\partial a} \in \mathbb{R}^{m \times m}$ is the Jacobian matrix of the time-$T$ map $h_a(T)$ with respect to the initial value $a$. For a weight vector $v \in \mathbb{V}^\ast$, we define $\widehat{\Phi} \in C^k(\mathcal{X}\times \mathbb{V}^\ast)$, $k \geq 2$, to be the neural ODE $\Phi$ with an explicit dependence on the weight vector $v$. As a composition of $k$ times continuously differentiable functions, $\widehat{\Phi}$ is not only in $x$ but also in $v$ $k$ times continuously differentiable. We aim to apply Lemma~\ref{lem:nn_morsefunction} to show that $\Phi$ is, for all weights $v\in \mathbb{V}^\ast$, except possibly for a set of measure zero with respect to the Lebesgue measure in $\mathbb{R}^{N_V}$, a Morse function. To that purpose, we need to show that the matrix $\frac{\partial^2 \widehat{\Phi}}{\partial v \partial x}\in \mathbb{R}^{n \times p}$ has for every $(x,v)\in \mathcal{X}\times \mathbb{V}^\ast$ full rank. The second partial derivatives with respect to $x$ and the components of $\widetilde{W}$ are given by
			\begin{equation*}
				\frac{\partial^2 \widehat{\Phi}}{\partial [\widetilde{W}]_i\partial x} = \frac{\partial }{\partial [\widetilde{W}]_i} \left[ W^\top \left(\frac{\partial h_a(T)}{\partial a}\right)^\top \widetilde{W}^\top \right] = W^\top \left(\frac{\partial h_a(T)}{\partial a}\right)^\top \mathrm{e}_i^{(m)},
			\end{equation*}
			where $\mathrm{e}_i^{(m)}$ denotes the $i$-th unit vector in $\mathbb{R}^m$. Consequently, it follows that 
			\begin{equation*}
				\frac{\partial^2 \widehat{\Phi}}{\partial \widetilde{W}\partial x} =  W^\top \left(\frac{\partial h_a(T)}{\partial a}\right)^\top \in \mathbb{R}^{n \times m},
			\end{equation*}
			has full rank $n$, since by assumption the matrix $W$ has full rank $n$ as $m >n$ and by Proposition~\ref{prop:node_jacobian} the Jacobian $\frac{\partial h_a(T)}{\partial a}$ has full rank $m$. Hence, the weight matrix  $\frac{\partial^2 \widehat{\Phi}}{\partial v \partial x}\in \mathbb{R}^{n \times p}$ has for every $(x,v)\in \mathcal{X}\times \mathbb{V}^\ast$ full rank $n<p$, because the submatrix $\frac{\partial^2 \widehat{\Phi}}{\partial \widetilde{W}\partial x}$ has rank $n$. Lemma~\ref{lem:nn_morsefunction} implies that $\Phi$ is, for all weights $v\in\mathbb{V}^\ast$, except possibly for a zero set with respect to the Lebesgue measure in $\mathbb{R}^{N_V}$, a Morse function. 
		\end{proof}
		
		We illustrate the assertion of the last theorem in a one-dimensional example. 
		
		\begin{example} \label{ex:node_augmented}
			\normalfont Consider the scalar augmented neural ODE $\Phi \in \textup{NODE}^\infty_\textup{A}(\mathbb{R},\mathbb{R})$ with $n=1$, $m = 2$ defined by the weights and biases
			\begin{equation*}
				W = \begin{pmatrix}
					w_1 \\ w_2
				\end{pmatrix} \in \mathbb{R}^{2}, \qquad b = \begin{pmatrix}
					b_{1} \\ b_{2}
				\end{pmatrix} \in \mathbb{R}^2, \qquad \widetilde{W} = (\widetilde{w}_1,\widetilde{w}_2) \in \mathbb{R}^{1\times 2}, \qquad \tilde{b} \in \mathbb{R},
			\end{equation*}
			which is visualized in Figure~\ref{fig:node_augmented}. As an underlying initial value problem, we consider
			\begin{equation*}
				\frac{\dd }{\dd t}\begin{pmatrix}
					h_1 \\ h_2
				\end{pmatrix} = \begin{pmatrix}
					0 \\ \exp(h_1)
				\end{pmatrix}, \qquad \begin{pmatrix}
					h_1 (0)\\ h_2(0)
				\end{pmatrix} = Wx+b,
			\end{equation*}
			with solution
			\begin{equation*}
				\begin{pmatrix}
					h_1 (t)\\ h_2(t)
				\end{pmatrix} = \begin{pmatrix}
					w_1x+b_1 \\ w_2x+b_2 + \exp(w_1x+b_1) t
				\end{pmatrix},
			\end{equation*}
			such that it follows
			\begin{align*}
				\Phi(x) &= \tilde{w}_1 (w_1x+b_1) + \tilde{w}_2(w_2x+b_2 + \exp(w_1x+b_1)T), \\
				\nabla \Phi(x) &= \tilde{w}_1 w_1 + \tilde{w}_2w_2 + \tilde{w}_2 w_1 \exp(w_1x+b_1)T, \\
				H_\Phi(x) &= \tilde{w}_2 w_1^2  \exp(w_1x+b_1)T.
			\end{align*}
			Consequently, $\Phi$ can only be of class $(C3)^\infty(\mathbb{R},\mathbb{R})$ if 
			\begin{equation*}
				\vem(W,\widetilde{W},b,\tilde{b})\in \left\{(w_1,w_2,\tilde{w}_1,\tilde{w}_2,b_1,b_2,\tilde{b}) \in \mathbb{R}^7: \tilde{w}_2 = 0 \vee w_1 = 0 \right\},
			\end{equation*}
			which is a zero set with respect to the Lebesgue measure in $\mathbb{R}^7$. Hence  for all weights $\vem(W,\widetilde{W},b,\tilde{b})\in \mathbb{R}^7$, except possibly for a zero set in $\mathbb{R}^{7}$ with respect to the Lebesgue measure, $\Phi$ is of class $(\mathcal{C}1)^k(\mathcal{X},\mathbb{R})$ or $(\mathcal{C}2)^k(\mathcal{X},\mathbb{R})$.
		\end{example}
		
		\begin{figure}
			\centering
			\includegraphics[width=0.45\textwidth]{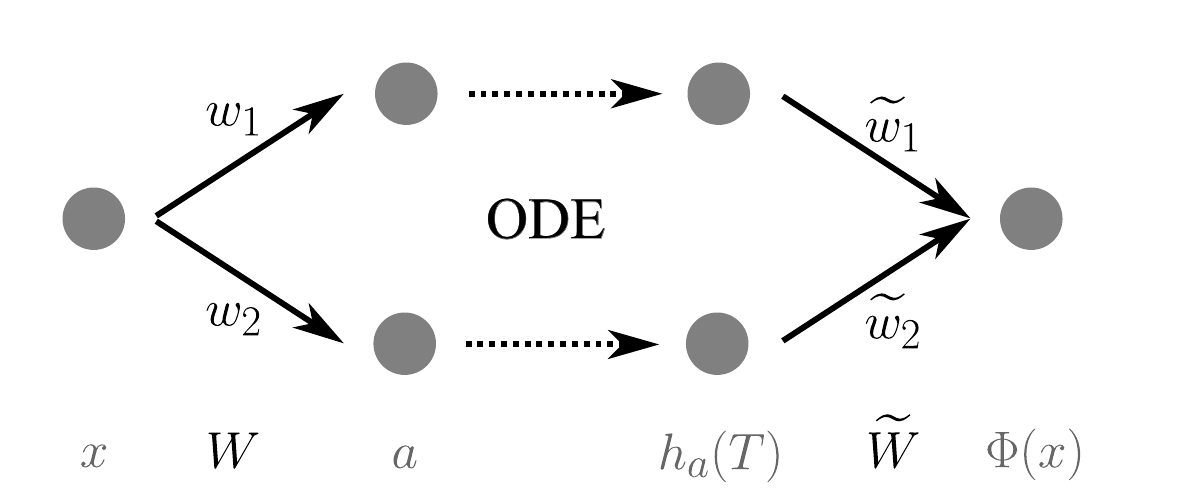}
			\caption{Scalar augmented neural ODE $\Phi \in \textup{NODE}^\infty_\textup{A}(\mathbb{R},\mathbb{R})$ of Example~\ref{ex:node_augmented}.}
			\label{fig:node_augmented}
		\end{figure}
		
		For degenerate neural ODEs, we cannot guarantee that the considered matrix $\frac{\partial^2 \widehat{\Phi}}{\partial v \partial x}$ in the proof of Theorem~\ref{th:node_augmented} has always full rank. In the following, we can directly show that any critical point of a degenerate neural ODE has to be degenerate. To that purpose, we derive a formula for the Hessian matrix $H_\Phi(x)$.
		
		\begin{lemma}\label{lem:node_hessian}
			Let $\Phi \in \textup{NODE}^k(\mathcal{X},\mathbb{R})$, $k\geq 2$, $\mathcal{X}\subset \mathbb{R}^n$ be a scalar neural ODE with weight matrices $(W,\widetilde{W},b,\tilde{b})\in \mathbb{V}$. Then
			\begin{equation*}
				H_\Phi(x) = W^\top H_\eta(s) W \in \mathbb{R}^{n \times n},
			\end{equation*}
			where $a = \lambda(x) = Wx+b$ and $\eta \in C^k(\mathcal{A},\mathbb{R})$, $\eta(s) \coloneqq \widetilde{W} h_a(T)$, $\mathcal{A} = \lambda(\mathcal{X})$.
		\end{lemma}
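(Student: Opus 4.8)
The plan is to exploit the composition structure of the neural ODE. Writing $\lambda(x) = Wx + b$ and $\eta(s) \coloneqq \widetilde{W} h_s(T)$ for $s \in \mathcal{A} = \lambda(\mathcal{X})$, the output factors as
\[
\Phi(x) = \widetilde{W} h_{\lambda(x)}(T) + \tilde{b} = \eta(\lambda(x)) + \tilde{b},
\]
so that $\Phi = \eta \circ \lambda + \tilde{b}$ is the composition of the affine map $\lambda$ with the scalar map $\eta$, shifted by the constant $\tilde{b}$. First I would record that $\eta \in C^k(\mathcal{A},\mathbb{R})$: by Lemma~\ref{lem:ivp_regularity} the time-$T$ map $s \mapsto h_s(T)$ lies in $C^k(\mathcal{A},\mathbb{R}^m)$, and composing with the linear map $\widetilde{W}$ preserves this regularity. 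Since $k \geq 2$, the Hessian $H_\eta$ exists and is continuous on $\mathcal{A}$.

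Next I would differentiate twice using the chain rule, the essential point being that $\lambda$ is affine, so its Jacobian is the constant linear part $W$ and all its second derivatives vanish. Differentiating $\Phi = \eta \circ \lambda + \tilde{b}$ once gives, at $a = \lambda(x)$,
\[
\nabla \Phi(x) = W^\top \nabla \eta(a),
\]
which is consistent with Lemma~\ref{lem:node_gradient} because $\nabla \eta(a) = \left(\tfrac{\partial h_a(T)}{\partial a}\right)^\top \widetilde{W}^\top$. Differentiating a second time, the $(i,j)$ entry of $H_\Phi(x)$ is $\partial^2 \Phi / \partial x_i \partial x_j$, and applying the chain rule to $x \mapsto \nabla \eta(Wx+b)$ produces the factor $H_\eta(a)\,W$ in each row, while the term that would involve a second derivative of $\lambda$ is absent. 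Collecting the rows yields
\[
H_\Phi(x) = W^\top H_\eta(a)\, W \in \mathbb{R}^{n \times n},
\]
which is exactly the claimed identity with $s = a = \lambda(x)$.

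The computation is entirely routine, and there is no genuine obstacle. The only point requiring a little care is the chain rule for the Hessian of a composition $\eta \circ \lambda$: in general this reads $H_{\eta \circ \lambda}(x) = (D\lambda)^\top H_\eta\, (D\lambda) + \sum_{r} \partial_r \eta \cdot H_{\lambda_r}$, where $H_{\lambda_r}$ is the Hessian of the $r$-th component of $\lambda$. What makes the statement clean is precisely that $\lambda$ is affine, so $D\lambda \equiv W$ and every $H_{\lambda_r}$ vanishes, removing the second sum. I would state this explicitly to justify that no additional curvature contribution from $\lambda$ survives, so that the Hessian of $\Phi$ is simply the pullback $W^\top H_\eta\, W$ of the Hessian of $\eta$ through the linear part $W$.
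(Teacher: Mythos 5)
Your proof is correct and follows essentially the same route as the paper: both introduce $\eta(s) = \widetilde{W}h_s(T)$, obtain its regularity from Lemma~\ref{lem:ivp_regularity}, write $\nabla\Phi(x) = W^\top\nabla\eta(\lambda(x))$, and differentiate once more via the chain rule to get $W^\top H_\eta W$. Your explicit remark that the affine map $\lambda$ contributes no second-derivative term is a nice clarification that the paper leaves implicit, but it does not change the argument.
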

		
		\begin{proof}
			We introduce $\eta: \mathcal{A} \rightarrow \mathbb{R}$, $\eta(s) \coloneqq\widetilde{W} h_a(T) $, which is by Lemma~\ref{lem:ivp_regularity} of class $C^k(\mathcal{A},\mathbb{R})$ with $\mathcal{A} = \lambda(\mathcal{X})$. It follows 
			\begin{equation*}
				\nabla \eta(x) =  \frac{\partial h_a(T)}{\partial a}^\top \widetilde{W}^\top \in \mathbb{R}^m,
			\end{equation*}
			such that Lemma~\ref{lem:node_gradient} implies $\nabla \Phi(x) = W^\top\nabla\eta(x)$.  Hence, the multi-dimensional chain rule implies
			\begin{align*}
				H_\Phi(x) = \nabla (\nabla \Phi(x)^\top) = \nabla (\nabla \eta(x)^\top W) = W^\top H_\eta(s)W\in \mathbb{R}^{n \times n},
			\end{align*}
			where $a = Wx+b$.
		\end{proof}
		
		Without any analysis of the Hessian matrix $H_\eta$, we can directly conclude that every critical point of a degenerate neural ODE is degenerate, as at least one of the weight matrices $W$ or $\widetilde{W}$ is singular.
		
		\begin{theorem}\label{th:node_bottleneck}
			Any scalar degenerate neural ODE $\Phi \in \textup{NODE}^k_{\textup{D}}(\mathcal{X},\mathbb{R})$ has only degenerate critical points, i.e., $\Phi$ is of class $(\mathcal{C}1)^k(\mathcal{X},\mathbb{R})$ or $(\mathcal{C}3)^k(\mathcal{X},\mathbb{R})$.
		\end{theorem}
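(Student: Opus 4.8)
The plan is to read off everything from the Hessian factorization already established in Lemma~\ref{lem:node_hessian}, namely $H_\Phi(x) = W^\top H_\eta(s) W$ with $s = \lambda(x) = Wx+b$, and to combine it with the rank deficiency built into the definition of a degenerate neural ODE. Since being of class $(\mathcal{C}1)^k(\mathcal{X},\mathbb{R})$ or $(\mathcal{C}3)^k(\mathcal{X},\mathbb{R})$ is, by Definition~\ref{def:subsets}, exactly the statement that $\Phi$ has no non-degenerate critical point, it suffices to show that $H_\Phi(x)$ is singular for \emph{every} $x \in \mathcal{X}$; then in particular the Hessian at any critical point is singular, so no critical point can be non-degenerate. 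The degeneracy hypothesis unpacks into two alternatives, $\textup{rank}(\widetilde{W}) = 0$ or $\textup{rank}(W) < \min\{m,n\}$, and I would treat these separately.

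First I would dispose of the case $\textup{rank}(\widetilde{W}) = 0$, i.e.\ $\widetilde{W} = 0$. Then the auxiliary function $\eta(s) = \widetilde{W} h_a(T)$ vanishes identically, so $\Phi(x) = \widetilde{W}\cdot h_{a}(T) + \tilde{b} = \tilde{b}$ is constant; equivalently $H_\eta \equiv 0$ forces $H_\Phi(x) = W^\top H_\eta(s) W \equiv 0$. Every point of $\mathcal{X}$ is then a critical point with singular (indeed zero) Hessian, so $\Phi \in (\mathcal{C}3)^k(\mathcal{X},\mathbb{R})$. In the remaining case $\textup{rank}(W) = r < \min\{m,n\} \leq n$, I would argue purely by rank bookkeeping: since $\textup{rank}(W^\top) = \textup{rank}(W) = r$, submultiplicativity of the rank under matrix products gives $\textup{rank}(H_\Phi(x)) = \textup{rank}\!\left(W^\top H_\eta(s) W\right) \leq \textup{rank}(W^\top) = r < n$ for all $x \in \mathcal{X}$. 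Hence $H_\Phi(x) \in \mathbb{R}^{n\times n}$ is singular everywhere, and again every critical point is degenerate.

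Combining the two cases, a degenerate neural ODE never admits a non-degenerate critical point, so $\Phi \notin (\mathcal{C}2)^k(\mathcal{X},\mathbb{R})$ and therefore $\Phi \in (\mathcal{C}1)^k(\mathcal{X},\mathbb{R}) \cup (\mathcal{C}3)^k(\mathcal{X},\mathbb{R})$, which is the assertion. To see that both classes genuinely occur, I would invoke Theorem~\ref{th:node_criticalpoint}\ref{th:node_criticalpoint_b}, which produces a degenerate neural ODE with a critical point (necessarily degenerate, hence class $(\mathcal{C}3)^k$), and note that a trivial vector field $f \equiv 0$ with $W^\top\widetilde{W}^\top \neq 0$ yields a degenerate neural ODE whose gradient $\nabla\Phi(x) = W^\top\widetilde{W}^\top$ is a nonzero constant, giving class $(\mathcal{C}1)^k$. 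I do not expect a genuine obstacle here: all the analytic content—differentiability of the time-$T$ map, the full rank of the variational Jacobian, and the chain-rule computation of $H_\Phi$—has already been packaged into Lemma~\ref{lem:node_hessian} and Proposition~\ref{prop:node_jacobian}. The only point requiring care is that the $\widetilde{W} = 0$ alternative must be handled by the constancy of $\Phi$ rather than by the $W$-rank bound, since in that subcase $W$ may still have full rank.
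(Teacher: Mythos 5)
Your proof is correct and follows essentially the same route as the paper: both handle the $\widetilde{W}=0$ case by constancy of $\Phi$ and the $\textup{rank}(W)<\min\{m,n\}$ case by bounding $\textup{rank}(H_\Phi(x)) = \textup{rank}(W^\top H_\eta(s)W) \leq \textup{rank}(W) < n$ via Lemma~\ref{lem:node_hessian}. Your closing observation that both classes genuinely occur is not part of the paper's proof but matches what the paper demonstrates separately in Example~\ref{ex:node_bottleneck}.
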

		
		\begin{proof}
			As the scalar neural ODE $\Phi$ is degenerate, at least one of the weight matrices $W \in \mathbb{R}^{m \times n}$ and $\widetilde{W} \in \mathbb{R}^{1 \times m}$ has not full rank. 
			If $\widetilde{W} = 0$, then $\Phi(x) = \tilde{b}$ is constant, such that every point $x \in \mathcal{X}$ is a degenerate critical point and hence $\Phi \in (\mathcal{C}3)^k(\mathcal{X},\mathbb{R})$. If $\textup{rank}(W) < \min\{m,n\}$, then it holds
			\begin{equation*}
				\textup{rank}(H_\Phi(x)) \leq \min\{\textup{rank}(W^\top),\textup{rank}(H_\eta(s)),\textup{rank}(W)\}  < \min\{m,n\},
			\end{equation*}
			which implies that the Hessian matrix $H_\Phi(x)$ is singular. Consequently if $\Phi$ has a critical point, then $\Phi \in (\mathcal{C}3)^k(\mathcal{X},\mathbb{R})$ and if $\Phi$ has no critical point, then $\Phi \in (\mathcal{C}1)^k(\mathcal{X},\mathbb{R})$.
		\end{proof}
		
		In Theorem~\ref{th:nn_bottleneck} we showed, that MLPs with a bottleneck can be of all classes $(\mathcal{C}1)^k(\mathcal{X},\mathbb{R})$, $(\mathcal{C}2)^k(\mathcal{X},\mathbb{R})$ and $(\mathcal{C}3)^k(\mathcal{X},\mathbb{R})$. The following remark explains why, nevertheless, the results established for neural networks and neural ODEs are comparable.
		
		\begin{remark}\label{rem:node_bottleneck}
			It is for both scalar neural ODEs and MLPs the case that if the last layer $\widetilde{W}\in\mathbb{R}^{1 \times m}$ or $\widetilde{W}_L\in\mathbb{R}^{1 \times m_L}$ has not full rank, the output is constant and hence the network is of class $(C3)^k(\mathcal{X},\mathbb{R})$. We can compare the case that the weight matrix $W\in\mathbb{R}^{m \times n}$ of the neural ODE has not full rank with the case, that the first weight matrix $W_1 \in \mathbb{R}^{m_1 \times n}$ of the MLP has not full rank. If $W_1$ is singular, still all four types of bottlenecks in Theorem~\ref{th:nn_bottleneck} are possible, but Theorem~\ref {th:nn_classes_coordinatechange}  implies that the neural network cannot be of class $(\mathcal{C}2)^k(\mathcal{X},\mathbb{R})$. This is the case, as a linear change of coordinates to obtain an equivalent neural network architecture is by the proof of Theorem~\ref{th:nn_fullrank_equivalent} needed if and only if the first weight matrix $W_1$ does not have full rank. The difference in the results of neural ODEs and MLPs is induced by the fact that for neural ODEs, the bottleneck obstruction cannot occur in the initial value problem, which corresponds to the hidden layer of the neural network.
		\end{remark}
		
		For the more interesting case, that the first weight matrix $W\in  \mathbb{R}^{m \times n}$ of the scalar neural ODE has not full rank, we provide an example which illustrates that the neural ODE can be both of class $(\mathcal{C}1)^k(\mathcal{X},\mathbb{R})$ or $(\mathcal{C}3)^k(\mathcal{X},\mathbb{R})$.
		
		\begin{figure}[b!]
			\centering
			\includegraphics[width=0.45\textwidth]{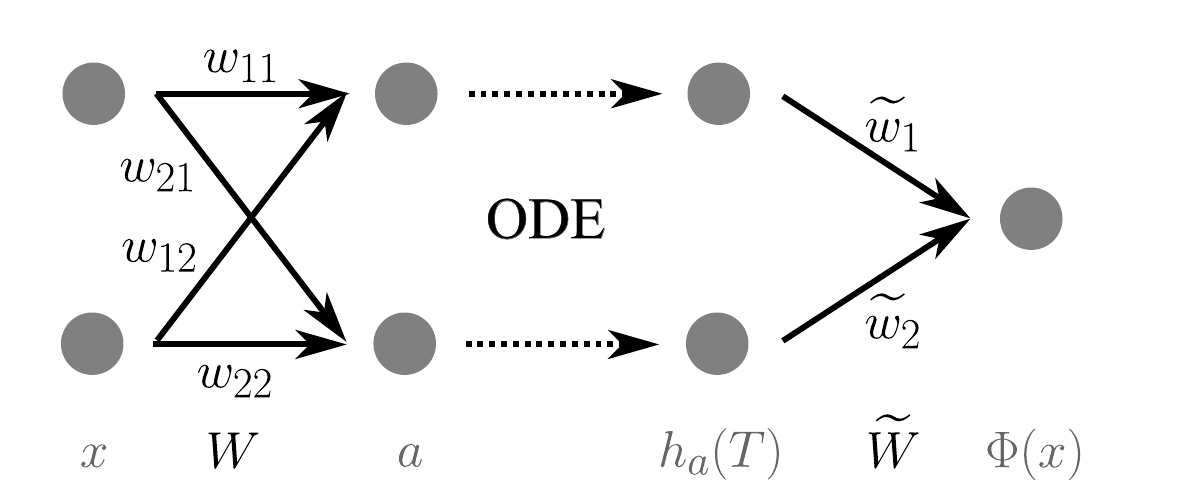}
			\caption{Scalar degenerate neural ODE $\Phi \in  \textup{NODE}^k_{\textup{D}}(\mathbb{R},\mathbb{R})$ of Example~\ref{ex:node_bottleneck}. }
			\label{fig:node_bottleneck}
		\end{figure}

		\begin{example}\label{ex:node_bottleneck}
			\normalfont Consider the scalar degenerate  neural ODE  $\Phi \in \textup{NODE}^k_{\textup{D}}(\mathbb{R},\mathbb{R})$, $k \geq 2$ with $n=2$, $m = 2$ defined by the weights and biases
			\begin{equation*}
				W = \begin{pmatrix}
					w_{11} & w_{12} \\ w_{21} & w_{22}
				\end{pmatrix} \in \mathbb{R}^{2 \times 2}, \qquad b = \begin{pmatrix}
					b_{1} \\ b_{2}
				\end{pmatrix} \in \mathbb{R}^2, \qquad \widetilde{W} = (\widetilde{w}_1,\widetilde{w}_2) \in \mathbb{R}^{1\times 2}, \qquad \tilde{b} \in \mathbb{R},
			\end{equation*}
			where we assume that the neural ODE is degenerate with $\textup{rank}(W) = 1$. The neural ODE $\Phi$ is visualized in Figure~\ref{fig:node_bottleneck}. As $\textup{rank}(W) = 1$, there exists a constant $\alpha$, such that 
			\begin{equation*}
				\begin{pmatrix}
					w_{21} & w_{22}
				\end{pmatrix} =\alpha \cdot \begin{pmatrix}
					w_{11} & w_{12}
				\end{pmatrix} \qquad \Rightarrow \qquad a = \begin{pmatrix}
					a_1 \\ a_2
				\end{pmatrix} = Wx + b = \begin{pmatrix}
					q + b_1\\ \alpha q+b_2
				\end{pmatrix}
			\end{equation*}
			with $q \coloneqq w_{11}x_1 + w_{12}x_2$. Consequently, it holds
			\begin{equation*}
				\Phi(x) = \tilde{w}_1 [h_a(T)]_1 + \tilde{w}_2 [h_a(T)]_2
			\end{equation*}
			and it follows for the gradient
			\begin{equation*}
				\nabla \Phi(x) 
				= \begin{pmatrix}
					w_{11}\left(\frac{\partial \Phi}{\partial a_1} + \alpha\frac{\partial \Phi}{\partial a_2} \right) \\
					w_{12}\left(\frac{\partial \Phi}{\partial a_1} + \alpha\frac{\partial \Phi}{\partial a_2} \right)
				\end{pmatrix}.
			\end{equation*}
			As the columns of $\nabla \Phi$ are linearly dependent, also the columns of the Hessian $H_\Phi$ are linearly dependent, which implies that every possible critical point of $\Phi$ is degenerate. Dependent on the vector field and $\alpha$, the neural ODE $\Phi$ has or does not have critical points. As an example, we consider the underlying initial value problem defined by
			\begin{equation*}
				\frac{\dd }{\dd t}\begin{pmatrix}
					h_1 \\ h_2
				\end{pmatrix} = \begin{pmatrix}
					h_1 \\ h_2
				\end{pmatrix}, \qquad \begin{pmatrix}
					h_1 (0)\\ h_2(0)
				\end{pmatrix} =\begin{pmatrix}
					a_1 \\ a_2
				\end{pmatrix}\qquad 	\Rightarrow \qquad \begin{pmatrix}
					h_1(T) \\ h_2(T)
				\end{pmatrix} = \begin{pmatrix}
					a_1 e^T \\ a_2 e^T
				\end{pmatrix},
			\end{equation*}
			such that 
			\begin{equation*}
				\nabla \Phi(x) 
				= \begin{pmatrix}
					w_{11}\left(\tilde{w}_1+ \alpha \tilde{w}_2 \right)e^T \\
					w_{12}\left(\tilde{w}_1+ \alpha \tilde{w}_2 \right)e^T
				\end{pmatrix}.
			\end{equation*}
			Hence, for $\alpha = -\frac{\tilde{w}_1}{\tilde{w}_2}$ or for $W = 0$, every point $x \in \mathcal{X}$ is a degenerate critical point and $\Phi$ is of class $(\mathcal{C}3)^\infty(\mathcal{X},\mathbb{R})$, otherwise $\Phi$ has no critical point and is of class $(\mathcal{C}1)^\infty(\mathcal{X},\mathbb{R})$.
		\end{example}

		\subsection{General and Parameterized Neural ODEs} \label{sec:NODE_universal}
		
		In the last section, we have seen that it is a generic property of an augmented neural ODE $\Phi \in \textup{NODE}^k_\textup{A}(\mathcal{X},\mathbb{R})$, $\mathcal{X}\subset \R^m$ open, to be a Morse function. If $k \geq n+1 \geq 2$, Morse functions are by Theorem~\ref{th:morsefunctionsdense} a dense subset in the Banach space of $k$ times differentiable functions, such that it is also in the space $C^k(\mathcal{X},\R)$ a generic property to be a Morse function. For augmented neural ODEs with a general non-autonomous vector field, we can not only show that the property of being a Morse function resembles the function space $C^k(\mathcal{X},\R)$, but it even holds that any function  $\Psi \in C^k(\mathcal{X},\mathbb{R})$, $k \geq 1$, can be represented exactly by an augmented neural ODE. Similar statements have already been proven in \cite{Kuehn2023, Zhang2020a} for augmented neural ODE architectures with one linear layer. In our setting, the following theorem shows the universal embedding property of augmented neural ODEs with respect to the space $C^k(\mathcal{X},\R)$, $k\geq 1$.
		
		\begin{theorem}[Universal Embedding of Augmented Neural ODEs]\label{th:node_universalembedding}
			Any map $\Psi \in C^k(\mathcal{X},\mathbb{R})$, $k \geq 1$, $\mathcal{X}\subset\mathbb{R}^n$ can be exactly represented by an augmented scalar neural ODE $\Phi \in \textup{NODE}^k_\textup{A}(\mathcal{X},\mathbb{R})$ with arbitrary $m>n$, i.e., $\Psi(x) = \Phi(x)$ for all $x \in \mathcal{X}$. 
		\end{theorem}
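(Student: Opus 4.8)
The plan is to exploit the augmentation directly: with $m>n$ phase-space dimensions available, I reserve one extra coordinate to \emph{accumulate} the target value $\Psi(x)$, while carrying the input $x$ along unchanged in the first $n$ coordinates. Concretely I would first treat $m=n+1$ and choose the pre-map $\lambda$ to embed the input as $a=\lambda(x)=Wx+b$ with
$$
W=\begin{pmatrix}\mathrm{Id}_n\\ 0\end{pmatrix}\in\mathbb{R}^{(n+1)\times n},\qquad b=0,
$$
so that $\mathrm{rank}(W)=n=\min\{m,n\}$ and the initial condition is $a=(x_1,\dots,x_n,0)^\top$.

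Next I would define the (autonomous) vector field $f:\mathbb{R}\times(\mathcal{X}\times\mathbb{R})\to\mathbb{R}^{n+1}$ by
$$
f(t,h)=\Bigl(0,\dots,0,\tfrac{1}{T}\,\Psi(h_1,\dots,h_n)\Bigr)^\top .
$$
Since $\Psi\in C^k(\mathcal{X},\mathbb{R})$ and only $(h_1,\dots,h_n)$ enter, $f\in C^{0,k}(\Omega,\mathbb{R}^{n+1})$ with $\Omega=\mathbb{R}\times(\mathcal{X}\times\mathbb{R})$ open. The initial value problem is then explicitly solvable: the first $n$ coordinates satisfy $\dot h_i=0$, hence $h_i(t)\equiv x_i\in\mathcal{X}$ for all $t$, so that $\Psi(h_1,\dots,h_n)$ stays well-defined and equal to the constant $\Psi(x)$ along the whole trajectory; the last coordinate satisfies $\dot h_{n+1}=\Psi(x)/T$ with $h_{n+1}(0)=0$, giving $h_{n+1}(t)=t\,\Psi(x)/T$. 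In particular the solution exists on all of $[0,T]$ and remains in $\Omega_h$, so $\Phi$ is a well-defined element of $\textup{NODE}^k(\mathcal{X},\mathbb{R})$.

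Finally I would read off the output by projecting onto the accumulated coordinate: take $\widetilde{W}=(0,\dots,0,1)\in\mathbb{R}^{1\times(n+1)}$ and $\tilde b=0$, so $\mathrm{rank}(\widetilde{W})=1$ and
$$
\Phi(x)=\widetilde{W}\,h_a(T)+\tilde b=h_{n+1}(T)=\Psi(x)\qquad\text{for all }x\in\mathcal{X}.
$$
Since $\mathrm{rank}(W)=n$, $\mathrm{rank}(\widetilde{W})=1$ and $m=n+1>n$, the constructed $\Phi$ lies in $\textup{NODE}^k_\textup{A}(\mathcal{X},\mathbb{R})$. For arbitrary $m>n$ I would simply pad: keep the embedding of $x$ in coordinates $1,\dots,n$ and the accumulator in coordinate $n+1$, let $f$ vanish in the remaining $m-n-1$ coordinates (initialized to $0$), and set $W=(\mathrm{Id}_n\,;\,0)^\top\in\mathbb{R}^{m\times n}$, $\widetilde{W}=\mathrm{e}_{n+1}^{(m)\,\top}$, which still have full rank $n$ and $1$.

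There is no deep obstacle here—the statement is a constructive existence result—so the only points needing care are bookkeeping: verifying that $f$ genuinely lies in $C^{0,k}$ on an \emph{open} domain containing the entire trajectory (which is exactly why the first $n$ coordinates must be frozen, so the argument of $\Psi$ never leaves $\mathcal{X}$), that $[0,T]\subset\mathcal{I}_a$ for every initial condition, and that the rank conditions defining $\textup{NODE}^k_\textup{A}$ hold. The conceptual content is the observation that the extra dimension \emph{decouples} the transport of the input from the computation of the output—precisely what augmentation supplies and what non-augmented architectures (by Theorem~\ref{th:node_nocriticalpoints}) cannot achieve.
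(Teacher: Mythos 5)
Your construction is exactly the one used in the paper: the same embedding matrix $W=(\mathrm{Id}_n\,;\,0)^\top$ with $b=0$, the same vector field that freezes the first $n$ coordinates and integrates $\tfrac{1}{T}\Psi(h_1,\dots,h_n)$ in an accumulator coordinate, and the same read-out $\widetilde{W}$ selecting that coordinate (the paper places the accumulator in the last coordinate $h_m$ rather than $h_{n+1}$, which is immaterial). The proposal is correct, and your additional remarks on well-posedness and the rank conditions only make explicit what the paper leaves implicit.
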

		
		\begin{proof}
			In analogy to the proof in \cite{Kuehn2023}, we fix $T>0$, choose $m>n$ and define the augmented neural ODE $\Phi$ by the weights
			\begin{equation*}
				W = \begin{pmatrix}
					\textup{Id}_n \\ 0
				\end{pmatrix} \in \mathbb{R}^{m\times n}, \quad b = 0\in\mathbb{R}^m,\quad\widetilde{W} = (0,\ldots,0,1)  \in \mathbb{R}^{1 \times m}, \quad \tilde{b} = 0 \in \mathbb{R}
			\end{equation*}
			and the ODE
			\begin{equation*}
				\frac{\dd }{\dd t} \begin{pmatrix}
					h_1 \\ \vdots \\ h_{m-1} \\ h_m
				\end{pmatrix}
				= \begin{pmatrix}
					0 \\ \vdots \\ 0 \\
					\frac{1}{T} \cdot \Psi(h_1,\ldots.h_n)
				\end{pmatrix} \eqqcolon f(t,h_1,\ldots,h_n),
			\end{equation*} 
			such that the vector field $f$ is of class $C^{0,k}(\mathbb{R} \times \mathcal{X},\mathbb{R})$. $\Phi$ is augmented, as $m>n$ and the weights matrices $W$ and $\widetilde{W}$ have full rank. By Lemma~\ref{lem:node_regularity} it holds $\Phi\in \textup{NODE}^k_\textup{A}(\mathcal{X},\mathbb{R})$ and it follows
			\begin{equation*}
				\Phi(x) = \widetilde{W} \cdot h_{Wx+b}(T) +\tilde{b}= \widetilde{W} \cdot h_{(x,0)^\top}(T) = \widetilde{W}\begin{pmatrix}
					x \\ 0 \\ \Psi(x)
				\end{pmatrix} = \Psi(x)
			\end{equation*}
			for all $x \in \mathcal{X}$.
		\end{proof}
		
		To prove Theorem~\ref{th:node_universalembedding}, we used the fact that we can freely choose the vector field of the underlying initial value problem. Often in practice, the vector field of the neural ODE is itself a neural network, or, more generally, a parameterized vector field. In the following, we show how to use the universal embedding property of general augmented neural ODEs to prove a universal approximation result for parameterized augmented neural ODEs. To that purpose, it is necessary that the parameterized vector field is expressive enough, such that the vector field can be chosen sufficiently close in the supremum norm to the vector field constructed in the proof of Theorem~\ref{th:node_universalembedding}. In the following, we define a parameterized neural ODE depending on a parameter function $\theta: \R \rightarrow \R^p$.

			\begin{definition}[Parameterized Neural ODE] \label{def:node_parameter}
				For $k \geq 1$, the set of all scalar parameterized neural ODE architectures $\Phi_\theta: \mathcal{X} \rightarrow \mathbb{R}$, $\mathcal{X}\subset \mathbb{R}^n$ open, defined by~\eqref{eq:NODE} based on the underlying initial value problem 
				\begin{equation*}  
					\frac{\dd h}{\dd t} = f_\theta(t,h(t),\theta(t)), \qquad h(0) = a \in \mathcal{A} \subset \mathbb{R}^m,
				\end{equation*}
				with vector field $f_\theta: \Omega_t \times \Omega_h \times \R^p \rightarrow \R^m$, $\Omega_t \times \Omega_h \subset \R \times \R^m$ open, parameter function $\theta: \R \rightarrow \R^p$, $\mathcal{A} = \lambda(\mathcal{X})\subset \Omega_h$, is denoted by $\textup{NODE}^k_\theta(\mathcal{X},\mathbb{R}) \subset C^k(\mathcal{X},\mathbb{R})$ if the initial value problem~\eqref{eq:IVP} with vector field
				\begin{equation} \label{eq:IVPpar}
					f: \Omega_t \times \Omega_h \rightarrow \R^m, \quad f(t,h(t)) \coloneqq f_\theta(t,h(t),\theta(t)), \qquad f\in C^{0,k}(\Omega_t\times \Omega_h, \R^m),\tag{$\text{IVP}_\text{par}$}
				\end{equation}
				defines a well-defined neural ODE, i.e., $[0,T] \subset \mathcal{I}_a\subset \Omega_t$ for all $a \in \mathcal{A}$. 
				
				For a fixed vector field $f_\theta: \Omega_t \times \Omega_h \times \R^p \rightarrow \R^m$, the set of all parameter functions $\theta:\mathbb{R}\rightarrow \mathbb{R}^{p}$, such that the corresponding neural ODE is an element of $\textup{NODE}_{\theta}^k(\mathcal{X},\mathbb{R})$, $k \geq 1$, is denoted by $\Theta^k(\mathbb{R},\mathbb{R}^p)$.
			\end{definition}
			
			\begin{remark}
				The regularity of a parameterized neural ODE is defined via the regularity of a general neural ODE. In order to be $k$ times continuously differentiable, both the vector field $f_\theta$ and the parameter function $\theta: \R \rightarrow \R^p$ need to be sufficiently regular. Using the theory of Carathéodory ODEs, the regularity of $f$ can be shown, for example, for functions $\theta$, which are only piece-wise continuous \cite{Hale1980}. Hence, the parameter $k$ of the space $\Theta^k(\mathbb{R},\mathbb{R}^p)$ does not describe the regularity of the parameter functions included in that space, but the regularity of the corresponding neural ODE. This is of particular interest if the vector field of a parameterized neural ODE is chosen itself as a neural network, where the weights can change throughout the network, leading to discontinuities in the parameter function. 
			\end{remark}
			
			In the following, we transfer the (non-)universal embedding property of a general neural ODE architecture  $\textup{NODE}^k(\mathcal{X},\mathbb{R})$ of Definition~\ref{def:node} to a (non-)universal approximation property of a parameterized neural ODE architecture $\textup{NODE}^k_\theta(\mathcal{X},\mathbb{R})$ of Definition~\ref{def:node_parameter}. In that way, we show that our results, proven for the exact input-output map, can be transferred to approximate results for parameterized neural ODEs, which are mainly used in practice. On the one hand side we prove in part~\ref{th:ODEgerenalparameterized_a} that if the general neural ODE architecture  $\textup{NODE}^k(\mathcal{X},\mathbb{R})$ does not have the universal approximation (embedding) property, then independently of the parameterization, every neural ODE architecture $\textup{NODE}_{\theta}^k(\mathcal{X},\mathbb{R})$ also does not have the universal approximation (embedding) property. On the other side,  part~\ref{th:ODEgerenalparameterized_b} shows that if the parameter function $\theta$ is chosen in such a way, that the vector field $f_\theta$ approximates every general vector field $f$ arbitrary well, then the parameterized architecture has the universal approximation property if the general neural ODE architecture has the universal embedding or approximation property.
			
			\begin{theorem}[Relationship General and Parameterized Neural ODE]\label{th:node_approx}
				Let $\mathcal{X}\subset \mathbb{R}^n$ open and consider the general neural ODE architecture $\textup{NODE}^k(\mathcal{X},\mathbb{R})$, $k\geq 1$, based on~\eqref{eq:IVP} and the parameterized neural ODE architecture $\textup{NODE}^i_{\theta}(\mathcal{X},\mathbb{R})$, $i\geq 1$ based on~\eqref{eq:IVPpar}.
				\begin{enumerate}[label=(\alph*), font=\normalfont]
					\item \label{th:ODEgerenalparameterized_a} If the neural ODE architecture $\textup{NODE}^k(\mathcal{X},\mathbb{R})$, $k \geq 1$,  does not have the universal approximation (embedding) property with respect to the space $C^j(\mathcal{X}, \mathbb{R})$, $j\geq 0$, then also the architecture $\textup{NODE}^{k}_{\theta}(\mathcal{X},\mathbb{R})$ does not have the universal approximation (embedding) property with respect to the space $C^j(\mathcal{X}, \mathbb{R})$.
					\item \label{th:ODEgerenalparameterized_b} Let the following three assumptions hold:
					\begin{itemize}
						\item The neural ODE architecture $\textup{NODE}^k(\mathcal{X},\mathbb{R})$, $k\geq 1$, has the universal approximation or embedding property with respect to the space $C^j(\mathcal{X}, \mathbb{R})$, $j \geq 0$.
						\item The vector field $f_\theta: \Omega_t \times \Omega_h \times \R^p \rightarrow \R^m$ and a space of parameters $\Theta^i(\mathbb{R},\mathbb{R}^{p})$, $i \geq 1$, are fixed.
						\item For any vector field $f \in C^{0,k}(\Omega_t \times \Omega_y,\mathbb{R}^m)$ corresponding to the architecture $\textup{NODE}^k(\mathcal{X},\mathbb{R})$, and every  $\delta >0$, there exists $\theta \in \Theta^i(\mathbb{R},\mathbb{R}^{p})$, such that 
						\begin{equation*}
							\norm{f(t,h(t))-f_\theta(t,h(t),\theta(t))}_\infty < \delta \quad \textup{ for all } (t,h(t))\in [0,T] \times \Omega_h \subset \Omega_t \times \Omega_h.
						\end{equation*}
					\end{itemize}
					Then also the neural ODE architecture $\textup{NODE}^i_{\theta}(\mathcal{X},\mathbb{R})$ has the universal approximation property with respect to the space $C^j(\mathcal{X}, \mathbb{R})$. 
				\end{enumerate}
			\end{theorem}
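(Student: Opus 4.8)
The plan is to handle the two parts by exploiting that every parameterized neural ODE is, as an input-output map, also a general neural ODE. For part~\ref{th:ODEgerenalparameterized_a} the key observation is the inclusion of realizable maps $\textup{NODE}^k_\theta(\mathcal{X},\mathbb{R}) \subseteq \textup{NODE}^k(\mathcal{X},\mathbb{R})$: for any admissible $\theta \in \Theta^k(\mathbb{R},\mathbb{R}^p)$, Definition~\ref{def:node_parameter} guarantees that the induced vector field $f(t,h) \coloneqq f_\theta(t,h,\theta(t))$ lies in $C^{0,k}(\Omega_t \times \Omega_h,\mathbb{R}^m)$ and defines a well-posed neural ODE, so the corresponding map satisfies $\Phi_\theta \in \textup{NODE}^k(\mathcal{X},\mathbb{R})$. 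I would then argue by contraposition: if $\textup{NODE}^k_\theta(\mathcal{X},\mathbb{R})$ possessed the universal approximation (resp.\ embedding) property with respect to $C^j(\mathcal{X},\mathbb{R})$, then via this inclusion the larger class $\textup{NODE}^k(\mathcal{X},\mathbb{R})$ would possess it too, contradicting the hypothesis. Hence the failure of universality for the general architecture transfers to every parameterization.

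For part~\ref{th:ODEgerenalparameterized_b} I would fix a target $\Psi \in C^j(\mathcal{X},\mathbb{R})$ and $\varepsilon > 0$ and build an approximating parameterized neural ODE in two stages. First, the universal approximation or embedding hypothesis on $\textup{NODE}^k(\mathcal{X},\mathbb{R})$ supplies a vector field $f \in C^{0,k}(\Omega_t\times\Omega_h,\mathbb{R}^m)$ and affine layers $(W,\widetilde{W},b,\tilde{b})$ whose general neural ODE $\Phi$ obeys $\sup_{x\in\mathcal{X}}\abs{\Phi(x)-\Psi(x)} < \varepsilon/2$ (the left side being $0$ in the embedding case). Second, keeping the \emph{same} affine layers, I define the parameterized neural ODE $\Phi_\theta$ with vector field $f_\theta(\cdot,\cdot,\theta(\cdot))$ and use the third assumption to select $\theta \in \Theta^i(\mathbb{R},\mathbb{R}^p)$ with $\norm{f - f_\theta(\cdot,\cdot,\theta(\cdot))}_\infty < \delta$ on $[0,T]\times\Omega_h$, where $\delta$ is fixed later. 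Both underlying solutions start from the common initial value $a = Wx+b$ and, since $\theta \in \Theta^i$, exist on $[0,T]$; setting $e(t) \coloneqq \norm{h_a(t)-\tilde{h}_a(t)}$ and splitting $f(t,h_a)-f_\theta(t,\tilde{h}_a,\theta) = [f(t,h_a)-f(t,\tilde{h}_a)] + [f(t,\tilde{h}_a)-f_\theta(t,\tilde{h}_a,\theta)]$, the local Lipschitz continuity of $f$ (available from $k\geq 1$) together with Gronwall's inequality yields $e(T) \leq \tfrac{\delta}{L}(\me^{LT}-1)$. Because $\Phi(x)-\Phi_\theta(x) = \widetilde{W}\,(h_a(T)-\tilde{h}_a(T))$, I then bound $\abs{\Phi(x)-\Phi_\theta(x)} \leq \norm{\widetilde{W}}\,e(T)$ and choose $\delta$ so small that this stays below $\varepsilon/2$ for all $x$; the triangle inequality delivers $\sup_{x}\abs{\Phi_\theta(x)-\Psi(x)} < \varepsilon$.

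The hard part will be making the Gronwall estimate uniform in $x$, as demanded by the supremum in the definition of universal approximation while $\mathcal{X}$, and thus the initial-condition set $\mathcal{A} = \lambda(\mathcal{X})$, may be unbounded. The bound $e(T)\leq \tfrac{\delta}{L}(\me^{LT}-1)$ needs a single Lipschitz constant $L$ valid along all trajectories $\{h_a(t): t\in[0,T],\,a\in\mathcal{A}\}$, whereas $f \in C^{0,k}$ only furnishes local Lipschitz continuity. The uniform closeness $\norm{f-f_\theta(\cdot,\cdot,\theta(\cdot))}_\infty<\delta$ over the whole phase-space domain $\Omega_h$ in the third assumption is precisely what keeps the perturbed trajectories inside $\Omega_h$ so that the bound applies along them, and the well-posedness built into $\textup{NODE}^k$ and $\textup{NODE}^i_\theta$ guarantees both solutions remain defined on $[0,T]$; when $\mathcal{X}$ is bounded the union of trajectories is precompact and a uniform $L$ follows by continuity, which is the regime I would treat, noting that the general case requires an additional global Lipschitz hypothesis on $f$. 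Finally, I would emphasize that the vector-field perturbation introduces an arbitrarily small but nonzero error, so the construction yields only the universal \emph{approximation} property for $\textup{NODE}^i_\theta(\mathcal{X},\mathbb{R})$ and never exact embedding.
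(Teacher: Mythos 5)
Your proposal follows the same skeleton as the paper's proof. Part~(a) is argued identically: every $\Phi_\theta$ realized by an admissible $\theta$ is also realized by the general architecture via $f(t,h)\coloneqq f_\theta(t,h,\theta(t))$, and contraposition finishes the argument. Part~(b) uses the same two-stage triangle inequality, spending $\eps/2$ on the general approximant and $\eps/2$ on the vector-field perturbation. Where you genuinely diverge is in how the second budget is controlled. The paper writes both time-$T$ maps as $Wx+b+\int_0^T(\cdot)\,\dd t$ and bounds the difference of the integrals directly by $\norm{\widetilde{W}}_\infty T\delta$ using the uniform $\delta$-closeness of the two vector fields; this evaluates $f$ and $f_\theta$ along the same trajectory $h(t)$ and thereby suppresses the fact that the perturbed solution follows a different path, so no Lipschitz constant or Gronwall factor enters and $\delta=\eps/(2\norm{\widetilde{W}}_\infty T)$ can be fixed a priori. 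You instead split $f(t,h_a)-f_\theta(t,\tilde h_a,\theta)$ into a vector-field-perturbation term and a trajectory-deviation term and close the estimate with Gronwall, obtaining $e(T)\leq\tfrac{\delta}{L}(\me^{LT}-1)$. Your route is the more careful treatment of the flow comparison — it is the standard rigorous argument here — but it buys that rigor at the price of a uniform Lipschitz constant $L$ along the entire trajectory tube, which, as you honestly note, you can only extract for bounded $\mathcal{X}$ (precompactness) or under an added global Lipschitz hypothesis; the theorem as stated imposes neither, so your proof covers a restricted regime while the paper's shorter bound avoids the issue only by gliding over the trajectory deviation. Your closing observation that the perturbation argument can only ever deliver approximation, never exact embedding, for the parameterized class agrees with the paper's conclusion.
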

			
			\begin{proof}
				The proof is similar to \cite[Theorem 3.4]{Kuehn2025}, where an analogous statement is proven for neural delay differential equations. For completeness, we state a simplified proof for the neural ODE case here.
				
				Part~\ref{th:ODEgerenalparameterized_a} is proven by contraposition: let $j\geq 0$, $k\geq 0$ and assume that the parameterized architecture $\textup{NODE}^{k}_{\theta}(\mathcal{X},\mathbb{R})$ has the universal approximation property with respect to the space $C^j(\mathcal{X},\mathbb{R})$. Hence, for any map $\Psi\in C^j(\mathcal{X}, \mathbb{R})$, there exists a choice of the parameter function $\theta\in \Theta^k(\mathbb{R},\mathbb{R}^{p})$, such that the corresponing neural ODE $\Phi_\theta \in\textup{NODE}^k_{\theta}(\mathcal{X},\mathbb{R})$ satisfies
				\begin{equation}\label{eq:approximation}
					\norm{\Phi_\theta(x)-\Psi(x)}_\infty < \varepsilon \quad \textup{ for all } x \in \mathcal{X}. 
				\end{equation}
				The neural ODE $\Phi_\theta \in\textup{NODE}^k_{\theta}(\mathcal{X},\mathbb{R})$ is based on a parameterized vector field  $f_\theta: \Omega_t \times \Omega_h \times \R^p \rightarrow \R^m$ with fixed parameter function $\theta\in \Theta^k(\mathbb{R},\mathbb{R}^{p})$. By Definition~\ref{def:node_parameter}, the vector field 
				\begin{equation*}
					f: \Omega_t \times \Omega_h \rightarrow \mathbb{R}^m, \quad f(t,h(t)) \coloneqq f_\theta(t,h(t),\theta(t))
				\end{equation*}
				leads to a well defined neural ODE $\Phi \in \textup{NODE}^k(\mathcal{X},\mathbb{R})$. By construction $\Phi(x) = \Phi_\theta(x)$ for all $x \in \mathcal{X}$, such that also the class of general neural ODE architectures $\textup{NODE}^k(\mathcal{X},\mathbb{R})$ has the universal approximation property with respect to the space $C^j(\mathcal{X},\mathbb{R})$, as  $\Psi\in C^j(\mathcal{X}, \mathbb{R})$ was arbitrary. The statement follows, as this is a contradiction to the assumption that the class  $\textup{NODE}^k(\mathcal{X},\mathbb{R})$ does not have the universal approximation property. The same argumentation holds verbatim for the universal embedding property if the definition of approximation in~\eqref{eq:approximation} is replaced by the definition of embedding, i.e., $\Phi_\theta(x) = \Psi(x)$ for all $x \in \mathcal{X}$.
				
				To prove part~\ref{th:ODEgerenalparameterized_b}, consider a function $\Psi \in C^j(\mathcal{X},\mathbb{R})$ for a fixed $j \geq 0$. As the class $\textup{NODE}^k(\mathcal{X},\mathbb{R})$ has the universal approximation or embedding property for a fixed $k\geq 0$, there exists for every given $\eps>0$ a neural ODE $\Phi \in \textup{NODE}^k(\mathcal{X},\mathbb{R})$ with
				\begin{equation*}
					\norm{\Phi(x)-\Psi(x)}_\infty  < \frac{\varepsilon}{2} \quad \textup{ for all } x \in \mathcal{X}.
				\end{equation*}
				If it holds for the weight matrix of the affine linear layer $\tilde{\lambda}$ that $\widetilde{W} = 0 \in \R^{1 \times m}$, then set $\delta = 1$, otherwise, let $\delta = \frac{\eps}{2 \normk{\widetilde{W}}_\infty T} >0$. By assumption, there exists for the vector field $f: \Omega_t \times \Omega_h \rightarrow \mathbb{R}^m$ of the neural ODE~$\Phi$ a parameter function $\theta\in\Theta^i(\mathbb{R},\mathbb{R}^{p})$, $i \geq 0$, such that
				\begin{equation*}
					\norm{f(t,h(t))-f_\theta(t,h(t),\theta(t))}_\infty < \delta \quad \textup{ for all } (t,h(t))\in [0,T] \times \Omega_h \subset \Omega_t \times \Omega_h.
				\end{equation*}
				It follows for the neural ODE $\Phi_\theta\in\textup{NODE}^i_{\tau,\theta}(\mathcal{X},\mathbb{R})$, based on the fixed vector field $f_\theta$ and the parameter function $\theta\in\Theta^i(\mathbb{R},\mathbb{R}^{p})$ chosen before that
				\begin{align*}
					\sup_{x \in \mathcal{X}} \norm{\Phi_\theta(x)-\Psi(x)}_\infty &\leq  \sup_{x \in \mathcal{X}} \norm{\Phi(x)-\Psi(x)}_\infty + \sup_{x \in \mathcal{X}} \norm{\Phi_\theta(x)-\Phi(x)}_\infty\\
					&< \frac{\eps}{2} +  \sup_{x \in \mathcal{X}} \norm{\tilde\lambda(h_{\theta,\lambda(x)}(T)) - \tilde\lambda(h_{\lambda(x)}(T))}_\infty,
				\end{align*}
				where we denote by $h_{\lambda(x)}$ the solution corresponding to $\Phi$ and by $h_{\theta,\lambda(x)}$ the solution corresponding to $\Phi_\theta$. Now we can estimate
				\begin{align*}
					&\sup_{x \in \mathcal{X}} \norm{\tilde\lambda(h_{\theta,\lambda(x)}(T)) - \tilde\lambda(h_{\lambda(x)}(T))}_\infty \\
					=\, &\sup_{x \in \mathcal{X}} \; \norm{\widetilde{W}\left( Wx+b + \int_0^T f(t,h(t))  \; \dd t \right) + \tilde{b} - \widetilde{W}\left( Wx+b + \int_0^T f_\theta(t,h(t),\theta(t)) \; \dd t \right) - \tilde{b} \, }_\infty \\
					\leq \,&\sup_{x \in \mathcal{X}} \; \norm{\widetilde{W}}_\infty \norm{\int_0^T f(t,h(t)) - f_\theta(t,h(t),\theta(t))  \; \dd t \,  }_\infty \\
					\leq \,&\sup_{x \in \mathcal{X}} \; \norm{\widetilde{W}}_\infty \int_0^T \norm{ f(t,h(t)) - f_\theta(t,h(t),\theta(t)) }_\infty \dd t < \norm{\widetilde{W}}_\infty T \delta = \frac{\eps}{2}.
				\end{align*}
				Consequently it holds
				\begin{equation*}
					\sup_{x \in \mathcal{X}} \norm{\Phi_\theta(x)-\Psi(x)}_\infty  < \frac{\eps}{2} + \frac{\eps}{2} = \eps,
				\end{equation*}
				and as $\eps>0$ and $\Psi\in C^j(\mathcal{X},\mathbb{R})$  were arbitrary, also the parameterized architecture $\textup{NODE}^i_{\theta}(\mathcal{X},\mathbb{R})$ has the universal approximation property with respect to the space $C^j(\mathcal{X},\mathbb{R})$.
			\end{proof}
			
			Combined with our results proven in Sections~\ref{sec:node_criticalpoints} and~\ref{sec:node_regularity}, Theorem~\ref{th:node_approx} implies the following: general non-augmented neural ODEs $\textup{NODE}_\text{N}^k(\mathcal{X},\R)$ can by Lemma~\ref{lem:node_gradient} only be of class $(\mathcal{C}1)^k(\mathcal{X},\R)$, and general degenerate neural ODEs $\textup{NODE}_\text{D}^k(\mathcal{X},\R)$ can by Theorem~\ref{th:node_bottleneck} only be of class $(\mathcal{C}1)^k(\mathcal{X},\R)$ or $(\mathcal{C}3)^k(\mathcal{X},\R)$, hence non-augmented and degenerate neural ODEs cannot have the universal embedding property with respect to the space $C^k(\mathcal{X},\R)$. Part~\ref{th:ODEgerenalparameterized_a} now shows, that also parameterized non-augmented and degenerate neural ODEs $\textup{NODE}_{\theta,\text{N}}^k(\mathcal{X},\R)$ and $\textup{NODE}_{\theta,\text{D}}^k(\mathcal{X},\R)$ cannot have the universal embedding property with respect to the space $C^k(\mathcal{X},\R)$. Combined with Corollary~\ref{cor:uniapprox} we infer, that parameterized non-augmented neural ODEs $\textup{NODE}_{\theta,\text{N}}^k(\mathcal{X},\R)$ also cannot have the universal approximation property with respect to the space  $C^k(\mathcal{X},\R)$. 
			
			In the case of augmented neural ODEs, we have show in Theorem~\ref{th:node_universalembedding}, that the architecture $\textup{NODE}_\text{A}^k(\mathcal{X},\R)$ has the universal embedding property with respect to the space $C^k(\mathcal{X},\R)$. Part~\ref{th:ODEgerenalparameterized_b} of the last theorem now implies, that if the parameterized vector field of the architecture $\textup{NODE}_{\theta,\text{A}}^k(\mathcal{X},\R)$ fulfills the given assumptions, then $\textup{NODE}_{\theta,\text{A}}^k(\mathcal{X},\R)$ has the universal approximation property.

		\vspace{3mm}
		
		\section{Conclusion and Outlook}
		
		In this work, we studied the geometric structure of the input-output map of MLPs and neural ODEs. We used the definition of a Morse function, which is a scalar function, where every critical point is non-degenerate. It is interesting to study the property of being a Morse function, as Morse functions are dense in the space of sufficiently smooth scalar functions and are fundamental building blocks within many areas of mathematics. We fully classified the existence and regularity of critical points depending on the specific architecture of the MLP or the neural ODE.
		
		The architectures studied are subdivided into non-augmented,  augmented, and  bottleneck/degenerate architectures. For MLPs, the subdivision is defined by the width of the largest layer and by checking if there exists a bottleneck in the underlying graph. If at least one weight matrix of a neural network $\Phi$ has not full rank, there exists up to a linear change of coordinates a normal form $\overline{\Phi}$, which is an MLP with only full rank matrices and which is equivalent to $\Phi$, i.e., it has the same input-output map. Our proof of the existence of a normal form is constructive, i.e., we gave an explicit algorithm to calculate an equivalent normal form~$\overline{\Phi}$. Furthermore, we proved that the case where all weight matrices have full rank is generic, i.e., the set of weights, where at least one weight matrix does not have full rank, has Lebesgue measure zero in the weight space. Hence, we focused in the following analysis on neural networks in normal form.
		
		For neural ODEs, we defined an analogous subdivision in non-augmented, augmented, and degenerate  architectures. As neural ODEs can be interpreted as an infinite depth limit of MLPs, the vector field $f$ of the neural ODE represents all weights, biases, and activation functions of the hidden layers. We studied the most general case, where $f$ can be any non-autonomous vector field. In analogy to MLPs, the dimension of the phase space defines whether an architecture is augmented or non-augmented. We say that a neural ODE is degenerate if at least one of the weight matrices of the affine linear layers before and after the initial value problem does not have full rank. 
		
		Our main contribution of this work is to classify in each case the existence and regularity of critical points. In both non-augmented cases, no critical points can exist, as the network gradient has always full rank, which we proved via rank arguments and linear variational equations. If the dimension of at least one layer or the dimension of the phase space is augmented, we showed that it is possible that critical points exist in both cases. Using differential geometry and Morse theory, we showed that, except for a zero set with respect to the Lebesgue measure in the weight space, the input-output map is a Morse function. For MLPs with a bottleneck, we showed that critical points can be degenerate or non-degenerate. Depending on the dimension of the first bottleneck and the weight matrices, we derived conditions specifying the regularity of the critical points depending on the type of bottleneck. In contrast to that, we proved for degenerate neural ODEs that every critical point is degenerate. The different classification for MLPs with a bottleneck and degenerate neural ODEs relies on the fact that for neural ODEs, the dimension of the vector field is constant and no bottleneck can occur in the hidden layer. We explain why, nevertheless, the results obtained for degenerate neural ODEs are comparable to the results for MLPs, where the first weight matrix does not have full rank. Overall, our results show that the geometric structure of the input-output map is comparable for MLPs and for neural ODEs.
		
		Besides the classification of critical points, we also studied the implications of our results on the universal embedding and approximation property of MLPs and neural ODEs. For neural ODEs, we showed that they cannot have the universal embedding property in the non-augmented and the degenerate case. In contrast, augmented neural ODEs have the universal embedding property. Hereby, it was used that  $f$ can be any non-autonomous vector field. In the case of an expressive parameterized vector field, augmented parameterized neural ODEs have the universal approximation property. In the non-augmented case, MLPs cannot have the universal embedding property. For augmented MLPs, there already exist various universal approximation theorems in the literature. For the bottleneck case, we distinguished between cases where no universal embedding is possible and cases where the input-output dynamics resembles augmented neural networks, such that it is possible that the neural network has the universal approximation property. 
		
		The focus on the input-output dynamics of MLPs and neural ODEs explains why certain architectures perform better than others. Our analysis showed how the dimensions of the layers and the phase space, the rank of the weight matrices, and obstructions such as bottlenecks influence whether the input-output map is a Morse function. The established results are used as a starting point for a perturbation analysis to derive (non-)universal approximation results. In future work, it would be interesting to study the geometric implications if an MLP or a neural ODE is a Morse function. Morse functions are used across many areas, such as topology, differential geometry, and data analysis, to study the underlying manifold on which the Morse function is defined. Transferred to our setting, it is of interest to study the manifold of input data or initial conditions with an augmented neural network defined on it, which is generically a Morse function. Beyond the evident connection to data geometry and its applications, one could even aim to consider, from a very theoretical viewpoint, whether neural network representations of Morse functions could be helpful to solve open conjectures in Morse, Morse-Bott, and Floer homology. In this context, it will also become important to study not only the existence of critical points but to obtain more precise quantitative global estimates regarding the number of critical points generated by a neural network.

		\vspace{5mm}
		
		\noindent\textbf{Acknowledgments.} CK and SVK acknowledge funding from the Deutsche Forschungsgemeinschaft (DFG) via the SPP2298 `Theoretical Foundations of Deep Learning'. CK would like to thank the VolkswagenStiftung for support via a Lichtenberg Professorship. SVK would like to thank the Munich Data Science Institute (MDSI) for partial support via a Linde doctoral fellowship. CK and SVK also appreciate the valuable feedback and suggestions of an anonymous referee, which led to significant improvements in the article.\\

		\noindent\textbf{Data Availability.} No datasets were generated or analyzed during the current study. \\
	
		\noindent\textbf{Conflict of Interest.} The authors have no competing interests.
		
		%\newpage
		
		%\begin{appendices}
		%	\section{Appendix}
		%\end{appendices}
		
		%\newpage
		
		\vspace{3mm}
		
		\addcontentsline{toc}{section}{References}
		\bibliographystyle{abbrvurl}
		\bibliography{0literature}

		\vspace{2mm}
		
		\begin{appendices}
			%\appendix
			
			\section{Neural Network Normal Form} \label{app:normalform}
			
			In this appendix, we state the two necessary lemmata to prove Theorem~\ref{th:nn_fullrank_equivalent} about the MLP normal form. The first lemma treats the case that an inner weight matrix $W_{l+1} \in \R^{m_{l+1}\times n_l}$ has for some $l \in \{1,\ldots,L-1\}$ not full rank and explicitly constructs an equivalent MLP with smaller architecture.
			
			\begin{lemma} \label{lem:nn_notfullrank1_app}
				Let $\Phi \in \Xi^k_{ \mathbb{W}_0}(\mathcal{X},\mathbb{R})$ with a non-zero, non-full rank matrix $W_{l+1} \in \mathbb{R}^{m_{l+1} \times n_{l}}$ for some $l \in \{1,\ldots,L-1\}$, i.e., $0<\textup{rank}(W_{l+1}) < \min\{m_{l+1},n_{l}\}$. Then there exists an equivalent MLP $\overline{\Phi} \in \Xi^k(\mathcal{X},\mathbb{R})$ with smaller architecture. Especially only the number of nodes in layer $h_{l}$ is reduced by one, i.e., $\overline{n}_{l} = n_{l}-1\geq 1$, and $W_{l+1}$, $\widetilde{W}_{l}$ and $\tilde{b}_{l}$ are replaced by new matrices $\overline{W}_{l+1} \in \mathbb{R}^{m_{l+1} \times \overline{n}_{l}}$ and $\overline{\widetilde{W}}_{l} \in \mathbb{R}^{\overline{n}_{l} \times m_{l}}$ with $ \textup{rank}(\overline{W}_{l+1}) = \textup{rank}(W_{l+1})$, $ \textup{rank}(\overline{\widetilde{W}}_{l}) \leq \textup{rank}(\widetilde{W}_{l})$, $\overline{W}_{l+1} \overline{\widetilde{W}}_{l} = W_{l+1} \widetilde{W}_{l}$ and a new bias $\overline{\tilde{b}}_{l} \in \mathbb{R}^{\overline{n}_{l}}$.
			\end{lemma}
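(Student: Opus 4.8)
The plan is to exploit the fact that the layer $h_l$ enters the network only through the product $W_{l+1}h_l$ appearing in the pre-activation $a_{l+1}=W_{l+1}h_l+b_{l+1}$, while $h_l$ itself is produced by $h_l=\widetilde{W}_l\sigma_l(a_l)+\tilde b_l$. Since $0<\textup{rank}(W_{l+1})<\min\{m_{l+1},n_l\}\le n_l$, the $n_l$ columns of $W_{l+1}$ are linearly dependent, so some column, say the $i$-th, satisfies $[W_{l+1}]_i=\sum_{j\neq i}\alpha_j[W_{l+1}]_j$. In complete analogy with step~\ref{th:nn_step_i} of the proof of Theorem~\ref{th:nn_fullrank_equivalent}, but applied to $h_l$ in place of the input $x$, I would delete column $i$ to obtain $\overline{W}_{l+1}\in\mathbb{R}^{m_{l+1}\times(n_l-1)}$ and fold the dependency into a single linear map $R\in\mathbb{R}^{(n_l-1)\times n_l}$.

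Substituting the dependency relation into $W_{l+1}h_l=\sum_k[W_{l+1}]_k[h_l]_k$ and regrouping gives
\[ W_{l+1}h_l=\sum_{k\neq i}[W_{l+1}]_k\big([h_l]_k+\alpha_k[h_l]_i\big)=\overline{W}_{l+1}\,R\,h_l, \]
where $R$ is the matrix of the linear map $h_l\mapsto\big([h_l]_k+\alpha_k[h_l]_i\big)_{k\neq i}$ (the exact analogue of the coordinate change $A^{(r)}$ in step~\ref{th:nn_step_i}). Since this holds for all $h_l$, it follows that $\overline{W}_{l+1}R=W_{l+1}$. Defining the reduced layer $\overline{h}_l:=Rh_l$ and inserting $h_l=\widetilde{W}_l\sigma_l(a_l)+\tilde b_l$ yields
\[ \overline{h}_l=R\widetilde{W}_l\,\sigma_l(a_l)+R\tilde b_l=:\overline{\widetilde{W}}_l\,\sigma_l(a_l)+\overline{\tilde b}_l, \]
so I would set $\overline{\widetilde{W}}_l:=R\widetilde{W}_l\in\mathbb{R}^{(n_l-1)\times m_l}$ and $\overline{\tilde b}_l:=R\tilde b_l\in\mathbb{R}^{n_l-1}$, leaving every other weight matrix and bias (in particular $W_l$, $b_l$, and $b_{l+1}$) untouched.

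The verification then reduces to three routine checks. Equivalence $\Phi=\overline\Phi$ on all of $\mathcal{X}$ holds because the pre-activation $a_l$ and everything upstream of it are unchanged, while downstream the argument $a_{l+1}=\overline{W}_{l+1}\overline{h}_l+b_{l+1}=W_{l+1}h_l+b_{l+1}$ is reproduced exactly, so all later layers coincide. The rank claims are immediate: deleting a column contained in the span of the remaining ones preserves the column space, hence $\textup{rank}(\overline{W}_{l+1})=\textup{rank}(W_{l+1})$; left-multiplication cannot increase rank, so $\textup{rank}(\overline{\widetilde{W}}_l)=\textup{rank}(R\widetilde{W}_l)\le\textup{rank}(\widetilde{W}_l)$; and $\overline{W}_{l+1}\overline{\widetilde{W}}_l=\overline{W}_{l+1}R\widetilde{W}_l=W_{l+1}\widetilde{W}_l$. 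Finally, $\overline n_l=n_l-1\ge1$ because $\textup{rank}(W_{l+1})\ge1$ forces $n_l\ge2$, and reducing exactly one node in $h_l$ makes the new architecture strictly smaller in the sense of Definition~\ref{def:nn_equivalence}.

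The construction is essentially mechanical; the one genuine point to get right is that, unlike for $W_1$ where the analogous reduction can be realized as a change of input coordinates, here the redundancy of $W_{l+1}$ must be absorbed into the \emph{preceding} outer layer via $\overline{\widetilde{W}}_l=R\widetilde{W}_l$ and $\overline{\tilde b}_l=R\tilde b_l$. This is precisely why the statement can only guarantee $\textup{rank}(\overline{\widetilde{W}}_l)\le\textup{rank}(\widetilde{W}_l)$ rather than equality: the map $R$ is not injective, so multiplying by it may strictly lower the rank of $\widetilde{W}_l$. Keeping track of this inequality (rather than expecting the rank of $\widetilde{W}_l$ to be preserved) is the only place where the argument requires care, and it is what makes this lemma the appropriate companion to Lemma~\ref{lem:nn_notfullrank2}, where the roles of $\widetilde{W}_l$ and $W_{l+1}$ are exchanged.
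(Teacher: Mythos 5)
Your proposal is correct and follows essentially the same construction as the paper: delete the redundant column of $W_{l+1}$ and absorb the linear dependence into $\widetilde{W}_l$ and $\tilde{b}_l$, which the paper writes out entry-by-entry and you package more cleanly through the single factorization $W_{l+1}=\overline{W}_{l+1}R$ with $\overline{\widetilde{W}}_l=R\widetilde{W}_l$, $\overline{\tilde b}_l=R\tilde b_l$. All the rank claims and the equivalence argument match the paper's proof.
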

			
			\begin{proof}
				If $0<\textup{rank}(W_{l+1}) < \min\{m_{l+1},n_{l}\}$ for some $l\in \{1,\ldots,L-1\}$, it holds $m_{l+1},n_{l} \geq 2$. The columns of $W_{l+1} \in \mathbb{R}^{m_{l+1} \times n_{l}}$ are linearly dependent, i.e., for the columns $[W_{l+1}]_j \in \mathbb{R}^{m_{l+1}}$, $j \in \{1,\ldots,n_{l}\}$ there exist scalars $\tilde{\alpha}_j$, $j \in \{1,\ldots,n_{l}\}$ not all equal to zero, such that
				\begin{equation*}
					\sum_{j = 1}^{n_{l}} \tilde{\alpha}_j [W_{l+1}]_j = 0 \in \mathbb{R}^{m_{l+1}}.
				\end{equation*}
				Since $\tilde{\alpha}_i \neq 0$ for some $i \in \{1,\ldots,n_{l}\}$, it holds 
				\begin{equation*} 
					[W_{l+1}]_i = \sum_{j = 1, j \neq i}^{n_{l}} \alpha_j [W_{l+1}]_j
				\end{equation*}
				with $\alpha_j \coloneqq - \tilde{\alpha}_j/\tilde{\alpha}_i$. If $[W_{l+1}]_i = 0$ is a zero column, no information of the node $[h_{l}]_i$ is transferred to the next layer, which indicates that the neural network $\Phi$ is equivalent to a smaller neural network, where the node $[h_{l}]_i$ and all connected weights are removed. In the following, we prove this statement for the general case that $[W_{l+1}]_i \in \mathbb{R}^{m_{l+1}}$ is a linear combination of the other columns. We define a new neural network $\overline{\Phi} \in \Xi^k(\mathcal{X},\mathbb{R})$, which has the same structure, weights, biases and activation functions as~$\Phi$, only the number of nodes in layer $h_{l}$ is reduced by one, i.e., $\overline{n}_{l} = n_{l}-1 \geq 1$, and $W_{l+1}$, $\widetilde{W}_{l}$ and $\tilde{b}_{l}$ are replaced by new matrices $\overline{W}_{l+1} \in \mathbb{R}^{m_{l+1} \times \overline{n}_{l}}$ and $\overline{\widetilde{W}}_{l} \in \mathbb{R}^{\overline{n}_{l} \times m_{l}}$ and a new bias $\overline{\tilde{b}}_{l} \in \mathbb{R}^{\overline{n}_{l}}$. We define the matrix
				\begin{equation*}
					\overline{W}_{l+1} = \begin{pmatrix}
						[W_{l+1}]_{1} &\cdots& [W_{l+1}]_{i-1} & [W_{l+1}]_{i+1} \cdots &[W_{l+1}]_{ n_{l}} 
					\end{pmatrix}
				\end{equation*}
				which has the same rank as $W_{l+1}$, as $W_{l+1}$ arises from $\overline{W}_{l+1}$ by adding the new column $[W_{l+1}]_i$, which is a linear combination of the other columns and hence does not increase the maximal number of linearly independent columns. Furthermore we define		
				\begin{equation*}
					\overline{\widetilde{W}}_{l} = \begin{pmatrix}
						[\widetilde{W}_{l}]_{1,1} + \alpha_1 [\widetilde{W}_{l}]_{i,1}   &\cdots &[\widetilde{W}_{l}]_{1,m_{l}}  + \alpha_1 [\widetilde{W}_{l}]_{i,m_{l}} \\
						\vdots   && \vdots \\
						[\widetilde{W}_{l}]_{i-1,1} + \alpha_{i-1} [\widetilde{W}_{l}]_{i,1} &\cdots &[\widetilde{W}_{l}]_{i-1,m_{l}}  + \alpha_{i-1} [\widetilde{W}_{l}]_{i,m_{l}}\\
						[\widetilde{W}_{l}]_{i+1,1} + \alpha_{i+1} [\widetilde{W}_{l}]_{i,1}  &\cdots &[\widetilde{W}_{l}]_{i+1,m_{l}}  + \alpha_{i+1} [\widetilde{W}_{l}]_{i,m_{l}} \\
						\vdots  && \vdots \\
						[\widetilde{W}_{l}]_{n_{l},1}+ \alpha_{n_{l}} [\widetilde{W}_{l}]_{i,1}  &\cdots &[\widetilde{W}_{l}]_{n_{l},m_{l}}  + \alpha_{n_{l}} [\widetilde{W}_{l}]_{i,m_{l}}
					\end{pmatrix} 
				\end{equation*}
				which has rank smaller or equal to $\textup{rank}(\widetilde{W}_{l})$. To see this, we first apply linear row operations to the matrix $\widetilde{W}_{l}$ by adding to the $j$-th row, $j \in \{1,\ldots,i-1,i+1,\ldots,n_{l}\}$, the $i$-th row multiplied with $\alpha_j$ each. The resulting matrix has the same rank as $\widetilde{W}_{l}$. As we obtain now the matrix 	$\overline{\widetilde{W}}_{l}$ by removing the $i$-th row, it holds $ \textup{rank}(\overline{\widetilde{W}}_{l}) \leq \textup{rank}(\widetilde{W}_{l})$. Finally, the new bias is given by
				\begin{equation*}
					\overline{\tilde{b}}_{l} = \begin{pmatrix}
						[\tilde{b}_{l}]_1 + \alpha_1 [\tilde{b}_{l}]_i \\
						\vdots \\
						[\tilde{b}_{l}]_{i-1} + \alpha_{i-1} [\tilde{b}_{l}]_i \\
						[\tilde{b}_{l}]_{i+1} + \alpha_{i+1} [\tilde{b}_{l}]_i \\
						\vdots \\
						[\tilde{b}_{l}]_{n_{l}} + \alpha_{n_{l}} [\tilde{b}_{l}]_i
					\end{pmatrix}.
				\end{equation*}
				As the neural network architectures $\Phi$ and $\overline{\Phi}$ agree in all weights, biases and activation functions until $\sigma_{l}$, it holds
				\begin{equation*}
					\overline{\sigma}_{l}(\overline{a}_{l}) = \overline{\sigma}_{l}(\overline{W}_{l}\overline{h}_{l-1}+\overline{b}_{l}) = \sigma_{l}(W_{l}h_{l-1}+b_{l}) = \sigma_{l}(a_{l}), \qquad \textup{for all } x \in \mathcal{X}.  
				\end{equation*}
				In the following, we aim to show that
				\begin{equation*}
					\overline{W}_{l+1} \overline{h}_{l} = \overline{W}_{l+1} \left[\overline{\widetilde{W}}_{l}\overline{\sigma}_{l}(\overline{a}_{l}) + \overline{\tilde{b}}_{l}\right] = W_{l+1} \left[\widetilde{W}_{l}\sigma_{l}(a_{l}) + \tilde{b}_{l}\right] = W_{l+1} h_{l},
				\end{equation*}
				which implies with the fact, that the neural network architectures $\Phi$ and $\overline{\Phi}$ agree in the bias $b_{l+1}$ and in all weights, biases and activation functions from $\sigma_{l+1}$ on-wards, that $\Phi$ and $\overline{\Phi}$ are equivalent. We calculate
				\begin{align*}
					\overline{W}_{l+1} \overline{h}_{l} &= \overline{W}_{l+1} \left[\overline{\widetilde{W}}_{l}\overline{\sigma}_{l}(\overline{a}_{l}) + \overline{\tilde{b}}_{l}\right]  = \overline{W}_{l+1} \left[\overline{\widetilde{W}}_{l}\sigma_{l}(a_{l}) + \overline{\tilde{b}}_{l}\right] \\ 
					& = \overline{W}_{l+1} \overline{\widetilde{W}}_{l}\sigma_{l}(a_{l}) + \overline{W}_{l+1} \overline{\tilde{b}}_{l}\\
					& = W_{l+1} \widetilde{W}_{l} \sigma_{l}(a_{l}) + W_{l+1}  \tilde{b}_{l} = W_{l+1}  \left[\widetilde{W}_{l}\sigma_{l}(a_{l}) + \tilde{b}_{l}\right] = W_{l+1}  h_{l}
				\end{align*} 
				where we used in the last line that $\overline{W}_{l+1} \overline{\widetilde{W}}_{l} = W_{l+1}  \widetilde{W}_{l}$ and $\overline{W}_{l+1}  \overline{\tilde{b}}_{l} =  W_{l+1}  \tilde{b}_{l} $ since
				\begin{align*}
					\left[\overline{W}_{l+1} \overline{\widetilde{W}}_{l}\right]_{j,k} &= \sum_{p = 1, p \neq i}^{n_{l-1}} [W_{l+1} ]_{j,p} \left[[\widetilde{W}_{l}]_{p,k} + \alpha_p [\widetilde{W}_{l}]_{i,k} \right]= \sum_{p = 1}^{n_{l-1}} [W_{l+1} ]_{j,p}[\widetilde{W}_{l}]_{p,k} = 	\left[W_{l+1}  \widetilde{W}_{l}\right]_{j,k} \\
					\left[\overline{W}_{l+1} \overline{\tilde{b}}_{l}\right]_{j} &= \sum_{p = 1, p \neq i}^{n_{l-1}} [W_{l+1} ]_{j,p} \left[ [\tilde{b}_{l}]_{p} + \alpha_p [\tilde{b}_{l}]_{i} \right] = \sum_{p = 1}^{n_{l}} [W_{l+1} ]_{j,p}[\tilde{b}_{l}]_{p} = 	\left[W_{l+1}  \tilde{b}_{l} \right]_{j}
				\end{align*}
				by the assumption on the linear dependence of the columns of $W_{l+1} $. 
			\end{proof}
			
			The second lemma treats the other case, that an outer weight matrix $\widetilde{W}_{l} \in \R^{m_{l}\times n_{l-1}}$ has for some $l \in \{1,\ldots,L-1\}$ not full rank and shows analogously, how to construct an equivalent MLP with smaller architecture. Combining the two lemmata allows us to construct equivalent MLPs until all weight matrices have full rank and the normal form is reached.
			
			\begin{lemma} \label{lem:nn_notfullrank2_app}
				Let $\Phi \in \Xi^k_{ \mathbb{W}_0}(\mathcal{X},\mathbb{R})$ with a non-zero, non-full rank matrix $\widetilde{W}_l \in \mathbb{R}^{n_l \times m_l}$ for some $l \in \{1,\ldots,L-1\}$, i.e., $0<\textup{rank}(\widetilde{W}_l) < \min\{m_l,n_{l}\}$. Then there exists an equivalent MLP $\overline{\Phi} \in \Xi^k(\mathcal{X},\mathbb{R})$ with smaller architecture. Especially only the number of nodes in layer $h_l$ is reduced by one, i.e., $\overline{n}_{l} = n_{l}-1\geq 1$, and $\widetilde{W}_{l}$, $W_{l+1}$ and $\tilde{b}_{l}$ are replaced by new matrices  $\overline{\widetilde{W}}_{l} \in \mathbb{R}^{\overline{n}_{l} \times m_{l}}$ and $\overline{W}_{l+1} \in \mathbb{R}^{m_{l+1} \times \overline{n}_{l}}$ with $ \textup{rank}(\overline{\widetilde{W}}_l) = \textup{rank}(\widetilde{W}_l)$, $ \textup{rank}(\overline{W}_{l+1}) \leq \textup{rank}(W_{l+1})$, $\overline{W}_{l+1}\overline{\widetilde{W}}_{l} = W_{l+1} \widetilde{W}_l$ and a new bias $\overline{\tilde{b}}_{l} \in \mathbb{R}^{\overline{n}_{l}}$. If $W_{l+1} \in \mathbb{R}^{m_{l+1}\times n_l}$ has full rank, i.e., $\textup{rank}(W_{l+1}) = \min\{m_{l+1},n_l\}$, then $\overline{W}_{l+1}$ has also full rank, i.e., $ \textup{rank}(\overline{W}_{l+1}) = \min\{m_{l+1},\overline{n}_l\}$.
			\end{lemma}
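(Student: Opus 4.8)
The plan is to mirror the construction in the proof of Lemma~\ref{lem:nn_notfullrank1_app}, exchanging the roles of the columns of $W_{l+1}$ and the rows of $\widetilde{W}_l$. Denote by $r_j\in\mathbb{R}^{1\times m_l}$ the $j$-th row of $\widetilde{W}_l$ and by $c_p\in\mathbb{R}^{m_{l+1}}$ the $p$-th column of $W_{l+1}$. Since $0<\textup{rank}(\widetilde{W}_l)<\min\{m_l,n_l\}\le n_l$, the $n_l$ rows of $\widetilde{W}_l$ are linearly dependent, so there is an index $i$ and scalars $\alpha_j$ ($j\ne i$), not all zero, with $r_i=\sum_{j\ne i}\alpha_j r_j$. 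First I would delete node $i$ of the layer $h_l$, set $\overline{n}_l=n_l-1$, and define $\overline{\widetilde{W}}_l$ by removing the $i$-th row of $\widetilde{W}_l$; as the removed row lies in the span of the others, the row rank is unchanged and $\textup{rank}(\overline{\widetilde{W}}_l)=\textup{rank}(\widetilde{W}_l)$.

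To preserve the output I would compensate in $W_{l+1}$. The removed component satisfies $[h_l]_i=r_i\sigma_l(a_l)+[\tilde b_l]_i=\sum_{j\ne i}\alpha_j[h_l]_j+d_i$ with a constant $d_i$ collecting the biases, so substituting it into $a_{l+1}=W_{l+1}h_l+b_{l+1}$ replaces each surviving column by $\overline{c}_p=c_p+\alpha_p c_i$ ($p\ne i$), drops column $c_i$, and leaves a constant $d_i c_i$ to be absorbed into the (truncated) bias $\overline{\tilde b}_l$. Defining $\overline{W}_{l+1}$ by the columns $\overline{c}_p$, the identity
\[
\overline{W}_{l+1}\,\overline{\widetilde{W}}_l=\sum_{p\ne i}(c_p+\alpha_p c_i)\,r_p=\sum_{p=1}^{n_l}c_p r_p=W_{l+1}\widetilde{W}_l
\]
holds, where the row relation $r_i=\sum_{j\ne i}\alpha_j r_j$ is used to fold the $i$-th outer product back in; together with the matching bias identity and the fact that $\Phi$ and $\overline{\Phi}$ agree on all weights up to $\sigma_l$ and from $\sigma_{l+1}$ onwards, this yields $\overline{\Phi}(x)=\Phi(x)$ for all $x\in\mathcal{X}$. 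Since $\overline{W}_{l+1}$ is obtained from $W_{l+1}$ by an invertible shear of the columns followed by deletion of one column, $\textup{rank}(\overline{W}_{l+1})\le\textup{rank}(W_{l+1})$.

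The main obstacle is the last assertion, that $\overline{W}_{l+1}$ inherits \emph{full} rank from $W_{l+1}$. The clean case is $m_{l+1}\ge n_l$, where $W_{l+1}$ has full column rank: the shear column operation is invertible, hence the transformed columns $\{\overline{c}_p\}_{p\ne i}\cup\{c_i\}$ still form a basis of the column space, and deleting $c_i$ leaves the $n_l-1$ independent columns $\{\overline{c}_p\}_{p\ne i}$, so that $\textup{rank}(\overline{W}_{l+1})=n_l-1=\min\{m_{l+1},\overline{n}_l\}$. The delicate regime is $m_{l+1}<n_l$, in which deleting a column of a full-row-rank matrix may a priori drop the rank; here I expect to exploit the freedom in which dependent row of $\widetilde{W}_l$ to eliminate, choosing a node $i$ whose deletion keeps the retained columns of $W_{l+1}$ spanning $\mathbb{R}^{m_{l+1}}$, and to use the non-degeneracy supplied by the surrounding construction (that $W_{l+1}$ is already full rank when the lemma is invoked). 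Establishing that such a node can always be selected is the crux of the proof.
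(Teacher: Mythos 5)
Your construction coincides with the paper's own proof: delete a dependent row $r_i$ of $\widetilde{W}_l$, shear the surviving columns of $W_{l+1}$ to $\overline{c}_p=c_p+\alpha_p c_i$, and verify $\overline{W}_{l+1}\overline{\widetilde{W}}_l=W_{l+1}\widetilde{W}_l$ via the outer-product identity. However, two steps remain genuinely open in your write-up. First, the bias: you say the leftover constant $d_i c_i\in\mathbb{R}^{m_{l+1}}$ is ``absorbed into the truncated bias $\overline{\tilde b}_l\in\mathbb{R}^{\overline{n}_l}$'', but these vectors live in different spaces. Absorbing it means solving the linear system $\overline{W}_{l+1}c_l=W_{l+1}\tilde b_l-\overline{W}_{l+1}y_l$ for a correction $c_l\in\mathbb{R}^{\overline{n}_l}$, and solvability is not automatic; the paper invokes the Rouché--Capelli theorem and checks that the right-hand side lies in the column span of $\overline{W}_{l+1}$. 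You should not treat this as bookkeeping.

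Second, and more importantly, you explicitly leave unproven the final assertion of the lemma (preservation of full rank) in the regime $m_{l+1}<n_l$, which you correctly identify as the crux. The paper does supply an argument at this point: it notes that $\overline{W}_{l+1}$ arises from the sheared matrix $Y_{l+1}$ (which has the same rank as $W_{l+1}$, since column shears are invertible) by deleting the $i$-th column, and then asserts that because $\textup{rank}(W_{l+1})=m_{l+1}$ and $n_l>m_{l+1}$, removing one column of $Y_{l+1}$ cannot drop the rank. Your instinct that this step is delicate is sound: deleting a column of a full-row-rank matrix can in general reduce the rank, and the retained columns $\{c_p+\alpha_p c_i\}_{p\ne i}$ span $\mathbb{R}^{m_{l+1}}$ precisely when $c_i$ lies in their span, which is not forced by the hypotheses for an arbitrary admissible choice of $i$. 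So the selection of the deleted node really does matter, and neither a proof that a good choice always exists nor the paper's shortcut appears in your submission. As it stands, your argument establishes everything except the last sentence of the lemma, and is therefore incomplete.
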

			
			\begin{proof}
				If $0<\textup{rank}(\widetilde{W}_l) < \min\{m_l,n_{l}\}$ for some $l\in \{1,\ldots,L-1\}$, it holds $m_l,n_l \geq 2$. The rows of $\widetilde{W}_l \in \mathbb{R}^{n_l \times m_l}$ are linearly dependent, i.e., for the rows $[\widetilde{W}_l^\top]_j \in \mathbb{R}^{m_l}$, $j \in \{1,\ldots,n_{l}\}$ there exist scalars $\tilde{\alpha}_j$, $j \in \{1,\ldots,n_{l}\}$ not all equal to zero, such that
				\begin{equation*}
					\sum_{j = 1}^{n_{l}} \tilde{\alpha}_j [\widetilde{W}_l^\top]_j = 0 \in \mathbb{R}^{m_l}.
				\end{equation*}
				Since $\tilde{\alpha}_i \neq 0$ for some $i \in \{1,\ldots,n_{l}\}$, it holds 
				\begin{equation*}
					[\widetilde{W}_l^\top]_i = \sum_{j = 1, j \neq i}^{n_{l}} \alpha_j [\widetilde{W}_l^\top]_j
				\end{equation*}
				with $\alpha_j \coloneqq - \tilde{\alpha}_j/\tilde{\alpha}_i$. If $[\widetilde{W}_l^\top]_i = 0$ is a zero row, no information of the activated layer $\sigma_l(a_{l})$ is transferred to the node $[h_l]_i$ of the next layer, which indicates that the neural network $\Phi$ is equivalent to a smaller neural network, where the node $[h_{l}]_i$ and all connected weights are removed. In the following, we prove this statement for the general case that $[\widetilde{W}_l^\top]_i \in \mathbb{R}^{m_l}$ is a linear combination of the other rows. We define a new neural network $\overline{\Phi} \in \Xi^k(\mathcal{X},\mathbb{R})$, which has the same structure, weights, biases and activation functions as~$\Phi$, only the number of nodes in layer $h_l$ is reduced by one, i.e., $\overline{n}_{l} = n_{l}-1 \geq 1$, and $\widetilde{W}_l$, $W_{l+1}$ and $\tilde{b}_{l}$ are replaced by new matrices $\overline{\widetilde{W}}_l \in \mathbb{R}^{ \overline{n}_{l}\times m_l}$ and $\overline{W}_{l+1} \in \mathbb{R}^{m_{l+1}\times\overline{n}_{l}}$ and a new bias $\overline{\tilde{b}}_{l} \in \mathbb{R}^{\overline{n}_{l}}$. We define the matrix
				\begin{equation*}
					\overline{\widetilde{W}}_l = \begin{pmatrix}
						[\widetilde{W}_l^\top]_{1}^\top \\ 
						\vdots \\
						[\widetilde{W}_l^\top]_{i-1}^\top\\
						[\widetilde{W}_l^\top]_{i+1}^\top\\
						\vdots \\
						[\widetilde{W}_l^\top]^\top_{ n_{l}} 
					\end{pmatrix}
				\end{equation*}
				which has the same rank as $\widetilde{W}_l$, as $\widetilde{W}_l$ arises from $\overline{\widetilde{W}}_l$ by adding the new row $[\widetilde{W}_l^\top]^\top_i$, which is a linear combination of the other rows and hence does not increase the maximal number of linearly independent rows. Furthermore we define		
				\begin{align*}
					\overline{W}_{l+1} = &\left(
					[W_{l+1}]_{1} + \alpha_1 [W_{l+1}]_i, \ldots, [W_{l+1}]_{i-1}+ \alpha_{i-1} [W_{l+1}]_i, \right. \\ &\left.[W_{l+1}]_{i+1}+ \alpha_{i+1} [W_{l+1}]_i, \ldots, [W_{l+1}]_{ n_{l}} + \alpha_{n_l} [W_{l+1}]_{i} \right)
				\end{align*}
				which has rank smaller or equal to $\textup{rank}(W_{l+1})$. To see this, we first apply linear column operations to the matrix $W_{l+1}$ by adding to the $j$-th column, $j \in \{1,\ldots,i-1,i+1,\ldots,n_{l}\}$, the $i$-th column multiplied with $\alpha_j$ each. The resulting matrix $Y_{l+1}$ has the same rank as $W_{l+1}$ and  $\overline{W}_{l+1}$ is obtained  from  $Y_{l+1}$ by removing the $i$-th column, hence $ \textup{rank}(\overline{W}_{l+1}) \leq \textup{rank}(Y_{l+1})= \textup{rank}(W_{l+1})$. If $W_{l+1}$ has full rank, i.e., $\textup{rank}(W_{l+1}) = \min\{m_{l+1},n_l\}$, then also $\overline{W}_{l+1}$ has full rank, i.e., $ \textup{rank}(\overline{W}_{l+1}) = \min\{m_{l+1},\overline{n}_l\}$.
				To see this, we distinguish two cases: if $m_{l+1}<n_l$ it holds $\textup{rank}(W_{l+1}) = m_{l+1}$, such that each $m_{l+1}$ columns of $W_{l+1}$ and hence also of $Y_{l+1}$ are linearly independent. Consequently removing one of the $n_l$ columns of $Y_{l+1}$ does not change the rank, such that $ \textup{rank}(\overline{W}_{l+1}) = m_{l+1} = \min\{m_{l+1},\overline{n}_l\}$. If $m_{l+1}\geq n_l$ it holds $\textup{rank}(W_{l+1}) = n_l$, such that all $n_l$ columns of  $W_{l+1}$ and hence also all $\bar{n}_l$ columns  of $Y_{l+1}$ are linearly independent. As $Y_{l+1}$ has  $n_l$ columns, removing one leads to a matrix with $\overline{n}_l = n_l-1$ linearly independent columns. Consequently it holds   $ \textup{rank}(\overline{W}_{l+1}) =  \overline{n}_l =  \min\{m_{l+1},\overline{n}_l\}$. Finally, the new bias is given by
				\begin{equation*}
					\overline{\tilde{b}}_{l} = y_l + c_l \coloneqq \begin{pmatrix}
						[\tilde{b}_{l}]_1\\
						\vdots \\
						[\tilde{b}_{l}]_{i-1}\\
						[\tilde{b}_{l}]_{i+1}\\
						\vdots \\
						[\tilde{b}_{l}]_{n_{l}}
					\end{pmatrix} + 
					\begin{pmatrix}
						[c_{l}]_1\\
						\vdots \\
						[c_{l}]_{i-1}\\
						[c_{l}]_{i+1}\\
						\vdots \\
						[c_{l}]_{n_{l}}
					\end{pmatrix},
				\end{equation*}
				where $c_l \in \mathbb{R}^{\bar{n}_l}$ is an arbitrary solution of the linear system
				\begin{equation*}
					\overline{W}_{l+1} c_l = W_{l+1} \tilde{b}_l - \overline{W}_{l+1}  y_l \eqqcolon d_l \in \mathbb{R}^{m_{l+1}} .
				\end{equation*}
				This system has, by the Rouché-Capelli Theorem (cf.\ \cite{Shafarevich2013}), always at least one solution  as the rank of the matrix $\overline{W}_{l+1}$ is the same as the rank of the extended coefficient matrix 
				\begin{equation*}
					\textup{rank}(\overline{W}_{l+1} ) = \textup{rank}(\overline{W}_{l+1} \big\vert d_l).
				\end{equation*}
				This holds as every column of $W_{l+1}$ is a linear combination of columns of $\overline{W}_{l+1}$, so also the vector $d_l$ is a linear combination of the columns of $\overline{W}_{l+1}$. Consequently the maximal number of linearly independent columns does not increase by attaching the vector $d_l$ to the matrix $\overline{W}_{l+1}$, such that the rank of $\overline{W}_{l+1}$ and  $(\overline{W}_{l+1} \big\vert d_l)$ is the same. As the neural network architectures $\Phi$ and $\overline{\Phi}$ agree in all weights, biases and activation functions until $\sigma_{l}$, it holds
				\begin{equation*}
					\overline{\sigma}_{l}(\overline{a}_{l}) = \overline{\sigma}_{l}(\overline{W}_{l}\overline{h}_{l-1}+\overline{b}_{l}) = \sigma_{l}(W_{l}h_{l-1}+b_{l}) = \sigma_{l}(a_{l}), \qquad \textup{for all } x \in \mathcal{X}.  
				\end{equation*}
				In the following, we aim to show that
				\begin{equation*}
					\overline{W}_{l+1} \overline{h}_{l} = \overline{W}_{l+1} \left[\overline{\widetilde{W}}_{l}\overline{\sigma}_{l}(\overline{a}_{l}) + \overline{\tilde{b}}_{l}\right] = W_{l+1} \left[\widetilde{W}_{l}\sigma_{l}(a_{l}) + \tilde{b}_{l}\right] = W_{l+1} h_{l},
				\end{equation*}
				which implies with the fact, that the neural network architectures $\Phi$ and $\overline{\Phi}$ agree in the bias $b_{l+1}$ and in all weights, biases and activation functions from $\sigma_{l+1}$ on-wards, that $\Phi$ and $\overline{\Phi}$ are equivalent. We calculate
				\begin{align*}
					\overline{W}_{l+1} \overline{h}_{l} &= \overline{W}_{l+1} \left[\overline{\widetilde{W}}_{l}\overline{\sigma}_{l}(\overline{a}_{l}) + \overline{\tilde{b}}_{l}\right]  
					= \overline{W}_{l+1} \left[\overline{\widetilde{W}}_{l}\sigma_{l}(a_{l}) + y_l+c_l \right] \\ 
					& = \overline{W}_{l+1}\overline{\widetilde{W}}_{l}\sigma_{l}(a_{l}) +  \overline{W}_{l+1} y_l + \overline{W}_{l+1}c_l\\
					&= \overline{W}_{l+1}\overline{\widetilde{W}}_{l}\sigma_{l}(a_{l}) + W_{l+1} \tilde{b}_l \\
					&=  W_{l+1} \widetilde{W}_l \sigma_l(a_l) + W_{l+1} \tilde{b}_l = W_{l+1} \left[\widetilde{W}_{l}\sigma_{l}(a_{l}) + \tilde{b}_{l}\right] = W_{l+1} h_{l},
				\end{align*} 
				where we used in the last line that $\overline{W}_{l+1}\overline{\widetilde{W}}_{l} = W_{l+1} \widetilde{W}_l$ since
				\begin{equation*}
					\left[\overline{W}_{l+1}\overline{\widetilde{W}}_{l}\right]_{j,k} = \sum_{p = 1, p \neq i}^{n_l} \left([W_{l+1}]_{j,p} + \alpha_p [W_{l+1}]_{j,i} \right) [\widetilde{W}_l]_{p,k} = \sum_{p = 1}^{n_l} [W_{l+1}]_{j,p}[\widetilde{W}_l]_{p,k} =\left[ W_{l+1} \widetilde{W}_l\right]_{j,k}
				\end{equation*}
				by the assumption on linear dependence of the rows of $\widetilde{W}_l$. 
			\end{proof}
			
			\section{Regular Critical Points of Augmented MLPs} \label{app:regularity}
			
			In this appendix, we prove Theorem~\ref{th:nn_morse}, that augmented MLPs are for all weights, except possibly for a zero set with respect to the Lebesgue measure, Morse functions. The proof includes the calculation of all mixed partial second derivatives with respect to the input $x$ and the weights and biases to apply Lemma~\ref{lem:nn_morsefunction}. The augmented structure of the MLP is used to show the surjectivity of the matrix of mixed partial second derivatives. After the proof of the upcoming theorem, we explain in Remark~\ref{rem:nn_morsetheorem} in which case the proof also applies to MLPs with a bottleneck.
			
			\begin{theorem} \label{th:nn_morse_app}
				Any augmented MLP $\Phi \in \Xi^k_{\textup{A},\mathbb{W}^\ast}(\mathcal{X},\mathbb{R})$, $k \geq 2$, with weight matrices $W\in \mathbb{W}^\ast$ and biases $b \in \mathbb{B}$, is for all weights $(\vem(W);\stack(B)) \in \mathbb{R}^{N_W+N_B} $, except possibly for a zero set in $\mathbb{R}^{N_W+N_B}$ with respect to the Lebesgue measure, of class $(\mathcal{C}1)^k (\mathcal{X},\mathbb{R})$ or $(\mathcal{C}2)^k (\mathcal{X},\mathbb{R})$ and hence a Morse function. The same statement holds if $\mathbb{W}^\ast$ is replaced by $\mathbb{W}$.
			\end{theorem}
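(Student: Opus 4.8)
The strategy is to apply the parametric transversality criterion of Lemma~\ref{lem:nn_morsefunction}, with the input $x$ playing the role of the variable and the stacked weights and biases $(\vem(W);\stack(b))$ playing the role of the parameter $v$, so that $\widehat{\Phi}$ is the MLP viewed as a function of both $x$ and the parameters; by Lemma~\ref{lem:nn_regularity} this is jointly $C^k$ with $k\geq 2$. Since a function is Morse precisely when it lies in $(\mathcal{C}1)^k$ or $(\mathcal{C}2)^k$ by Definition~\ref{def:subsets}, once $\Phi$ is known to be generically Morse the stated classification follows immediately. It therefore suffices to verify the hypothesis of Lemma~\ref{lem:nn_morsefunction}: that the mixed second-derivative matrix $\frac{\partial^2\widehat{\Phi}}{\partial v\,\partial x}(x,v)\in\mathbb{R}^{n\times p}$ has full rank $n$ for every $x$ and every admissible full-rank weight choice.

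To exhibit this rank I would start from the gradient formula of Lemma~\ref{lem:nn_gradient} and factor the matrix product at the first layer $g_{l^\ast}$ of maximal width, writing $\nabla\Phi(x)^\top = \overline{Z}\,\overline{Y}$, where $\overline{Y}\in\mathbb{R}^{d_{l^\ast}\times n}$ collects the factors $V_{l^\ast}\cdots\Psi_1(a_1)V_1$ on the input side and $\overline{Z}\in\mathbb{R}^{1\times d_{l^\ast}}$ collects the remaining factors $V_{2L}\cdots V_{l^\ast+1}$ on the output side (up to the boundary activation-derivative factor, whose treatment depends only on the parity of $l^\ast$). By the augmented structure the widths increase up to $l^\ast$ and decrease afterwards, so the factors in $\overline{Y}$ have monotonically increasing dimensions and those in $\overline{Z}$ monotonically decreasing dimensions; Lemma~\ref{lem:nn_productfullrank} then gives $\mathrm{rank}(\overline{Y}) = n$ and $\mathrm{rank}(\overline{Z}) = 1$, the latter since a product of full-rank matrices is nonzero.

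The heart of the proof is the computation of $\frac{\partial^2\widehat{\Phi}}{\partial v\,\partial x}$ and the verification of surjectivity. I would differentiate the gradient with respect to the entries of the weight matrix $V_{l^\ast+1}$ immediately following the widest layer (and, in parallel, the adjacent bias), exploiting that $d_{l^\ast}\geq n+1 > d_{l^\ast+1}$ means this matrix has strictly more columns than its rank. Differentiation produces, via the chain rule, a dominant term in which the rank-$n$ factor $\overline{Y}$ survives on the right while the perturbed weight block supplies $d_{l^\ast}$ independent directions on the left; the augmentation inequality $d_{l^\ast}\geq n+1$ is exactly what guarantees that, even after composition with the rank-$1$ output factor $\overline{Z}$, the image still spans all of $\mathbb{R}^n$. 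Because the partials with respect to $W$ and $b$ (respectively $\widetilde{W}$ and $\tilde b$) share the same algebraic form through the chain rule, the secondary contributions coming from the dependence of the downstream diagonal matrices $\Psi_l(a_l)$ on $V_{l^\ast+1}$ can be absorbed, and the dominant block retains full row rank $n$ for every $(x,v)$. This bookkeeping is the main obstacle: isolating a single clean block of the mixed-derivative matrix that inherits rank $n$ from $\overline{Y}$, while controlling the extra chain-rule terms generated by the activations' derivatives depending on the perturbed weights, is where the augmentation hypothesis is indispensable and where the lengthy explicit computations reside.

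With surjectivity established everywhere, Lemma~\ref{lem:nn_morsefunction} yields a Lebesgue-null exceptional set outside of which $\Phi$ is a Morse function, hence of class $(\mathcal{C}1)^k$ or $(\mathcal{C}2)^k$. Finally, to replace $\mathbb{W}^\ast$ by $\mathbb{W}$, I would invoke Lemma~\ref{lem:nn_weightspace}: the set $\vem(\mathbb{W}_0)\times\stack(\mathbb{B})$ of non-full-rank weights is itself Lebesgue-null in $\mathbb{R}^{N_W+N_B}$, and a finite union of null sets is null, so the full-rank restriction may be dropped without enlarging the exceptional set beyond measure zero.
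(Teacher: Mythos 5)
Your proposal follows essentially the same route as the paper's proof: apply Lemma~\ref{lem:nn_morsefunction} to $\widehat{\Phi}(x,v)$, factor the gradient at the first layer of maximal width $g_{l^\ast}$ into a rank-$n$ input-side product $\overline{Y}$ and a nonzero output-side product $\overline{Z}$ via Lemma~\ref{lem:nn_productfullrank}, cancel the chain-rule contributions in $\partial^2\widehat{\Phi}/\partial [V_{l^\ast+1}]_{ij}\partial x$ against the matching bias columns (the paper makes your ``absorption'' precise as the elementary column operation $\frac{\partial^2\widehat{\Phi}}{\partial [W_l]_{ij}\partial x}-[h_l]_j\,\frac{\partial^2\widehat{\Phi}}{\partial [b_l]_i\partial x}$, which kills the entire chain-rule sum and leaves the single $\delta_{ij}$ term $\overline{Y}_i[\overline{Z}]_j$), and conclude with Lemma~\ref{lem:nn_weightspace} for the passage from $\mathbb{W}^\ast$ to $\mathbb{W}$. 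One correction: the inequality $n+1>d_{l^\ast+1}$ you invoke is false in general (two consecutive layers may share the maximal width) and is not needed anywhere; the argument only requires $\textup{rank}(\overline{Y})=n$ and $\overline{Z}\neq 0$, which follow from the monotone widths on either side of $g_{l^\ast}$ and the full-rank assumption on the weights.
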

			
			\begin{proof}
				Let $\Phi \in \Xi^k_{\textup{A},\mathbb{W}^\ast}(\mathcal{X},\mathbb{R})$, $k \geq 2$ be an augmented MLP with weight matrices $(W_1, \widetilde{W}_1, \ldots, W_L, \widetilde{W}_L)\in \mathbb{W}^\ast$. By Theorem~\ref{th:nn_criticalpoints}, it is possible that the neural network~$\Phi$ has a critical point. In the case that $\Phi$ has no critical point, $\Phi$ is of class  $(\mathcal{C}1)^k (\mathcal{X},\mathbb{R})$ and hence a Morse function. In the following we show that if $\Phi$ has a critical point, then it is for all weights, where $\Phi$ has critical points, except possibly for a zero set with respect to the Lebesgue measure, of class $(\mathcal{C}2)^k(\mathcal{X},\mathbb{R})$. To show this statement, we consider MLPs with full rank weight matrices  $W =  (W_1, \widetilde{W}_1, \ldots, W_L, \widetilde{W}_L)\in \mathbb{W}^\ast$. By Lemma~\ref{lem:nn_gradient}, the gradient of $\Phi$ is given by
				\begin{equation*}
					\nabla \Phi(x) = \left[\widetilde{W}_L\Psi_L(a_{L})W_{L} \ldots \widetilde{W}_1\Psi_1(a_{1})W_1\right]^\top \in \mathbb{R}^{n},
				\end{equation*}
				where $\Psi_l(a_{l}) = \textup{diag}([\sigma_l]_i'([a_l]_i)) \in \mathbb{R}^{n_{l}\times n_{l}}$, $i \in \{1,\ldots,n_l\}$, is a diagonal matrix with $a_l = W_lh_{l-1}+b_l$, $W \in \mathbb{W}^\ast$  and $b\in \mathbb{B}$. Let $v=(\vem(W);\stack(b)) \in \mathbb{R}^p$ with $p = N_W+N_B$ be the vector with all stacked weight matrices and biases and define $\widehat{\Phi}\in C^{k}(\mathcal{X} \times \left(\vem(\mathbb{W}^\ast) \times \stack(\mathbb{B})\right))$, $k \geq 2$ to be the neural network $\Phi$ with an explicit dependence on the weight vector $v \in \vem(\mathbb{W}^\ast) \times \stack(\mathbb{B})$. As a composition of $k$ times continuously differentiable functions $\widehat{\Phi}$ is not only in $x$ but also in $v$ $k$ times continuously differentiable. The second partial derivatives $\frac{\partial^2 \widehat{\Phi}}{\partial v \partial x} \in \mathbb{R}^{n \times p} $ of $\widehat{\Phi}$ with respect to $x$ and $v$ can be explicitly calculated. First, we calculate the derivatives with respect to the components of the biases $[b_l]_i$, $l \in \{1,\ldots,L\}$, $i \in \{1,\ldots,m_l\}$. They are given by
				\begin{align*}
					\frac{\partial^2 \widehat{\Phi}}{\partial [b_l]_i \partial x} 
					= \; & \frac{\partial }{\partial  [b_l]_i}  \left[W_1^\top\Psi_1(a_{1})\widetilde{W}_1^\top \ldots W_L^\top\Psi_L(a_{L})\widetilde{W}_L^\top\right] \\
					= \; & \sum_{k = l}^L W_1^\top \Psi_1(a_1) \ldots   W_k^\top \Psi_k'(a_k) \textup{ diag}\left(  \left[\frac{\partial a_k}{\partial [b_l]_i }\right] \right)\widetilde{W}_k^\top W_{k+1}^\top \ldots\Psi_L(a_L) \widetilde{W}_L^\top\\
					= \; & \sum_{k = l}^L W_1^\top  \ldots \Psi_k'(a_k) \textup{ diag}\left( \left[ W_k \widetilde{W}_{k-1} \Psi_{k-1}(a_{k-1})  \ldots \Psi_l(a_l) \mathrm{e}_i^{(m_l)}\right]\right) \widetilde{W}_k^\top  \ldots  \widetilde{W}_L^\top,
				\end{align*}
				where $\Psi_k'(a_{k}) = \textup{diag}([\sigma_k]_i''([a_k]_i)) \in \mathbb{R}^{m_{k}\times m_{k}}$, $i \in \{1,\ldots,m_k\}$ and $\frac{\partial a_k}{\partial [b_l]_i} \in \mathbb{R}^{m_k}$, $k \in \{l,\ldots,L\}$. The operator diag applied to a vector $y \in \mathbb{R}^{m_k}$ defines a diagonal matrix in $\mathbb{R}^{m_{k}\times m_{k}}$ with the entries $y$ component-wise on its diagonal. The vector $\mathrm{e}_i^{(m_l)}$ denotes the $i$-th unit vector in $\mathbb{R}^{m_l}$. In the calculation, the product rule and the multidimensional chain rule were used. Analogously, it follows for the biases 
				$[\tilde{b}_l]_i$, $l \in \{1,\ldots,L-1\}$, $i \in \{1,\ldots,n_l\}$ that
				\begin{align*}
					\frac{\partial^2 \widehat{\Phi}}{\partial [\tilde{b}_l]_i \partial x} 
					= \; & \frac{\partial }{\partial  [\tilde{b}_l]_i}  \left[W_1^\top\Psi_1(a_{1})\widetilde{W}_1^\top \ldots W_L^\top\Psi_L(a_{L})\widetilde{W}_L^\top\right] \\
					= \; & \sum_{k = l+1}^{L} W_1^\top \Psi_1(a_1) \ldots   W_k^\top \Psi_k'(a_k) \textup{ diag}\left(  \left[\frac{\partial a_k}{\partial [\tilde{b}_l]_i }\right] \right)\widetilde{W}_k^\top W_{k+1}^\top \ldots\Psi_L(a_L) \widetilde{W}_L^\top\\
					= \; & \sum_{k = l+1}^{L} W_1^\top  \ldots \Psi_k'(a_k) \textup{ diag}\left( \left[ W_k \widetilde{W}_{k-1} \Psi_{k-1}(a_{k-1})  \ldots W_{l+1} \mathrm{e}_i^{(n_l)}\right]\right) \widetilde{W}_k^\top  \ldots  \widetilde{W}_L^\top
				\end{align*}
				and
				\begin{equation*}
					\frac{\partial^2 \widehat{\Phi}}{\partial \tilde{b}_L \partial x} = \frac{\partial }{\partial  \tilde{b}_L } \left[W_1^\top\Psi_1(a_{1})\widetilde{W}_1^\top \ldots W_L^\top\Psi_L(a_{L})\widetilde{W}_L^\top\right] = 0
				\end{equation*}
				where $\tilde{b}_L \in \mathbb{R}$.
				
				The derivatives with respect to the matrix entries $[W_l]_{ij}$, $l \in \{1,\ldots,L\}$, $i\in\{1,\ldots,m_l\}$, $j \in \{1,\ldots,n_{l-1}\}$ are given by
				\begin{align*}
					\frac{\partial^2 \widehat{\Phi}}{\partial [W_l]_{ij} \partial x} = \;& \frac{\partial }{\partial  [W_l]_{ij}}  \left[W_1^\top\Psi_1(a_{1})\widetilde{W}_1^\top \ldots W_L^\top\Psi_L(a_{L})\widetilde{W}_L^\top\right] \\
					= \; & \sum_{k = l}^L W_1^\top \Psi_1(a_1) \ldots   W_k^\top \Psi_k'(a_k) \textup{ diag}\left(  \left[\frac{\partial a_k}{\partial  [W_l]_{ij} }\right] \right)\widetilde{W}_k^\top W_{k+1}^\top \ldots\Psi_L(a_L) \widetilde{W}_L^\top\\
					& +W_1^\top \Psi_1(a_1) \ldots \Psi_{l-1}(a_{l-1})\widetilde{W}_{l-1}^\top\delta_{ij}^{n_{l-1} \times m_l} \Psi_{l}(a_{l}) \ldots  \Psi_L(a_L)\widetilde{W}_L^\top \\
					= \; & \sum_{k = l}^L W_1^\top  \ldots \Psi_k'(a_k) \textup{ diag}\left( \left[ W_k \widetilde{W}_{k-1} \Psi_{k-1}(a_{k-1})  \ldots \Psi_l(a_l) \mathrm{e}_i^{(m_l)} [h_l]_j\right]\right) \widetilde{W}_k^\top  \ldots  \widetilde{W}_L^\top \\
					& +W_1^\top \Psi_1(a_1) \ldots 	\Psi_{l-1}(a_{l-1})\widetilde{W}_{l-1}^\top\delta_{ij}^{n_{l-1} \times m_l} \Psi_{l}(a_{l}) \ldots  \Psi_L(a_L)\widetilde{W}_L^\top
				\end{align*}
				where $\delta_{ij}^{n_{l-1} \times m_l}$ denotes the matrix in $\mathbb{R}^{ n_{l-1} \times m_l} $, which has everywhere zeros, only at the $i,j$-th entry the number one. Analogously, it follows for the weight matrices 
				$[\widetilde{W}_l]_{ij}$, $l \in \{1,\ldots,L-1\}$, $i\in\{1,\ldots,n_l\}$, $j \in \{1,\ldots,m_{l}\}$ that
				\begin{align*}
					\frac{\partial^2 \widehat{\Phi}}{\partial [\widetilde{W}_l]_{ij} \partial x} 
					= \; & \frac{\partial }{\partial  [\widetilde{W}_l]_{ij}}  \left[W_1^\top\Psi_1(a_{1})\widetilde{W}_1^\top \ldots W_L^\top\Psi_L(a_{L})\widetilde{W}_L^\top\right] \\
					= \; & \sum_{k = l+1}^{L} W_1^\top \Psi_1(a_1) \ldots   W_k^\top \Psi_k'(a_k) \textup{ diag}\left(  \left[\frac{\partial a_k}{\partial [\widetilde{W}_l]_{ij} }\right] \right)\widetilde{W}_k^\top W_{k+1}^\top \ldots\Psi_L(a_L) \widetilde{W}_L^\top\\
					& +W_1^\top \Psi_1(a_1) \ldots \Psi_{l}(a_{l})\delta_{ij}^{m_l \times n_l}W_{l+1}^\top \Psi_{l+1}(a_{l+1}) \ldots  \Psi_L(a_L)\widetilde{W}_L^\top \\
					= \; & \sum_{k = l+1}^{L} W_1^\top  \ldots \Psi_k'(a_k) \textup{ diag}\left( \left[ W_k \widetilde{W}_{k-1} \Psi_{k-1}(a_{k-1})  \ldots W_{l+1} \mathrm{e}_i^{(n_l)}[\sigma_l(a_l)]_j\right]\right) \widetilde{W}_k^\top  \ldots  \widetilde{W}_L^\top \\
					& +W_1^\top \Psi_1(a_1) \ldots \Psi_{l}(a_{l})\delta_{ij}^{m_l \times n_l}W_{l+1}^\top \Psi_{l+1}(a_{l+1}) \ldots  \Psi_L(a_L)\widetilde{W}_L^\top 
				\end{align*}
				and
				\begin{equation*}
					\frac{\partial^2 \widehat{\Phi}}{\partial [\widetilde{W}_L]_{1j} \partial x} =   W_1^\top\Psi_1(a_{1})\widetilde{W}_1^\top \ldots W_L^\top\Psi_L(a_{L})\delta_{1j}^{ m_l \times 1}
				\end{equation*}
				for $j \in \{1,\ldots,m_L\}$ as $\widetilde{W}_L \in \mathbb{R}^{1 \times m_L}$. All calculated derivatives are columns of the matrix 	$\frac{\partial^2 \widehat{\Phi}}{\partial v \partial x} \in \mathbb{R}^{n \times p} $. To apply Lemma~\ref{lem:nn_morsefunction} we show that the matrix $\frac{\partial^2 \widehat{\Phi}}{\partial v \partial x} \in \mathbb{R}^{n \times p} $ has at any point $(x,v) \in \mathcal{X} \times \left(\vem(\mathbb{W}^\ast) \times \stack(\mathbb{B})\right)$ full rank, i.e., rank $n$ as $n \leq p$.  As elementary column operations do not change the rank of a matrix, we can subtract for fixed $l\in \{1,\ldots,L\}$ from the columns $\frac{\partial^2 \widehat{\Phi}}{\partial [W_l]_{ij} \partial x}$, $i\in\{1,\ldots,m_l\}$, $j \in \{1,\ldots,n_{l-1}\}$, corresponding to the weight matrix $W_l$, the columns $\frac{\partial^2\widehat{\Phi}}{\partial [b_l]_i \partial x} $, $i\in\{1,\ldots,m_l\}$, corresponding to the bias $b_l$, each multiplied with  $[h_l]_j$, $j \in \{1,\ldots,n_{l-1}\}$. We obtain
				\begin{equation} \label{eq:modified1}
					\frac{\partial^2 \widehat{\Phi}}{\partial [W_l]_{ij} \partial x} - \frac{\partial^2 \widehat{\Phi}}{\partial [b_l]_i \partial x} \cdot [h_l]_j =W_1^\top \Psi_1(a_1) \ldots 	\Psi_{l-1}(a_{l-1})\widetilde{W}_{l-1}^\top\delta_{ij}^{n_{l-1} \times m_l} \Psi_{l}(a_{l}) \ldots  \Psi_L(a_L)\widetilde{W}_L^\top.
				\end{equation}
				Analogously we can without changing the rank of the matrix subtract for fixed $l\in \{1,\ldots,L\}$ from the columns $\frac{\partial^2\widehat{\Phi}}{\partial [\widetilde{W}_l]_{ij} \partial x}$, $i\in\{1,\ldots,n_l\}$, $j \in \{1,\ldots,m_l\}$, corresponding to the weight matrix $\widetilde{W}_l$, the columns $\frac{\partial^2 \widehat{\Phi}}{\partial [\tilde{b}_l]_i \partial x} $, $i\in\{1,\ldots,n_l\}$, corresponding to the bias $\tilde{b}_l$, each multiplied with every $[\sigma_l(a_l)]_j$, $j \in \{1,\ldots,m_l\}$. We obtain
				\begin{equation} \label{eq:modified2}
					\frac{\partial^2 \widehat{\Phi}}{\partial [\widetilde{W}_l]_{ij} \partial x} - \frac{\partial^2 \widehat{\Phi}}{\partial [\tilde{b}_l]_i \partial x} \cdot[\sigma_l(a_l)]_j = W_1^\top \Psi_1(a_1) \ldots \Psi_{l}(a_{l})\delta_{ij}^{m_l \times n_l}W_{l+1}^\top \Psi_{l+1}(a_{l+1}) \ldots  \Psi_L(a_L)\widetilde{W}_L^\top .
				\end{equation}
				As the neural network $\Phi$ is augmented, there exists a layer $g_{l^\ast}$ of maximal width, $l^\ast \in \{1,\ldots,2L-1\}$, which has at least $n+1$ nodes, i.e., $d_{l^\ast} \geq n+1$. If $l^\ast$ is even, the layer $g_{l^\ast}$ of maximal width corresponds to the layer $h_{l^\ast/2}$, and we consider the modified columns~\eqref{eq:modified1} for the index $l^\ast/2+1$. We obtain
				\begin{align} 
					&\;\frac{\partial^2 \widehat{\Phi}}{\partial [W_{l^\ast/2+1}]_{ij} \partial x} - \frac{\partial^2 \widehat{\Phi}}{\partial [b_{l^\ast/2+1}]_i \partial x} \cdot [h_{l^\ast/2+1}]_j \nonumber \\
					= \, &\;  W_1^\top  \ldots 	\widetilde{W}_{l^\ast/2}^\top\delta_{ij}^{n_{l^\ast/2} \times m_{l^\ast/2+1}} \Psi_{l^\ast/2+1}(a_{l^\ast/2+1}) \ldots  \widetilde{W}_L^\top  \label{eq:modified1a} \\
					\eqqcolon&\; Y  \delta_{ij}^{ n_{l^\ast/2} \times {m_{l^\ast/2+1}}} Z \nonumber \\
					=&\; Y_i \cdot [Z]_{j} \in \mathbb{R}^n, \label{eq:modified1b}
				\end{align}
				where $Y = [Y_1,\ldots,Y_{n_{l^\ast/2}}] \in \mathbb{R}^{n \times n_{l^\ast/2}}$ and $Z \in \mathbb{R}^{m_{l^\ast/2+1}}$ are implicitly defined by equation~\eqref{eq:modified1a}.  Expression~\eqref{eq:modified1b} follows from the simple structure of the matrix $\delta_{ij}^{n_{l^\ast/2} \times m_{l^\ast/2+1}}$: it holds $$Y\delta_{ij}^{n_{l^\ast/2} \times m_{l^\ast/2+1}} Z = Y\mathrm{e}_i^{(n_{l^\ast/2})}[Z]_{j} = Y_i \cdot [Z]_{j},$$ where $\mathrm{e}_i^{(n_{l^\ast/2})}$ denotes the $i$-th unit vector in $\mathbb{R}^{n_{l^\ast/2}}$ and $[Z]_{j}$ the $j$-th component of the vector $Z$. If $l^\ast$ is odd, the layer $g_{l^\ast}$ of maximal width corresponds to the layer $a_{(l^\ast+1)/2}$, and we consider the modified columns~\eqref{eq:modified1} for the index $(l^\ast+1)/2$. Analogously we obtain 
				\begin{align} 
					&\frac{\partial^2 \widehat{\Phi}}{\partial [\widetilde{W}_{(l^\ast+1)/2}]_{ij} \partial x} - \frac{\partial^2\widehat{\Phi}}{\partial [\tilde{b}_{(l^\ast+1)/2}]_i \partial x} \cdot [\sigma_{(l^\ast+1)/2}(a_{(l^\ast+1)/2})]_j \nonumber \\
					= \, &\; W_1^\top  \ldots \Psi_{(l^\ast+1)/2}(a_{(l^\ast+1)/2})\delta_{ij}^{m_{(l^\ast+1)/2}\times n_{(l^\ast+1)/2}}W_{(l^\ast+3)/2}^\top  \ldots  \widetilde{W}_L^\top \label{eq:modified2a} \\
					\eqqcolon &\;\widetilde{Y} \delta_{ij}^{m_{(l^\ast+1)/2}\times n_{(l^\ast+1)/2}} \widetilde{Z} \nonumber \\
					= &\;\widetilde{Y}_i \cdot [\widetilde{Z}]_{j} \in \mathbb{R}^n, \label{eq:modified2b}
				\end{align}
				where $\widetilde{Y} = [\widetilde{Y}_1,\ldots,\widetilde{Y}_{m_{(l^\ast+1)/2}}] \in \mathbb{R}^{n \times m_{(l^\ast+1)/2}}$ and $\widetilde{Z} \in \mathbb{R}^{n_{(l^\ast+1)/2}}$ are implicitly defined by equation~\eqref{eq:modified2a}. If $(l^\ast+1)/2 = L$, the matrix product $\widetilde{Z}$ does not exist, so we define $\widetilde{Z} = 1\in \mathbb{R}^{n_L} = \mathbb{R}$. In both cases we obtain from equations~\eqref{eq:modified1b} or~\eqref{eq:modified2b} modified columns of the form
				\begin{equation*}
					\overline{Y}_i [\overline{Z}]_j \in \mathbb{R}^n, \qquad i \in \{1,\ldots,d_{l^\ast}\}, j\in \{1,\ldots,d_{l^\ast+1}\}
				\end{equation*} 
				with $\overline{Y} = [\overline{Y}_1,\ldots,\overline{Y}_{d_{l^\ast}}]\in \mathbb{R}^{n \times d_{l^\ast}}$ and $\overline{Z} \in \mathbb{R}^{d_{l^\ast+1}}$., where $d_l^\ast \geq n+1$ is the number of nodes in the layer of maximal width $g_{l^\ast}$.
				
				In the following we aim to show that for every $(x,v) \in \mathcal{X} \times \left(\vem(\mathbb{W}^\ast) \times \stack(\mathbb{B})\right)$, there exists an index $j \in \{1,\ldots,d_{l^\ast+1}\}$, such that the matrix
				\begin{equation}\label{eq:nn_matrixproof}
					Q_j \coloneqq \left[ 	\overline{Y}_1 \cdot [\overline{Z}]_{j}, \ldots, \overline{Y}_{d_{l^\ast}} \cdot [\overline{Z}]_{j} \right]  \in \mathbb{R}^{n \times d_{l^\ast}}
				\end{equation}
				has full rank $n$. As $Q_j$ is a submatrix of $\frac{\partial^2 \widehat{\Phi}}{\partial v \partial x} \in \mathbb{R}^{n \times p} $ of rank $n$, also the matrix $ \frac{\partial^2 \widehat{\Phi}}{\partial v \partial x}$ has full rank $n$  for every $(x,v) \in \mathcal{X} \times \left(\vem(\mathbb{W}^\ast) \times \stack(\mathbb{B})\right)$. Due to the assumption $(W_1, \widetilde{W}_1, \ldots, W_L, \widetilde{W}_L)\in \mathbb{W}^\ast$ all considered weight matrices as well as the diagonal matrices $\Psi_l(a_l)$, $l \in \{1,\ldots,L\}$ have full rank. Since the neural network $\Phi$ is augmented, it holds $d_i \leq d_j$ for $0 \leq i < j \leq l^\ast$ and $d_i \geq d_j$ for $l^\ast \leq i < j \leq L$. By Lemma~\ref{lem:nn_productfullrank}, also the matrix products $\overline{Y}$ and $\overline{Z}$ have full rank, i.e., $\textup{rank}(\overline{Y}) = n$ and $\textup{rank}(\overline{Z}) = 1$ (this also holds in the special case where $\overline{Z} = 1$ for $(l^\ast+1)/2 = L$). Consequently the vector $\overline{Z} \in \mathbb{R}^{d_{l^\ast+1}}$ has at least one non-zero entry, i.e., $[\overline{Z}]_{j^\ast} \neq 0$ for some $j^\ast \in \{1,\ldots,d_{l^\ast+1}\}$. As $\overline{Z}$ depends on $(x,v) \in \mathcal{X} \times \left(\vem(\mathbb{W}^\ast) \times \stack(\mathbb{B})\right)$, also the choice of $j^\ast$ depends on $x$ and $v$. For the index $ j^\ast$, the matrix $Q_{j^\ast}$ arises from the matrix $\overline{Y}$ by elementary column operations, as each column is multiplied by the non-zero scalar $[Z]_{j^\ast}$. As the matrix $\overline{Y}$ has full rank $n$, also the matrix $Q_{j^\ast}$ has full rank $n$, which implies that  $ \frac{\partial^2 \widehat{\Phi}}{\partial v \partial x}$ has full rank $n$  for every $(x,v) \in \mathcal{X} \times \left(\vem(\mathbb{W}^\ast) \times \stack(\mathbb{B})\right)$. As the map  $\widehat{\Phi}\in C^k\left(\mathcal{X} \times \left(\vem(\mathbb{W}^\ast) \times \stack(\mathbb{B})\right)\right)$, $k\geq 2$, fulfills  the assumptions of Lemma~\ref{lem:nn_morsefunction}, it follows that for all weights $v \in \mathcal{V}\coloneqq \vem(\mathbb{W}^\ast) \times \stack(\mathbb{B})$, except possibly for a zero set  $\mathcal{V}_0 \subset \mathcal{V}$ with respect to the Lebesgue measure in $\mathbb{R}^p$, the corresponding augmented MLP $\Phi \in \Xi^k_{\textup{A},\mathbb{W}^\ast}(\mathcal{X},\mathbb{R})$, $k \geq 2$ with weight vector $v$ is a Morse function. 
				
				As $\vem( \mathbb{W}_0) \times \stack(\mathbb{B}) = \mathbb{R}^p \setminus \mathcal{V}$ is by Lemma~\ref{lem:nn_weightspace} a zero set with respect to the Lebesgue measure in $\mathbb{R}^p$, also the finite union $(\mathbb{R}^p \setminus \mathcal{V}) \cup \mathcal{V}_0$ is a zero set with respect to the Lebesgue measure in $\mathbb{R}^p$. It follows that for all weights $v \in \vem(\mathbb{W}) \times \stack(\mathbb{B}) = \mathbb{R}^p$, except possibly for a zero set with respect to the Lebesgue measure in $\mathbb{R}^p$, the augmented MLP $\Phi \in \Xi^k_{\textup{A}}(\mathcal{X},\mathbb{R})$, $k \geq 2$ with weight vector $v$ is a Morse function. The Morse function $\Phi$ can be of class $(\mathcal{C}1)^k (\mathcal{X},\mathbb{R})$ or of class $(\mathcal{C}2)^k (\mathcal{X},\mathbb{R})$, as by Lemma~\ref{th:nn_criticalpoints} it is possible that the augmented network $\Phi$ can or cannot have critical points for $W \in \mathbb{W}^\ast$.
			\end{proof}
			
			\begin{remark} \label{rem:nn_morsetheorem}
				The proof of Theorem~\ref{th:nn_morse_app} relies on the fact that the matrix $Q_j$ defined in~\eqref{eq:nn_matrixproof} has full rank as $\overline{Y}$ and $\overline{Z}$ are full rank matrices. This can be guaranteed, as the considered neural network is augmented and the weight matrices are assumed to have full rank. As this is the only time that the augmented structure of $\Phi$ is used, Theorem~\ref{th:nn_morse_app} is also applicable for neural networks with a bottleneck, where  $\overline{Y}$ and $\overline{Z}$ are full rank matrices. In the analysis of MLPs with a bottleneck in Section~\ref{sec:nn_bottleneck}, we proof in Theorem~\ref{th:nn_bottleneck}\ref{th:nn_bottleneck_c}, that for MLPs, where the first bottleneck is augmented and a certain matrix product is non-zero, the matrix $Q_{j}$ defined in~\eqref{eq:nn_matrixproof} has full rank and the same statement as in Theorem~\ref{th:nn_morse_app} holds.
			\end{remark}
			
			In the following lemma, we prove a technical detail of Example \ref{ex:nn_augmented}, which visualizes the statement of Theorem \ref{th:nn_morse}/\ref{th:nn_morse_app} for a one-dimensional augmented MLP.
			
			\begin{lemma} \label{lem:app_regular}
				Assume the setting of Example \ref{ex:nn_augmented}. Then the set of weights $v \in \mathbb{R}^7$, such that $\Phi$ has one degenerate critical point $x^\ast \in \R$ with $g(v,x^\ast)$, has Lebesgue measure zero. Hence, $\Phi$ is for all sets of weights, except possibly for a set of Lebesgue measure zero, a Morse function.
			\end{lemma}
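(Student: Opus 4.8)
The plan is to exhibit the set of weights carrying a degenerate critical point as the image of a low-dimensional manifold and then invoke that smooth images of lower-dimensional manifolds are Lebesgue null. First I would split $\mathbb{R}^7$ into three regimes. On $\mathcal{W}_0$ the claim is immediate, since $\mathcal{W}_0$ is a finite union of proper algebraic subsets and hence null. On $\{v\notin\mathcal{W}_0:\widetilde{w}_1\widetilde{w}_2>0\}$ the example already shows that $g(v,x)=\widetilde{w}_2\exp(-a_1)+\widetilde{w}_1\exp(-a_2)$ never vanishes, so no degenerate critical point exists. It remains to treat $U:=\{v\in\mathbb{R}^7\setminus\mathcal{W}_0:\widetilde{w}_1\widetilde{w}_2<0\}$. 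Writing $\Phi_v$ for the network with weight vector $v$, I would introduce the auxiliary map $\mathcal{F}:U\times\mathbb{R}\to\mathbb{R}^2$, $\mathcal{F}(v,x)=(\nabla\Phi_v(x),g(v,x))$. Because at a critical point $H_\Phi(x^\ast)=0\Leftrightarrow g(v,x^\ast)=0$ for $v\notin\mathcal{W}_0$, the preimage $\Sigma:=\mathcal{F}^{-1}(0,0)$ is exactly the set of pairs (weights, degenerate critical point), and the weights admitting such a point form its projection $\pi(\Sigma)\subset\mathbb{R}^7$.

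The heart of the argument is to show that $(0,0)$ is a regular value of $\mathcal{F}$, i.e.\ that $D\mathcal{F}$ has rank $2$ at every point of $\Sigma$. I would compute the two rows of $D\mathcal{F}$ and produce a non-vanishing $2\times2$ minor in the columns belonging to $b_{11}$ and $x$. The first row (derivatives of $\nabla\Phi_v$) has $b_{11}$-entry $w_1\widetilde{w}_1\,\exp(-a_1)/(1+\exp(-a_1))^2\neq0$, while its $x$-entry equals $H_\Phi$, which vanishes on $\Sigma$. The second row (derivatives of $g$) has $x$-entry $\partial_x g$; substituting the relation $\widetilde{w}_2\exp(-a_1)=-\widetilde{w}_1\exp(-a_2)$ valid on $\{g=0\}$ yields $\partial_x g=\widetilde{w}_1\exp(-a_2)(w_1-w_2)$, which is non-zero since $\widetilde{w}_1\neq0$ and $w_1\neq w_2$. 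The relevant minor is therefore lower triangular,
\[
\det\begin{pmatrix} w_1\widetilde{w}_1\,\dfrac{\exp(-a_1)}{(1+\exp(-a_1))^2} & 0 \\[2mm] \partial_{b_{11}} g & \widetilde{w}_1\exp(-a_2)(w_1-w_2) \end{pmatrix}\neq 0,
\]
so $D\mathcal{F}$ has full rank $2$ on $\Sigma$. By the regular value (preimage) theorem $\Sigma$ is a smooth $6$-dimensional submanifold of $U\times\mathbb{R}$; covering it by countably many $C^1$ charts and using that the $C^1$-image of an open subset of $\mathbb{R}^6$ in $\mathbb{R}^7$ is null, $\pi(\Sigma)$ has Lebesgue measure zero. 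Combining the three regimes shows the full degenerate locus lies in a null set, so $\Phi$ is a Morse function for almost every $v$.

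I expect the only delicate point to be the verification that $\partial_x g\neq0$ on $\Sigma$: this is exactly what keeps the two defining equations transverse, and it is precisely where the conditions $w_1\neq w_2$ and $\widetilde{w}_1\neq0$ (that is, $v\notin\mathcal{W}_0$) enter. As an alternative formulation I would note that the degenerate locus equals $\{G=0\}$ for the real-analytic scalar function $G(v):=\nabla\Phi_v(h(v))$ (with $h(v)=x^\ast$ the unique root of $g$), and that it suffices to check $G\not\equiv0$ on each connected component of $U$ by evaluating at one explicit weight vector, after which the zero set of a non-trivial real-analytic function is automatically null; dropping the dependence on $\tilde b_1$, which enters neither $\nabla\Phi$ nor $g$, lets one reduce the ambient dimension by one via Fubini. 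In either route, Theorem~\ref{th:nn_criticalpoint_fulllebesgue} enters as the complementary persistence statement: the configurations possessing a non-degenerate critical point form an open set of positive measure, confirming that the degenerate locus is the thin boundary separating the two Morse regimes rather than a set of positive measure.
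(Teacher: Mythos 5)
Your proof is correct, and it takes a genuinely different route from the paper. The paper argues by explicit perturbation: it first removes an auxiliary null set $\mathcal{W}_1=\{v: 1+\exp(-w_1h(v)-b_{11})+w_1h(v)=0\}$, then, at a degenerate critical point, constructs a one-parameter family $\bar v(v,\eps)$ that moves $w_1$ and compensates $\widetilde w_1$ so that $x^\ast$ stays critical, and checks $\partial_\eps g(\bar v(v,0),x^\ast)\neq 0$ to conclude that the degenerate configuration can always be destroyed by an arbitrarily small perturbation; combined with Theorem~\ref{th:nn_criticalpoint_fulllebesgue} this yields nullity of the degenerate locus. You instead set up $\mathcal{F}(v,x)=(\nabla\Phi_v(x),g(v,x))$ on $U=\{\widetilde w_1\widetilde w_2<0\}\setminus\mathcal{W}_0$ and verify that $(0,0)$ is a regular value via the lower-triangular $2\times 2$ minor in the $(b_{11},x)$ columns; your computations check out: $\partial_{b_{11}}\nabla\Phi_v=w_1\widetilde w_1\exp(-a_1)/(1+\exp(-a_1))^2\neq 0$ on $U$, the $(1,2)$ entry is $H_\Phi$ which vanishes on $\Sigma$, and on $\{g=0\}$ indeed $\partial_xg=\widetilde w_1\exp(-a_2)(w_1-w_2)\neq 0$ since $v\notin\mathcal{W}_0$. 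The preimage theorem then exhibits the degenerate locus as the projection of a $6$-dimensional submanifold of $\mathbb{R}^8$ into $\mathbb{R}^7$, which is null. Your approach buys two things: it avoids the auxiliary set $\mathcal{W}_1$ altogether, because the perturbation direction $b_{11}$ is transversal without any extra condition on $v$, and it replaces the paper's final inference (finding a nearby non-degenerate configuration, which on its own only shows the degenerate locus has empty interior) with the standard and fully rigorous statement that a $C^1$ image of a codimension-two submanifold is Lebesgue null. The paper's approach is more elementary and self-contained in that it needs no submanifold theory, only the implicit function theorem already invoked in Theorem~\ref{th:nn_criticalpoint_fulllebesgue}. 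Your secondary real-analytic route is also viable but incomplete as stated, since you would still have to exhibit one weight vector per connected component of $U$ where $G\neq 0$; the transversality argument does not need this and should be regarded as your main proof.
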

			
			\begin{proof}
				We continue with the setting and notation of Example \ref{ex:nn_augmented}. For the proof of the statement, we have to exclude the set
				\begin{equation*}
					\mathcal{W}_1 \coloneqq \{v \in \mathbb{R}^7\setminus \mathcal{W}_0: 1+\exp(-w_1h(v)-b_{11})+w_1h(v) = 0\}
				\end{equation*}
				from the weight space. $\mathcal{W}_1$ has Lebesgue measure zero in $\mathbb{R}^7$ as for $\eps>0$, it holds for the weight
				\begin{equation*}
					\hat{v}(v,\eps) = v+(0,0,0,0,\eps,\eps,0) = (w_1,w_2,\tilde{w}_1,\tilde{w}_2,b_{11}+\eps,b_{12}+\eps,\tilde{b}_1)
				\end{equation*}
				that $h(\hat{v}) = h(v)$, which implies that if $v\in\mathcal{W}_1$, then $\hat{v}\notin \mathcal{W}_1$ as $\exp(-\eps)\neq 1$, such that the constraint defining $\mathcal{W}_1$ cannot be fulfilled. As $\eps>0$ was arbitrary, $\mathcal{W}_1$ cannot have non-zero Lebesgue measure. 
				
				In the following, we show that the set of weights
				\begin{equation*}
					\mathcal{W}_2 \coloneqq \{v \in \mathbb{R}^7\setminus (\mathcal{W}_0 \cup \mathcal{W}_1): \; \exists \textup{ critical point } x^\ast \in \mathbb{R} \textup{ with } g(v,x^\ast) = 0\},
				\end{equation*}
				such that $\Phi$ has a degenerate critical point, has Lebesgue measure zero. To that purpose assume there exist weights $v \in \mathbb{R}^7\setminus (\mathcal{W}_0 \cup \mathcal{W}_1)$, such that $\Phi$ has a degenerate critical point $x^\ast$. We notice that by Theorem~\ref{th:nn_criticalpoint_fulllebesgue}, a non-degenerate critical point perturbs to a non-degenerate critical point under a small perturbation of the weights. Hence, if we aim to find degenerate critical points in a neighborhood of the considered weights $v \in \mathbb{R}^7\setminus (\mathcal{W}_0\cup \mathcal{W}_1)$, the only possibility is that the degenerate critical point $x^\ast$ perturbs to a degenerate critical point. For $\eps>0$ sufficiently small, consider the modified weights
				\begin{equation*}
					\bar{v}(v,\eps) = v + \left(\eps,0,	\delta(v,\eps) ,0,0,0,0\right)^\top = \left(w_1+\eps,w_2,\widetilde{w}_1+	\delta(v,\eps) ,\widetilde{w}_2,b_{11},b_{12},\tilde{b}_1\right)^\top \in \mathbb{R}^7,
				\end{equation*}
				where 
				\begin{equation}\label{eq:nn_ex_delta}
					\delta(v,\eps) = \widetilde{w}_1 \left(\frac{w_1}{w_1+\eps}\frac{ 1+\exp(-(w_1+\eps)x^\ast-b_{11})}{1+\exp(-a_1^\ast)}-1\right),
				\end{equation}
				such that 
				\begin{equation*}
					\frac{w_1 \widetilde{w}_1}{1+\exp(-a_1^\ast)} = \frac{(
						w_1+\eps)( \widetilde{w}_1+	\delta(v,\eps) )}{1+\exp(-(w_1+\eps)x^\ast-b_{11})}.
				\end{equation*}
				Hence, if and only if $x^\ast$ is a critical point of the network with weights $v$, then $x^\ast$ is also a critical point of the network with weights $\bar{v}$. It holds
				\begin{align*}
					\frac{\partial g(\bar{v}(v,\eps),x^\ast)}{\partial \eps} &= -\widetilde{w}_2 x^\ast\exp(-(w_1+\eps) x^\ast - b_{11}) + \frac{\partial \delta(v,\eps)}{\partial \eps} \exp(-a_2^\ast) \\
					& = -\widetilde{w}_2 x^\ast\exp(-(w_1+\eps) x^\ast - b_{11}) + \left(-\frac{\widetilde{w}_1 w_1}{(w_1+\eps)^2}\frac{ 1+\exp(-(w_1+\eps)x^\ast-b_{11})}{1+\exp(-a_1^\ast)}\right. \\
					& \hspace{4mm}\left. -\frac{\widetilde{w}_1 w_1x^\ast}{w_1+\eps}\frac{ \exp(-(w_1+\eps)x^\ast-b_{11})}{1+\exp(-a_1^\ast)}\right)\exp(-a_2^\ast),
				\end{align*}
				which yields evaluated at $\eps = 0$:
				\begin{align*}
					\frac{\partial g(\bar{v}(v,0),x^\ast)}{\partial \eps} 
					& = -\widetilde{w}_2 x^\ast\exp(-a_1^\ast) - \frac{\widetilde{w}_1 }{w_1} \exp(-a_2^\ast)- \widetilde{w}_1 x^\ast\frac{ \exp(-a_1^\ast)\exp(-a_2^\ast)}{1+\exp(-a_1^\ast)} \\
					& =-x^\ast( \widetilde{w}_2 \exp(-a_1^\ast)+\widetilde{w}_1 \exp(-a_2^\ast)) - \frac{\widetilde{w}_1 }{w_1} \exp(-a_2^\ast)+\widetilde{w}_1x^\ast\frac{ \exp(-a_2^\ast)}{1+\exp(-a_1^\ast)} \\
					& = \frac{\widetilde{w}_1\exp(-a_2^\ast)}{w_1(1+\exp(-a_1^\ast)) }\left(1+\exp(-w_1h(v)-b_{11})+w_1h(v)\right)
				\end{align*}
				where we used in the last line the property $g(v,x^\ast) = 0$, such that the first summand vanishes, and we replaced $x^\ast$ by $h(v)$ in the second summand. As $v\in\mathbb{R}^7\setminus(\mathcal{W}_0\cup\mathcal{W}_1)$, it holds 
				\begin{equation}\label{eq:nonzeroderiv}
					\frac{\partial g(\bar{v}(v,0),x^\ast)}{\partial \eps} \neq 0.
				\end{equation}
				For $\eps>0$ sufficiently small we can deduce from $g(v,x^\ast) = 0$ and~\eqref{eq:nonzeroderiv} that  $g(\bar{v}(v,\eps),x^\ast)\neq 0$, such that $x^\ast$ is a non-degenerate critical point for the modified weights $\bar{v}$. From~\eqref{eq:nn_ex_delta} it follows that for every $\mu>0$, the parameter $\eps>0$ can be chosen small enough, such that $\abs{\delta(v,\eps)} < \mu$. Hence, we found a point $\bar{v}$ in a $\mu$-neighborhood of $v\in \mathcal{W}_2$, which is not contained in $\mathcal{W}_2$. Consequently $\mathcal{W}_2$ must have zero Lebesgue measure in $\mathbb{R}^7$ and $\Phi$ is a Morse function for all weighs in $\mathbb{R}^7\setminus(\mathcal{W}_0 \cup \mathcal{W}_1\cup \mathcal{W}_2)$, where $\mathcal{W}_0 \cup \mathcal{W}_1 \cup \mathcal{W}_2$ is a set of Lebesgue measure zero in $\mathbb{R}^7$.
			\end{proof}
			
			\vspace{2mm}
			
			\section{Results from Linear Algebra} \label{sec:app_LA}
			
			The proofs of our results about multilayer perceptrons include many statements about matrix products and linear systems. This appendix collects results from basic linear algebra, which are used multiple times throughout Sections~\ref{sec:feedforward} and \ref{sec:neuralODEs}. The first lemma uses Sylvester's rank inequality to prove that a monotone product of full-rank matrices has full rank.
			
			\begin{lemma} \label{lem:nn_productfullrank}
				Let $A_1, \ldots, A_J$ be matrices $A_j \in \mathbb{R}^{k_j\times k_{j+1}}$ with full rank, i.e., for $j \in \{1,\ldots,J\}$ it holds $\textup{rank}(A_j) = \min\{k_j,k_{j+1}\}$. If the dimensions of the matrices are monotone, i.e., $k_1 \geq k_2 \geq \ldots \geq k_{J+1}$ or $k_1 \leq k_2 \leq \ldots \leq k_{J+1}$, then also the matrix product
				\begin{equation*}
					A \coloneqq A_1 A_2 \ldots A_J \in \mathbb{R}^{k_1 \times k_{J+1}}
				\end{equation*}
				has full rank, i.e., $\textup{rank}(A) = \min\{k_1,k_{J+1}\}$.
			\end{lemma}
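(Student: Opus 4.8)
The plan is to argue by induction on the number of factors $J$, with Sylvester's rank inequality as the only real ingredient. Recall that for matrices $B \in \mathbb{R}^{p \times q}$ and $C \in \mathbb{R}^{q \times r}$ one has the two-sided bound $\textup{rank}(B) + \textup{rank}(C) - q \leq \textup{rank}(BC) \leq \min\{\textup{rank}(B), \textup{rank}(C)\}$; the lower bound is what does the work here, and I would cite it from a standard linear algebra reference.

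First I would collapse the two monotonicity hypotheses into one. Since transposition preserves rank and $A^\top = A_J^\top \cdots A_1^\top$ with $A_j^\top \in \mathbb{R}^{k_{j+1} \times k_j}$ again a product of full-rank matrices (transposition preserves $\textup{rank}(A_j) = \min\{k_j,k_{j+1}\}$), the increasing chain $k_1 \leq \cdots \leq k_{J+1}$ becomes a decreasing chain for the transposed product. Hence it suffices to treat the case $k_1 \geq k_2 \geq \cdots \geq k_{J+1}$, in which $\min\{k_1,k_{J+1}\} = k_{J+1}$ and each factor has full \emph{column} rank $\textup{rank}(A_j) = \min\{k_j,k_{j+1}\} = k_{j+1}$.

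For the induction itself, the base case $J = 1$ is immediate. For the step, I would set $B \coloneqq A_1 A_2 \cdots A_{J-1} \in \mathbb{R}^{k_1 \times k_J}$ and assume by the inductive hypothesis that $\textup{rank}(B) = \min\{k_1,k_J\} = k_J$. Writing $A = B A_J$ and applying the lower Sylvester bound with inner dimension $k_J$ yields
\begin{equation*}
	\textup{rank}(A) \geq \textup{rank}(B) + \textup{rank}(A_J) - k_J = k_J + k_{J+1} - k_J = k_{J+1},
\end{equation*}
while the trivial upper bound gives $\textup{rank}(A) \leq \min\{k_1,k_{J+1}\} = k_{J+1}$. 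Combining the two forces $\textup{rank}(A) = k_{J+1} = \min\{k_1,k_{J+1}\}$, completing the induction.

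I do not expect a genuine obstacle: the entire argument is routine once Sylvester's inequality is invoked. The only points needing a line of care are verifying that in the monotone decreasing regime every full-rank factor is in fact of full column rank (so the telescoping of the inner dimensions $k_J$ cancels cleanly), and checking that the reduction by transposition is legitimate; both are elementary.
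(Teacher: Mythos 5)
Your proof is correct and follows essentially the same route as the paper: Sylvester's rank inequality combined with induction on the number of factors. The only cosmetic difference is that you reduce the increasing case to the decreasing one by transposition, whereas the paper handles both monotonicity directions symmetrically in the two-factor step before inducting.
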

			
			\begin{proof}
				Sylvester's rank inequality (cf.\ \cite{Thome2016}) implies for the product of two matrices $A_j \in \mathbb{R}^{k_j \times k_{j+1}}$ and $A_{j+1}  \in \mathbb{R}^{k_{j+1} \times k_{j+2}}$ that 
				\begin{equation*}
					\textup{rank}(A_j) + \textup{rank}(A_{j+1}) - k_{j+1} \leq \textup{rank}(A_jA_{j+1}) \leq \min\{k_j,k_{j+2}\}.
				\end{equation*}
				If $k_j \geq k_{j+1} \geq k_{j+2}$, this implies $\textup{rank}(A_jA_{j+1}) = k_{j+2}$ and if $k_j \leq k_{j+1} \leq k_{j+2}$, this implies $\textup{rank}(A_jA_{j+1}) = k_j$, such that in both cases the matrix product $A_jA_{j+1}$ has full rank. Inductively it follows that the matrix product $A$ has full rank, i.e., $\textup{rank}(A) = \min\{k_1,k_{J+1}\}$.
			\end{proof}
			
			The second lemma guarantees that the algorithm to derive the MLP normal form in Theorem~\ref{th:nn_fullrank_equivalent} ends, as all weight matrices become eventually non-singular.
			
			\begin{lemma} \label{lem:nn_iterations}
				Let $A_0 \in \mathbb{R}^{m \times n}$ with $\textup{rank}(A_0)>0$. For $i \geq 0$, if  $A_i \in \mathbb{R}^{m \times (n-i)}$ has not full rank, define iteratively the matrix $A_{i+1} \in  \mathbb{R}^{m \times (n-i-1)}$ by removing a column of $A_i \in \mathbb{R}^{m \times (n-i)}$ in such a way that $\textup{rank}(A_{i+1}) = \textup{rank}(A_i)$. Then there exists an index $r \geq 0$, such that $A_r \in \mathbb{R}^{m \times (n-r)}$ has full rank. The analogous statement holds by iteratively removing the rows of $A_0$.
			\end{lemma}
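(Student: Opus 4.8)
The plan is to track two quantities through the iteration: the rank, which stays constant, and the number of columns, which strictly decreases by one at each step. Setting $\rho \coloneqq \textup{rank}(A_0) \geq 1$, I would first observe that every construction step preserves the rank by hypothesis, so $\textup{rank}(A_i) = \rho$ for every matrix produced. Since the rank of any matrix is bounded by its number of columns, this gives $\rho \leq n - i$, so the integer $f(i) \coloneqq (n-i) - \rho \geq 0$ is a well-defined non-negative monovariant.

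Next I would confirm that the iteration is actually well-defined, i.e.\ that whenever the lemma instructs us to remove a column, this is possible while preserving the rank. If $A_i$ does not have full rank, then $\textup{rank}(A_i) = \rho < \min\{m, n-i\} \leq n - i$, so the $n-i$ columns of $A_i$ are linearly dependent; hence one column is a linear combination of the others and can be deleted without lowering the maximal number of linearly independent columns, yielding $A_{i+1}$ with $\textup{rank}(A_{i+1}) = \rho$. The same inequality shows $f(i) = (n-i) - \rho \geq 1$ whenever a removal step occurs, and since each removal decreases the column count by exactly one while fixing the rank, we get $f(i+1) = f(i) - 1$.

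I would then conclude by the monovariant argument: $f$ is a non-negative integer that strictly decreases by one at every iteration, so after at most $f(0) = n - \rho$ steps the process must halt, and it can only halt at an index $r$ where $A_r$ has full rank. For completeness I would verify that the terminal matrix really is full rank: once $f(r) = 0$ we have $n - r = \rho$, and since $\rho \leq m$ (the rank is bounded by the number of rows), $\min\{m, n-r\} = \min\{m, \rho\} = \rho = \textup{rank}(A_r)$, so $A_r$ has full rank. The row statement then follows immediately by applying the column result to $A_0^\top$, using $\textup{rank}(A_0^\top) = \textup{rank}(A_0)$ and the fact that transposition preserves the full-rank property.

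The argument has no genuine obstacle; the only point demanding care is disentangling the two meanings of rank deficiency — the lemma's full-rank condition $\textup{rank} = \min\{m, n-i\}$ versus the linear dependence of \emph{columns} needed to justify a rank-preserving removal — and checking that ``not full rank'' implies the stronger statement that the columns (as opposed to the rows) are dependent, which is exactly what the inequality $\rho < \min\{m,n-i\} \le n - i$ provides.
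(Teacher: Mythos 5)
Your proof is correct and follows essentially the same route as the paper: the rank is invariant under the construction while the column count strictly decreases, so the process must terminate at $r = n - \textup{rank}(A_0)$ (or earlier if $A_0$ already has full rank), where the column count equals the rank and hence, since the rank is also bounded by $m$, the matrix has full rank; the row case follows by transposition. Your additional check that a rank-preserving column removal is always possible when $A_i$ is not of full rank is a nice touch the paper leaves implicit, but it does not change the argument.
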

			
			\begin{proof}
				Let $p = \textup{rank}(A_0)<\min\{m,n\}$. By assumption it holds for $i \in \{0,\ldots,n-p-1\}$ that
				\begin{equation*}
					\textup{rank}(A_i) = \textup{rank}(A_0) = p,
				\end{equation*}
				such that $A_i \in \mathbb{R}^{m \times (n-i)}$ has not full rank as $p<\min\{m,n-i\}$. Consequently, the matrix $A_{n-p} \in \mathbb{R}^{m \times p}$ is well defined and has full rank $p<m$, i.e., the statement holds for $r = n-p$. The statement for iteratively removing the rows of $A_0$ follows with the same argumentation for the transpose $A_0^\top$.
			\end{proof}
			
			The next lemma is necessary to show in Theorem~\ref{th:nn_criticalpoints} via an explicit construction that augmented MLPs and MLPs with a bottleneck can have critical points.
			
			\begin{lemma} \label{lem:nn_linearsystem}
				Given $A \in \mathbb{R}^{1 \times n}$, $\textup{rank}(A) = 1$ and $ C \in \mathbb{R}^{1 \times m}$, $\textup{rank}(C) = 1$  with $n\leq m$, the linear system $AB = C$ always has a full rank solution $B \in \mathbb{R}^{n \times m}$, $\textup{rank}(B) = n$.
			\end{lemma}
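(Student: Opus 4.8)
The plan is to give an explicit construction of a full-rank solution, exploiting the slack provided by the hypothesis $n \leq m$. Writing $A = (A_1,\ldots,A_n)$ and denoting the rows of $B$ by $b_1,\ldots,b_n \in \mathbb{R}^{1\times m}$, the product expands as $AB = \sum_{i=1}^n A_i b_i$, so the system $AB = C$ asks for a weighted combination of the rows of $B$ (with weights $A_i$) to equal the prescribed row vector $C$. Since $\textup{rank}(A) = 1$ and $\textup{rank}(C) = 1$, both $A$ and $C$ are nonzero, so I can fix an index $i_0$ with $A_{i_0}\neq 0$ and an index $j^\ast$ with $C_{j^\ast}\neq 0$.

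The idea is to assign unit vectors to all rows except the $i_0$-th, while reserving the column $j^\ast$ so that the remaining row can be made independent of the others. Concretely, I would enumerate $\{1,\ldots,n\}\setminus\{i_0\} = \{i_1,\ldots,i_{n-1}\}$ and, using that $\{1,\ldots,m\}\setminus\{j^\ast\}$ contains $m-1 \geq n-1$ elements, choose \emph{distinct} column indices $j_{i_1},\ldots,j_{i_{n-1}} \in \{1,\ldots,m\}\setminus\{j^\ast\}$. I then set $b_{i_k} = \mathrm{e}_{j_{i_k}}^{(m)}$ for $k \in \{1,\ldots,n-1\}$ and define the last row so that the linear equation is satisfied:
\begin{equation*}
	b_{i_0} = \frac{1}{A_{i_0}}\left( C - \sum_{k=1}^{n-1} A_{i_k}\, \mathrm{e}_{j_{i_k}}^{(m)} \right) \in \mathbb{R}^{1\times m}.
\end{equation*}
By construction $\sum_{i=1}^n A_i b_i = A_{i_0} b_{i_0} + \sum_{k} A_{i_k}\mathrm{e}_{j_{i_k}}^{(m)} = C$, so $AB = C$ holds immediately.

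It remains to verify $\textup{rank}(B) = n$, which I expect to be the only nontrivial point. The $n-1$ rows $\mathrm{e}_{j_{i_1}}^{(m)},\ldots,\mathrm{e}_{j_{i_{n-1}}}^{(m)}$ are distinct standard unit vectors, hence linearly independent, and each has a zero entry in column $j^\ast$; thus their span is contained in $\{v \in \mathbb{R}^{1\times m} : v_{j^\ast} = 0\}$. On the other hand, the subtracted sum in the definition of $b_{i_0}$ has support only in the columns $j_{i_1},\ldots,j_{i_{n-1}}$, all of which differ from $j^\ast$, so the $j^\ast$-th entry of $b_{i_0}$ equals $C_{j^\ast}/A_{i_0} \neq 0$. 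Therefore $b_{i_0}$ lies outside the span of the other rows, all $n$ rows are linearly independent, and $B$ has full rank $n$. The hypothesis $n \leq m$ enters precisely in the counting step $m-1 \geq n-1$, which guarantees that enough columns remain to reserve $j^\ast$ for the pivot row $b_{i_0}$; this is exactly the mechanism that rules out the degenerate configurations where a naive assignment of unit vectors would collapse the rank.
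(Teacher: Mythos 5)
Your construction is correct: the identity $AB=\sum_{i=1}^n A_i b_i$ is the right way to read the system, the choice of distinct columns $j_{i_1},\ldots,j_{i_{n-1}}$ avoiding $j^\ast$ is possible exactly because $m-1\geq n-1$, the pivot row $b_{i_0}$ makes the equation hold by construction, and the rank argument via the $j^\ast$-th coordinate (zero on the unit-vector rows, equal to $C_{j^\ast}/A_{i_0}\neq 0$ on $b_{i_0}$) is airtight; the degenerate case $n=1$ also goes through. The paper proves the same lemma by an explicit construction as well, but organizes it differently: it first invokes the Rouch\'e--Capelli theorem for existence, then restricts to the submatrices of nonzero entries of $A$ and $C$ (of sizes $\bar n$ and $\overline m$), splits into the cases $\overline m\geq\bar n$ and $\overline m<\bar n$ to build a quasi-diagonal block of rank $\bar n$, and finally pads the rows of $B$ corresponding to zero entries of $A$ by arbitrary rows independent of the ones already built. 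Your version collapses all of this into a single uniform construction: one designated pivot row absorbs the entire correction $C-\sum_k A_{i_k}\mathrm{e}_{j_{i_k}}^{(m)}$, so rows of $B$ indexed by zero entries of $A$ need no special treatment, no case distinction on the supports is required, and the Rouch\'e--Capelli step becomes superfluous since the solution is exhibited directly. The trade-off is purely one of economy; both arguments use the hypothesis $n\leq m$ in the same place, namely to guarantee enough columns to keep the independence witnesses disjoint.
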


			\begin{proof}
				As by assumption $\text{rank}(A) = \text{rank}(A\vert C) = 1$, the Rouché-Capelli Theorem \cite{Shafarevich2013} guarantees the existence of a solution to the linear system. In the following, we show via an explicit construction that the solution $B$ can be chosen as a full-rank matrix. Denote by $\overline{A} = [A]_{i_1,\ldots,i_{\bar{n}}} \in \mathbb{R}^{1\times \bar{n}}$, $\bar{n}\leq n$, $\{i_1,\ldots,i_{\bar{n}}\} \subset \{1,\ldots,n\}$ the submatrix of $A$, which consists only of the non-zero entries of $A$, i.e., $[A]_{i_k} \neq 0$ for all $k \in \{1,\ldots, \bar{n}\}$ and denote the non-zero entries of $C$ by  $ [C]_{j_1,\ldots,j_{\overline{m}}} \in \mathbb{R}^{1\times \overline{m}}$,  $\overline{m} \leq m$, i.e., $[C]_{j_k}\neq 0$ for all $k \in \{1,\ldots,\overline{m}\}$. If $\overline{m} \geq \bar{n}$, we define the following matrix entries of the submatrix $[B]_{(i_1,\ldots,i_{\bar{n}}),(1,\ldots,m)}$:
				\begin{align*}
					[B]_{i_k,j_k} &= \frac{[C]_{j_k}}{[A]_{i_k}} \neq 0, \qquad &&\text{if } k \in \{1,\ldots,\bar{n}\}, \\
					[B]_{i_{\bar{n}},j_k} &= \frac{[C]_{j_k}}{[A]_{i_{\bar{n}}}} \neq 0, \qquad &&\text{if }k \in \{\bar{n}+1,\ldots,\overline{m}\}, 
				\end{align*}
				and set all other matrix entries to zero. By construction, $[B]_{(i_1,\ldots,i_{\bar{n}}),(1,\ldots,m)}$ has rank $\bar{n}$ and it holds
				\begin{equation*}
					\overline{A} \cdot [B]_{(i_1,\ldots,i_{\bar{n}}),(1,\ldots,m)} = C.
				\end{equation*}
				If $\overline{m}<\bar{n}$, define $j_{\overline{m}+1}, \ldots, j_{\bar{n}}$ to be arbitrary, but different indices of $\{1,\ldots,m\}\setminus \{j_1,\ldots,j_{\overline{m}}\}$, which is possible as $\bar{n}\leq n \leq m$. Let the matrix entries of $[B]_{(i_1,\ldots,i_{\bar{n}}),(1,\ldots,m)}$ be
				\begin{align*}
					[B]_{i_k,j_k} &= \frac{[C]_{j_k}}{[A]_{i_k}} \neq 0, \qquad &&\text{if } k \in \{1,\ldots,\overline{m}\}, \\
					[B]_{i_k,j_k} &= A_{i_1}, \quad [B]_{i_1,j_k} = - A_{i_k}, \qquad &&\text{if }k \in \{\overline{m}+1,\ldots,\bar{n}\}, 
				\end{align*}
				and set all other matrix entries to zero. By construction, $[B]_{(i_1,\ldots,i_{\bar{n}}),(1,\ldots,m)}$ has rank $\bar{n}$ and it holds
				\begin{equation*}
					\overline{A} \cdot [B]_{(i_1,\ldots,i_{\bar{n}}),(1,\ldots,m)} = C,
				\end{equation*}
				as for $k \in \{\overline{m}+1,\ldots,\bar{n}\}$ it holds $\overline{A}\cdot [B]_{(i_1,\ldots,i_{\bar{n}}),(j_k)} = A_{i_1}A_{i_k}-A_{i_1}A_{i_k}=0 = [C]_{j_k}$.
				
				For both cases, i.e., $\overline{m}\geq \bar{n}$ and $\overline{m}<\bar{n}$, choose the other rows of $B$, which are not part of $[B]_{(i_1,\ldots,i_{\bar{n}}),(1,\ldots,m)}$ in such a way, that they are linearly independent of the rows of $[B]_{(i_1,\ldots,i_{\bar{n}}),(1,\ldots,m)}$. This is possible, as the number $n$ of rows of $B$ is smaller than its dimension $m$. Hence, the constructed matrix $B$ has rank $n$. As $[A]_i = 0$ for $i \in \{1,\ldots,n\}\setminus \{i_1,\ldots,i_{\bar{n}}\}$ it follows $A B  =C$.
			\end{proof}
			
			\section{Results from ODE Theory} \label{sec:app_ODE}

			To determine if a neural ODE has critical points, we study in Section~\ref{sec:node_criticalpoints} the network gradient of a neural ODE. To characterize if the network gradient can be zero, we need to study the Jacobian matrix $\frac{\partial h_a(T)}{\partial a}$ of the time-$T$ map $ h_a(T)$ with respect to the initial value $a$. To that purpose, we use some results from ODE theory. The considered initial value problem is given by
				\begin{equation} \label{eq:IVP2} 
					\frac{\dd h}{\dd t} = f(t,h(t)), \qquad h(0) = a \in \mathcal{A} \subset \mathbb{R}^m, \tag{IVP}
				\end{equation}
				where $f:\Omega\rightarrow \R^m$, $\Omega = \Omega_t \times \Omega_h \subset \R \times \R^m$ open, with maximal time interval of existence $\mathcal{I}_a \subset \Omega_t$ open, initial value $h(0) = a \in \mathcal{A}$ with $ \varnothing \neq \mathcal{A}\subset \Omega_h$ and solution map $h_a: \mathcal{I}_a \rightarrow \R^m$. The following lemma shows that the Jacobian matrix $\frac{\partial h_a(T)}{\partial a}$ of the time-$T$ map $h_a(T)$ satisfies a linear variational equation.
			
			\begin{lemma}[\cite{Hale1980}] \label{lem:node_variational} 
				Let $f\in C^{0,k}(\Omega_t \times \Omega_h,\mathbb{R}^m)$ with $k \geq 1$. Then the solution $h_a(t)$ of the initial value problem~\eqref{eq:IVP2} is continuously differentiable with respect to the initial condition $a \in \mathcal{A}\subset \Omega_h$ and the Jacobian matrix $\frac{\partial h_a(t)}{\partial a}\in \mathbb{R}^{m \times m}$ satisfies the linear variational equation
				\begin{equation*}
					\frac{\dd Y}{\dd t} = \frac{\partial f(t,h_a(t))}{\partial h_a}Y,
				\end{equation*}
				with $Y(t)\in \mathbb{R}^{m \times m}$ and initial condition $\frac{\partial h_a(0)}{\partial a} = \textup{Id}_m$, where $\textup{Id}_m \in \mathbb{R}^{m \times m}$ is the identity matrix. The matrix $\frac{\partial f(t,h_a(t))}{\partial h_a} \in \mathbb{R}^{m \times m}$ is the Jacobian  of the vector field $f(t,h_a(t))$ with respect to $h_a$, defined by $\left[\frac{\partial f(t,h_a(t))}{\partial h_a}\right]_{ij} = \frac{\partial [f(t,h_a(t))]_i}{\partial [h_a]_j}$.
			\end{lemma}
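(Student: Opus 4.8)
The plan is to prove this classical result on smooth dependence on initial conditions by a direct difference-quotient argument combined with two successive Gronwall estimates; a slicker alternative via the fibre contraction (uniform contraction) principle exists, but the direct route is more self-contained. Writing the solution in integral form $h_a(t) = a + \int_0^t f(s,h_a(s))\,\dd s$, I would first record the facts that come for free. Since $f \in C^{0,k}$ with $k \geq 1$ is locally Lipschitz in its second argument, Lemma~\ref{lem:ivp_regularity} already supplies existence, uniqueness and continuity of the solution map, and the coefficient $A(t) \coloneqq \frac{\partial f(t,h_a(t))}{\partial h_a}$ is continuous in $t$ (composition of the continuous $\frac{\partial f}{\partial h}$ with the continuous orbit). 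Hence the linear variational equation $\dot Y = A(t)Y$, $Y(0) = \textup{Id}_m$ possesses a unique matrix solution $Y$ on all of $\mathcal{I}_a$ by the linear ODE theory behind Lemma~\ref{lem:node_linearODE}. The real content of the lemma is that this $Y(t)$ is genuinely the Jacobian $\frac{\partial h_a(t)}{\partial a}$.

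To establish this, fix $a \in \mathcal{A}$ and a small increment $v \in \R^m$, and set $\delta(t) \coloneqq h_{a+v}(t) - h_a(t)$ together with the candidate remainder $R(t) \coloneqq \delta(t) - Y(t)v$. Subtracting the integral equations for $h_{a+v}$, $h_a$ and $Y(\cdot)v$ yields $R(t) = \int_0^t \big[ f(s,h_{a+v}(s)) - f(s,h_a(s)) - A(s)Y(s)v\big]\,\dd s$. Using the integral form of the mean value theorem, $f(s,h_{a+v}(s)) - f(s,h_a(s)) = M(s)\,\delta(s)$ with $M(s) \coloneqq \int_0^1 \frac{\partial f}{\partial h}\big(s, h_a(s) + \tau \delta(s)\big)\,\dd\tau$, and substituting $\delta = R + Yv$ gives $R(t) = \int_0^t M(s) R(s)\,\dd s + \int_0^t [M(s) - A(s)]\,Y(s)v\,\dd s$.

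The key preparatory step is a first Gronwall estimate showing $\norm{\delta(t)} \leq \norm{v}\,\me^{LT}$, where $L$ is a Lipschitz constant for $f$ on a compact tube around the orbit $\{h_a(t): t \in [0,T]\}$; in particular $\delta(t) \to 0$ uniformly on $[0,T]$ as $v \to 0$. By continuity of $\frac{\partial f}{\partial h}$ this forces $M(s) \to A(s)$ uniformly, so the second integral above is bounded by $\eta(v)\norm{v}$ with $\eta(v) \to 0$, while $\norm{Y(s)}$ stays bounded on $[0,T]$. A second application of Gronwall to $\norm{R(t)} \leq \int_0^t \norm{M(s)}\,\norm{R(s)}\,\dd s + C\,\eta(v)\norm{v}$ then yields $\norm{R(t)}/\norm{v} \to 0$ uniformly, which is precisely the statement that $h_a(t)$ is differentiable in $a$ with derivative $Y(t)$. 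Continuity of $\frac{\partial h_a(t)}{\partial a}$ in $(t,a)$ finally follows from continuous dependence of the solution of the variational equation on its coefficient $A(t)$, which itself depends continuously on $a$ through $h_a$.

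The main obstacle I anticipate is the interlocking of the two Gronwall arguments: one cannot bound $R$ in a single step, because the term $[M(s)-A(s)]Y(s)v$ is only $o(\norm{v})$ after one already knows, via the first estimate, that $\delta(s)\to 0$ and hence $M(s)\to A(s)$. Getting the order right — establishing uniform continuous dependence first, then feeding it into the remainder bound — is the delicate part; everything else is routine once the mean-value representation $f(\cdot,h_{a+v})-f(\cdot,h_a) = M\,\delta$ is in place.
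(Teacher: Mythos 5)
The paper does not prove this lemma at all: it is quoted verbatim as a classical result from Hale's book, and the surrounding text only uses its conclusion (in Proposition~\ref{prop:node_jacobian}). So there is no in-paper argument to compare against; what you have written is a self-contained proof of the cited theorem, and it is the standard and correct one. Your decomposition $R(t) = \delta(t) - Y(t)v$, the mean-value representation $f(s,h_{a+v}(s))-f(s,h_a(s)) = M(s)\delta(s)$, and the two-stage Gronwall argument (first $\norm{\delta(t)}\leq \norm{v}\me^{LT}$, then the remainder estimate $\norm{R(t)} = o(\norm{v})$ uniformly on $[0,T]$) are exactly how the textbook proof goes, and you correctly identify why the order of the two estimates cannot be interchanged. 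Two points would need to be made explicit in a full writeup: (i) for $\norm{v}$ small enough the perturbed solution $h_{a+v}$ actually exists on all of $[0,T]$ and its orbit stays inside the compact tube on which you take the Lipschitz constant $L$ and the modulus of continuity of $\frac{\partial f}{\partial h}$ — this follows from continuous dependence on initial data but is logically prior to your first Gronwall estimate, not a consequence of it; and (ii) the segment $h_a(s)+\tau\delta(s)$, $\tau\in[0,1]$, must lie in $\Omega_h$ for the integral mean value theorem to apply, which again holds once the tube is chosen convex in the fibre and $v$ is small. With those remarks added, the argument is complete, and the final continuity claim for $(t,a)\mapsto \frac{\partial h_a(t)}{\partial a}$ via continuous dependence of the variational equation on its coefficient matrix is also correct.
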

			
			The variational equation of Lemma~\ref{lem:node_variational} is a linear homogeneous ODE. The following lemma characterizes the determinant of matrix solutions of linear homogeneous ODEs, which is used to show in Proposition~\ref{prop:node_jacobian} that the Jacobian matrix $\frac{\partial h_a(T)}{\partial a}$ has always full rank.
			
			\begin{lemma}[\cite{Chicone2006}] \label{lem:node_linearODE}
				Consider the linear homogeneous ODE 
				\begin{equation} \label{eq:node_linearODE}
					\frac{\dd y}{\dd t} = A(t) y,
				\end{equation}
				where $y \in \mathbb{R}^m$ and $A \in C^0(\mathcal{I} ,\mathbb{R}^{m \times m})$, $\mathcal{I} \subset \mathbb{R}$ open, is continuous. 
				\begin{enumerate}[label=(\alph*), font=\normalfont]
					\item \label{lem:node_linearODE_a} Given an initial condition $y(t_0) = y_0$, $t_0 \in \mathcal{I}$, then~\eqref{eq:node_linearODE} has a unique solution  $y: \mathcal{I} \rightarrow \mathbb{R}^m$. 
					\item  \label{lem:node_linearODE_b} If $Y:\mathcal{I} \rightarrow \mathbb{R}^{m \times m}$ is a matrix solution of~\eqref{eq:node_linearODE} with initial condition $Y(t_0) = Y_0 \in \mathbb{R}^{m \times m}$, i.e., every column $[Y]_i \in\mathbb{R}^m$, $i \in \{1,\ldots,m\}$ is a solution of ~\eqref{eq:node_linearODE} with initial condition $[Y]_i(0) = [Y_0]_i \in \mathbb{R}^m$, then Liouville's formula holds:
					\begin{equation*}
						\det(Y(t)) = \det(Y(t_0))\exp\left(\int_{t_0}^t \textup{tr}(A(r)) \; \dd r\right),
					\end{equation*}
					where $\textup{tr}(A(r))$ is the trace of the matrix $A \in C^0(\mathcal{I} ,\mathbb{R}^{m \times m})$.
				\end{enumerate}
			\end{lemma}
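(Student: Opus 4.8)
The plan is to prove the two parts separately using classical ODE theory. For part~\ref{lem:node_linearODE_a}, I would first establish local existence and uniqueness via the Picard--Lindel\"of theorem: the right-hand side $f(t,y) = A(t)y$ of~\eqref{eq:node_linearODE} is continuous in $t$ on $\mathcal{I}$ and, on any compact subinterval $[c,d] \subset \mathcal{I}$, globally Lipschitz in $y$ with Lipschitz constant $\sup_{t \in [c,d]} \norm{A(t)}$, which is finite since $A$ is continuous. This yields a unique local solution through any point $(t_0,y_0) \in \mathcal{I} \times \R^m$. The essential step is then to upgrade local to global existence on all of $\mathcal{I}$: applying Gr\"onwall's inequality to $\norm{y(t)}$ produces the a priori bound $\norm{y(t)} \leq \norm{y_0}\exp\bigl(\int_{t_0}^t \norm{A(r)}\,\dd r\bigr)$, so no solution can blow up in finite time. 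By the standard continuation argument, the maximal solution must therefore extend to the entire interval $\mathcal{I}$, and uniqueness is inherited from the local statement.

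For part~\ref{lem:node_linearODE_b}, I would first observe that, since each column of $Y$ solves~\eqref{eq:node_linearODE}, the matrix $Y$ satisfies the matrix ODE $\frac{\dd Y}{\dd t} = A(t)Y$. Setting $W(t) \coloneqq \det(Y(t))$, the cleanest route is to write, for small $h$,
\begin{equation*}
Y(t+h) = Y(t) + h\,A(t)Y(t) + o(h) = \bigl(\textup{Id}_m + h\,A(t) + o(h)\bigr)Y(t),
\end{equation*}
and combine multiplicativity of the determinant with the first-order expansion $\det(\textup{Id}_m + hA(t)) = 1 + h\,\tr(A(t)) + O(h^2)$. This gives $W(t+h) = \bigl(1 + h\,\tr(A(t)) + o(h)\bigr)W(t)$, hence the scalar linear homogeneous ODE $\frac{\dd W}{\dd t} = \tr(A(t))\,W(t)$. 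Solving it with initial value $W(t_0) = \det(Y(t_0))$ yields Liouville's formula
\begin{equation*}
\det(Y(t)) = \det(Y(t_0))\exp\Bigl(\int_{t_0}^t \tr(A(r))\,\dd r\Bigr).
\end{equation*}

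I expect the main obstacle to be the global extension in part~\ref{lem:node_linearODE_a}: local existence is routine, but making precise that the a priori Gr\"onwall bound prevents the maximal interval of existence from being a proper subinterval of $\mathcal{I}$ requires the continuation principle, namely that a solution approaching the boundary of its maximal domain must eventually leave every compact set, which contradicts the uniform bound on compacta. The determinant computation in part~\ref{lem:node_linearODE_b} is by contrast purely algebraic once the expansion of $\det(\textup{Id}_m + hA(t))$ is invoked; the only point requiring care is that the argument remains valid when $Y(t)$ is singular, which is automatic here since no inverse of $Y$ enters the derivation.
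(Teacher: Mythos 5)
The paper offers no proof of this lemma at all: it is quoted as a standard fact with the citation \cite{Chicone2006}, so there is no internal argument to compare against. Your proof is the classical textbook one and is essentially correct. For part~\ref{lem:node_linearODE_a}, Picard--Lindel\"of on compact subintervals, the Gr\"onwall bound $\norm{y(t)} \leq \norm{y_0}\exp\bigl(\abs{\int_{t_0}^{t}\norm{A(r)}\,\dd r}\bigr)$, and the continuation principle together form exactly the standard route to global existence and uniqueness for linear systems; your identification of the continuation step as the point needing care is accurate. For part~\ref{lem:node_linearODE_b}, the one step that is not as ``automatic'' as you assert is the factorization $Y(t+h) = (\textup{Id}_m + hA(t) + o(h))Y(t)$: differentiability gives $Y(t+h) = Y(t) + hA(t)Y(t) + S(h)$ with $S(h) = o(h)$, but rewriting $S(h)$ as $R(h)\,Y(t)$ with $R(h) = o(h)$ implicitly uses the invertibility of $Y(t)$, which can fail --- and the formula must also hold when $\det(Y(t_0)) = 0$, in which case it asserts $\det(Y(t)) \equiv 0$. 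The repair is one line: since $\det$ is a polynomial in the matrix entries and hence locally Lipschitz, $\det\bigl(Y(t)+hA(t)Y(t)+S(h)\bigr) = \det\bigl((\textup{Id}_m+hA(t))Y(t)\bigr) + o(h)$, after which your expansion of $\det(\textup{Id}_m + hA(t))$ yields $\frac{\dd}{\dd t}\det(Y(t)) = \tr(A(t))\det(Y(t))$ for every $t$, singular or not. Alternatively, one can differentiate $\det(Y(t))$ directly via multilinearity in the rows, which avoids the factorization entirely. With that small point addressed, your argument is complete.
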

			
		\end{appendices}

	\end{document}